\newtheorem{prop}{Proposition}[section]
\newtheorem{lemma}[prop]{Lemma}
\newtheorem{Cor}[prop]{Corollary}
\newtheorem{theorem}[prop]{Theorem}
\DeclareMathOperator{\Hopf}{Hopf}
\DeclareMathOperator{\Dil}{Dil}
\DeclareMathOperator{\Vol}{Vol}
\DeclareMathOperator{\Area}{Area}
\DeclareMathOperator{\Length}{Length}
\DeclareMathOperator{\Deg}{Deg}
\DeclareMathOperator{\SH}{SH}
\DeclareMathOperator{\Sq}{Sq}
\DeclareMathOperator{\Average}{Average}
\DeclareMathOperator{\Bad}{Bad}
\DeclareMathOperator{\Ball}{Ball}
\DeclareMathOperator{\Bil}{Bil}
\DeclareMathOperator{\Lin}{Lin}
\DeclareMathOperator{\Bouquet}{Bouquet}
\DeclareMathOperator{\Diag}{Diag}
\DeclareMathOperator{\Diam}{Diam}
\DeclareMathOperator{\FlatNorm}{Flat Norm}
\DeclareMathOperator{\GL}{GL}
\DeclareMathOperator{\HC}{HC}
\DeclareMathOperator{\Lip}{Lip}
\DeclareMathOperator{\Map}{Map}
\DeclareMathOperator{\Mass}{Mass}
\DeclareMathOperator{\SO}{SO}
\DeclareMathOperator{\Triv}{Triv}
\DeclareMathOperator{\Dim}{Dim}
\DeclareMathOperator{\Dist}{Dist}
\title{Contraction of areas vs. topology of mappings}
\author{Larry Guth}
\begin{document}

\begin{abstract} We construct homotopically non-trivial maps from $S^m$ to $S^{m-1}$ with arbitrarily small
$k$-dilation for each $k > (m+1)/2$.  We prove that homotopically non-trivial maps from $S^m$ to $S^{m-1}$ cannot
have arbitrarily small $k$-dilation for $ k \le (m+1)/2$.

\end{abstract}

\maketitle


\section{Introduction}

The $k$-dilation of a map between Riemannian manifolds measures how much the
map stretches $k$-dimensional volumes.  If $F$ is a $C^1$ map, 
we say that $\Dil_k(F) \le \lambda$ if each
$k$-dimensional surface $\Sigma$ in the domain is mapped to an image with $k$-dimensional
volume at most $\lambda \Vol_k (\Sigma)$.  The 1-dilation is the same as the Lipschitz constant.
We will study how the $k$-dilation of $F$ is related to its homotopy class.  The $k$-dilation describes the local geometry
of $F$, and we want to understand how the local geometry of $F$ influences its global topological features.  
We focus on maps from the unit $m$-sphere to the unit $n$-sphere.  

We begin with the following question: if a map $F: S^m \rightarrow S^n$ has small $k$-dilation, does the map $F$ have
to be contractible?  If a map $F: S^m \rightarrow S^n$ has $\Dil_1 F < 1$, then it is contractible.  If $\Dil_1 F < 1$, then
the diameter of the image of $F$ is $< \pi$, and so $F$ is not surjective.  
In \cite{TW} Tsui and Wang proved that maps with small 2-dilation are also contractible.

\newtheorem*{rtwo}{Tsui-Wang inequality}
\begin{rtwo} If $F: S^m \rightarrow S^n$ has $\Dil_2 F < 1$, (and if $m,n \ge 2$), then $F$ is contractible.
\end{rtwo}

In contrast, we will show that maps with small 3-dilation may not be contractible.

\newtheorem*{introexam}{Example}
\begin{introexam} There is a sequence of homotopically non-trivial maps $F_j: S^4 \rightarrow S^3$ with
$\Dil_3(F_j) \rightarrow 0$.
\end{introexam}

Our goal is to study this phenomenon.  We study the following question.

\newtheorem*{prob}{Main Question}
\begin{prob} Suppose that $F_j: S^m \rightarrow S^n$ is a sequence of maps, all in the same homotopy class,
with $\Dil_k(F_j) \rightarrow 0$.  What can we conclude about the homotopy class of the $F_j$?
\end{prob}

Our main result describes the situation for maps from $S^m$ to $S^{m-1}$.

\newtheorem*{mthm}{Main Theorem}
\begin{mthm} Fix an integer $m \ge 3$.  If $k > (m+1)/2$, then there is a sequence of homotopically
non-trivial maps $F_j$ from $S^m$ to $S^{m-1}$ with $k$-dilation tending to zero.  On the
other hand if $k \le (m+1)/2$, then every homotopically non-trivial map from $S^m$ to $S^{m-1}$ has $k$-dilation
at least $c(m) > 0$. 
\end{mthm}

In the first half of our theorem, we have to construct some homotopically non-trivial
maps with tiny $k$-dilations.  Our construction gives the following more general result.

\newtheorem*{hp}{An h-principle for $k$-dilation}
\begin{hp} Suppose that $F_0$ is a map from $S^m$ to $S^n$ with $m > n$ and $k > (m+1)/2$.
Then for any $\epsilon > 0$, we can homotope $F_0$ to a map $F$ with $k$-dilation less than
$\epsilon$.
\end{hp}

For a given map $F$, the $k$-dilations are related to each other by the inequalities $\Dil_1(F) \ge \Dil_2(F)^{1/2}
\ge \Dil_3(F)^{1/3} \ge ...$  So $\Dil_1(F) \le 1$ implies $\Dil_2(F) \le 1$ etc.  As $k$ increases, maps
with small $k$-dilation become easier to find.  Our results show that there is a phase transition at $k = (m+1)/2$.  When
$k > (m+1)/2$, the condition $\Dil_k F \le 1$ behaves rather flexibly.  When $k \le (m+1)/2$ the condition behaves
more rigidly.

We recall some previous results about $k$-dilation and homotopy type of maps.  We start with results about the 1-dilation, which is
much better understood.  In the paper ``Homotopical effects
of dilatation'' (\cite{GHED}), Gromov investigated the relationship between the 1-dilation of a map and its homotopy type.  
If $F$ is a map from $S^m$ to $S^m$, then its degree is at most $\Dil_1(F)^m$, and this
bound is sharp up to a constant factor.  A more difficult result from \cite{GHED} says
that if $F$ is a map from $S^{2n-1}$ to $S^n$ (with $n$ even), then its
Hopf invariant is at most $C_n \Dil_1(F)^{2n}$, and this upper bound is also sharp up to a constant factor. 
Recently, DeTurck, Gluck, and Storm \cite{DGS} proved that each Hopf fibration has the minimal 1-dilation in its homotopy class.
The 1-dilations of maps in torsion homotopy classes have not been studied as much, partly because it's difficult
to formulate a good question.  For each torsion homotopy class in $\pi_m(S^n)$, the minimal 1-dilation is finite
and $\ge 1$.  There is no good candidate for a minimizer, and so finding the exact minimal value of the 1-dilation
looks hopeless. 

In \cite{GCC}, Gromov posed the question how the $k$-dilation of a map $F: S^m \rightarrow S^n$ is related to its homotopy
class.  The estimates above about the degree and the Hopf invariant generalize to $k$-dilation.
The argument about the degree generalizes to show that a map $F: S^m \rightarrow S^m$ has degree at most
$\Dil_m(F)$.  The argument about the Hopf invariant gives
the following inequality.   (See \cite{GCC} section 3.6 and \cite{GMS} pages 358-59.)

\newtheorem*{ho}{Hopf invariant inequality}
\begin{ho} (Gromov) Suppose that $F: S^{2n-1} \rightarrow S^n$, with $n$ even.  Then
the Hopf invariant of $F$ is controlled by the $n$-dilation of $F$ as follows:

$$ | \Hopf(F) | \le C(n) \Dil_n(F)^2. $$

In particular, if $\Hopf(F) \not= 0$, then $\Dil_n(F) \ge c(n) > 0$.  
\end{ho}

The proof of the Hopf invariant inequality is based on differential forms.  On the one hand, the
Hopf invariant can be written in terms of differential forms.  On the other hand, differential forms interact
nicely with $k$-dilation.  If $\alpha$ is a $k$-form, then $\| F^* \alpha \|_\infty \le \Dil_k(F)
\| \alpha \|_\infty$.  This allows us to bound homotopy invariants defined using
differential forms in terms of $k$-dilation.  It seems significantly 
harder to connect the $k$-dilation with torsion homotopy invariants.  

In the second half of the main theorem, 
we prove a lower bound for the $k$-dilation of maps in some torsion homotopy classes.  
Our proof involves Steenrod squares,
and the technique gives the following slightly more general estimate.

\newtheorem*{st}{Steenrod squares and $k$-dilation}
\begin{st} Let $F$ be a map from $S^m$ to $S^n$, and let $X$ be the cell complex
formed by attaching an (m+1)-cell to $S^n$ using $F$: $X = B^{m+1} \cup_F S^n$.
If the complex $X$ has a non-trivial Steenrod square and if $k \le (m+1)/2$, then
$\Dil_k(F) \ge c(m) > 0$.
\end{st}

\noindent The non-trivial homotopy class in $\pi_m(S^{m-1})$ induces a non-trivial Steenrod square.  There are also homotopy
classes inducing a non-trivial Steenrod square in $\pi_m(S^{m-3})$ and $\pi_m(S^{m-7})$ if $m$ is sufficiently large.

We will also see that different homotopy classes in the same group $\pi_m(S^n)$ can interact with $k$-dilation differently.  
Bechtluft-Sachs observed a related phenomenon in \cite{BS}.  Building on his observation, we will construct
maps with arbitrarily small $k$-dilation (for some $k$) in homotopy classes that are suspensions.  For
example, we will prove the following result.

\begin{prop} Suppose that $a \in \pi_m(S^n)$ is the suspension of a homotopy class in $\pi_{m-1}(S^{n-1})$, and
that $m > n$.  Then the class $a$ can be realized by maps with arbitrarily small $n$-dilation.
\end{prop}

We will apply this method to some specific homotopy groups.  We begin with the group $\pi_7(S^4)$ which
is isomorphic to $\mathbb{Z} \oplus \mathbb{Z}_{12}$.

\begin{prop} Each torsion element $a \in \pi_7(S^4)$ can be realized by maps with arbitrarily small 4-dilation,
and each non-torsion element cannot. 
\end{prop}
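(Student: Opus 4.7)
\medskip

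\noindent\textbf{Proof plan.} The proposition has two halves that correspond to the two summands $\mathbb{Z}$ and $\mathbb{Z}_{12}$ of $\pi_7(S^4)$, so my plan is to identify each summand geometrically and apply a previously stated tool to each one.

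The first step is to characterize which classes are torsion in terms of the Hopf invariant. The group $\pi_7(S^4)$ fits into the classical $\mathbb{Z}$-valued Hopf invariant $H: \pi_7(S^4) \to \mathbb{Z}$, and because the Hopf fibration $\nu : S^7 \to S^4$ has $H(\nu) = 1$, the map $H$ is surjective. Since $\mathbb{Z}$ is torsion-free, $H$ must vanish on every torsion element, so the kernel of $H$ contains (and therefore equals) the torsion subgroup $\mathbb{Z}_{12}$. Thus a class $a \in \pi_7(S^4)$ is non-torsion if and only if $H(a) \ne 0$.

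Now I handle the non-torsion half. If $a$ is non-torsion and $F : S^7 \to S^4$ represents $a$, then $|H(F)| \ge 1$. The Hopf invariant inequality quoted above (Gromov's estimate for $n$ even, applied with $n=4$) gives
$$ 1 \le |H(F)| \le C(4) \, \Dil_4(F)^2, $$
so $\Dil_4(F) \ge c(4) > 0$ uniformly in $F$, proving that non-torsion classes cannot be represented with arbitrarily small $4$-dilation.

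For the torsion half, I want to apply the suspension proposition stated just before (classes of the form $\Sigma b$ with $b \in \pi_{m-1}(S^{n-1})$ can be realized with arbitrarily small $n$-dilation, provided $m > n$; here $m=7$, $n=4$). So the task reduces to showing that every element of the torsion subgroup $\mathbb{Z}_{12} \subset \pi_7(S^4)$ is a suspension from $\pi_6(S^3) = \mathbb{Z}_{12}$. I would do this via the EHP exact sequence
$$ \pi_6(S^3) \xrightarrow{\;E\;} \pi_7(S^4) \xrightarrow{\;H\;} \pi_7(S^7), $$
whose exactness tells us that the image of suspension equals the kernel of the Hopf invariant, which by the first step is exactly the torsion subgroup. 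Thus every torsion class is a suspension, and the suspension proposition produces representatives with $4$-dilation $< \epsilon$ for any $\epsilon > 0$.

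The main obstacle, in my view, is making sure the identification ``torsion $=$ kernel of $H$ $=$ image of suspension'' is legitimate without extra hypotheses; the surjectivity of the Hopf invariant (Hopf invariant one in dimension $4$) is what makes both identifications clean here, and one should flag that this is specific to $n \in \{2,4,8\}$ rather than a generic phenomenon. Once that point is in place the argument is a direct combination of Gromov's Hopf invariant inequality with the suspension proposition.
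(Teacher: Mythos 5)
Your proof is correct and follows the same approach as the paper: the non-torsion classes are exactly those with nonzero Hopf invariant, handled by Gromov's Hopf invariant inequality, while the torsion classes are exactly the suspensions from $\pi_6(S^3)$, handled by the suspension proposition. The only cosmetic difference is that you derive ``torsion $=$ suspensions'' from the EHP exact sequence, whereas the paper simply cites Toda's tables for this standard fact about $\pi_7(S^4)$.
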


The non-torsion classes have non-zero Hopf invariant, and their 4-dilation cannot be too small by the Hopf
invariant inequality.  The non-torsion elements are all suspensions, and using the suspension structure, their
4-dilations can be made arbitrarily small.  In other homotopy groups, different torsion elements may behave differently.  For example, 
we consider $\pi_8(S^5)$ which is isomorphic to $\mathbb{Z}_{24}$.

\begin{prop} The homotopy class corresponding to $12$ in $\mathbb{Z}_{24} = \pi_8(S^5)$ can be realized by maps
with arbitrarily small 4-dilation.  The homotopy classes corresponding to odd numbers in $\mathbb{Z}_{24} = \pi_8(S^5)$
cannot be realized with arbitrarily small 4-dilation. 
\end{prop}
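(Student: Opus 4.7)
For any $F\colon S^8 \to S^5$ in an odd class of $\mathbb{Z}/24 = \pi_8(S^5)$, the plan is to apply the Steenrod squares and $k$-dilation theorem with $m = 8$ and $k = 4$ (so $k \le (m+1)/2 = 9/2$). I need to show that the two-cell complex $X_F = B^9 \cup_F S^5$ supports a non-trivial Steenrod square. The only candidate in the correct dimensions is $\Sq^4 \colon H^5(X_F;\mathbb{F}_2) \to H^9(X_F;\mathbb{F}_2)$. For the generator $\Sigma\nu$ of $\pi_8(S^5)$, the cone $X_{\Sigma\nu}$ is $\Sigma \mathbb{HP}^2 = \Sigma(S^4 \cup_\nu B^8)$; since $\Sq^4$ on $\mathbb{HP}^2$ is the nontrivial cup-square on $H^4(\mathbb{HP}^2;\mathbb{F}_2)$, suspension invariance of Steenrod squares on reduced cohomology gives $\Sq^4 \ne 0$ on $X_{\Sigma\nu}$. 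Because the invariant $F \mapsto \Sq^4$ on $X_F$ is additive on $\pi_8(S^5)$ and nonzero on the generator, it is nonzero on every odd multiple, and the theorem then yields $\Dil_4(F) \ge c(8) > 0$.

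\textbf{Upper bound.} For class $12$, I would first exhibit it as an iterated suspension. The Hopf--Whitehead identity $[\iota_4,\iota_4] = 2\nu - \Sigma\nu'$ in $\pi_7(S^4)$, combined with the vanishing $E[\iota_4,\iota_4] = 0$ (Whitehead products suspend to zero), gives $2 \Sigma \nu = \Sigma^2 \nu'$ in $\pi_8(S^5)$. Therefore $12 \Sigma \nu = \Sigma^2(6\nu')$, where $6\nu' \in \pi_6(S^3) = \mathbb{Z}/12$ is the element of order $2$ (the unstable representative of $\eta^3$). To realize this double suspension with arbitrarily small $4$-dilation, I would invoke a refinement of the single-suspension proposition: if $a \in \pi_m(S^n)$ is a double suspension of a class in $\pi_{m-2}(S^{n-2})$, then $a$ admits representatives with $(n-1)$-dilation arbitrarily small. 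Applied with $n = 5$, this yields the desired small $4$-dilation for class $12$.

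\textbf{Main obstacle.} The hard part will be establishing this double-suspension refinement. The single-suspension proposition works by concentrating the suspended map into a thin tube along the suspension axis, buying one dimension of slack in the dilation index; the refinement must iterate this, starting from a first-suspension representative that is itself manifestly a suspension (which is available since $6\nu'$ is itself a suspension from $\pi_5(S^2)$), so that a second tube-concentration along the new suspension axis contributes an additional dimension of slack without destroying the first. A naive iteration loses control because the generic suspension formula only gives $\Dil_k(\Sigma F) \lesssim \max(\Dil_k F, \Dil_{k-1} F)$, and $\Dil_2$ of any nontrivial map to $S^3$ is bounded below by the Tsui--Wang inequality; one must instead produce the inner representative by a construction tailored so that the outer concentration has access to the slack. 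Once this refinement is in hand, the algebraic identification above completes the upper bound, and the Steenrod-square argument completes the lower bound.
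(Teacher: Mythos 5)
Your lower bound is sound and matches the paper's argument: identify the cone on the generator $\Sigma\nu$ with $\Sigma\mathbb{HP}^2$, invoke $\Sq^4 \ne 0$ on $\mathbb{HP}^2$ and suspension invariance, note the Steenrod--Hopf invariant vanishes exactly on the even classes, and apply the Steenrod-square theorem with $m=8$, $k=4 \le 9/2$.

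The upper bound, however, has a real gap, and it is not the technical gap you flag but a quantitative one. You aim to realize class $12$ as $\Sigma^2(6\nu')$ with $6\nu' \in \pi_6(S^3)$ and then invoke a claimed refinement: a class that is a double suspension from $\pi_{m-2}(S^{n-2})$ admits representatives with arbitrarily small $(n-1)$-dilation. For $m=8$, $n=5$ this would give small $4$-dilation. But Proposition \ref{suspmeth} applied to a double suspension from $\pi_6(S^3)$ (so $p=6$, $q=3$) gives small $k$-dilation only for $k > (q/p)m = (3/6)\cdot 8 = 4$, i.e. $k \ge 5$, which is strictly weaker than what you need. Your proposed refinement asserts the borderline case, and that borderline case is genuinely open: the paper explicitly records (in its discussion of $\pi_6(S^4)$ in Section \ref{openproblems}) that it does not know whether the nontrivial double suspension there admits small $(n-1)$-dilation. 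So the "main obstacle" you identify is not an engineering hurdle inside a known approach; it is an unresolved question, and the iteration scheme you sketch does not close it, since as you note the intermediate stage runs into the Tsui--Wang barrier.

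The paper sidesteps all of this with a fact you already have in hand but do not use: $6\nu'$ is itself a suspension from $\pi_5(S^2)$, so class $12$ is a \emph{triple} suspension from $\pi_5(S^2)=\mathbb{Z}_2$. Proposition \ref{suspmeth} is stated and proved for arbitrary iterated suspensions in one shot (the proof embeds a single rescaled rectangle $[0,\epsilon]^p \times [0,\epsilon^{-p/(m-p)}]^{m-p}$ into $S^m$ and uses one product-and-smash map, rather than composing single-suspension constructions). Applied with $p=5$, $q=2$, $m=8$, it gives small $k$-dilation for all $k > (2/5)\cdot 8 = 3.2$, so $k=4$ is covered with room to spare. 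In short: go one suspension further and apply the existing multi-suspension construction directly; no refinement, iteration, or new lemma is required.
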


The odd classes involve non-trivial Steenrod squares, and so the second statement follows from the Steenrod square inequality.
The class 12 is a suspension from $\pi_5(S^2)$, and using the suspension structure we are able to construct maps with
arbitrarily small 4-dilation.

\subsection{Thick tubes} \label{introthicktubes}

The results and questions we mentioned above have some connections with the geometry of tubes in $\mathbb{R}^m$.  We mention these
in the introduction because they may be easier to visualize than the main theorem.  In particular, instead of talking about a non-trivial Steenrod square, we can talk about a tube with a twist in it.

An $m$-dimensional tube is an embedding $I$ from $S^1 \times B^{m-1}$ into $\mathbb{R}^m$.  We write
$S^1(\delta)$ for the circle of radius $\delta$ and $B^{m-1}(R)$ for the ball of radius $R$.  We say an embedding
is $k$-expanding if it increases the $k$-dimensional area of each $k$-dimensional surface.  Finally, we say that
a tube has $k$-thickness $R$ if the embedding $I$ is a $k$-expanding embedding
from $S^1(\delta) \times B^{m-1}(R)$ into $\mathbb{R}^m$, for some $\delta > 0$.  

Surprisingly, there are 3-dimensional tubes with 2-thickness 1 inside of arbitrarily small balls $B^3(\epsilon)$.

\newtheorem*{gz}{Thick tube example}
\begin{gz} For every $\epsilon > 0$, there is a 3-dimensional tube with 2-thickness 1 embedded
in the $\epsilon$-ball $B^3(\epsilon) \subset \mathbb{R}^3$.
\end{gz}

The first example of this type that I'm aware of was given by Zel'dovitch in the 1970's (see \cite{Ar}).
The example generalizes in a straightforward way to higher dimensions.

\begin{gz} (Higher dimensions) In dimension $m$, if $k \ge (m+1)/2$, then a tube with $k$-thickness
1 may be embedded into an arbitrarily small ball.
\end{gz}

In \cite{Ge} Gehring studied the geometry of linked tubes in $\mathbb{R}^3$.  Gehring was interested in a geometric invariant
called the conformal modulus of a tube.  His methods give the following result about thick tubes.

\newtheorem*{gl}{Gehring linking inequality}

\begin{gl} If $I_1$ and $I_2$ are disjoint 3-dimensional tubes
with 2-thickness 1 contained in the ball $B^3(\epsilon)$, and if $\epsilon > 0$ is sufficiently small,
then the tubes have linking number zero.
\end{gl}

In the early 1990's, Freedman and He \cite{FH} extended Gehring's work, proving estimates for general knots and links.  For example,
they proved that a 3-dimensional tube with 2-thickness 1 contained
in a small ball must be unknotted.

An embedding $I: S^1 \times B^{m-1} \rightarrow \mathbb{R}^m$ defines a framing of the normal bundle of
the core circle $I ( S^1 \times \{ 0 \} )$.  Any embedded circle in $\mathbb{R}^m$ also has a canonical framing
of its normal bundle (up to homotopy), induced by the ambient space.  The relationship between the two framings defines an element in $\pi_1 (\SO(m-1) )$ which is $\mathbb{Z}$ for $m=3$ and $\mathbb{Z}_2$ for $m > 3$.  We call this element the 
twisting number of the embedding $I$.  If $m=3$, then the twisting
number of $I$ is equal to the linking number of the circles $I ( S^1 \times \{ p \} )$ and $I( S^1 \times \{ q \} )$ for any $p, q \in B^2$.  
Therefore, the linking inequality above has the following corollary.

\newtheorem*{gt}{Twisting inequality in three dimensions}

\begin{gt} If $I$ is a 3-dimensional tube with 2-thickness 1 contained in the ball $B^3(\epsilon)$,
and if $\epsilon$ is sufficiently small, then the twisting number of $I$ is equal to zero.
\end{gt}

In summary, it's possible to embed a thick tube into a tiny ball, but we cannot put a twist in it.
In higher dimensions, $m > 3$, there is no linking number of tubes and every tube
is unknotted, but the twisting number is still defined modulo 2.  We will prove
the following higher-dimensional generalization of Gehring's twisting inequality.

\newtheorem*{hdt}{Twisting inequality in high dimensions}

\begin{hdt} If $I$ is an $m$-dimensional tube, with $k$-thickness 1, contained in the
ball $B^m(\epsilon)$, if $\epsilon > 0$ is sufficiently small, and if $k \le (m+1)/2$, then the tube $I$ has twisting number zero.
\end{hdt}

In particular, if $m$ is any odd dimension $m \ge 5$, and if $k = (m+1)/2$, then
we can embed a $k$-thick tube into an arbitrarily small ball, but we cannot put
a twist in it.

The twisting inequality in three dimensions is closely related to the inequality $| \Hopf(F) | \lesssim \Dil_2(F)^2$ for maps
$F: S^3 \rightarrow S^2$.  The twisting inequality in higher dimensions is closely related to the
Steenrod square inequality and the main theorem of the paper.

\subsection{On the proof of the lower bound for $k$-dilation using Steenrod squares} \label{introlowerbound}

The main new idea in this paper is a way to prove a lower bound for the $k$-dilation of maps in certain
torsion homotopy classes.  Here is an outline of the argument.

As a warmup, we consider maps $S^3 \rightarrow S^2$ with non-zero Hopf invariant.  The Hopf invariant
inequality implies that maps with tiny 2-dilation must have zero Hopf invariant.  The original proof uses
differential forms, but this proof is hard to generalize to maps $S^m \rightarrow S^{m-1}$ with $m \ge 4$,
because the relevant homotopy invariant cannot be written in terms of differential forms.

We describe an alternate proof that a map $F: S^3 \rightarrow S^2$ with tiny 2-dilation has zero Hopf invariant.  The
Hopf invariant is closely related to cup products which are closely related to Cartesian products.  We consider the
Cartesian product $F \times F: S^3 \times S^3 \rightarrow S^2 \times S^2$.  We can read the Hopf invariant from $F \times F$ as follows.  
There is a 4-chain $Z_0$ in $S^3 \times S^3$ with the interesting property that $F \times F(Z_0)$ is always a cycle.
To see how this may happen, notice that $F \times F$ maps $\Diag(S^3)$ to $\Diag(S^2)$.  The diagonal $\Diag(S^3)$ is one
of the components of $\partial Z_0$ and $F \times F$ collapses it to something 2-dimensional.  The same happens to the other components of $\partial Z_0$, and so $F \times F$ seals the boundary closed making a cycle.  We let $Z(F)$ denote the cycle
$F \times F(Z_0)$.  The homology class of $Z(F)$ is the Hopf invariant of $F$.  Now it is easy to check that if $\Dil_2(F)$ is tiny,
then $\Dil_4(F \times F)$ is also tiny, and so the cycle $Z(F)$ has tiny 4-volume, and so it is homologically trivial in $S^2 \times S^2$.

This approach generalizes to maps $S^m \rightarrow S^{m-1}$, or more generally to maps $S^m \rightarrow S^n$ when the cell complex has a non-trivial Steenrod square.  The Steenrod squares are closely connected with the following twisted product.  For any space $X$, consider the product $S^i \times X \times X$, and consider the involution $I(\theta, x_1, x_2) = (- \theta, x_2, x_1)$.  The involution acts freely, and the quotient is denoted $\Gamma_i X$.  The space $\Gamma_i X$ is a fiber bundle with fiber $X \times X$ and base $\mathbb{RP}^i$.  The construction is also functorial, and so our map $F: S^m \rightarrow S^n$ induces a map $\Gamma_i F: \Gamma_i S^m \rightarrow \Gamma_i S^n$.  (Eventually we will choose $i = 2n - m -1$.)  As above, there is a $2n$-chain $Z_0$ in $\Gamma_i S^m$ with the interesting property that $\Gamma_i F(Z_0)$ is always a cycle in $\Gamma_i S^n$.  The $2n$-cycle $Z(F) = \Gamma_i F(Z_0)$ is homologically non-trivial if and only if the cell complex $B^{m+1} \cup_F S^n$ has a non-trivial Steenrod square.

We suppose $k \le (m+1)/2$ and that $\Dil_k(F)$ is tiny.  The $k$-dilation of $F$ gives information about the geometry of the map $\Gamma_i F$.  For some 
small 
values of $k$, $\Dil_{2n} \Gamma_i F$ can be controlled in terms of $\Dil_k F$.  For example, if $F: S^m \rightarrow S^{m-1}$ has sufficiently 
small 2-dilation, then the volume of $Z(F)$ is small, so $Z(F)$ is null-homologous, and so $B^{m+1} \cup_F S^n$ has trivial Steenrod squares.  
The same argument works if $F: S^m \rightarrow S^{m-3}$ has sufficiently small 4-dilation.
However, for most values of $k$ in the range $k \le (m+1)/2$, $\Dil_k F$ does not control $\Dil_{2n} \Gamma_i F$.
The $k$-dilation of $F$ may be arbitrarily small and yet $\Dil_{2n} \Gamma_i F$ and $\Vol Z(F)$ may be arbitrarily large.

The construction of $\Gamma_i F$ does not treat all the directions equally.  The double cover of $\Gamma_i F$ is defined to be
$id \times F \times F: S^i \times S^m \times S^m \rightarrow S^i \times S^n \times S^n$, where $id$ is the identity map.  We can see that the different factors are not treated in the same way.  Because the situation is not isotropic, we can make a more refined estimate that treats different directions differently.  If $k \le (m+1)/2$ and $\Dil_k(F)$ is tiny, then the tangent plane of $Z(F)$ is usually constrained to lie in certain `good' directions.  
We will divide $Z(F)$ into a good
set, where its tangent plane is constrained to lie in `good' directions, and a bad set of small volume where the tangent plane may go in any direction.

It takes a little time to say in detail what are the `good' directions.  To give some sense, we mention that
on the good set, the tangent plane of $Z(F)$ must be nearly orthogonal to the $S^n \times S^n$ direction.  If we look at the double cover of $Z(F)$ in $S^i \times S^n \times S^n$ and project it to the $S^n \times S^n$ factor, the volume of the projection is tiny.  This implies that the double cover of $Z(F)$ is homologically trivial.  But it may happen that the double cover of a cycle is homologically trivial and the cycle is not.

As a warmup, we first consider the case of a cycle $X$ in $\Gamma_i S^n$ whose tangent plane lies in the good directions at every point.  This condition forces every loop in $X$ to be homotopically trivial in $\Gamma_i S^n$.  
This is the key fact about the good directions for tangent planes.  Therefore, the double cover of $X$ consists of two disjoint cycles $X_1$ and $X_2$ in $S^i \times S^n \times S^n$.  Because of the control on the tangent direction of $X$, each of these cycles is trivial, and so $X$ is homologically trivial.

But the proof of our theorem is more complicated because the cycle $Z(F)$ has a bad set of small volume where the tangent plane
may face in any direction.  As a result, the cycle $Z(F)$ may contain non-trivial loops - we only know that each non-trivial loop must go through this small bad set.  The double cover of $Z(F)$ consists of two large pieces connected by a small bridge - the bridge being the double cover of the bad set.  We will cut out the small bridge and replace it by two small caps.  After this surgery, we get two homologically trivial cycles in $S^i \times S^n \times S^n$.  Projecting one of them down to $\Gamma_i S^n$, we get a trivial cycle which is very close to $Z(F)$.  It consists of $Z(F)$ minus the bad set and plus the image of the cap.  As long as the cap is small, it follows that $Z(F)$ is homologically trivial.

\subsection{On the proof of the h-principle} \label{introhprin}

We will give two constructions of maps with small $k$-dilation.  The first construction uses suspensions.  The construction is short, and we give it
early in the paper.  The second construction is used to prove the h-principle for $k$-dilation.  That construction is longer, and we describe
the main idea here.

The following simple observation is helpful to construct maps with small $k$-dilation.  If a map sends a region of the domain into a $(k-1)$-dimensional
polyhedron, then on that region the map has $k$-dilation zero.  With this observation in mind, we sketch the construction in the h-principle.

We begin with a map $F_0: S^m \rightarrow S^n$.  We consider a fine triangulation of
the target $S^n$.  We define $U \subset S^m$ so that $F_0$ maps the complement of $U$ into the $(k-1)$-skeleton of the fine triangulation.
We know that $F_0$ automatically has $(k-1)$-dilation zero on $U^c$, and we only have to worry about $U$.  Of course the $k$-dilation of $F_0$ on
$U$ is probably not tiny, and we have to modify $F_0$.  

We will carefully construct a degree 1 map $G: S^m \rightarrow S^m$, 
and the map $F$ will be $F_0 \circ G$.  Since $G$ is homotopic to
the identity, $F$ will be homotopic to $F_0$.  We let $T$ be $G^{-1}(U)$.  The map $G$ sends $T$ to $U$ and $T^c$ to $U^c$.  Therefore, the map
$F$ sends $T^c$ into the $(k-1)$-skeleton of the fine triangulation.  By our simple observation, the k-dilation of $F$ on $T^c$ is automatically zero.  In our
construction, the restriction of $F$ to $T^c$ will be very complicated and may have a huge $(k-1)$-dilation, but we don't have to keep track of it.
We only have to worry about the $k$-dilation of $F$ on $T$.  In fact, we will be able to arrange that the Lipschitz constant of $F$ on $T$ 
is very small.

Telling a minor white lie, the set $U$ is a fiber bundle, where the base space is $S^n$ minus the $(k-1)$-skeleton of our triangulation, and
each fiber is a manifold of dimension $m - n > 0$.  We call directions tangent to the fiber `vertical', and the perpendicular directions horizontal.
The map $G$ will be a diffeomorphism from $T$ to $U$, so $T$ also has this fiber bundle structure.  The map $G$ will strongly shrink all the horizontal
directions and strongly stretch all the vertical directions.  Since $F_0$ annihilates all the vertical directions, the composition 
$G \circ F_0$ will have a small Lipschitz constant.

It's actually easier to imagine $G^{-1}$ going from $U$ to $T$ than to imagine $G$.  We take a second to switch our perspective.  We want an embedding
$G^{-1}$ from $U$ into $S^m$ which stretches all the horizontal directions and shrinks all the vertical directions.  We build this embedding by isotoping
the identity map.

To get a sense of how to do the isotopy, we need to describe $U$ a little bit more.  
The complement of the $(k-1)$-skeleton of our triangulation of $S^n$ is a small neighborhood of the dual 
$n-k$-dimensional polyhedron.  Now $U$ is a small neighborhood of the inverse image of this polyhedron, so $U$ is a small neighborhood of a polyhedron
of dimension $p = m - k$.  The condition $k > (m+1)/2$ is equivalent to $p < (m-1)/2$.  

If $p < (m-1)/2$, then any p-dimensional polyhedron 
embeds in $S^m$, and any two embeddings are isotopic.  These facts follow from a standard general position argument.  
Informally, we may say that a p-dimensional polyhedron in $S^m$ may
be isotoped rather freely -- there is no obstruction to isotoping it into any position that we like.  This intuition extends to our set $U$, which
is a small neighborhood of a polyhedron of dimension $p < (m-1)/2$.  We isotope $U$ so that the horizontal directions all expand and the vertical
directions all contract.  Because of the condition $p < (m-1)/2$, there is no obstruction that prevents us from isotoping $U$ in this way.  The fibers
have to shrink, but everything can slide out of their way to allow them to shrink.  As they shrink, it creates space which allows the horizontal
directions to become thicker.

To make this argument precise and rigorous, we use the h-principle for immersions and general position arguments.  The h-principle for
immersions allows us to build immersions of $U$ rather freely.  Using the condition $p < (m-1)/2$, we
can modify these immersions to embeddings.  This last argument is a quantitative version of the general position argument mentioned in
the last paragraph.

\subsection{Outline of the paper}

Now we give an outline of the paper.  We describe what material appears in each section, and also what kinds of tools and background each
section uses.  

In Section 2, we give background about $k$-dilation.  This section contains all the facts about $k$-dilation that we need
in the paper.

In Section 3, we construct homotopically non-trivial maps with small $k$-dilation using suspensions.  With this method, 
we can construct the maps in the main theorem for $m \le 7$, and we can give some other examples in classes that are suspensions.  

The next large chunk of the paper is concerned with proving the lower bound for $k$-dilation for homotopy classes that induce a non-trivial
Steenrod square.  This argument follows the outline in Section \ref{introlowerbound}.  The argument involves a combination of topology
and isoperimetric-type estimates.  On the topology side, we need to use some facts about Steenrod squares.  On the geometry side,
we need to use the Federer-Fleming deformation theorem in many places.  We will also introduce some small variations on the deformation
theorem.  Here is a more detailed outline.
In Section 4, we prove the Hopf invariant inequality using the cycle $Z(F)$ as described above.  In Section 5, we
generalize this setup with the cycle $Z(F)$ to all homotopy classes with a non-trivial Steenrod square.  This section requires some
background on Steenrod squares.  The material we need is in Section 4L of Hatcher's book {\it Algebraic Topology}, \cite{H}.
In Section 6, we describe
how the geometry of $\Gamma_i F$ is controlled by $\Dil_k F$.  We define and describe `directed volumes' of $Z(F)$, and describe the
good and bad directions.  We see that the volume of $Z(F)$ in bad directions is controlled by $\Dil_k F$.  In Section 7, we
discuss the surgery where the bad part of the double cover of $Z(F)$ is cut out and replaced by small caps.  We see that the estimates
that we need about the size of the caps follow from an inequality about chains in Euclidean space called the perpendicular pair inequality.  
The perpendicular
pair inequality is a geometric inequality in the same area as the Federer-Fleming isoperimetric inequality (the isoperimetric
inequality for cycles of arbitrary codimension).  
In Section 8, we prove the perpendicular pair inequality.  This section heavily uses the deformation theorem.
There is a review of the deformation theorem in an appendix at the end of the paper.

In Section 9, we discuss thick tubes and prove the estimates about twisted tubes in Section \ref{introthicktubes}.

In Sections 10-11, we prove the h-principle for $k$-dilation.  As a special case, this gives all the maps in the main theorem.  This argument
follows the outline in Section \ref{introhprin}.  In Section 10, we prove a quantitative embedding result using a general position argument.
This result allows us to isotope the set $U$ rather freely, as described above.  Section 10 uses the h-principle for immersions.  The
material we need is in the book {\it Introduction to the H-Principle} by Eliashberg and Mishachev, \cite{EM}.  In Section 11, we prove
the h-principle for $k$-dilation.

This finishes the proofs of all the results in the paper.  Section 12 gives more background about $k$-dilation.  
It discusses other results in the literature that are generally relevant to the paper.  Section 13 discusses open problems.  
Section 14 contains several appendices.

\vskip5pt 

{\bf Acknowledgements.} I would like to thank Tom Mrowka, Brian White, Misha Gromov, and Robert Young for helpful conversations
related to the paper.

\section{Basic facts about $k$-dilation} \label{basicfacts}

In this section, we recall the definition of $k$-dilation, some different ways of looking at it, and some of its immediate consequences.
We hope to give some feel for the condition $\Dil_k F \le \lambda$.  Readers who are already familiar with $k$-dilation can skip this
section.

If $F$ is a $C^1$ map from $(M^m,g)$ to $(N^n,h)$, then we say that $F$ has $k$-dilation $\le \lambda$ if, for each
$k$-dimensional submanifold $\Sigma^k \subset M$,

$$ \Vol_k (F(\Sigma)) \le \lambda \Vol_k(\Sigma). $$

The $k$-dilation of $F$ can be described in terms of the derivative $dF$.  Suppose that $y = F(x)$, and we consider
the derivative $dF_x: T_x M \rightarrow T_y N$.  We consider the singular values of the derivative $dF_x$.  The singular value
decomposition says that we can find an orthonormal frame $v_1, ..., v_m$ for $T_x M$ and an orthonormal frame $w_1, ..., w_n$ for
$T_y N$ so that $dF_x (v_j) = s_j w_j$.  Here $s_j \ge 0$ are the singular values of $dF_x$.  (If $m > n$, then $s_j = 0$ for all $j > n$.)  We 
organize the singular values so that $s_1 \ge s_2 \ge s_3 \ge ...$.  The map $dF_x$ maps the unit ball in $T_x M$ to an ellipsoid in $T_y N$ with
principal radii $s_1 \ge s_2 \ge ...$.  The norm of $dF_x$ is $s_1$, and the norms of the exterior products $\Lambda^k dF_x$ are also
described by the singular values as follows.

\begin{prop} If $dF_x$ has singular values $s_1 \ge s_2 \ge ...$, then the norm of $\Lambda^k dF_x$ is $\prod_{i=1}^k s_i$.
\end{prop}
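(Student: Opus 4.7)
The plan is to exploit the singular value decomposition to diagonalize $\Lambda^k dF_x$ with respect to an orthonormal wedge basis. First, I would extend $w_1,\ldots,w_n$ (padding with zero singular values in the case $m > n$) so that both $\{v_i\}$ and $\{w_j\}$ are indexed consistently, and recall that the standard basis of $\Lambda^k T_x M$ consisting of wedge products $v_{i_1} \wedge \cdots \wedge v_{i_k}$ over multi-indices $i_1 < \cdots < i_k$ is orthonormal with respect to the inner product induced on $\Lambda^k T_x M$.

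The central computation is immediate from functoriality of the wedge product: for each increasing multi-index $I = (i_1,\ldots,i_k)$,
\[
 \Lambda^k dF_x (v_{i_1} \wedge \cdots \wedge v_{i_k}) = (s_{i_1} w_{i_1}) \wedge \cdots \wedge (s_{i_k} w_{i_k}) = \Big(\prod_{j=1}^k s_{i_j}\Big)\, w_{i_1} \wedge \cdots \wedge w_{i_k}.
\]
Since $\{w_{i_1} \wedge \cdots \wedge w_{i_k}\}$ is likewise an orthonormal family in $\Lambda^k T_y N$ (or zero when some $i_j > n$), this shows that $\Lambda^k dF_x$ is already in singular value form with singular values $\prod_{j=1}^k s_{i_j}$, one for each multi-index $I$.

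The operator norm on $\Lambda^k T_x M$ is therefore the largest of these products. Because the $s_i$ are ordered $s_1 \ge s_2 \ge \cdots$, the maximum is achieved by taking $I = (1,2,\ldots,k)$, yielding $\prod_{i=1}^k s_i$ as claimed. To match this with the $k$-dilation interpretation, which really only cares about simple unit $k$-vectors, I would note that the extremal $k$-vector $v_1 \wedge \cdots \wedge v_k$ is itself simple, so the supremum over simple unit $k$-vectors equals the full operator norm. There is no real obstacle here; the only thing one has to be careful about is the degenerate case where $k > n$ or some $s_i$ vanish, in which case the formula still holds with the convention that the product equals zero. This proposition will then feed directly into the pointwise description of $k$-dilation as $\Dil_k(F) = \sup_x \prod_{i=1}^k s_i(dF_x)$.
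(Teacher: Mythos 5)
Your proof is correct and follows essentially the same route as the paper: diagonalize $\Lambda^k dF_x$ on the orthonormal wedge basis induced by the singular value decomposition of $dF_x$, observe that the images are orthogonal with lengths $\prod s_{i_j}$, and read off that the largest singular value is $\prod_{i=1}^k s_i$. The extra remarks about simple $k$-vectors and the degenerate case $k > n$ are harmless additions not present in the paper's argument.
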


\begin{proof} Recall that $v_i$ are an orthonormal basis of $T_x M$ and $w_i$ an orthonormal basis of $T_y N$ with $dF_x v_i = s_i w_i$.  
There is an orthonormal basis of $\Lambda^k T_x M$ given by the wedge products $v_{i_1} \wedge ... \wedge v_{i_k}$, with $i_1 < ... < i_k$.  
We compute $\Lambda^k dF_x$ in this basis.

$$\Lambda^k dF_x (v_{i_1} \wedge ... \wedge v_{i_k}) = s_{i_1} ... s_{i_k} w_{i_1} \wedge ... \wedge w_{i_k}. $$

Since $\{ w_{i_1} \wedge ... \wedge w_{i_k} \}$ is an orthonormal basis of $\Lambda^k T_y N$, we see that the singular values of 
$\Lambda^k dF_x$ are just the products $s_{i_1} ... s_{i_k}$ with $i_1 < ... < i_k$.  
In particular, we see that the operator norm $| \Lambda^k dF_x | = s_1 ... s_k$. \end{proof}

Now we can write the $k$-dilation in terms of $\Lambda^k dF$ and/or the singular values of the derivative.

\begin{prop} If $F: (M,g) \rightarrow (N,h)$ is a $C^1$ map, then the $k$-dilation of $F$ is equal to $\sup_{x \in M} | \Lambda^k dF_x|$.  If $s_1(x) \ge s_2(x) \ge ... $ are the singular values of $dF_x$, then the $k$-dilation of $F$ is also equal to $\sup_{x \in M} s_1(x) ... s_k(x)$.  
\end{prop}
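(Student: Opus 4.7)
The claim has two equalities. The second equality $\sup_x |\Lambda^k dF_x| = \sup_x s_1(x)\cdots s_k(x)$ is immediate from the preceding proposition, so the real content is showing $\Dil_k(F) = \sup_x |\Lambda^k dF_x|$. The plan is to prove the two inequalities separately via a standard Jacobian/area-formula argument on one side and a small-disk blowup on the other.

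For the upper bound $\Dil_k(F) \le \sup_x |\Lambda^k dF_x|$, I would fix a $C^1$ submanifold $\Sigma^k \subset M$ and express $\Vol_k(F(\Sigma))$ by integrating the Jacobian of $F|_\Sigma$ over $\Sigma$. Pointwise, the Jacobian $J_k(F|_\Sigma)(x)$ is the norm of $\Lambda^k(dF_x|_{T_x\Sigma})$, which is the norm of $\Lambda^k dF_x$ applied to the unit simple $k$-vector representing $T_x\Sigma$. Since operator norms on simple $k$-vectors are bounded by the operator norm on all $k$-vectors, we have $J_k(F|_\Sigma)(x) \le |\Lambda^k dF_x|$. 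Integrating and using the area formula (which gives $\Vol_k(F(\Sigma)) \le \int_\Sigma J_k(F|_\Sigma)$, with equality if $F|_\Sigma$ is injective but only $\le$ in general) yields $\Vol_k(F(\Sigma)) \le \sup_x |\Lambda^k dF_x| \cdot \Vol_k(\Sigma)$.

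For the lower bound $\Dil_k(F) \ge \sup_x |\Lambda^k dF_x|$, I would fix any $x_0 \in M$ and use the singular value decomposition at $x_0$: choose orthonormal $v_1,\dots,v_m$ and $w_1,\dots,w_n$ with $dF_{x_0}(v_j) = s_j(x_0) w_j$, so that $|\Lambda^k dF_{x_0}| = s_1(x_0)\cdots s_k(x_0)$. Let $P \subset T_{x_0}M$ be the $k$-plane spanned by $v_1,\dots,v_k$, and for small $r>0$ let $\Sigma_r$ be the image under $\exp_{x_0}$ of the ball of radius $r$ in $P$. As $r \to 0$, $r^{-k}\Vol_k(\Sigma_r)$ tends to the volume of the unit Euclidean $k$-ball, and $r^{-k}\Vol_k(F(\Sigma_r))$ tends to the same constant times $s_1(x_0)\cdots s_k(x_0)$, because $F$ is approximated to first order by its derivative and $dF_{x_0}(P)$ is spanned by $w_1,\dots,w_k$ with stretch factors $s_1(x_0),\dots,s_k(x_0)$. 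Hence the ratio $\Vol_k(F(\Sigma_r))/\Vol_k(\Sigma_r) \to |\Lambda^k dF_{x_0}|$, forcing $\Dil_k(F) \ge |\Lambda^k dF_{x_0}|$. Taking the supremum over $x_0$ finishes the proof.

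There is no real obstacle; the only mildly delicate point is invoking the correct form of the area formula (since $F|_\Sigma$ need not be injective on large $\Sigma$, one only gets an inequality, which is exactly what we want). The blowup argument for the lower bound works cleanly provided $\exp_{x_0}$ is used to move the plane $P$ into $M$; any smooth chart centered at $x_0$ does equally well.
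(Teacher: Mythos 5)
Your proposal is correct and follows essentially the same approach as the paper: a small-disk blowup tangent to the top singular directions for the lower bound, and integrating the pointwise Jacobian (i.e.\ $|\Lambda^k dF_x V_\Sigma(x)|$) over $\Sigma$ for the upper bound. You are slightly more careful than the paper on one point --- you correctly note that for a non-injective $F|_\Sigma$ the area formula gives $\Vol_k(F(\Sigma)) \le \int_\Sigma J_k(F|_\Sigma)$ rather than equality, which is exactly the direction needed.
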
 

\begin{proof} Fix $x \in M$, and suppose as above that $dF_x v_i = s_i w_i$.  
If $\Sigma$ is a small $k$-dimensional disk centered at $x$ with tangent plane spanned by $v_1, ..., v_k$, then $\Vol_k F(\Sigma)$ is
approximately $(\prod_{j=1}^k s_j) \Vol_k(\Sigma)$.  Therefore, $\Dil_k(F) \ge \prod_{j=1}^k s_j$.

If $\Sigma$ is an oriented $k$-dimensional manifold, then at each point $x \in \Sigma$, the tangent space of $\Sigma$ defines a unit $k$-vector $V_\Sigma(x) \in \Lambda^k T_x M$.  (Any $k$-dimensional manifold can be written as a union of oriented manifolds, so we can restrict attention to the case of oriented manifolds $\Sigma$.)  The volume of $F(\Sigma)$ can then be described as

$$\Vol_k F(\Sigma) = \int_{\Sigma} | \Lambda^k dF_x V_\Sigma(x) | dvol(x) \le \sup_x | \Lambda^k dF_x | \Vol_k \Sigma. $$

Therefore, $\Dil_k F \le \sup_{x \in M} | \Lambda^k dF_x| = \sup_{x \in M} \prod_{j=1}^k s_j(x)$. \end{proof}

With these results, we can give a little discussion of the condition $\Dil_k F \le 1$.  A map with small 2-dilation can have an arbitrarily 
large value of $s_1(x)$ as long as $s_2(x)$ is small enough to make $s_1 s_2 \le 1$.  So the derivative $dF_x$ can stretch in one direction
arbitrarily strongly as long as it contracts in the perpendicular directions enough to compensate.  Similarly, if $\Dil_k F \le 1$, then the
derivative $dF_x$ can stretch in (k-1) directions arbitrarily strongly, as long as it contracts in all the perpendicular directions enough
to compensate.  Also, $dF_x$ may behave differently for different $x$.  For example, if $\Dil_k(F) \le 1$, there may be some $x$ where
$dF_x$ is roughly an isometry and other $x$ where $dF_x$ stretches a lot in one direction and contracts in the others.  
The condition $\Dil_k(F) \le 1$ gives local
information about $dF_x$ for each $x$, and it's not easy to understand what kinds of global behavior are allowed by this condition.  To help
get a feel for this, consider the following example.

\begin{prop} There are surjective maps from the unit 3-ball to the unit 2-ball with arbitrarily small 2-dilation.
\end{prop}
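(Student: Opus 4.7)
The key observation is that if a $C^1$ map has derivative of rank at most one everywhere, its 2-dilation vanishes. Such a map cannot be surjective onto $B^2$ (its image would have measure zero by Sard), but we can get arbitrarily close: build a surjective map that is a small transverse perturbation of an approximately rank-one ``snake'' map, so $\Dil_2$ stays small.

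Fix a small parameter $\delta > 0$ and let $T = [0,L] \times [-\delta,\delta]^2$ with $L \sim \delta^{-2}$, so $\Vol(T) \sim 1 \sim \Vol(B^3)$. Construct an approximately isometric smooth embedding $\psi \colon T \hookrightarrow B^3$ whose image fills $B^3$ up to a set of negligible measure; do this by threading a ``snake'' back and forth through a $\delta^{-1} \times \delta^{-1}$ cross-sectional array of parallel segments of length $\sim 1$, joined by smoothly rounded turns. Choose a constant-speed smooth curve $f\colon [0,L] \to B^2$ with $|f'| \equiv 1$ whose image is $\delta^2$-dense in $B^2$; this is feasible because the available length $L \sim \delta^{-2}$ exactly matches the threshold for $\delta^2$-density.

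Define $F$ on the tube by
\[
F(\psi(t,u,v)) = f(t) + \delta(u,v),
\]
and extend $F$ to the small complement $B^3 \setminus \psi(T)$ by a smooth bump-function interpolation. The image covers the $\delta^2$-neighborhood of $\mathrm{Image}(f)$, which is all of $B^2$. Working in the isometric chart $(t,u,v)$, the derivative is the $2 \times 3$ matrix
\[
d(F \circ \psi) = \begin{pmatrix} f_1'(t) & \delta & 0 \\ f_2'(t) & 0 & \delta \end{pmatrix},
\]
and a direct calculation gives $\det\bigl(d(F\circ\psi)\,d(F\circ\psi)^T\bigr) = \delta^2(|f'|^2 + \delta^2) = \delta^2(1+\delta^2)$, so by the preceding proposition the two singular values have product $\delta\sqrt{1+\delta^2} \approx \delta$. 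Hence $\Dil_2 F \lesssim \delta$ on the tube.

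The main technical points are (i) the turns of the snake, where $\psi$ fails to be isometric, but which can be made to have bounded anisotropy so their contribution to $\Dil_2$ is still $O(\delta)$; and (ii) the extension of $F$ across the small complement $B^3 \setminus \psi(T)$, which by a smooth bump construction contributes only $O(\delta)$ to the 2-dilation as well. Letting $\delta \to 0$ produces a sequence of surjective $C^1$ maps $F_\delta \colon B^3 \to B^2$ with $\Dil_2 F_\delta \to 0$.
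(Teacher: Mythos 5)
Your work inside the tube is fine: the computation of $\det\bigl(d(F\circ\psi)\,d(F\circ\psi)^T\bigr) = \delta^2(1+\delta^2)$ is correct, the singular-value product is $\delta\sqrt{1+\delta^2}$, and the tube image does cover $B^2$ once $f$ is $\delta^2$-dense. But point (ii) --- the claim that the extension across $B^3 \setminus \psi(T)$ can be made to contribute only $O(\delta)$ to the 2-dilation by a ``smooth bump construction'' --- is a genuine gap, and I do not see how to fill it within your setup.

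Here is the obstruction. For the tube to fill $B^3$ up to negligible measure, the gaps between adjacent strands of the snake must have width $\eta \ll \delta$. Two strands that are adjacent in $B^3$ (separated by such a gap) typically have parameters $t_1, t_2$ with $|t_1 - t_2|$ large --- of order $1$ for consecutive strands at the far ends from the turn, or of order $\delta^{-1}$ for strands that are adjacent across rows of the $\delta^{-1} \times \delta^{-1}$ array. On the two facing walls of such a gap, $F$ takes values near $f(t_1)$ and $f(t_2)$, which can be distance $\sim 1$ apart in $B^2$. Any continuous extension across the gap then has a singular value $\gtrsim 1/\eta$ in the direction transverse to the gap, while the singular value in the direction along the strands is $\gtrsim 1$ (since both boundary traces of $F$ move at unit speed as $|f'| \equiv 1$). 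The 2-dilation at such a gap is therefore $\gtrsim 1/\eta \gg 1$, blowing up as the strands are packed more tightly. Because $\Dil_2$ is a supremum rather than an average, even one bad gap is fatal. To avoid this you would need the snake ordering in $B^3$ and the curve $f$ in $B^2$ to be ``compatible'', so that geometrically adjacent strands always map to nearby points of $f$; but this amounts to embedding the $\delta^{-1}\times\delta^{-1}$ grid adjacency graph into a length-$\delta^{-2}$ path with bounded distortion, which is impossible. You would have to redesign both the snake ordering and $f$ jointly, and you have not done so.

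The paper sidesteps the interface problem entirely with a different device: the scaled building block in each small $r$-ball collapses the boundary of that ball to a \emph{single point} of $B^2$. The extension to the complement of the balls therefore only has to match finitely many point-values, which one does by mapping the complement into a 1-dimensional tree in $B^2$ connecting those points. A map into a 1-dimensional set has 2-dilation identically zero, so there is nothing to estimate. That collapsing step is the essential idea missing from your construction.
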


\begin{proof} (sketch) Fix $\epsilon > 0$.  We want to find a surjective map from $B^3$ to $B^2$ with 2-dilation $\le \epsilon$.
It's easy to find a (linear) map with 2-dilation $\le \epsilon$ onto $B^2(\epsilon^{1/2})$.  By modifying this map a little, 
we can make a map $F_0$ with 2-dilation $\le \epsilon$ onto $B^2(\epsilon^{1/2}/10)$ and which maps $\partial B^3$ to a point.

We can improve the situation as follows.  Let $r$ be a small radius which we choose later.  Take $r^{-3}$ disjoint $r$-balls in
the unit 3-ball.  Using a rescaling of the map $F_0$, we can define $F$ on each $r$-ball so that its image covers a disk of
radius $r' = (1/10) \epsilon^{1/2} r$, and so that $F$ collapses the boundary of each $r$-ball to a point.  The image of $F$
now includes $r^{-3}$ discs in $B^2$ of radius $r' \sim \epsilon^{1/2} r$.  Taking $r$ sufficiently small compared to $\epsilon$, 
we can cover all of $B^2$ with these disks.

So far, we have only defined $F$ on the union of the disjoint balls of radius $r$.  Now we have to extend $F$ to the region between them
in a way that matches with $F$ on the boundary.  At this point, it's helpful to know that $F$ mapped the boundary of each ball to a point.
We choose a 1-dimensional tree in $B^2$ that includes all of these points, and we extend $F$ to a map from the complement of the balls
to the tree.  Since the tree is contractible, such an extension exists.  And since the image is 1-dimensional, the 2-dilation of the extension
is zero (on the complement of the balls). \end{proof}

Let's make some comments on this construction.  Notice that the singular values of $dF_x$ behave differently at different points.  At the
points inside the balls, we have singular values $s_1 \sim s_2 \sim \epsilon^{1/2}$.  At the points between the balls, we have singular
value $s_1$ very large and $s_2 = 0$.  The key to the construction is to mix these two behaviors.  For context, a linear map $L: B^3 \rightarrow
B^2$ with 2-dilation $\epsilon$ cannot be surjective - the image will have area $\lesssim \epsilon$.  A non-linear map
$F: B^3 \rightarrow B^2$ with tiny 2-dilation can be surjective, and one crucial ingredient is that the derivative of the map should be
wildly different at different places.  (This example can also be made stronger.  
In \cite{K}, Kaufman gives an example of a $C^1$ surjective map from $B^3$ to $B^2$ with 2-dilation equal to zero.)

We've just seen that (non-linear) maps with small $k$-dilation can do things that linear maps with small $k$-dilation can't.  There are also
some limits to this phenomenon.  For example, if $F$ is a $C^1$ map with $\Dil_k F > 1$, then there is no sequence of maps with $\Dil_k F_j \le 1$
which converges to $F$ in $C^0$.  In particular, (non-linear) maps with $k$-dilation $\le 1$ cannot imitate a linear map with $k$-dilation $> 1$.  
In Section \ref{prevlower}, we prove this result and give some further background on $k$-dilation.

Using the analysis with singular values, we can see how the $k$-dilations for differents values of $k$ are related to each other.

\begin{prop} \label{dilkdill} If $k < l$, then $\Dil_k(F)^{1/k} \ge \Dil_l(F)^{1/l}$.
\end{prop}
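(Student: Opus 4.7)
The plan is to reduce everything to a pointwise statement about the singular values of $dF_x$, using the characterization from the preceding proposition that $\Dil_k(F) = \sup_x s_1(x) \cdots s_k(x)$, where $s_1(x) \ge s_2(x) \ge \cdots$ are the singular values of $dF_x$ at $x$. Once the inequality is proved pointwise, taking the supremum in $x$ and the appropriate roots will give the result directly.

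First I would fix a point $x$ and write $s_i = s_i(x)$. The pointwise claim is that $(s_1 \cdots s_k)^{1/k} \ge (s_1 \cdots s_l)^{1/l}$, which after raising to the power $kl$ is equivalent to the cleaner form
\[
(s_1 \cdots s_k)^{l-k} \ge (s_{k+1} \cdots s_l)^k.
\]
The key observation is that because the singular values are ordered, $s_i \ge s_k$ for all $i \le k$ and $s_j \le s_k$ for all $j > k$. Therefore the left side is at least $s_k^{k(l-k)}$, while the right side is at most $s_k^{k(l-k)}$, and the inequality follows. (Equivalently, one can view both sides as the $k$th and $l$th roots of the geometric mean of the largest $k$ (resp. $l$) singular values and invoke the fact that adding smaller terms into a geometric mean can only decrease it.)

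Once the pointwise inequality is established, I would take the supremum over $x \in M$ on both sides, using that $\sup_x (s_1(x) \cdots s_k(x))^{1/k} = (\sup_x s_1(x) \cdots s_k(x))^{1/k} = \Dil_k(F)^{1/k}$ and similarly for $l$. This gives the claimed inequality.

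There is no real obstacle here; the only mild subtlety is remembering to order the singular values before comparing them, since without that ordering the inequality between geometric means of $k$ and $l$ of the $s_i$'s would fail. Modulo this routine bookkeeping, the proof is a one-line consequence of the singular value formula for $\Dil_k$ derived earlier in the section.
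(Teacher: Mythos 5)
Your proof is correct and takes essentially the same approach as the paper: reduce to the pointwise inequality $(\prod_{j=1}^k s_j(x))^{1/k} \ge (\prod_{j=1}^l s_j(x))^{1/l}$ via the singular-value formula for $\Dil_k$, then take the supremum in $x$. The only difference is that you spell out the elementary verification of the pointwise inequality, which the paper leaves implicit.
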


\begin{proof} For each point $x$, we have $s_1(x) \ge s_2(x) \ge ...$.  Since $k < l$, for each point $x$ we have

$$(\prod_{j=1}^k s_j(x)) ^{1/k} \ge (\prod_{j=1}^l s_j(x) )^{1/l}.$$  

Taking the supremum over $x$, we get $\Dil_k(F)^{1/k} \ge \Dil_l(F)^{1/l}$.

\end{proof}

So the $k$-dilations of $F$ for different $k$ are related
to each other by the following inequalities.

$$ \Dil_1(F) \ge \Dil_2(F)^{1/2} \ge \Dil_3(F)^{1/3} \ge ... $$

\noindent As $k$ increases, the condition $\Dil_k F \le 1$ gets weaker, and 
finding a map with small $k$-dilation gets easier.  

We can also see a connection between $k$-dilation and differential forms.

\begin{prop} \label{kdildiff} If $F: (M,g) \rightarrow (N,h)$ is a $C^1$ map and $\alpha$ is a $k$-form on $N$, then

$$\| F^* \alpha \|_\infty \le \Dil_k(F) \| \alpha \|_\infty. $$

\end{prop}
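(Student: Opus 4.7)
The plan is to reduce the statement to a pointwise inequality and then read off the bound from the preceding analysis of $\Lambda^k dF_x$ via singular values. First I would fix a point $x \in M$ and a unit simple $k$-vector $\xi = v_1 \wedge \cdots \wedge v_k$ in $T_xM$, noting that the comass of the $k$-form $(F^*\alpha)_x$ is realized as the supremum of $|(F^*\alpha)_x(\xi)|$ over such $\xi$; thus it suffices to bound this quantity by $\Dil_k(F) \, \|\alpha\|_\infty$ uniformly in $x$ and $\xi$.

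Next, I would unfold the definition of pullback: $(F^*\alpha)_x(\xi) = \alpha_{F(x)}\bigl(dF_x v_1, \ldots, dF_x v_k\bigr) = \alpha_{F(x)}\bigl(\Lambda^k dF_x \, \xi\bigr)$. The image $\Lambda^k dF_x \, \xi$ is itself a simple $k$-vector whose mass is the $k$-volume of the parallelepiped spanned by $dF_x v_1, \ldots, dF_x v_k$. By the comass–mass duality for simple $k$-vectors, this gives the bound $|(F^*\alpha)_x(\xi)| \le \|\alpha_{F(x)}\| \cdot |\Lambda^k dF_x \, \xi|$.

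To control the second factor, I would invoke the previous proposition, which identifies the operator norm of $\Lambda^k dF_x$ with the product $s_1(x) \cdots s_k(x)$ of the top $k$ singular values of $dF_x$, and identifies $\Dil_k(F)$ with the supremum of $s_1(x) \cdots s_k(x)$ over $x$. Since $\xi$ is a unit simple $k$-vector, this yields $|\Lambda^k dF_x \, \xi| \le s_1(x) \cdots s_k(x) \le \Dil_k(F)$. Combining the two inequalities and taking the supremum over $x$ and $\xi$ gives the desired conclusion.

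There is no real obstacle here: the only point requiring care is the identification of the mass norm of the simple $k$-vector $\Lambda^k dF_x \, \xi$ with the operator-norm image of $\xi$, which is subsumed by the singular value decomposition already carried out in the previous proposition. The argument is essentially a routine pointwise computation, and the proof should be only a few lines.
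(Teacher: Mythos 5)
Your argument is correct and follows essentially the same route as the paper: both reduce the claim to the pointwise operator-norm bound $|\Lambda^k dF_x| = s_1(x)\cdots s_k(x) \le \Dil_k(F)$ established just before. The only cosmetic difference is that the paper expresses the pullback as $\Lambda^k dF_x^*(\alpha_y)$ and uses that the adjoint has the same (Euclidean) operator norm, whereas you test against simple unit $k$-vectors and invoke comass; either phrasing collapses to the same estimate.
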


\begin{proof} For any point $x \in M$, let $y = F(x)$.  Then $(F^* \alpha)_x = \Lambda^k dF_x^* (\alpha_y)$. The map $\Lambda^k dF_x^*$ is the 
adjoint of the map $\Lambda^k dF_x: \Lambda^k T_x M \rightarrow
\Lambda^k T_y N$.  Therefore, the operator norm $| \Lambda^k dF_x^* |$ is equal to the operator norm $|\Lambda^k dF_x| \le
\Dil_k(F)$.  Hence $|(F^* \alpha)_x| \le \Dil_k(F) |\alpha_y|$.  \end{proof}

Using this proposition, we can easily bound the degree of a map in terms of its $k$-dilations.  Recall that $S^m$ denotes the unit $m$-sphere.

\begin{prop} The degree of a map $F: S^m \rightarrow S^m$ obeys the bound $| \Deg F | \le \Dil_m F$.
\end{prop}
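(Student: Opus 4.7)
The plan is to apply Proposition \ref{kdildiff} to the volume form on the target sphere, and then interpret the degree as an integral. Let $\omega$ denote the Riemannian volume form on the unit sphere $S^m$, so that at every point the pointwise comass $|\omega_y| = 1$, i.e.\ $\|\omega\|_\infty = 1$, and $\int_{S^m} \omega = \Vol(S^m)$.

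The first step is to recall the standard formula for the degree of a smooth map between closed oriented manifolds of the same dimension, namely
$$\Deg(F) = \frac{1}{\Vol(S^m)} \int_{S^m} F^* \omega.$$
Next I would apply Proposition \ref{kdildiff} with $k = m$ to obtain the pointwise bound $\|F^* \omega\|_\infty \le \Dil_m(F) \cdot \|\omega\|_\infty = \Dil_m(F)$.

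Combining these two ingredients, I would estimate
$$|\Deg(F)| = \frac{1}{\Vol(S^m)} \left| \int_{S^m} F^* \omega \right| \le \frac{1}{\Vol(S^m)} \int_{S^m} \|F^* \omega\|_\infty \, dvol \le \Dil_m(F).$$
There is no real obstacle here: the entire argument is simply a one-line consequence of Proposition \ref{kdildiff} together with the integral-geometric definition of degree. The only mild subtlety is to ensure we are working with a $C^1$ (or at least almost-everywhere differentiable) representative so that the pullback and the change-of-variables formula are justified, which is already the standing regularity hypothesis on $F$.
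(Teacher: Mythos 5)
Your proof is correct and follows exactly the same route as the paper: express the degree as $(\Vol S^m)^{-1}\int_{S^m} F^*\omega$ with $\omega$ the volume form, apply Proposition \ref{kdildiff} with $k=m$ to bound $\|F^*\omega\|_\infty$ by $\Dil_m(F)$, and integrate. No discrepancy to note.
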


\begin{proof} Let $\omega$ be the volume form of $S^m$.  We write $| \Deg F | = | (\Vol S^m)^{-1} \int_{S^m} F^* \omega | \le \| F^* \omega \|_\infty
\le \Dil_m F$. \end{proof}

\section{Mappings with small $k$-dilation, the suspension method}

In this paper, we will give two different constructions of homotopically non-trivial maps with
arbitrarily small $k$-dilation.  This section contains a construction that is adapted to homotopy classes
that are suspensions.  The construction is short, and so we describe it right away.

This construction isn't strong enough to make all the mappings from the main theorem in the introduction or from
the h-principle for $k$-dilation stated in the introduction,
but it does give many interesting mappings.  It gives homotopically non-trivial maps from $S^4$ to $S^3$ with
arbitrarily small 3-dilation.  More generally it gives the following proposition.

\begin{prop} \label{cod1mapbaby} If $m \ge 4$, and if $k > (2/3) m$, then there are homotopically non-trivial maps from
$S^m$ to $S^{m-1}$ with arbitrarily small $k$-dilation.
\end{prop}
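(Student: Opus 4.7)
The plan is to combine the Bechtluft-Sachs-type proposition from the introduction with iterated suspensions of the Hopf class.

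The key computation I would establish first is the effect of suspension on $k$-dilation. Parametrizing $S^{m'+1}$ as $[-\pi/2,\pi/2]\times S^{m'}/{\sim}$ with metric $dt^2+\cos^2 t\,g_{S^{m'}}$, a singular-value calculation shows that at a point $(p,t)$, the differential $d(\Sigma F)$ has the same horizontal singular values as $dF_p$ together with one vertical singular value equal to $1$. Hence
\[
\Dil_k(\Sigma F) \;=\; \max\bigl(\Dil_k F,\; \Dil_{k-1} F\bigr).
\]
Combined with Proposition~\ref{dilkdill} (so that $\Dil_{k-1} F < \epsilon < 1$ forces $\Dil_k F < \epsilon$), this shows that small $(k-1)$-dilation of $F$ propagates to small $k$-dilation of $\Sigma F$.

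I would then argue by induction on $m$. The base case $(m,k)=(4,3)$: the class $\Sigma\eta\in\pi_4(S^3)$ is a suspension, so the Bechtluft-Sachs-type proposition of the introduction gives a non-trivial $F_4:S^4\to S^3$ with arbitrarily small $3$-dilation. For the inductive step at $(m,k)$ with $k>(2m+1)/3$, the pair $(m-1,k-1)$ satisfies $k-1>2(m-1)/3$, so by induction there is a non-trivial $F_{m-1}:S^{m-1}\to S^{m-2}$ with small $(k-1)$-dilation. Then $\Sigma F_{m-1}:S^m\to S^{m-1}$ has small $k$-dilation by the formula above, and represents the non-trivial class by Freudenthal's suspension theorem (the suspension map $\pi_{m-1}(S^{m-2})\to\pi_m(S^{m-1})$ is surjective for $m=4$ and an isomorphism for $m\ge 5$).

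This covers $k>(2m+1)/3$, leaving the narrow strip $2m/3 < k \le (2m+1)/3$. The only integer pairs in this strip are the extremal points $(m,k)=(3l+1,2l+1)$ for $l\ge 1$; for these the induction fails because $(m-1,k-1)=(3l,2l)$ has $k/m=2/3$ exactly. The case $l=1$ is the base. For $l\ge 2$ I would handle the extremal cases by factoring $\Sigma^{m-3}\eta = \Sigma^{j}(\Sigma^{m-3-j}\eta)$ and applying the Bechtluft-Sachs construction at an intermediate stage of the suspension chain, so that the remaining plain suspensions then propagate smallness down to the desired index $k$. The hardest step in the whole plan is verifying the extremal cases in this way -- making the combinatorics of iterated Bechtluft-Sachs plus plain suspension work out to exactly the threshold $k>2m/3$, rather than the stronger $k>(m+1)/2$ which requires the full h-principle of Sections 10--11.
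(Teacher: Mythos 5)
Your suspension formula $\Dil_k(\Sigma F) = \max(\Dil_k F, \Dil_{k-1} F)$ for the unit-sphere warped-product suspension is correct and a nice observation, and the induction built on it genuinely proves the statement for all $(m,k)$ with $m\ge 4$ and $k>(2m+1)/3$. You have also correctly isolated the leftover strip: the only integer pairs with $\frac{2}{3}m < k \le \frac{2m+1}{3}$ are $(m,k)=(3l+1,2l+1)$, with $l=1$ handled by your base case.

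However, the proposed fix for $l\ge 2$ -- ``factoring $\Sigma^{m-3}\eta$ and applying the Bechtluft-Sachs construction at an intermediate stage'' -- cannot work, and this is a genuine gap. The arithmetic is unforgiving: if you want to suspend $j$ times from a map $S^{m-j}\to S^{m-1-j}$ with small $(k-j)$-dilation to reach small $k$-dilation on $S^m\to S^{m-1}$, the analogue of your threshold at the intermediate stage requires $(k-j) > \frac{2}{3}(m-j)$, which is equivalent to $j < 3k-2m$. At the extremal points $3k=2m+1$ this says $j<1$, forcing $j=0$ -- there is no intermediate stage at which to apply anything. Unit-sphere suspension by itself can at best preserve (never improve) the $k/m$ ratio, because the warping factor $\cos t \le 1$ fixes the aspect ratio between the new $t$-direction and the old sphere. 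The paper gets the sharp threshold by a different mechanism: Proposition~\ref{suspmeth} embeds a \emph{rectangle with tunable anisotropy} $[0,\epsilon]^p \times [0,\epsilon^{-p/(m-p)}]^{m-p}$ into $S^m$ and builds a smash-product map on it; it is the freedom to stretch the second factor as $\epsilon\to 0$ that produces the exponent $k > (q/p)m$. With $p=3,q=2$ (suspend directly from $\eta\in\pi_3(S^2)$, not one step at a time) this gives $k>\frac{2}{3}m$ in a single stroke, covering all cases at once including the extremal ones. So the right repair of your argument is not to iterate, but to replace the entire inductive scaffolding by one application of the sharper quantitative suspension lemma, proved via the anisotropic rectangle.
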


Eventually we will prove that there are homotopically non-trivial maps $S^m \rightarrow S^{m-1}$ with arbitrarily
small $k$-dilation for all $k > (m+1)/2$, which is the sharp range of $k$.  This proposition gives the sharp range of $k$ for
$m = 4, 5, 6,$ or $7$, but not for $m \ge 8$.  

Later in the paper, we will prove the h-principle for $k$-dilation using a different construction, based on general position arguments.  
This second construction is much longer.

The suspension method can sometimes do better than the general position method we use later.  In particular, the suspension
method can allow us to distinguish different
homotopy classes in the same group $\pi_m(S^n)$.  For example,
consider $\pi_7(S^4)$.  Maps from $S^7$ to $S^4$ with non-trivial Hopf invariant must have
4-dilation at least $c > 0$.  The homotopy group $\pi_7(S^4)$ is isomorphic to $\mathbb{Z} \oplus
\mathbb{Z}_{12}$.  The torsion elements are exactly the elements with Hopf invariant zero.  They
are all suspensions of classes in $\pi_6(S^3)$.  The suspension method applies to
all the torsion elements, and it proves that all of them can be realized by maps
with arbitrarily small 4-dilation.  We will discuss a few more examples below.  The information that we
use about the homotopy groups of spheres and the suspension maps between them may be found in \cite{T},
pages 39-42.

Now we give the construction of maps with arbitrarily small $k$-dilation.

\begin{prop} \label{suspmeth} Suppose that $a \in \pi_m(S^n)$ is the suspension of a homotopy class
in $\pi_p(S^q)$.  Then $a$ can be realized by maps with arbitrarily small $k$-dilation
for any $k > (q/ p) m$.
\end{prop}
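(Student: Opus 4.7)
The plan is to prove the proposition by explicit construction of maps $F_\epsilon: S^m \to S^n$ in the class $a$ with $\Dil_k F_\epsilon \to 0$ as $\epsilon \to 0$, for $k > (q/p) m$. Start with a smooth representative $f: S^p \to S^q$ of the class $b \in \pi_p(S^q)$. Using $m - p = n - q$, decompose both spheres as joins $S^m = S^p \star S^{m-p-1}$ and $S^n = S^q \star S^{m-p-1}$, with join coordinates $(x, y, \phi) \in S^p \times S^{m-p-1} \times [0, \pi/2]$, underlying sphere point $(\cos\phi \cdot x, \sin\phi \cdot y)$, and metric $d\phi^2 + \cos^2\phi \, g_{S^p} + \sin^2\phi \, g_{S^{m-p-1}}$. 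The iterated suspension $\Sigma^{m-p}(b) = a$ is then realized by the join map $F_0 = f \star \mathrm{id}: (x, y, \phi) \mapsto (f(x), y, \phi)$, and the singular-value formula of Section \ref{basicfacts} gives the singular values of $dF_0$ at a typical point as: one $1$ in the $\phi$-direction, $s_j(df|_x)$ for $j = 1, \ldots, p$ in the $S^p$-direction (of which at most $q$ are nonzero since $\mathrm{rank}\, df \le q$), and $1$ with multiplicity $m - p - 1$ in the $S^{m-p-1}$-direction. Thus $dF_0$ has rank at most $n = q + (m - p)$ and a kernel of dimension $p - q$; this rank defect is the structural source of the savings below.

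The $k$-dilation of $F_0$ is bounded but not small. To drive it to zero I would deform $F_0$ by precomposing with a degree-one self-map $G_\epsilon: S^m \to S^m$, which is automatically homotopic to the identity, so $F_\epsilon = F_0 \circ G_\epsilon$ remains in the class $a$. The map $G_\epsilon$ is designed so that on the bulk of $S^m$ it collapses into a subset of dimension less than $k$; there $\Lambda^k dG_\epsilon = 0$, hence $\Dil_k F_\epsilon = 0$ by the functorial relation $\Lambda^k(dF_0 \circ dG_\epsilon) = \Lambda^k dF_0 \circ \Lambda^k dG_\epsilon$ and the vanishing of $\Lambda^k$ on rank-deficient operators. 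The degree of $G_\epsilon$ is concentrated on a thin exceptional region, and there the full-rank derivative of $G_\epsilon$ must be balanced against the rank defect of $dF_0$ so that the composite's top-$k$ singular-value product is still small.

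Working through the pointwise singular-value estimate (together with Proposition \ref{dilkdill} to pass between different $k$-dilations), the top-$k$ exterior power of $dF_\epsilon$ is bounded by a positive power of $\epsilon$ precisely when $kp > qm$, i.e.\ $k > (q/p)m$. Heuristically, this threshold is the exchange rate between the $p - q$ ``wasted'' source dimensions of $df$ (which supply the savings through the rank defect, compounded up to $k$ times in the exterior power) and the $m$ total source dimensions of $S^m$ (which impose the cost of maintaining degree one). The proof then reduces to this pointwise singular-value estimate together with routine continuity and homotopy-class verification for the deformation $G_\epsilon \simeq \mathrm{id}$.

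The main obstacle is the explicit construction of $G_\epsilon$ and verification of the sharp exponent. The two delicate issues are (i) arranging the thin degree-carrying region so that its image under $G_\epsilon$ meets $S^m$ in places where the kernel of $dF_0$ produces the required cancellations in the composite exterior power, and (ii) keeping track of the competing contributions so that the exact threshold $kp > qm$ emerges, rather than a weaker bound. A minor technical point is smoothing $G_\epsilon$ near the join-parameter endpoints $\phi = 0$ and $\phi = \pi/2$, where the join parametrization degenerates, so that $F_\epsilon$ is genuinely $C^1$ on all of $S^m$ and the singular-value computation is valid everywhere.
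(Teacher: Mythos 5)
Your plan diverges from the paper's argument at the very first step, and the place where it diverges is also where the gap lives.

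The paper does not start from the join map $f \star \mathrm{id}$ and then try to deform it by a degree-one self-map. Instead it builds the representative from scratch: it bilipschitz-embeds an anisotropic rectangle $R$ with dimensions $[0,\epsilon]^p \times [0,\epsilon^{-p/(m-p)}]^{m-p}$ into $S^m$ (the rectangle has unit volume, which is why it embeds with controlled bilipschitz constant), puts the product map $(f_1 \circ \text{rescale}) \times (f_2 \circ \text{rescale})$ on $R$, follows it with a smash $S^q \times S^{m-p} \to S^n$, and sends $S^m \setminus R$ to the basepoint. The $k$-dilation bound is then a one-line singular-value count: $q$ singular values $\lesssim \epsilon^{-1}$ and $m-p$ singular values $\lesssim \epsilon^{p/(m-p)}$, so for $k > q$ one gets $\Dil_k \lesssim \epsilon^{-q} \cdot \epsilon^{(k-q) p/(m-p)}$, whose exponent is positive exactly when $k > (q/p)m$. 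No collapsing self-map, no general-position argument, no interaction between the kernel of $dF_0$ and the image of $dG_\epsilon$ needs to be controlled — the anisotropy is baked into the aspect ratio of $R$ and the estimate falls out immediately.

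Your proposal, by contrast, is structurally a precomposition argument of the kind the paper reserves for the much harder h-principle in Sections 10--11, and it is incomplete precisely at the step that contains all the content. You write that "the main obstacle is the explicit construction of $G_\epsilon$ and verification of the sharp exponent," and you correctly flag that one must (i) arrange the thin degree-carrying region so that $dG_\epsilon$ pushes stretching into the $(p-q)$-dimensional kernel of $dF_0$ and contracts in the orthogonal $n$ directions, while (ii) keeping degree one and landing on the threshold $kp > qm$ rather than something weaker. These are not "minor technical points"; this is the entire proof. It is far from clear that such a $G_\epsilon$ exists with the needed quantitative balance — note that the paper's general precomposition construction of this type (the h-principle) only delivers the weaker threshold $k > (m+1)/2$, not $k > (q/p)m$, and the paper explicitly points out (e.g.\ for the torsion classes in $\pi_7(S^4)$, where $(q/p)m = 3.5 < (m+1)/2 = 4$) that the suspension method beats the precomposition method. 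So to make your route work you would have to exploit the special product structure of $dF_0$ in a way that goes genuinely beyond the h-principle machinery, and you have not shown how. I would encourage you to abandon the $G_\epsilon$ route here and instead carry out the direct product-on-a-rectangle construction, which is short and makes the exponent computation transparent; the appendix on bilipschitz embeddings of rectangles (Section \ref{bilipembed}) supplies exactly the embedding of $R$ you need.
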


\proof Let $a_0$ be a homotopy class in $\pi_p(S^q)$ so that $a$ is the $(m-p)$-fold suspension
of $a_0$.

Let $f_1$ be a map in the homotopy class $a_0$ from
$[0,1]^p$ to the unit q-sphere, taking the boundary of the domain
to the basepoint of $S^q$.  Let $f_2$ be a degree 1 map from
$[0,1]^{m-p}$ to the unit (m-p)-sphere, taking the boundary of the domain
to the basepoint of $S^{m-p}$.  We can assume both maps are smooth,
and we pick a number $L$ which is bigger than the Lipshitz constant
of either map.

Inside of the unit $m$-sphere, we can bilipschitz embed
a rectangle $R$ with dimensions $[0,\epsilon]^p \times
[0,\epsilon^{-\frac{p}{m-p}}]^{m-p}$.  More precisely, there is an embedding
$I$ which is locally $C(m)$-bilipschitz.  (See the appendix in Section \ref{bilipembed} for
a construction of this embedding.)

Now we construct a map $F$ from $R$ to $S^q
\times S^{m-p}$.  The map $F$ is a direct product of a map $F_1$ from
$[0, \epsilon]^p$ to $S^q$ and a map $F_2$ from $[0,
\epsilon^{-\frac{p}{m-p}}]^{m-p}$ to $S^{m-p}$.  The map $F_1$ is just a rescaling
from $[0, \epsilon]^p$ to the unit cube, composed with the map
$f_1$.  Similarly, the map $F_2$ is just a rescaling from $[0,
\epsilon^{-\frac{p}{m-p}}]^{m-p}$ to $[0,1]^{m-p}$, composed with the map $f_2$.  

When $k$ is bigger than $q$, the $k$-dilation of $F$ is less than $(L
\epsilon^{-1})^q (L \epsilon^{\frac{p}{m-p}})^{k-q}$.  Expanding this
expression gives $L^k \epsilon^{-q + (\frac{p}{m-p})(k-q)}$.  The
important part of the expression is the power of $\epsilon$,
which is equal to $(\frac{p}{m-p}) (k-q - \frac{q}{p} (m-p)) = (\frac{p}{m-p})( k - qm/p)$.  We have assumed that $k$
is greater than $qm/p$, and so the exponent of $\epsilon$
is positive.  For $\epsilon$ sufficiently small, the $k$-dilation
of $F$ is arbitrarily small.  

The map $F$ takes the boundary of $R$ to $S^q \vee S^{m-p}$.  We compose
$F$ with a smash map, which is a degree 1 map from $S^q
\times S^{m-p}$ to $S^{m + q - p} = S^n$, taking $S^q \vee S^{m-p}$ to the base
point.  The result is a map from $R$ to $S^{n}$ which takes the
boundary of $R$ to the basepoint.  We can easily extend this map to
all of $S^{m}$ by mapping the complement of $R$ to the basepoint
of $S^{n}$.  The resulting map is homotopic to $a$,
and it has arbitrarily small $k$-dilation.
\endproof

The following special case was stated in the introduction.

\begin{prop} Suppose that $a \in \pi_m(S^n)$ is the suspension of a homotopy class in $\pi_{m-1}(S^{n-1})$, and
that $m > n$.  Then the class $a$ can be realized by maps with arbitrarily small $n$-dilation.
\end{prop}

\begin{proof} This follows from Proposition \ref{suspmeth}.  We take $p = m-1$ and $q = n-1$.  
Since $m > n$, we have $n > \frac{n-1}{m-1} m = (q/p) m$.
\end{proof}

Next we apply Proposition \ref{suspmeth} to some particular homotopy classes.

First we consider maps from $S^m$ to $S^{m-1}$.  We prove Proposition \ref{cod1mapbaby}.

\begin{proof} For $m \ge 4$, the non-trivial homotopy class in $\pi_m(S^{m-1})$ is the (m-3)-fold
suspension of the Hopf fibration from $S^3$ to $S^2$.  Proposition \ref{suspmeth} gives a map in this homotopy class with
arbitrarily small $k$-dilation for $k > (2/3) m$. \end{proof}

Next we consider the torsion classes in $\pi_7(S^4)$.

\begin{prop} Each torsion homotopy class in $\pi_7(S^4)$ can be realized by maps with
arbitrarily small 4-dilation.
\end{prop}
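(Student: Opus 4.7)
The plan is to realize every torsion class of $\pi_7(S^4)$ as a suspension and then apply Proposition~\ref{suspmeth}. First I would identify the suspension source. The Hopf invariant gives a splitting $\pi_7(S^4) \cong \mathbb{Z} \oplus \mathbb{Z}_{12}$ with the infinite cyclic summand generated by the Hopf map $\nu: S^7 \to S^4$, so the torsion subgroup $\mathbb{Z}_{12}$ coincides with $\ker H$. The relevant piece of the EHP sequence
\[
\pi_7(S^7) \xrightarrow{P} \pi_6(S^3) \xrightarrow{E} \pi_7(S^4) \xrightarrow{H} \pi_7(S^7),
\]
together with Toda's calculation $\pi_6(S^3) \cong \mathbb{Z}_{12}$ (see \cite{T}), shows that the suspension $E$ carries $\pi_6(S^3)$ isomorphically onto the torsion subgroup of $\pi_7(S^4)$. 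Consequently every torsion class $a \in \pi_7(S^4)$ is the single suspension $E(a_0)$ of some $a_0 \in \pi_6(S^3)$.

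Next I would feed this into Proposition~\ref{suspmeth} with the parameters $m = 7$, $n = 4$, $p = 6$, $q = 3$. The hypothesis $k > (q/p) m$ becomes $k > (3/6)\cdot 7 = 7/2$, and since $4 > 7/2$ the choice $k = 4$ lies in the allowed range. Proposition~\ref{suspmeth} then produces, for every $\epsilon > 0$, a map $S^7 \to S^4$ in the class $a$ with $4$-dilation less than $\epsilon$, which is precisely the conclusion we want.

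The only real obstacle is the topological bookkeeping: one must verify that the entire torsion subgroup of $\pi_7(S^4)$ lies in the image of the suspension from $\pi_6(S^3)$, rather than being accessible only through some Toda bracket or more exotic construction. This is a direct consequence of the EHP exact sequence and the classical computations in Toda's book, already invoked in the same page range cited elsewhere in the paper. Once that topological input is in hand the proof is mechanical: the numerical slack $4 > 7/2$ is exactly what Proposition~\ref{suspmeth} needs to turn a product of rescaled representatives of $a_0$ and a degree-one map into a sequence with vanishing $4$-dilation.
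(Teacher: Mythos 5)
Your proposal is correct and takes essentially the same approach as the paper: identify every torsion class in $\pi_7(S^4)$ as a suspension from $\pi_6(S^3)$, then invoke Proposition~\ref{suspmeth} with $p=6$, $q=3$, $m=7$ to get the threshold $k > 7/2$, so $k=4$ qualifies. The paper simply cites Toda's book for the suspension fact, whereas you spell out the EHP-sequence argument, but the substance is identical.
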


\begin{proof} Each torsion class is the suspension of a class from $\pi_6(S^3)$.  We apply Proposition \ref{suspmeth} 
to get maps with arbitrarily small $k$-dilation for all
$k >  (3/6) 7 = 3.5$. \end{proof}

We remark that one element of $\pi_7(S^4)$ is actually a double suspension of a class in $\pi_5(S^2)$.  
We can realize this element with arbitrarily small $k$-dilation for $k > (2/5) 7 = 2.8$.  In particular, we can realize it by
maps with arbitrarily small 3-dilation.  I don't know whether the other torsion classes can be realized with arbitrarily
small 3-dilation.

There is one (non-zero) element $a \in \pi_8(S^5)$ which is a triple suspension of an element in $\pi_5(S^2)$.  It can be realized
with arbitrarily small 4-dilation, since $4 > (2/5) 8$.  
Recall that $\pi_8(S^5)$ is isomorphic to $\mathbb{Z}_{24}$.  The element $a$ above is the 2-torsion element - it corresponds to $12$ in $\mathbb{Z}_{24}$.  
All of the odd elements are detected by Steenrod squares.  By the Steenrod square inequality, they cannot be realized by maps with arbitrarily 
small $k$-dilation when $k \le 9/2$.  In particular, they cannot be realized with arbitrarily small 4-dilation.

The discussion of $\pi_8(S^5)$ applies more generally to the stable 3-stem.  For all $m \ge 8$, $\pi_m(S^{m-3})$ is isomorphic to $\mathbb{Z}_{24}$, and the suspension maps are isomorphisms.  Therefore, the two-torsion element can be realized by maps with arbitrarily small $k$-dilation for all $k > (2/5) m$.  The odd elements can be realized by maps with arbitrarily small $k$-dilation only if $k > (m+1)/2$.

In the open problems section at the end of the paper, there is some more discussion of which homotopy classes can be realized with arbitrarily small $k$-dilation.

The Freudenthal suspension theorem says that every map in $\pi_m(S^n)$ is a suspension as long as $m \le 2n - 2$.  (See \cite{H}, corollary 4.24 on page 360.)  The condition $m \le 2n -2$ is equivalent to $n > (m+1)/2$.  As long as $n > (m+1)/2$, the Freudenthal suspension theorem implies that every class in $\pi_m(S^n)$ is the suspension of a class from $\pi_{2m - 2n + 1}(S^{m-n+1})$.  Proposition \ref{suspmeth} then implies the following weak form of the h-principle for $k$-dilation.

\begin{prop} \label{weakhprin} If $n > (m+1)/2$, and if $k > \frac{m-n+1}{2m-2n+1} m$, then any map $F_0: S^m \rightarrow S^n$ can be homotoped to a map $F: S^m \rightarrow S^n$ with arbitrarily small $k$-dilation.
\end{prop}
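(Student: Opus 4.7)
The plan is to combine the Freudenthal suspension theorem with Proposition \ref{suspmeth}, applied at the deepest possible stage of de-suspension. Since the construction in Proposition \ref{suspmeth} gives better $k$-dilation bounds when the source dimensions $p,q$ are small, the strategy is to write the homotopy class of $F_0$ as an iterated suspension from $\pi_p(S^q)$ with $p, q$ as small as Freudenthal will allow.

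First, I would recall the precise form of Freudenthal: the suspension homomorphism $\pi_i(S^j) \to \pi_{i+1}(S^{j+1})$ is surjective whenever $i \le 2j - 1$ and an isomorphism whenever $i \le 2j - 2$. Starting from the group $\pi_m(S^n)$ and de-suspending $j$ times reaches $\pi_{m-j}(S^{n-j})$, and the surjectivity hypothesis at the $i$-th stage requires $m - i \le 2(n-i) - 1$, i.e.\ $i \le 2n - m - 1$. The hypothesis $n > (m+1)/2$ is equivalent to $2n - m - 1 \ge 1$, so one may de-suspend, and the maximum allowable number of de-suspensions is $j_{\max} = 2n - m - 1$. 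Setting $p := m - j_{\max} = 2m - 2n + 1$ and $q := n - j_{\max} = m - n + 1$, we conclude that every class in $\pi_m(S^n)$ is the $(m - p)$-fold suspension of some class in $\pi_p(S^q)$.

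Next I would apply Proposition \ref{suspmeth} to the class $[F_0] \in \pi_m(S^n)$ with the parameters $p = 2m - 2n + 1$ and $q = m - n + 1$ identified above. The conclusion of Proposition \ref{suspmeth} is that $[F_0]$ can be realized by maps with arbitrarily small $k$-dilation for every
\[
k > \frac{q}{p}\, m = \frac{m-n+1}{2m-2n+1}\, m,
\]
which is precisely the hypothesis of the proposition we are proving. Thus $F_0$ is homotopic to maps of arbitrarily small $k$-dilation, as desired.

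The argument is essentially a bookkeeping exercise, and I do not anticipate any serious obstacle beyond matching up the dimensions correctly. The one point to double-check is that $p \ge 1$ and $q \ge 1$ (so that the statement of Proposition \ref{suspmeth} makes sense): $q = m - n + 1 \ge 1$ follows from $m \ge n$ (and the case $m = n$ needs no argument since then the hypothesis $n > (m+1)/2$ combined with a degree bound handles it trivially via $\Dil_k F_0 = 0$ ruling out nonzero degree), while $p = 2m - 2n + 1 \ge 1$ follows from $m \ge n$. With these sanity checks in place, the proof reduces to the two displayed citations above.
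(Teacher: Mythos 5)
Your proposal is correct and follows essentially the same approach as the paper: iterate Freudenthal de-suspension as far as the hypothesis $n > (m+1)/2$ permits (reaching $\pi_{2m-2n+1}(S^{m-n+1})$) and then invoke Proposition \ref{suspmeth} with $p = 2m-2n+1$, $q = m-n+1$. Your extra remarks on the case $m=n$ are unnecessary (there the bound $k > m$ is vacuous, so the statement holds trivially), but they do not affect the validity of the main argument.
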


Proposition \ref{weakhprin} is weaker than the h-principle, but for many values of $m$ and $n$ it's not a bad substitute.  For example, if $F_0$ is a map
from $S^{101}$ to $S^{88}$, then Proposition \ref{weakhprin} implies that $F_0$ can be homotoped to a map with arbitrarily small $k$-dilation for all
$k > (14/27) 101 = 52.37...$: in other words, for each integer $k > 52$.  On the other hand, the h-principle says that $F_0$ can be homotoped to a map with arbitrarily small $k$-dilation for each $k > (101 + 1)/2 = 51$.  

The suspension method is based on an observation of Bechtluft-Sachs in \cite{BS}.  
He was interested in the $L^p$ norms $\| \Lambda^k dF \|_p$.  
In \cite{R}, Riviere proved an $L^p$ version of the Hopf invariant inequality.

\newtheorem*{lpho}{$L^p$ bound for the Hopf invariant}
\begin{lpho} For any $C^1$ map  $F: S^{2n-1} \rightarrow S^{n}$, the Hopf invariant of $F$ is controlled
by

$$ |\Hopf(F) | \le C(n) \| \Lambda^n d F \|_{L^{\frac{2n-1}{n}}}^2. $$

In particular, if $F$ has non-zero Hopf invariant, then $\| \Lambda^n d F \|_{L^\frac{2n-1}{n}} > c(n) > 0$.
\end{lpho}

In particular, if $F: S^7 \rightarrow S^4$ with non-zero Hopf invariant, then $\| \Lambda^4 dF \|_{7/4} \ge c > 0$.
Bechtluft-Sachs observed that other homotopy classes in $\pi_7(S^4)$ behave differently.

\newtheorem*{bsex}{Bechtluft-Sachs example}  

\begin{bsex} For each torsion element $a \in \pi_7(S^4)$, 
there is a sequence of homotopically non-trivial maps $F_i: S^7 \rightarrow S^4$ with 
$\| \Lambda^4 d F \|_{7/4} \rightarrow 0$.
\end{bsex}

\section{The Hopf invariant and $k$-dilation}

Our lower bounds on $k$-dilation generalize the following inequality for maps from $S^3$ to $S^2$.

\begin{prop} \label{hopf2dil} If $F$ is a $C^1$ map from the unit sphere $S^3$ to the unit sphere $S^2$, then the Hopf invariant of $F$ is controlled in terms of the 2-dilation of $F$ by the inequality

$$ |\Hopf(F)| \lesssim \Dil_2(F)^2. $$

\end{prop}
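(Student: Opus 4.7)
The plan is to implement, in the simplest nontrivial case, the argument sketched in Section~\ref{introlowerbound}. Consider the Cartesian square
\[
F \times F : S^3 \times S^3 \to S^2 \times S^2.
\]
The singular values of $d(F \times F)$ at $(x,y)$ are the union of those of $dF_x$ and $dF_y$, so $\Dil_4(F \times F) \le \Dil_2(F)^2$. The goal is to build a specific $4$-cycle $Z(F)$ in $S^2 \times S^2$ whose homology class records $\Hopf(F)$ and whose $4$-volume is controlled by $\Dil_4(F \times F)$.

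First I would construct a $4$-chain $Z_0 \subset S^3 \times S^3$ whose boundary is mapped by $F \times F$ into a $2$-dimensional set. Fix a basepoint $x_0 \in S^3$ and for each $x \in S^3 \setminus \{-x_0\}$ let $\gamma_x$ be the minimizing geodesic from $x$ to $x_0$. Set
\[
Z_0 := \{(x, \gamma_x(t)) : x \in S^3,\ t \in [0,1]\},
\]
a smooth $4$-chain with boundary $\partial Z_0 = \Diag(S^3) - S^3 \times \{x_0\}$ (the cut locus $\{-x_0\}$ is negligible). Since $F \times F$ sends $\Diag(S^3)$ into $\Diag(S^2)$ and $S^3 \times \{x_0\}$ into $S^2 \times \{F(x_0)\}$, both $2$-dimensional, the pushforward $Z(F) := (F \times F)(Z_0)$ has boundary of zero $3$-dimensional mass and is therefore a $4$-cycle in $S^2 \times S^2$.

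Next I would identify $[Z(F)] \in H_4(S^2 \times S^2) \cong \mathbb{Z}$ with $\Hopf(F)$. Write $F^*\omega = d\alpha$ on $S^3$, where $\omega$ is the area form of $S^2$. On $S^3 \times S^3$ the pullback of $\pi_1^*\omega \wedge \pi_2^*\omega$ under $F \times F$ equals $d(\pi_1^*\alpha \wedge \pi_2^*F^*\omega)$. Stokes on $Z_0$ gives
\[
\int_{Z(F)} \pi_1^*\omega \wedge \pi_2^*\omega = \int_{\partial Z_0} \pi_1^*\alpha \wedge \pi_2^*F^*\omega = \int_{S^3} \alpha \wedge d\alpha,
\]
since the piece $S^3 \times \{x_0\}$ contributes zero (there $\pi_2^*F^*\omega$ pulls back to $0$). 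The right-hand side is the standard differential-form formula for $\Hopf(F)$ up to a universal constant, and $\pi_1^*\omega \wedge \pi_2^*\omega$ generates $H^4(S^2 \times S^2;\mathbb{Z})$ up to the same constant, so $[Z(F)] = \Hopf(F) \cdot [S^2 \times S^2]$.

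Finally I would combine these ingredients. The chain $Z_0$ has a universal $4$-volume $V_0 \le C$, so by Proposition~\ref{kdildiff} applied to volume forms (equivalently, the definition of $\Dil_4$),
\[
\Vol_4 Z(F) \le \Dil_4(F \times F)\cdot V_0 \le C \Dil_2(F)^2.
\]
Pairing $Z(F)$ against the bounded $4$-form $\pi_1^*\omega \wedge \pi_2^*\omega$ then gives $|\Hopf(F)| \lesssim \Vol_4 Z(F) \lesssim \Dil_2(F)^2$. The step I expect to be most delicate is the construction of $Z_0$ and the Stokes computation near the cut locus of $x_0$: one must verify that the boundary really is absorbed into a $2$-dimensional subset of $S^2 \times S^2$ and that the singular locus does not spoil Stokes. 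This can be handled either by smoothing the geodesic cone or by working in the category of integral currents, where the null set at the cut locus is automatically irrelevant.
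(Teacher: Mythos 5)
Your overall strategy matches the paper's: push a $4$-chain $Z_0\subset S^3\times S^3$ forward by $F\times F$, identify its homology class with $\Hopf(F)$, and bound its $4$-volume by $\Dil_4(F\times F)\le\Dil_2(F)^2$. But there is a genuine error in your computation of $\partial Z_0$. The $3$-cycle $\Diag(S^3)-S^3\times\{x_0\}$ does not bound in $S^3\times S^3$: in $H_3(S^3\times S^3)\cong\mathbb{Z}^2$ the diagonal represents $(1,1)$ while $S^3\times\{x_0\}$ represents $(1,0)$, so their difference is $(0,1)\neq 0$. Correspondingly, the cut locus in your geodesic cone is not negligible. As $x\to -x_0$ the geodesics $\gamma_x$ sweep out more and more of $S^3$, and the closure of your $Z_0$ acquires an additional $3$-dimensional boundary component equal (up to sign) to $\{-x_0\}\times S^3$. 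The argument can still be pushed through, because $F\times F$ collapses this component to the $2$-dimensional set $\{F(-x_0)\}\times S^2$, and because $\pi_1^*\alpha$ restricts to zero on $\{-x_0\}\times S^3$, so the Stokes identity gives the same answer. But you must identify this extra boundary term and check it drops out, rather than assert that the cut locus contributes nothing. The paper sidesteps the issue entirely by taking $\partial Z_0 = \Diag(S^3)-\Bouquet(S^3)$, where $\Bouquet(S^3)=S^3\times\{x_0\}\cup\{x_0\}\times S^3$ has class $(1,1)$; then $Z_0$ exists simply because the two cycles are homologous, and one only needs existence and finite volume, not an explicit formula.

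Your identification of $[Z(F)]$ with $\Hopf(F)$ is a genuinely different route from the paper's. You write $\Hopf(F)=\int_{S^3}\alpha\wedge F^*\omega$ and deduce $[Z(F)]=\Hopf(F)\,[S^2\times S^2]$ by pairing $Z(F)$ against $\pi_1^*\omega\wedge\pi_2^*\omega$ and applying Stokes on $Z_0$. The paper instead builds the cell complex $X=B^4\cup_F S^2$, expresses $\Hopf(F)$ as the cup square $a\cup a$ evaluated on $[X]$, and shows $Z(F)$ is homologous to $\Diag(X)-\Bouquet(X)$ in $X\times X$ by a flat-chain argument (Lemma~\ref{Z(F)hom}). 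Your de Rham argument is shorter and more self-contained for this particular proposition, precisely because the Hopf invariant of a map $S^3\to S^2$ has a differential-form description. The paper deliberately uses the heavier topological machinery because this section is meant to set up an argument that generalizes to non-trivial maps $S^m\to S^{m-1}$ with $m>3$, where the relevant homotopy invariant is $\mathbb{Z}_2$-valued (a Steenrod square), admits no de Rham description, and only the cup-square/cell-complex formulation carries over.
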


In the first part of this section, we review the previous proofs of the proposition from the literature, and we discuss the difficulties of generalizing them to maps from $S^m$ to $S^{m-1}$ for $m > 3$.  In the second part of this section, we give a new proof of the proposition.  Our proof of $k$-dilation estimates for maps $S^m \rightarrow S^{m-1}$ generalizes the new proof for maps from $S^3$ to $S^2$.  

\subsection{Previous estimates about the Hopf invariant and 2-dilation}

The first proof of Proposition \ref{hopf2dil} was based on differential forms.  We begin by describing the Hopf invariant in the language of differential forms.  We let $\omega$ be a 2-form on $S^2$ with $\int_{S^2} \omega = 1$.  The pullback $F^* \omega$ is a closed 2-form on $S^3$, and therefore $F^* \omega$ is exact.  Let $\alpha$ be a 1-form on $S^3$ with $d \alpha = F^* \omega$.  Then the Hopf invariant of $F$ is $\int_{S^3} \alpha \wedge F^* \omega$.  (See \cite{BT}, page 230, for background on the Hopf invariant.)

To get a quantitative estimate for the Hopf invariant, we can go step by step, giving quantitative estimates for each character appearing in the story.  
We sketch the proof here, and give more details in the appendix in Section \ref{prevlower}.  
First, we can choose $\omega$ to be $(4 \pi)^{-1} darea_{S^2}$.  So pointwise, $|\omega| = (4 \pi)^{-1} \le 1$.  The 2-dilation interacts well with 2-forms, 
giving the estimate $\| F^* \omega \|_\infty \le \Dil_2(F) \| \omega \|_\infty \le \Dil_2(F)$ (see Proposition \ref{kdildiff}).  
The main part of the proof is to give estimates for $\alpha$.  This requires some analysis and/or geometry.  For example, using Hodge theory and elliptic estimates, we can find a choice of $\alpha$ with $\| \alpha \|_2 \lesssim \| F^* \omega \|_2 \lesssim \Dil_2(F)$.
With these bounds in hand,

$$ |\Hopf(F)| = \left| \int_{S^3} \alpha \wedge F^* \omega \right| \le \| \alpha \|_2 \| F^* \omega \|_2 \lesssim \Dil_2(F)^2. $$

\noindent It's hard to generalize this argument to maps $S^m \rightarrow S^{m-1}$ when $m > 3$.  For $m > 3$, $\pi_m(S^{m-1}) = \mathbb{Z}_2$.  The homotopy invariant here takes values in $\mathbb{Z}_2$, and I don't know any way to describe it using differential forms.

A second proof of Proposition \ref{hopf2dil} studies the fibers of the map $F$.  By a smoothing argument, we can deform $F$ to a $C^\infty$ map without significantly increasing its 2-dilation.  So we can assume without loss of generality that $F$ is $C^\infty$.  Sard's theorem guarantees that almost every $y \in S^2$ is a regular value.  When $y$ is a regular value, the fiber $F^{-1}(y)$ is a smooth compact 1-manifold (without boundary).  Each regular fiber has a canonical orientation, so each regular fiber is an integral 1-cycle in $S^3$.  Now the Hopf invariant can be described as the linking number of $F^{-1}(y_1)$ and $F^{-1}(y_2)$ for any two regular values of $F$.  Unwinding the definition of linking number this means the following.  Suppose that $\Sigma_1$ is an integral 2-chain in $S^3$ with $\partial \Sigma_1 = F^{-1}(y_1)$.  For almost every choice of $y_2$, $F^{-1}(y_2)$ will be transverse to $\Sigma_1$.  Then the Hopf invariant of $F$ is given by the intersection number of $F^{-1}(y_2)$ with $\Sigma_1$.  (See \cite{BT}, page 227-234 for a discussion of the different definitions of the Hopf invariant and why they are equivalent.)

To get a quantitative estimate for the Hopf invariant, we again go step by step through the argument and give quantitative estimates for each character as it appears.  By the coarea inequality, we can choose $y_1$ so that $\Length[ F^{-1}(y_1) ] \lesssim \Dil_2(F)$.  Next, by the isoperimetric inequality, we can choose $\Sigma_1$ so that $\Area(\Sigma_1) \lesssim \Length[ F^{-1}(y_1) ] \lesssim \Dil_2(F)$.  Now by the coarea inequality again, we can choose $y_2$ so that the number of points in $\Sigma_1 \cap F^{-1}(y_2)$ is $\lesssim \Dil_2(F) [\Area \Sigma_1] \lesssim \Dil_2(F)^2$.  The Hopf invariant is given by counting these intersection points with multiplicities $\pm 1$ determined by the orientations.  Therefore, $|\Hopf(F)|$ is at most the number of intersection points, which is $\lesssim \Dil_2(F)^2$.  (This argument first appeared in \cite{GFRM}.  The details are explained in the short paper \cite{GURH}.)

This argument also does not easily generalize to maps $S^m \rightarrow S^{m-1}$ for $m > 3$.  When $m > 3$, the relevant homotopy invariant cannot be described using a linking number.  It can be described using the fibers of the map $F$, together with their framing.  For a regular value $y \in S^{m-1}$, the fiber $F^{-1}(y)$ is a closed 1-manifold, and its normal bundle gets a framing coming from the isomorphism between the normal bundle and $T S^{m-1}_y$.  The map $F$ is homotopically non-trivial if and only if the framing of the normal bundle has a non-trivial twist.  (See Section \ref{twistedtubes} for a more detailed description.)

I tried to go step by step through the argument with the framing of the normal bundle and give quantitative estimates, but I couldn't make this approach work.  We can begin in the same way, by using the coarea formula to pick a point $y \in S^{m-1}$ so that $\Length[ F^{-1}(y) ] \lesssim \Dil_{m-1}(F)$.  The next character seems to be the framing of the normal bundle and the way that it twists as we move along the fiber $F^{-1}(y)$.  But this twisting depends on the second derivative of $F$, and so there is no way to bound the amount of local twisting in terms of any $\Dil_k(F)$.  Within this setting it's not clear to me what geometric quantity one should try to bound next.  Also notice that in bounding the length of the fiber $F^{-1}(y)$, we only required an estimate for $\Dil_{m-1}(F)$.  A bound of the form $\Dil_{m-1}(F) < \epsilon$ does not by itself imply that $F$ is null-homotopic - we need to invoke somewhere $\Dil_k(F)$ for $k \le (m+1)/2$.  

\subsection{A new method for bounding the Hopf invariant in terms of the 2-dilation} \label{newpfhopf}

Now we give a new proof of Proposition \ref{hopf2dil}, based on the connection between the Hopf invariant and cup products.  If $F: S^3 \rightarrow S^2$, we use $F$ as an attaching map to build a 4-dimensional cell complex $X$: $X := B^4 \cup_F S^2$.  The homotopy type of $X$ is a homotopy invariant of the map $F$.  In particular, the Hopf invariant of $F$ is related to the cup product structure of $X$.  The cohomology groups of $X$ are $H^2(X; \mathbb{Z}) = \mathbb{Z}$ with generator $a$ and $H^4(X; \mathbb{Z}) = \mathbb{Z}$ with generator $b$.  The cup product $a \cup a$ must be a multiple of $b$.  Let $H(X)$ be the integer so that  $a \cup a = H(X) b$.  The integer $H(X)$ is the Hopf invariant of the map $F$.  (See \cite{H} page 427 for a review of the Hopf invariant from this perspective.)

This definition does not seem at first sight like a good setting for a quantitative argument.  How can we connect this story to $\Dil_2(F)$?  What are the geometric quantities related to this story that we should try to estimate?

Cup products are closely connected with Cartesian products.  By unwinding the definition of the cup product in terms of Cartesian products, we can get a formulation which is easier to connect with the geometry of the map $F$.   Here is the formulation.  If $F: S^3 \rightarrow S^2$ is our map, we can look at the map $F \times F: S^3 \times S^3 \rightarrow S^2 \times S^2$.  Let $x_0$ be a basepoint of $S^3$ and $y_0 = F(x_0)$ be a basepoint of $S^2$.

Let $\Diag(S^3) \subset S^3 \times S^3$ denote the diagonal of $S^3 \times S^3$.   Let $\Bouquet(S^3)$ denote the cycle $S^3 \times \{ x_0 \} \cup \{x_0 \} \times S^3$.  Both $\Diag(S^3)$ and $\Bouquet(S^3)$ are integral cycles in $S^3 \times S^3$, and they are homologous.  Therefore, there is an integral 4-chain $Z_0$ with $\partial Z_0 = \Diag(S^3) - \Bouquet(S^3)$.  We consider $F \times F(Z_0)$.  By definition, $F \times F (Z_0)$ is an integral 4-chain in $S^2 \times S^2$, but in fact $F \times F (Z_0)$ is essentially a 4-cycle.  The reason is that $F \times F$ maps $\Diag(S^3)$ into $\Diag(S^2)$ and $\Bouquet(S^3)$ into $\Bouquet(S^2)$.  Therefore, $F \times F$ maps $\partial Z_0$ (which is 3-dimensional) into a 2-dimensional complex.  Therefore, $F \times F (Z_0)$ is essentially an integral 4-cycle in $S^2 \times S^2$.  

If we work with Lipschitz chains, then $F \times F(Z_0)$ is not literally a cycle.  But as we have seen, the boundary of $F \times F (Z_0)$ is a 3-cycle lying in a 2-dimensional polyhedron.  Therefore, we can pick an integral 4-chain $\nu$ with zero volume and with $\partial \nu = \partial \left( F \times F(Z_0) \right)$.  We define $Z(F)$ to be the 4-cycle $F \times F(Z_0) - \nu$.  The cycle $Z(F)$ is connected to the Hopf invariant
by the following proposition.

\begin{prop} \label{Z(F)hopf}   
The homology class of $Z(F)$ in $H_4(S^2 \times S^2; \mathbb{Z})$ is equal to $\Hopf(F) [S^2] \times [S^2]$.
\end{prop}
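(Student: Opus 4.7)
The plan is to pair $Z(F)$ with a closed $4$-form representing the generator of $H^4(S^2\times S^2;\mathbb{R})$ and show the result equals $\Hopf(F)$. Since $H_4(S^2\times S^2;\mathbb{Z})=\mathbb{Z}\cdot[S^2]\times[S^2]$ is torsion-free, this pins down the integer class $[Z(F)]$ completely. Fix a smooth $2$-form $\omega$ on $S^2$ with $\int_{S^2}\omega=1$, so that $\omega\times\omega:=\pi_1^*\omega\wedge\pi_2^*\omega$ satisfies $\langle\omega\times\omega,[S^2]\times[S^2]\rangle=1$. The whole job reduces to proving
\[
\int_{Z(F)}\omega\times\omega = \Hopf(F).
\]

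First I would dispose of the correction chain $\nu$. By construction $\nu$ has zero $4$-dimensional volume, and any bounded measurable $4$-form integrates to zero over a $4$-chain of vanishing mass, so $\int_\nu\omega\times\omega=0$. Thus the integral equals $\int_{Z_0}(F\times F)^*(\omega\times\omega)=\int_{Z_0}\beta\times\beta$, where $\beta:=F^*\omega$. Because $F^*\omega$ is closed on $S^3$ and $H^2(S^3)=0$, we may choose a $1$-form $\alpha$ on $S^3$ with $d\alpha=\beta$; then $\pi_1^*\alpha\wedge\pi_2^*\beta$ is a primitive of $\beta\times\beta$ on $S^3\times S^3$.

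Next I would apply Stokes' theorem to $Z_0$, whose boundary is $\Diag(S^3)-\Bouquet(S^3)$:
\[
\int_{Z_0}\beta\times\beta
=\int_{\Diag(S^3)}\pi_1^*\alpha\wedge\pi_2^*\beta
-\int_{\Bouquet(S^3)}\pi_1^*\alpha\wedge\pi_2^*\beta.
\]
On $\Diag(S^3)\cong S^3$ both projections restrict to the identity, so the integrand pulls back to $\alpha\wedge\beta=\alpha\wedge F^*\omega$, whose integral over $S^3$ is by definition $\Hopf(F)$. On $S^3\times\{x_0\}\subset\Bouquet(S^3)$ the second projection is constant, hence $\pi_2^*\beta=0$; symmetrically $\pi_1^*\alpha=0$ on $\{x_0\}\times S^3$. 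Therefore the bouquet contribution vanishes and the integral equals $\Hopf(F)$.

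The only delicate point is regularity: $Z_0$ is an integral chain, not a smooth manifold, and $F$ is merely $C^1$, so I would justify Stokes in the form needed either by approximating $F$ by smooth maps (without increasing $\Dil_2(F)$ much, using a standard mollification), or by citing Stokes for Lipschitz chains and bounded $C^1$ forms. This should be routine given Section~\ref{basicfacts}, and is the main bookkeeping obstacle; the topological heart of the argument is the Stokes computation above together with the vanishing on $\Bouquet(S^3)$.
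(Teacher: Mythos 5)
Your proof is correct, but it takes a genuinely different route from the paper's. You compute the integral $\int_{Z(F)}\omega\times\omega$ directly via Stokes' theorem, exploiting the fact that $\beta\times\beta=d(\pi_1^*\alpha\wedge\pi_2^*\beta)$ on $S^3\times S^3$, and then identifying the diagonal boundary contribution with Whitehead's integral formula for the Hopf invariant. This is clean, and the torsion-free observation about $H_4(S^2\times S^2;\mathbb{Z})$ correctly upgrades a de Rham pairing to an integral statement. The paper instead works entirely in singular homology/cohomology of the CW complex $X=B^4\cup_F S^2$: it shows (Lemma~\ref{Z(F)hom}) that $Z(F)$ is homologous in $X\times X$ to $\Diag(X)-\Bouquet(X)$, then evaluates the cross-product class $a\times a$ on this cycle, using $a\cup a=\Diag^*(a\times a)$ and the vanishing of $a\times a$ on the bouquet. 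The two arguments are mirror images of each other — your primitive $\pi_1^*\alpha\wedge\pi_2^*\beta$ and the vanishing of its pullback to the bouquet play the role of $a\times a(\Bouquet(X))=0$ in the paper's argument; your filling $Z_0$ with Stokes mirrors the paper's filling $Z_0-\Diag(\bar B^4)+\Bouquet(\bar B^4)$ inside the contractible $\bar B^4\times\bar B^4$. The trade-off worth noting: your approach is more elementary and self-contained for the Hopf case, but it relies on de Rham theory, which is precisely the tool the paper is setting up to abandon — the whole purpose of introducing $Z(F)$ in Section~\ref{newpfhopf} is to give a proof that makes sense with $\mathbb{Z}_2$ coefficients, so that it can be ported verbatim to the Steenrod-square setting of Section~5 where no differential-forms formulation is available. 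Your argument would not transfer. (Your regularity remarks are fine; since $F$ is $C^1$, the form $\beta=F^*\omega$ is merely continuous, so the existence and regularity of the primitive $\alpha$ wants either a mollification of $F$ or a short appeal to Hodge theory / elliptic regularity, as you say.)
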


Using this proposition, we can give a short proof that $|\Hopf(F)| \lesssim \Dil_2(F)^2$.

\begin{proof} The Hopf invariant of $F$ is equal to the homology class of $Z(F) = F \times F(Z_0) - \nu$.  Since $\nu$ has zero 4-volume, we see that $|\Hopf(F)| \lesssim
\Vol_4 (F \times F(Z_0) ) \le \Dil_4(F \times F) \cdot \Vol_4(Z_0)$.   In our construction $Z_0$ does not depend on $F$, so $\Vol_4(Z_0) \lesssim 1$.  

It suffices to check that $\Dil_4(F \times F) \le \Dil_2(F)^2$.   Let $(x, x')$ be a point of $S^3 \times S^3$.  Let $S_1 \ge ... \ge S_4$ be the singular values of $d (F \times F)$ at $(x, x')$.  (So $S_1, S_2, S_3, S_4$ are functions on $S^3 \times S^3$.)   The 4-dilation of $F \times F$ is $\sup_{(x,x') \in S^3 \times S^3} S_1 S_2 S_3 S_4$.  Now let $s_1(x) \ge s_2(x)$ be the singular values of $dF_x$.  Since the derivative $d (F \times F)$ at $(x, x')$ is just $dF_x \times dF_{x'}$, the singular values $S_1, S_2, S_3, S_4$ are equal to the numbers $s_1(x), s_2(x), s_1(x'), s_2(x')$ arranged in decreasing order.  In particular, $S_1 S_2 S_3 S_4 = s_1(x) s_2(x) s_1(x') s_2(x') \le \Dil_2(F)^2$. \end{proof}

Now we prove Proposition \ref{Z(F)hopf}.

\begin{proof}  As we recalled above, $H^2(X; \mathbb{Z}) = \mathbb{Z}$ with generator $a$.  Let $[X]$ be the 
generator of $H_4(X; \mathbb{Z})$, defined so that $b( [X] ) = 1$.  We know that $a \cup a$ evaluated on $[X]$ is the Hopf invariant of $F$.

One definition of the cup product involves the diagonal embedding $\Diag: X \rightarrow X \times X$.  The cup product $a \cup a$ is the pullback of the cross product $a \times a$ defined on $X \times X$.  Therefore, the Hopf invariant of $F$ is the evaluation of $a \times a$ on $\Diag(X)$.

Recall that the space $X$ is formed by attaching $B^4$ to $S^2$ by the map $F: S^3 \rightarrow S^2$.  The space $X$ has a natural basepoint $x$, which is equal to the basepoint of $S^2 \subset X$.  We define $\Bouquet(X) \subset X \times X$ to be $X \times \{ x \} \cup \{x \} \times X$.  Now $S^2 \subset X$, and $Z(F) \subset S^2 \times S^2 \subset X \times X$, so $Z(F)$ is a 4-cycle in $X \times X$.  Next we determine its homology class.

\begin{lemma} \label{Z(F)hom} The 4-cycle $Z(F)$ is homologous to $\Diag(X) - \Bouquet(X)$ as integral 4-cycles in $X \times X$.
\end{lemma}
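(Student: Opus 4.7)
The strategy is to construct an explicit $5$-chain in $X \times X$ whose boundary realizes the desired homology. The essential input is that $F$ extends to the characteristic map $\Phi : B^4 \to X$ of the attached $4$-cell (with $\Phi|_{S^3} = F$), so $\Phi \times \Phi$ carries chains in the contractible space $B^4 \times B^4$ to chains in $X \times X$. Since every $4$-cycle in $B^4 \times B^4$ bounds, I can build the desired $5$-chain upstairs and push it down.

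First, working in $B^4 \times B^4$, I form the $4$-chain
$$ C := \Diag(B^4) - B^4 \times \{x_0\} - \{x_0\} \times B^4 - Z_0, $$
viewing $Z_0$ as a chain in $S^3 \times S^3 \subset \partial(B^4 \times B^4)$. The boundaries of the first three terms are $\Diag(S^3)$, $S^3 \times \{x_0\}$, and $\{x_0\} \times S^3$, respectively, while $\partial Z_0 = \Diag(S^3) - S^3 \times \{x_0\} - \{x_0\} \times S^3$; these contributions cancel, so $\partial C = 0$. Since $B^4 \times B^4 \cong B^8$ is contractible, $C = \partial W$ for some integral Lipschitz $5$-chain $W$.

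Next I push forward by $\Phi \times \Phi$ and identify each piece with its counterpart in $X \times X$: $\Phi \times \Phi(\Diag B^4)$ parametrizes $\Diag(X)$ up to a zero-volume correction supported in $\Diag(S^2)$; $\Phi \times \Phi\bigl(B^4 \times \{x_0\} + \{x_0\} \times B^4\bigr)$ parametrizes $\Bouquet(X)$ up to a zero-volume correction supported in $\Bouquet(S^2)$; and $\Phi \times \Phi(Z_0) = F \times F(Z_0) = Z(F) + \nu$. Collecting these corrections, I obtain
$$ \Diag(X) - \Bouquet(X) - Z(F) = \partial\bigl(\Phi \times \Phi(W)\bigr) + \eta, $$
where $\eta$ is an integer $4$-cycle of zero $4$-volume supported in the $2$-dimensional subcomplex $Y := \Diag(S^2) \cup \Bouquet(S^2) \subset X \times X$. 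A short boundary computation shows that the contributions $\partial \nu_1$, $\partial \nu_2$, $\partial \nu$ of the three corrections all cancel, confirming $\partial \eta = 0$.

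The remaining step, and in my view the main technical point, is to dispose of $\eta$. Because $Y$ is the union of three $2$-spheres meeting at the basepoint, it is $2$-dimensional, so $H_4(Y;\mathbb{Z}) = 0$; hence $\eta$ bounds a $5$-chain supported entirely in $Y \subset X \times X$. Adding this $5$-chain to $\Phi \times \Phi(W)$ produces a $5$-chain in $X \times X$ whose boundary is exactly $\Diag(X) - \Bouquet(X) - Z(F)$, showing that $Z(F)$ and $\Diag(X) - \Bouquet(X)$ represent the same class in $H_4(X \times X; \mathbb{Z})$.
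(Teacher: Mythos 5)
Your argument is correct and follows essentially the same route as the paper's: both build the $4$-cycle $\Diag(\bar B^4) - \Bouquet(\bar B^4) - Z_0$ in the contractible space $\bar B^4 \times \bar B^4$, fill it with a $5$-chain $W$, and push forward along the characteristic map $\Phi \times \Phi$ (denoted $\Psi \times \Psi$ in the paper). The only place you diverge is in how the residual discrepancy between the pushforward and the target chains is absorbed. The paper packages this as ``flat equivalence'': each of the three pushed-forward pieces agrees with its target cycle up to chains of zero $4$-volume, flat-equivalent Lipschitz cycles are homologous, done. You instead gather those corrections into a single zero-volume Lipschitz $4$-cycle $\eta$ supported in the $2$-dimensional subcomplex $Y = \Diag(S^2) \cup \Bouquet(S^2)$, and invoke $H_4(Y;\mathbb{Z}) = 0$. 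That last step is where you should be a bit more careful: $\eta$ is an integral \emph{Lipschitz} cycle, and the vanishing of singular $H_4(Y)$ does not immediately hand you a Lipschitz $5$-chain in $Y$ filling it --- you still need some comparison between Lipschitz homology and singular homology for $Y$, or, more economically, a Federer--Fleming deformation argument. But that is precisely the tool the paper supplies (Lemma~\ref{defCW}: in a finite CW complex with Lipschitz attaching maps, a Lipschitz cycle of sufficiently small volume is null-homologous). Applied to $\eta$ (which has zero volume), this closes your argument with no appeal to $H_4(Y)$ at all. So your route is fine and arguably slightly more concrete in isolating where the error term lives, but the heavy lifting in both cases is the same: zero-volume cycles bound, and the mechanism behind that is the deformation theorem.
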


\begin{proof}

Recall that $Z_0$ is a 4-chain in $S^3 \times S^3$ with $\partial Z_0 = \Diag(S^3) - \Bouquet(S^3)$.  We consider $Z_0 \subset S^3 \times S^3 \subset
\bar B^4 \times \bar B^4$, where $\bar B^4$ denotes the closed 4-ball.  
The boundary of $\Diag(\bar B^{4})$ is $\Diag(S^3)$.  We choose a basepoint of $S^3$ and make it also a basepoint of $\bar B^4$, so that
the boundary of $\Bouquet(\bar B^{4})$ is $\Bouquet(S^3)$.  Therefore, $Z_0 - \Diag(\bar B^{4}) + \Bouquet(\bar B^{4})$ is a 4-cycle 
in $\bar B^4 \times \bar B^4$.  Since $\bar B^4 \times \bar B^4$ is contractible, this cycle is null-homologous, and so there is a 5-chain $Y_0$ with 

$$ \partial Y_0 = Z_0 - \Diag(\bar B^4) + \Bouquet(\bar B^4). $$

Let $\Psi: \bar B^4 \rightarrow X$ be the characteristic map of $B^4$ to $X = B^4 \cup_F S^2$.  In other words, the restriction of $\Psi$
to the boundary of $\bar B^4$ is the attaching map $F: S^3 \rightarrow S^2 \subset X$, and $\Psi$ is the inclusion map from the interior of $B^4$ into
$X$.  We choose base points so that $\Psi$ maps the basepoint of $\bar B^4$ to the basepoint of $X$.  
We consider $\Psi \times \Psi: \bar B^4 \times \bar B^4 \rightarrow X \times X$.  The image $\Psi \times \Psi (\partial Y_0)$ is a null-homologous
cycle in $X \times X$.  This null-homologous cycle is essentially equal to $Z(F) - \Diag(X) + \Bouquet(X)$.  More precisely,

\begin{itemize}

\item $\Psi \times \Psi (Z_0)$ is flat equivalent to $Z(F)$.

\item $\Psi \times \Psi (\Diag(\bar B^4))$ is flat equivalent to $\Diag(X)$.

\item $\Psi \times \Psi (\Bouquet(\bar B^4))$ is flat equivalent to $\Bouquet(X)$.

\end{itemize}

We review flat chains and cycles in Appendix \ref{appenflat}.  We say that two Lipschitz chains are flat equivalent if they define
the same flat chain.  The main point that we need is that if two Lipschitz cycles are flat equivalent, then they are homologous.  This follows 
easily from the definitions, and we review it in the appendix.  Given the three flat equivalences we just mentioned, it follows that 
$Z(F) - \Diag(X) + \Bouquet(X)$ is null-homologous, which is what we wanted to prove.

The three flat equivalences we just mentioned are straightforward.  First, $Z(F) = F \times F(Z_0) - \nu$, where $\nu$ is a chain of volume
zero.  Recall that the restriction of $\Psi$ to $S^3 = \partial B^4$ is just $F$.  Hence the restriction of $\Psi \times \Psi$ to $S^3 \times S^3$
is $F \times F$, and so we see $Z(F) = \Psi \times \Psi(Z_0) - \nu$.  Since $\nu$ has zero volume, $Z(F)$ is flat equivalent to $\Psi \times \Psi(Z_0)$.

Next we consider the chain $\Psi(\bar B^4)$.  If we consider it as a Lipschitz chain, then it is not literally a cycle, but its boundary
lies in $S^2 \subset X$.  Therefore, we can find an 4-chain $\nu'$ of zero volume so that $\Psi(\bar B^4) - \nu'$ is a Lipschitz
cycle in $X$.  It is homologous to the fundamental homology class $[X]$.  Any two Lipschitz cycles in this homology class are flat equivalent (see
the appendix).  So $\Psi(\bar B^4)$ is flat equivalent to the cycle $X$.  Similarly, $\Psi \times \Psi (\Diag(\bar B^4))$ is flat equivalent to $\Diag(X)$,
and $\Psi \times \Psi (\Bouquet(\bar B^4))$ is flat equivalent to $\Bouquet(X)$.  \end{proof}

The Hopf invariant of $F$ is $a \times a ( \Diag(X))$.  By the last lemma, this is equal to $a \times a( Z(F) + \Bouquet(X))$.  For any point $p \in X$, $a \times a ( X \times \{ p \}) = a \times a ( \{ p \} \times X) = 0$.  Therefore, $a \times a (\Bouquet(X)) = 0$.  Hence $\Hopf(F) = a \times a (Z(F))$.  Now $Z(F)$ is a cycle in $S^2 \times S^2 \subset X \times X$.  The restriction of $a$ to $S^2$ is just $[ S^2 ]^*$, the generator of $H^2(S^2; \mathbb{Z})$.  Therefore, $Z(F)$ is homologous to $\Hopf(F) [S^2] \times [S^2]$.  \end{proof}

This argument gives another proof that $|\Hopf(F)| \lesssim \Dil_2(F)^2$.  In the next sections, we will generalize this proof
to homotopically non-trivial maps $S^m \rightarrow S^{m-1}$ for $m > 3$.  When $m > 3$, we will need to consider Steenrod squares instead of cup squares.  In the next section, we review Steenrod squares and define an analogous cycle $Z(F)$ in that setting.

\section{Mappings detected by Steenrod squares}

Suppose that $F: S^m \rightarrow S^n$ is a $C^1$ map.  We can use $F$ as an attaching
map to build a cell complex $X = B^{m+1} \cup_F S^n$.  We assume that $m > n$.  In that case,
the cohomology of $X$ has the following structure: $H^n(X; \mathbb{Z}_2) = \mathbb{Z}_2$ with
generator $a$, $H^{m+1}(X; \mathbb{Z}_2) = \mathbb{Z}_2$ with generator $b$, and $H^d(X; \mathbb{Z}_2)$
vanishes for all other dimensions $d > 0$.  
The Steenrod square $\Sq^{m+1-n}$ maps $H^n(X; \mathbb{Z}_2)$ to $H^{m+1}(X; \mathbb{Z}_2)$.
We define the Steenrod-Hopf invariant of $F$ by the formula

$$ \Sq^{m+1-n}(a) = \SH(F) b. $$

The Steenrod-Hopf invariant takes values in $\mathbb{Z}_2$.  It is a homotopy invariant of the map $F$ (because
homotopic maps $F_1$ and $F_2$ produce homotopy-equivalent complexes $X_1$ and $X_2$).  

We recall some fundamental topological facts about the Steenrod-Hopf invariant.  These facts are explained
in more detail in Hatcher's book \cite{H}, page 489.  This is a very nice reference about Steenrod squares,
containing all the background material we need in this paper.

If $m = 2n-1$, then the Steenrod square
$\Sq^{m+1-n} = \Sq^n$ is the cup square.  In this case $\SH(F)$ is the mod 2 reduction
of the Hopf invariant of $F$.  The Hopf invariant is equal to 1 for the three Hopf fibrations
($S^3 \rightarrow S^2$, $S^7 \rightarrow S^4$, and $S^{15} \rightarrow S^8$).  So 
$\SH(F) = 1$ for the three Hopf fibrations.

Because Steenrod squares behave well with respect to suspensions, the 
Steenrod-Hopf invariant is preserved by suspensions.   Suppose that $\Sigma F: S^{m+1} \rightarrow
S^{n+1}$ is the suspension of $F: S^m \rightarrow S^n$.  The complex
formed by $\Sigma F$, $B^{m+2} \cup_{\Sigma F} S^{n+1}$, is the suspension of $B^{m+1} \cup_F
S^n$.  Since the Steenrod squares commute with the suspension isomorphism, we conclude that
$\SH (\Sigma F) = \SH(F)$.  In particular, $\SH(F) = 1$ for suspensions of the Hopf fibrations.

Therefore, the map $SH: \pi_m(S^n) \rightarrow \mathbb{Z}_2$ is surjective whenever
$n = m-1$ and $m \ge 3$; or $n= m-3$ and $m \ge 7$; or $n = m-7$ and $m \ge 15$.
(By a difficult theorem of Adams, the Steenrod-Hopf invariant is trivial for all other
$m > n$ - see page 490 of \cite{H} and the references therein.)

Our lower bound for $k$-dilation is the following theorem:

\begin{st} Let $F$ be a $C^1$ map from $S^m$ to $S^n$ with $\SH(F) \not= 0$.  If $k \le (m+1)/2$, then
$\Dil_k(F) \ge c(m) > 0$.
\end{st}

We have to review the topological proof that a map with $\SH(F) \not= 0$
is non-contractible and try to organize it in order to get quantitative information about $\Dil_k(F)$.
In the following subsection, we give an alternate description of $\SH(F)$, which we will
be able to connect with $\Dil_k(F)$.  We will check that the alternate definition agrees with
the definition above, which involves reviewing the construction of Steenrod squares.

\subsection{The cycle $Z(F)$} \label{cyclez(f)}

The Steenrod squares are closely related to the following topological operation.  Given
a space $X$ and an integer $i \ge 0$, first consider the product $S^i \times X \times X$.  On this product, there
is a natural $\mathbb{Z}_2$ action, sending $(\theta, x_1, x_2)$ to $(- \theta, x_2, x_1)$.
This action is free and the quotient space is denoted $\Gamma_i X$.  The space $\Gamma_i
X$ is a fiber bundle over $\mathbb{RP}^i$ with fiber $X \times X$.  

The operation $\Gamma_i$ is functorial - if we have a map $F: X \rightarrow Y$, then
there is an induced map $\Gamma_i F: \Gamma_i X \rightarrow \Gamma_i Y$.  The
induced map is defined as follows.  First we map $S^i \times X \times X$ to $S^i
\times Y \times Y$ using the map $id \times F \times F$.  (Here $id$ denotes the
identity map.)  This map is equivariant with respect to the $\mathbb{Z}_2$ action
on the domain and on the range.  Therefore, it descends to a map $\Gamma_i F$ 
between the quotient spaces.

In particular, our map $F: S^m \rightarrow S^n$ induces a map $\Gamma_i F$
from $\Gamma_i S^m$ to $\Gamma_i S^n$ for every $i$.

If $W \subset X$ is a mod 2 cycle, then there are several cycles in $\Gamma_i X$
that we can canonically build from $W$.  One of these is the diagonal cycle
$\Diag(W)$.  In each fiber of $\Gamma_i X \rightarrow \mathbb{RP}^i$, the fiber
of $\Diag(W)$ is a diagonal copy of $W \subset W \times W \subset X \times X$.  A second
example is the bouquet cycle $\Bouquet (W)$.  This is defined canonically as long as $X$
has a basepoint $x$.  In each fiber of $\Gamma_i X \rightarrow \mathbb{RP}^i$, the
fiber of $\Bouquet(W)$ is $W \times \{ x \} \cup \{x \} \times W \subset X \times X$.
If $W$ is a mod 2 d-cycle, then $\Diag(W)$ and $\Bouquet(W)$ are mod 2 (d+i)-cycles.

Remark: If $W$ is an integral d-cycle, it is not necessarily possible to choose orientations 
in order to make $\Diag(W)$ and $\Bouquet(W)$ into integral cycles.  

\begin{lemma} \label{homgi} If $i <  m$, then $H_d(\Gamma_i S^m) = 0$ for $m+i < d < 2m$
(with any coefficient group).
\end{lemma}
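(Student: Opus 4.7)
The plan is to exploit the fiber bundle structure
\[
S^m \times S^m \;\longrightarrow\; \Gamma_i S^m \;\longrightarrow\; \mathbb{RP}^i
\]
coming from the definition of $\Gamma_i S^m$ as the quotient of $S^i \times S^m \times S^m$ by the involution $(\theta, x_1, x_2) \mapsto (-\theta, x_2, x_1)$. Fix an arbitrary abelian coefficient group $A$ and run the Serre spectral sequence
\[
E^2_{p,q} = H_p\bigl(\mathbb{RP}^i;\, \mathcal{H}_q(S^m \times S^m;\, A)\bigr) \;\Longrightarrow\; H_{p+q}(\Gamma_i S^m;\, A).
\]
Since $H_q(S^m \times S^m; A)$ vanishes except for $q \in \{0, m, 2m\}$, the $E^2$-page is concentrated in those three rows. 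The local system $\mathcal{H}_q$ may genuinely be twisted, since the monodromy around a non-trivial loop in $\mathbb{RP}^i$ acts on the fiber by interchanging the two $S^m$-factors; but this will not matter, because $\mathbb{RP}^i$ has homological dimension $i$, so $H_p(\mathbb{RP}^i; \mathcal{L}) = 0$ for every local system $\mathcal{L}$ as soon as $p > i$.

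Next I would check that no $E^2$-entry on the anti-diagonal $p + q = d$ with $m + i < d < 2m$ can be non-zero. If $q = 0$ then $p = d > m + i \ge i$; if $q = m$ then $p = d - m > i$; and if $q = 2m$ then $p = d - 2m < 0$ because $d < 2m$. In every case $(p,q)$ falls outside $0 \le p \le i$, so every relevant $E^2_{p,q}$ vanishes. Hence every $E^\infty_{p,q}$ on that diagonal vanishes, and therefore $H_d(\Gamma_i S^m; A) = 0$ for every such $d$ and every coefficient group $A$.

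The only subtlety is the possibly twisted coefficient system in the Serre spectral sequence; the saving grace is that we never have to compute $H_p(\mathbb{RP}^i; \mathcal{L})$, only to use its vanishing for $p > i$, which is automatic. Everything else is pure bookkeeping of the anti-diagonal, and the hypothesis $i < m$ is used exactly to ensure that the $q = m$ row contributes nothing when $p \le i$ and $p + q > m + i$. If one preferred to avoid spectral sequences, an equivalent argument is cellular: pulling back the standard CW structure on $\mathbb{RP}^i$, each open $j$-cell of the base contributes cells of $\Gamma_i S^m$ only in dimensions $j$, $j+m$, $j+m$, $j+2m$; for $0 \le j \le i$ and $m+i < d < 2m$ none of these equals $d$, so $\Gamma_i S^m$ has no $d$-cells at all, which already forces $H_d = 0$ with any coefficients.
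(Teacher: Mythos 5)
Your proof is correct, and your primary argument (the Serre spectral sequence) is a genuinely different route from the paper's. The paper constructs a CW structure on $\Gamma_i S^m$ directly: with the equivariant two-cells-per-dimension structure on $S^i$ and the one-cell-per-dimension-$0$-and-$m$ structure on $S^m$, the product $S^i \times S^m \times S^m$ has a $\mathbb{Z}_2$-equivariant CW structure whose quotient has one cell in each dimension $0,\dots,i$, two in each dimension $m,\dots,m+i$, and one in each dimension $2m,\dots,2m+i$; since $i < m$ these ranges are disjoint and there are no $d$-cells for $m + i < d < 2m$, giving $H_d = 0$ at the chain level for any coefficients. Your spectral-sequence argument reaches the same conclusion by noting that the $E^2$-page is supported in $q \in \{0, m, 2m\}$ and $p \le i$ (the latter because $\mathbb{RP}^i$ is an $i$-complex, so its homology with any local coefficient system vanishes above degree $i$), and the anti-diagonal $p + q = d$ misses this support entirely when $m + i < d < 2m$. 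Both are valid; the paper's argument is shorter and more self-contained (no spectral sequence, no discussion of twisted coefficients), while yours has the virtue of identifying precisely the role of the fiber bundle structure $S^m \times S^m \to \Gamma_i S^m \to \mathbb{RP}^i$, which is a structure the paper also uses repeatedly. You correctly flagged the potential twisting by the monodromy that swaps the two sphere factors, and correctly observed that it is irrelevant because only the homological dimension of the base enters. Your closing cellular paraphrase is essentially the paper's proof; the one nit is that "pulling back the standard CW structure on $\mathbb{RP}^i$" doesn't literally produce a CW structure on the total space — the clean statement is the one above, quotienting an equivariant cell structure on $S^i \times S^m \times S^m$ — but the cell count you extract from it is the right one.
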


\proof There is a natural cell structure
on $\Gamma_i S^m$ which comes from the usual cell structure on $S^m$ (with 2 cells) and
the usual cell structure on $\mathbb{RP}^i$ with $i+1$ cells.  Since $i < m$, 
the cell structure has one cell
in each dimension $0, ..., i$, two cells in each dimension $m, ..., m+i$, and one cell 
in each dimension $2m, ..., 2m + i$.  In particular, we see that $H_d(\Gamma_i S^m)$
vanishes for $m+i < d < 2m$. \endproof

\begin{lemma} If $i < m$, then $\Diag(S^m)$ and $\Bouquet(S^m)$ are homologous - they
belong to the same homology class in $H_{m+i}(\Gamma_i S^m; \mathbb{Z}_2)$.
\end{lemma}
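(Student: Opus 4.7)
The plan is to compute $H_{m+i}(\Gamma_i S^m; \mathbb{Z}_2) \cong \mathbb{Z}_2$ and then show that both $[\Diag(S^m)]$ and $[\Bouquet(S^m)]$ pair nontrivially with a common test $m$-cycle, so that both must represent the unique nonzero class.

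First I would verify $H_{m+i}(\Gamma_i S^m; \mathbb{Z}_2) \cong \mathbb{Z}_2$. Since $\Gamma_i S^m$ is a closed manifold of dimension $2m+i$ (the defining $\mathbb{Z}_2$-action is free), Poincar\'e duality identifies this group with $H^m(\Gamma_i S^m; \mathbb{Z}_2)$, which I would compute via the Serre spectral sequence of the fibration $S^m \times S^m \to \Gamma_i S^m \to \mathbb{RP}^i$. The key input is that $\pi_1(\mathbb{RP}^i) = \mathbb{Z}_2$ acts on $H^m(S^m \times S^m; \mathbb{Z}_2) = \mathbb{Z}_2^2$ by the factor swap, which makes the local coefficient module a free $\mathbb{Z}_2[\mathbb{Z}_2]$-module; hence $H^p(\mathbb{RP}^i; H^m_{\mathrm{loc}})$ equals $\mathbb{Z}_2$ for $p = 0$ and vanishes for $p > 0$. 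The assumption $i < m$ kills the only other potential contribution $H^m(\mathbb{RP}^i; \mathbb{Z}_2)$ to total degree $m$, and checking rows shows that no nontrivial $d_r$ can hit or leave $E_2^{0, m}$.

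Next I would fix $\theta_0 \in S^i$ and a point $x_1 \in S^m$ distinct from the basepoint $x_0$, and consider the transverse $m$-cycle
\[
Y := \{[\theta_0, x, x_1] : x \in S^m\} \subset \Gamma_i S^m.
\]
A direct calculation in the double cover $S^i \times S^m \times S^m$ shows that $Y \cap \Diag(S^m)$ reduces to the single point $[\theta_0, x_1, x_1]$ (from $x = x_1$), while $Y \cap \Bouquet(S^m)$ reduces to the single point $[\theta_0, x_0, x_1]$ (from $x = x_0$ on the arm $\{x_0\} \times S^m$, matched via the defining involution with the point on the other arm). In both cases the two submanifolds have complementary dimensions in the $(2m+i)$-manifold $\Gamma_i S^m$ and a quick tangent-space check confirms transversality, so each intersection pairing is $1$ mod $2$; hence $[\Diag(S^m)]$ and $[\Bouquet(S^m)]$ coincide in $H_{m+i}(\Gamma_i S^m; \mathbb{Z}_2) \cong \mathbb{Z}_2$.

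The main obstacle I expect is executing the spectral sequence step cleanly, namely identifying the local system on $\mathbb{RP}^i$ with a free $\mathbb{Z}_2[\mathbb{Z}_2]$-module and confirming the absence of differentials in the relevant range. A fallback that avoids the spectral sequence is a direct cobordism construction: build an $(m+i+1)$-chain in $\Gamma_i S^m$ with boundary $\Diag(S^m) + \Bouquet(S^m)$ by lifting a bidegree-$(1, 1)$ homotopy between the diagonal map $S^m \to S^m \times S^m$ and the pinch-wedge composition $S^m \to S^m \vee S^m \hookrightarrow S^m \times S^m$, and then symmetrizing using a hemisphere decomposition $S^i = D_+ \cup D_-$ to produce a chain invariant under $(\theta, x, y) \mapsto (-\theta, y, x)$ upstairs; the subtle point in this alternative is controlling the equatorial error $S^{i-1} \times (C + \sigma C)$ and canceling it by a $\sigma$-equivariant filling, which is possible thanks to the vanishing of $H_{m+1}(S^m \times S^m; \mathbb{Z}_2)$ for $m \ge 2$.
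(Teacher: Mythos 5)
Your main argument takes a genuinely different route from the paper. The paper directly constructs an $(m{+}i{+}1)$-chain with boundary $\Diag(S^m)+\Bouquet(S^m)$: it takes a homology $T_0$ between the diagonal and bouquet of $S^m\times S^m$, forms $B^i\times T_0$ over one hemisphere $B^i\subset S^i$, projects to $\Gamma_i S^m$, and kills the resulting equatorial error $V'\subset\Gamma_{i-1}S^m$ using the preceding lemma $H_{m+i}(\Gamma_{i-1}S^m)=0$. Your main approach instead computes $H_{m+i}(\Gamma_i S^m;\mathbb{Z}_2)$, invokes Poincar\'e duality on the closed manifold $\Gamma_i S^m$, and pins down both classes by intersection with a test $m$-cycle $Y$. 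This is a valid and somewhat more informative route for $i\geq 1$ (it even identifies the common class explicitly), at the cost of heavier machinery. The transversality and intersection-count checks you sketch are correct.

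However, there is a real gap at $i=0$, which is in scope here ($i=2n-m-1$ can vanish, e.g.\ the Hopf case $m=2n-1$ treated in Section 4). When $i=0$, $\mathbb{RP}^0$ is a point, $\pi_1(\mathbb{RP}^0)=0$, and $\Gamma_0 S^m\cong S^m\times S^m$; so $H_m(\Gamma_0 S^m;\mathbb{Z}_2)\cong(\mathbb{Z}_2)^2$, not $\mathbb{Z}_2$. Your spectral-sequence step silently assumes $\pi_1(\mathbb{RP}^i)=\mathbb{Z}_2$, which fails for $i=0$. And with $H_m$ two-dimensional, pairing to $1$ against the single test cycle $Y=\{[\theta_0,x,x_1]\}=S^m\times\{x_1\}$ no longer determines the class: $[S^m\times\{\mathrm{pt}\}]$ also pairs to $1$ with $Y$ but is not the diagonal class. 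You would need a second test cycle, say $\{x_1\}\times S^m$, or just treat $i=0$ separately by the elementary fact that in $S^m\times S^m$ both $\Diag$ and $\Bouquet$ represent $[S^m\times\mathrm{pt}]+[\mathrm{pt}\times S^m]$.

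Your fallback is the same idea as the paper's proof, but as written the symmetrization step does not quite work. If you make the upstairs chain literally $\sigma$-invariant over all of $S^i$ (e.g.\ $D_+\times C + D_-\times\sigma C$), then its pushforward to $\Gamma_i S^m$ is twice a chain and hence zero mod $2$, and in particular the boundary will not produce $\Diag+\Bouquet$ downstairs. The paper avoids this by pushing forward $B^i\times T_0$ only over a single hemisphere, where the projection is essentially injective; the boundary then contains $\Diag+\Bouquet$ with multiplicity one plus an error $V'$ in $\Gamma_{i-1}S^m$, and the earlier vanishing lemma finishes the job cleanly. Your route via a $\sigma$-equivariant filling in $S^m\times S^m$ would also need more than $H_{m+1}(S^m\times S^m;\mathbb{Z}_2)=0$: a filling of the $\sigma$-invariant cycle $C+\sigma C$ need not itself be $\sigma$-invariant, and extracting one requires an extra argument.
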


\proof First, the diagonal of $S^m \times S^m$
is homologous to the bouquet $\{ x \} \times S^m \cup S^m \times \{ x \}$, where
$x$ denotes the basepoint of $S^m$.  We let $T_0$ denote a homology between
them.  Now we consider $B^i \times T_0$, which is an (m+i+1)-chain in $B^i
\times S^m \times S^m$.  Here we think of $B^i$ as a hemisphere of $S^i$, so
that we can project $B^i \times S^m \times S^m$ onto $\Gamma_i S^m$.  The
boundary of $B^i \times T_0$ is equal to $B^i \times \Diag(S^m) + B^i \times
\Bouquet(S^m) + V$ where $V$ is a chain in $\partial B^i \times S^m \times S^m$.
Projecting $B^i \times T_0$ into $\Gamma_i S^m$, we get a chain with boundary
$\Diag(S^m) + \Bouquet(S^m) + V'$, where $V'$ is an (m+i)-chain lying in
$\Gamma_{i-1} S^m \subset \Gamma_i S^m$.  Because $\Diag(S^m)$ and $\Bouquet(S^m)$ are
each cycles, $V'$ must also be a cycle.  But as
we saw in Lemma \ref{homgi}, $H_{m+i}(\Gamma_{i-1} S^m) = 0$.  Hence $V'$ is
homologous to zero and the
diagonal $\Diag(S^m)$ is homologous to $\Bouquet (S^m)$. \endproof

At this point, we choose $i = 2n - m -1$.  Since $m > n$, we see that 
$i < n < m$.  Since $i < m$, we can find an (m+i+1)-chain $Z_0$ in $\Gamma_i S^m$ with
$\partial Z_0 = \Diag(S^m) + \Bouquet(S^m)$.  The dimension of the chain $Z_0$ is $m+i+1 = 2n$.

Our map $F: S^m \rightarrow S^n$ induces a map $\Gamma_i F:
\Gamma_i S^m \rightarrow \Gamma_i S^n$.  The map $\Gamma_i F$
maps $\Diag(S^m)$ to $\Diag(S^n)$.  We pick basepoints of $S^m$ and $S^n$ so that
$F$ sends basepoint to basepoint.  With these basepoints, $\Gamma_i F$ maps $\Bouquet(S^m)$ to
$\Bouquet(S^n)$.  Therefore, $\Gamma_i F$ maps $\partial Z_0$ into $\Diag(S^n) \cup \Bouquet(S^n)$.
Now $\partial Z_0$ is a cycle of dimension $i + m$.  On the other hand, $\Diag(S^n) \cup \Bouquet(S^n)$
is a polyhedron of dimension $i + n < i + m$.  Therefore $\Gamma_i F(Z_0)$ is essentially a cycle.

Although $\Gamma_i F (Z_0)$ is not literally a Lipschitz cycle, we have seen that the boundary of $\Gamma_i F (Z_0)$ is an (m+i)-cycle lying in a lower-dimensional polyhedron.  Therefore, we can pick a mod 2 (m+i+1)-chain $\nu$ with zero volume and with $\partial \nu = \partial \Gamma_i F (Z_0)$.  We define $Z(F)$ to be the cycle $\Gamma_i F (Z_0) - \nu$.

Next we study the homology class of $Z(F)$ in $H_{2n}(\Gamma_i S^n; \mathbb{Z}_2)$.  First we calculate this homology group.

\begin{lemma} \label{homgi2} Recall that $i = 2n - m -1$ and $m >n$.  The homology group $H_{2n}(\Gamma_i S^n; \mathbb{Z}_2) = \mathbb{Z}_2$.  The non-trivial homology class is represented by
a fiber $S^n \times S^n$ of the fiber bundle $S^n \times S^n \rightarrow \Gamma_i S^n \rightarrow \mathbb{RP}^i$.  
\end{lemma}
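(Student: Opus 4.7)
The plan is to extend the cellular argument used in Lemma \ref{homgi} down to dimension $2n$ itself. Recall the construction there: using the standard cell structure on $S^n$ (two cells) and on $\mathbb{RP}^i$ (one cell per dimension $0 \le p \le i$), the quotient $\Gamma_i S^n = (S^i \times S^n \times S^n)/\mathbb{Z}_2$ inherits a CW structure. Because the free involution swaps the two $n$-cells of $S^n \times S^n$ while fixing the $0$-cell and the $2n$-cell, counting $\mathbb{Z}_2$-orbits of product cells gives, for $i < n$, one cell in each dimension $p \in [0, i]$, two cells in each dimension $p \in [n, n+i]$, and one cell in each dimension $p \in [2n, 2n+i]$.

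Since $i = 2n - m - 1$ and $m > n$ force $i \le n - 2$, the three ranges $[0,i]$, $[n, n+i]$, $[2n, 2n+i]$ are pairwise disjoint and separated by genuine gaps. In particular $C_{2n-1}(\Gamma_i S^n; \mathbb{Z}_2) = 0$, so the outgoing boundary $\partial \colon C_{2n} \to C_{2n-1}$ is automatically zero. The only thing to check is that the incoming boundary $\partial \colon C_{2n+1} \to C_{2n}$ also vanishes mod $2$. When $i = 0$ there is nothing to verify. When $i \ge 1$, the unique $(2n+1)$-cell of $\Gamma_i S^n$ is the $\mathbb{Z}_2$-orbit of $E^1 \times e^n \times e^n$, where $\{E^1_+, E^1_-\}$ is the antipodal pair of $1$-cells of $S^i$. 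Since $\partial e^n = 0$ in the cellular chain complex of $S^n$, the boundary of $E^1 \times e^n \times e^n$ in $S^i \times S^n \times S^n$ lies in $(E^0_+ \pm E^0_-) \times e^n \times e^n$, and both summands descend to the same $2n$-cell in the quotient, contributing $2 c_{2n} \equiv 0 \pmod 2$.

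It follows that $H_{2n}(\Gamma_i S^n; \mathbb{Z}_2) \cong C_{2n}/0 \cong \mathbb{Z}_2$, generated by the unique $2n$-cell $c_{2n}$. By construction, $c_{2n}$ is the image in $\Gamma_i S^n$ of $\{E^0\} \times S^n \times S^n \subset S^i \times S^n \times S^n$ for a chosen $0$-cell $E^0$ of $S^i$; under the bundle projection $\Gamma_i S^n \to \mathbb{RP}^i$ this is precisely a fiber $S^n \times S^n$, which proves the last assertion.

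The main obstacle is the bookkeeping for orbits of the free $\mathbb{Z}_2$-action, together with the observation that in both the top and the bottom bands, antipodal cells of $S^i$ are paired so that their boundaries enter the quotient chain complex with even multiplicity. Mod $2$ everything collapses cleanly; over $\mathbb{Z}$ there would be genuine $2$-torsion to track, which is why working with $\mathbb{Z}_2$ coefficients is essential here.
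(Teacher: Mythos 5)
Your proposal is correct and follows essentially the same route as the paper: use the product cell structure on $S^i\times S^n\times S^n$ descended to $\Gamma_i S^n$, note that for $i \le n-2$ there is a single $2n$-cell and no $(2n-1)$-cell, and observe that the unique $(2n+1)$-cell bounds the $2n$-cell with multiplicity two, hence zero mod $2$. The only difference is cosmetic — you compute the boundary explicitly in the double cover and push down, while the paper phrases the same computation as the restriction of the bundle to $\mathbb{RP}^1\subset\mathbb{RP}^i$ covering the fiber twice.
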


\begin{proof} We use the cell structure of $\Gamma_i S^n$ as in the proof of Lemma \ref{homgi}.  Since $i  = 2n - m - 1 < n$, this cell structure has exactly one cell in dimension $2n$.  Recall that $\Gamma_i S^n$ is a fiber-bundle over $\mathbb{RP}^i$ with fiber $S^n \times S^n$.  The $2n$-cell corresponds to a fiber of the fiber bundle - its closure is a fiber $S^n \times S^n$.  The cell structure also has exactly one $2n + 1$-cell.  Its closure is the restriction of the fiber bundle to a copy of $\mathbb{RP}^1 \subset \mathbb{RP}^i$.  The boundary of this $(2n+1)$-cell gives two copies of the $2n$-cell, and so the boundary operator (working modulo 2) is zero.  \end{proof}

Now we determine the homology class of $Z(F)$ and see how it connects to the Steenrod-Hopf invariant.

\begin{prop} \label{homz(f)=sh(f)} Let $Z_0 \subset \Gamma_i S^m$ be any $2n$-chain with boundary $\Diag(S^m) - \Bouquet(S^m)$, and define
the $2n$-cycle $Z(F) \subset \Gamma_i S^n$ as above.  Then the $2n$-cycle $Z(F)$ is homologous to $\SH(F) [S^n \times S^n]$, where $S^n \times S^n$
is a fiber of the fiber bundle $\Gamma_i S^n$.  In particular, the Steenrod-Hopf invariant $\SH(F)$ is non-zero if and only if
the cycle $Z(F)$ is non-trivial in $H_{2n}(\Gamma_i S^n; \mathbb{Z}_2)$.
\end{prop}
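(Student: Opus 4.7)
The plan is to adapt the argument from Lemma \ref{Z(F)hom} and Proposition \ref{Z(F)hopf}, replacing the Cartesian product $X \times X$ by the twisted product $\Gamma_i X$ and the cup square by the Steenrod square $\Sq^{m+1-n}$. The strategy has two stages: first extend $Z(F)$ to a cycle in $\Gamma_i X$ and identify its homology class there in terms of $\Diag(X)$ and $\Bouquet(X)$; then pair this class with a cohomology class on $\Gamma_i X$ tailor-made so that the Steenrod--Hopf invariant falls out.

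First I would carry out the analogue of Lemma \ref{Z(F)hom}. Choose a basepoint of $\bar B^{m+1}$ agreeing with that of $S^m = \partial \bar B^{m+1}$ and form $\Gamma_i \bar B^{m+1} = S^i \times \bar B^{m+1} \times \bar B^{m+1}/\mathbb{Z}_2$. Since $\bar B^{m+1}$ is contractible, $\Gamma_i \bar B^{m+1}$ is homotopy equivalent to $\mathbb{RP}^i$, and in particular $H_{2n}(\Gamma_i \bar B^{m+1}; \mathbb{Z}_2) = 0$ because $2n > i = 2n - m - 1$. The chains $Z_0$, $\Diag(\bar B^{m+1})$ and $\Bouquet(\bar B^{m+1})$ all sit in $\Gamma_i \bar B^{m+1}$, and their formal sum is a $2n$-cycle by the defining property of $Z_0$ together with $\partial \Diag(\bar B^{m+1}) = \Diag(S^m)$ and $\partial \Bouquet(\bar B^{m+1}) = \Bouquet(S^m)$. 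Hence it bounds a $(2n{+}1)$-chain $Y_0$ in $\Gamma_i \bar B^{m+1}$. Pushing $Y_0$ forward by $\Gamma_i \Psi$, where $\Psi: \bar B^{m+1} \to X$ is the characteristic map of the top cell, and using the same flat-equivalence bookkeeping as in Lemma \ref{Z(F)hom} to absorb the zero-volume correction $\nu$, I conclude that $Z(F)$ is homologous in $\Gamma_i X$ to $\Diag(X) + \Bouquet(X)$ as mod-$2$ cycles.

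Second, I would introduce a cohomology class $\alpha \in H^{2n}(\Gamma_i X; \mathbb{Z}_2)$, namely the equivariant extended square of a representing cocycle of $a \in H^n(X; \mathbb{Z}_2)$ as constructed in Hatcher, Section~4L. This class has three properties I need: (i) the restriction of $\alpha$ to any fiber $S^n \times S^n \subset \Gamma_i S^n$ is the top class $a|_{S^n} \times a|_{S^n}$, so by Lemma \ref{homgi2} the class $\alpha|_{\Gamma_i S^n}$ is the generator of $H^{2n}(\Gamma_i S^n; \mathbb{Z}_2)$ dual to $[S^n \times S^n]$; (ii) the pairing $\langle \alpha, \Diag(X)\rangle$ equals $\SH(F)$, because evaluating the extended square on the diagonal $2n$-cycle inside $\Gamma_i X$ is exactly the cochain-level recipe by which $\Sq^{m+1-n}(a)$ is extracted, and by definition that produces the coefficient $\SH(F)$ of $b$; and (iii) $\alpha$ vanishes on $\Bouquet(X)$, because at every point of $\Bouquet(X)$ one of the two $X$-factors is the basepoint, where $a$ restricts to zero, so the extended square restricts trivially.

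Combining the three properties, $\langle \alpha, Z(F)\rangle = \langle \alpha, \Diag(X)\rangle + \langle \alpha, \Bouquet(X)\rangle = \SH(F)$ (mod $2$). Since $Z(F)$ lies inside $\Gamma_i S^n$ and $\alpha|_{\Gamma_i S^n}$ is dual to $[S^n \times S^n]$, the class of $Z(F)$ must equal $\SH(F)\cdot[S^n \times S^n]$ in $H_{2n}(\Gamma_i S^n; \mathbb{Z}_2)$, proving the proposition. I expect the main obstacle to be step (ii): matching our geometric chain-level cycle $\Diag(X)$ with the cochain-level construction of $\Sq^{m+1-n}$ via $\Gamma_i$ requires carefully unpacking Hatcher's definition. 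Once that identification is made, properties (i) and (iii) are routine naturality checks and the argument closes exactly as in the cup-product case.
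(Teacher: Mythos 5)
Your proposal matches the paper's proof in both structure and content: you first show (via the $\Gamma_i\Psi$-pushforward and flat-equivalence argument, mirroring Lemma~\ref{Z(F)hom}) that $Z(F)$ is homologous to $\Diag(X) + \Bouquet(X)$ in $\Gamma_i X$, and then pair against a degree-$2n$ cohomology class on $\Gamma_i X$ that restricts to $a|_{S^n}\times a|_{S^n}$ on the fiber, vanishes on $\Bouquet(X)$, and evaluates to $\SH(F)$ on $\Diag(X)$. The only cosmetic difference is that you construct this class directly as the equivariant extended square of a cocycle representing $a$, whereas the paper obtains it as $\Gamma_i\Phi^*(\beta)$ by pulling back from $\Gamma_i K(\mathbb{Z}_2,n)$ and reads off the evaluation on $\Diag(X)$ via the diagonal inclusion $\mathbb{RP}^i\times X\hookrightarrow\Gamma_i X$ and the Kunneth expansion $\sum_j \omega^j\otimes\Sq_j a$; these are the same class, and the step you flag as the main obstacle is precisely that Kunneth bookkeeping, which the paper carries out explicitly.
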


This Proposition is a generalization of Proposition \ref{Z(F)hopf}.

\proof  Recall that $X = B^{m+1} \cup_F S^n$.  The cohomology group $H^n (X; \mathbb{Z}_2)$
is isomorphic to $\mathbb{Z}_2$ and it has generator $a$.  We let $[X]$ be a generator of $H_{m+1}(X; \mathbb{Z}_2) = \mathbb{Z}_2$.  
The Steenrod-Hopf invariant $\SH(F)$ is equal to
the evaluation $\Sq_i a [X]$.  Now we unwind the definition of Steenrod squares to understand this evaluation
better.  We follow the construction of Steenrod squares in Hatcher, \cite{H}, 
pages 501-4.

The class $a$ induces a map $\Phi$ from $X$ to $K(\mathbb{Z}_2, n)$, which is well-defined
up to homotopy.  From now on, we 
abbreviate $K(\mathbb{Z}_2, n) = K$.  Therefore, we get a sequence of maps

$$ \mathbb{RP}^i \times X \rightarrow \Gamma_i X \rightarrow \Gamma_i K. $$

The first map is the diagonal inclusion, and the second map is $\Gamma_i \Phi$.  

The space $K$ comes with a fundamental cohomology class $\alpha \in H^n(K; \mathbb{Z}_2)$,
and $\Phi^* \alpha = a$.  Now in $\Gamma_i K$ there is a (2n)-dimensional cohomology
class $\beta$, whose restriction to each fiber $K \times K$ is $\alpha \times \alpha$ and whose
restriction to $\Bouquet(K) \subset \Gamma_i K$ vanishes.  This element is constructed
in Hatcher, pages 503-504.  (Hatcher constructs the element $\lambda(i)$ on the space $\Lambda K$.  He has already defined a map from $\Gamma_i K$ to $\Lambda K$, and the class $\beta$ is the pullback of $\lambda(i)$ to $\Gamma_i K$).

Let $\omega \in H^1( \mathbb{RP}^i; \mathbb{Z}_2)$ be the non-trivial cohomology class.  We pull back the cohomology class $\beta$  to $\mathbb{RP}^i \times X$, and expand it using the Kunneth formula.  The definition of the Steenrod squares is that this pullback is equal to

$$\sum_{j= 0}^n \omega^j \otimes \Sq_j a. $$

Using the diagram of maps above, we see that 

$$ \SH(F) = \Sq_i a  [ X]  =  \Diag^* \Gamma_i \Phi^* (\beta) [ \mathbb{RP}^i \times X] = \Gamma_i \Phi^* (\beta) [ \Diag(X)].$$

We have an inclusion $S^n \subset X$
and hence $\Gamma_i S^n \subset \Gamma_i X$.  So our cycle $Z(F)$ is a $2n$-cycle
in $\Gamma_i X$.

\begin{lemma}  The cycle $Z(F)$ is homologous to $\Diag(X) - \Bouquet(X)$ in $\Gamma_i X$.
\end{lemma}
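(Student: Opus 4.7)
The plan is to imitate the proof of Lemma \ref{Z(F)hom}, replacing the contractible ambient space $\bar B^4 \times \bar B^4$ by $\Gamma_i \bar B^{m+1}$ and working with $\mathbb{Z}_2$ coefficients throughout (as is appropriate for the Steenrod--Hopf invariant). I would first pick a basepoint of $\bar B^{m+1}$ lying on $S^m = \partial \bar B^{m+1}$ and agreeing with the chosen basepoint of $S^m$. Under the natural inclusion $\Gamma_i S^m \hookrightarrow \Gamma_i \bar B^{m+1}$, view $Z_0$ as a $2n$-chain in $\Gamma_i \bar B^{m+1}$. Since $\partial \Diag(\bar B^{m+1}) = \Diag(S^m)$ and $\partial \Bouquet(\bar B^{m+1}) = \Bouquet(S^m)$, the combination
$$W_0 := Z_0 - \Diag(\bar B^{m+1}) - \Bouquet(\bar B^{m+1})$$
is a mod $2$ cycle in $\Gamma_i \bar B^{m+1}$ of dimension $m + i + 1 = 2n$.

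Next I would show that $W_0$ is null-homologous. The projection $\Gamma_i \bar B^{m+1} \to \mathbb{RP}^i$ is a fiber bundle with contractible fiber $\bar B^{m+1} \times \bar B^{m+1}$, so it is a homotopy equivalence; equivalently, one may equivariantly deformation retract $\bar B^{m+1} \times \bar B^{m+1}$ onto a fixed point of the swap involution. In either case $H_{2n}(\Gamma_i \bar B^{m+1}; \mathbb{Z}_2) \cong H_{2n}(\mathbb{RP}^i; \mathbb{Z}_2) = 0$, where the vanishing uses $2n = m + i + 1 > i$ (which is automatic from $m > n$ and $i = 2n - m - 1$). Hence $W_0 = \partial Y_0$ for some $(2n+1)$-chain $Y_0$ in $\Gamma_i \bar B^{m+1}$.

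I would then push $Y_0$ forward to $\Gamma_i X$ using the characteristic map $\Psi: \bar B^{m+1} \to X$ (which restricts to $F$ on $S^m$) and its functorial image $\Gamma_i \Psi$. The chain $\Gamma_i \Psi(Y_0)$ witnesses the mod $2$ homology
$$\Gamma_i \Psi(Z_0) \;\sim\; \Gamma_i \Psi(\Diag(\bar B^{m+1})) \,+\, \Gamma_i \Psi(\Bouquet(\bar B^{m+1}))$$
in $\Gamma_i X$. It remains to identify each of these three Lipschitz chains up to flat equivalence. Because $\Gamma_i \Psi$ extends $\Gamma_i F$ on $\Gamma_i S^m$, the chain $\Gamma_i \Psi(Z_0)$ equals $\Gamma_i F(Z_0) = Z(F) + \nu$ for a chain $\nu$ of zero volume, and is therefore flat equivalent to $Z(F)$. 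The other two chains are handled by the same mechanism used in Lemma \ref{Z(F)hom}: $\Psi(\bar B^{m+1})$ differs from the cycle $X$ only by a chain of zero volume (its boundary sits inside the lower-dimensional subcomplex $S^n$), and applying $\Gamma_i$ fiberwise gives that $\Gamma_i \Psi(\Diag(\bar B^{m+1}))$ is flat equivalent to $\Diag(X)$ and $\Gamma_i \Psi(\Bouquet(\bar B^{m+1}))$ is flat equivalent to $\Bouquet(X)$.

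Combining these flat equivalences, $Z(F) - \Diag(X) - \Bouquet(X)$ is null-homologous mod $2$ in $\Gamma_i X$, which is the claimed statement. The only point where the dimensional hypothesis $i = 2n - m - 1$ genuinely enters is the vanishing $H_{2n}(\Gamma_i \bar B^{m+1}; \mathbb{Z}_2) = 0$ used to produce $Y_0$; everything else is a direct transcription of the $m = 3$, $n = 2$ argument. I therefore expect the main obstacle to be purely bookkeeping: tracking basepoints through $\Gamma_i$ and verifying the three flat equivalences in the quotient space, both of which are formal once the analogous statements from Lemma \ref{Z(F)hom} are in hand.
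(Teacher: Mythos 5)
Your proposal is correct and follows essentially the same route as the paper: view $Z_0$ in $\Gamma_i \bar B^{m+1}$, observe that $Z_0 - \Diag(\bar B^{m+1}) - \Bouquet(\bar B^{m+1})$ is a $2n$-cycle in a space homotopy equivalent to $\mathbb{RP}^i$ and hence null-homologous, push a filling $Y_0$ forward along $\Gamma_i \Psi$, and conclude via the same three flat equivalences. The one small addition you make—spelling out why $\Gamma_i \bar B^{m+1} \simeq \mathbb{RP}^i$ via the contractible fiber—is a harmless elaboration of a step the paper states without proof.
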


This lemma is a generalization of Lemma \ref{Z(F)hom}.

\begin{proof}

Recall that $Z_0$ is a chain in $\Gamma_i S^m$ with $\partial Z_0 = \Diag(S^m) + \Bouquet(S^m)$.  We think of the sphere $S^m$ as the boundary of the closed
ball $B^{m+1}$, and so $\Gamma_i S^m \subset \Gamma_i \bar B^{m+1}$.  Therefore, we can think of $Z_0$ as a chain in $\Gamma_i \bar B^{m+1}$.  
The boundary of $\Diag(\bar B^{m+1})$ is $\Diag(S^m)$ and the boundary of $\Bouquet(\bar B^{m+1})$ is $\Bouquet(S^m)$.  Therefore, $Z_0 - \Diag(\bar B^{m+1}) + \Bouquet(\bar B^{m+1})$ is an (m+i+1)-cycle in $\Gamma_i \bar B^{m+1}$.  Since $\Gamma_i \bar B^{m+1}$ is homotopic to $\mathbb{RP}^i$, this cycle is null-homologous, and so there is a chain $Y_0$ with 

$$ \partial Y_0 = Z_0 - \Diag(\bar B^{m+1}) + \Bouquet(\bar B^{m+1}). $$

Let $\Psi: \bar B^{m+1} \rightarrow X$ be the characteristic map of $B^{m+1}$ to $X = B^{m+1} \cup_F S^n$.  In other words, the restriction of $\Psi$
to the boundary of $\bar B^{m+1}$ is the attaching map $F: S^m \rightarrow S^n \subset X$, and $\Psi$ is the inclusion map from the interior of $B^{m+1}$ into
$X$.  We choose base points so that $\Psi$ maps the basepoint of $\bar B^{m+1}$ to the basepoint of $X$.  
We consider $\Gamma_i \Psi: \Gamma_i \bar B^{m+1} \rightarrow \Gamma_i X$.  The image $\Gamma_i \Psi (\partial Y_0)$ is a null-homologous
cycle in $\Gamma_i X$.  This null-homologous cycle is essentially equal to $Z(F) - \Diag(X) + \Bouquet(X)$.  More precisely,

\begin{itemize}

\item $\Gamma_i \Psi Z_0$ is flat equivalent to $Z(F)$.

\item $\Gamma_i \Psi \Diag(\bar B^{m+1})$ is flat equivalent to $\Diag(X)$.

\item $\Gamma_i \Psi \Bouquet(\bar B^{m+1})$ is flat equivalent to $\Bouquet(X)$.

\end{itemize}

We review flat chains and cycles in Appendix \ref{appenflat}.  We say that two Lipschitz chains are flat equivalent if they define
the same flat chain.  The main point that we need is that if two Lipschitz cycles are flat equivalent, then they are homologous.  This follows 
easily from the definitions, and we review it in the appendix.  Given the three flat equivalences we just mentioned, it follows that 
$Z(F) - \Diag(X) + \Bouquet(X)$ is null-homologous, which is what we wanted to prove.

The three flat equivalences we just mentioned are straightforward.  First, $Z(F) = \Gamma_i F(Z_0) - \nu$, where $\nu$ is a chain of volume
zero.  Recall that the restriction of $\Psi$ to $S^m = \partial B^{m+1}$ is just $F$.  Hence the restriction of $\Gamma_i \Psi$ to $\Gamma_i S^m$
is $\Gamma_i F$, and so we see $Z(F) = \Gamma_i \Psi(Z_0) - \nu$.  Since $\nu$ has zero volume, $Z(F)$ is flat equivalent to $\Gamma_i \Psi(Z_0)$.

Next we consider the chain $\Psi(\bar B^{m+1})$.  If we consider it as a Lipschitz chain, then it is not literally a cycle, but its boundary
lies in $S^n \subset X$, and $n < m$.  Therefore, we can find an (m+1)-chain $\nu'$ of zero volume so that $\Psi(\bar B^{m+1}) - \nu'$ is a Lipschitz
cycle in $X$.  It is homologous to the fundamental homology class $[X]$.  Any two Lipschitz cycles in this homology class are flat equivalent (see
the appendix).  So $\Psi(\bar B^{m+1})$ is flat equivalent to the cycle $X$.  Similarly, $\Gamma_i \Psi \Diag(\bar B^{m+1})$ is flat equivalent to $\Diag(X)$,
and $\Gamma_i \Psi \Bouquet(\bar B^{m+1})$ is flat equivalent to $\Bouquet(X)$.  \end{proof}

We now know that $\SH(F) = \Gamma_i \Phi^*(\beta) [\Diag(X)] = \Gamma_i \Phi^*(\beta) [Z(F) + \Bouquet(X)]$.  
The cohomology class $\Gamma_i \Phi^* (\beta)$ vanishes on $\Bouquet (X)$ because $\Gamma_i \Phi$
maps $\Bouquet(X)$ to $\Bouquet(K)$, and $\beta$ vanishes on $\Bouquet(K)$.  Therefore, $\SH(F) =  \Gamma_i \Phi^* (\beta) [ Z(F)]$.  

The cycle $Z(F)$ lies in $\Gamma_i S^n$.  So next we consider the restriction of $\Gamma_i \Phi^*(\beta)$ to
$\Gamma_i S^n$.  We recall from Lemma \ref{homgi2} that $H^{2n}(\Gamma_i S^n; \mathbb{Z}_2) = \mathbb{Z}_2$, and a non-trivial representative is given by the fiber $S^n \times S^n$.  Recall that $\Gamma_i \Phi^* (\beta)$ restricted to the fiber $X \times X$ is $a \times a$.
Therefore, $\Gamma_i \Phi^*(\beta) (S^n \times S^n) = 1$.  Therefore, $Z(F)$ is homologous to $\SH(F) S^n \times S^n$.  \endproof

To summarize, our definition of the Steenrod-Hopf invariant in terms of the cycle $Z(F)$
agrees with the standard definition in terms of Steenrod squares on $X$.

Describing $\SH(F)$ in terms of the homology class of $Z(F)$ makes it more approachable
geometrically.  Next we will prove estimates about the geometry of $Z(F)$ in terms of $\Dil_k(F)$.
If $k \le (m+1)/2$, and if $\Dil_k(F)$ is sufficiently small, then we will be
able to use these estimates to construct a null-homology of $Z(F)$.

\section{Directed volume} \label{secdirvol}

In this section, we study the geometry of the cycle $Z(F)$ constructed in Section \ref{cyclez(f)}.   We will estimate the volume
of $Z(F)$.  If the volume of $Z(F)$ were sufficiently small, it would follow that $Z(F)$ was null-homologous and that $\SH(F) = 0$.  But it turns out that even if $\Dil_k(F)$ is very small (and $k \le (m+1)/2$), the volume of $Z(F)$ may still be arbitrarily large.  This point is the main difficulty in our proof.  We will get more information about the shape of $Z(F)$ by studying its directed volumes in different directions.  Later we will use this information to show that $Z(F)$ is homologically trivial.  We begin the section by defining directed volumes.

To get some intuition for directed volumes, we start with the simple case that $X$ is a compact submanifold of Euclidean space $\mathbb{R}^N$.  Suppose that $X$ has dimension $d$, and let $J$ be a d-tuple of integers from the set $1, .., N$.  Let $\mathbb{R}^J$ denote the
d-dimensional coordinate plane corresponding to $J$, and let $\pi_J$ denote
the orthogonal projection from $\mathbb{R}^N$ to $\mathbb{R}^J$.  Let $| \pi_J^{-1} (q) \cap X |$ denote the
number of points in $\pi_J^{-1}(q) \cap X$.  Then the $J$-volume of $X$ is given by the formula

$$ \Vol_J(X) := \int_{\mathbb{R}^J} | \pi_j^{-1}(q) \cap X | dq. $$

If $X$ is an oriented submanifold, then we can integrate differential forms over it.  We can
then redefine the $J$-volume as

$$\Vol_J (X) := \sup_{\| w \|_{\infty} \le 1 }  \int_X w(x) dx_J. $$

Next we want to define the directed volume of $C^1$ chains in $\mathbb{R}^N$.  
Suppose that $f$ is a $C^1$ map from the simplex $\Delta^d$ to $\mathbb{R}^N$.  Since the map $f$
may not be an embedding, we need to be slightly more careful in defining $\Vol_J f$.  

Let us recall the definition of $\Vol f$, written in a slightly non-standard way which generalizes for our purposes.  

We define the $k$-dilation of $f$ at a point $x$ by the formula

$$ \Dil_k f(x) = \sup_\omega  | \Lambda^k df_x^* \omega | ,$$

\noindent where the $\sup$ is taken over all $\omega \in \Lambda^k T^* \mathbb{R}^N$ with $| \omega | \le 1$.  

Then if $f: \Delta^d \rightarrow \mathbb{R}^N$ is a $C^1$ map, we define $\Vol_d f := \int_{\Delta} \Dil_d f(x) dx$.

If $J$ is a $k$-tuple of numbers from 1 to $N$, we can define $\Dil_J f(x)$ in a similar way:

$$ \Dil_J f (x) = | \Lambda^k df_x^* e_J^* |, $$

\noindent where $e_J^* \in \Lambda^k T^* \mathbb{R}^N$.  If $J = \{ j_1, ..., j_k \}$, then $e_J^* = e_{j_1}^* \wedge ... \wedge e_{j_k}^*$.  Here
$e_1, ..., e_N$ are the standard orthonormal basis of $T \mathbb{R}^N$ and $e_1^*, ..., e_N^*$ are the dual basis of $T^* \mathbb{R}^N$.  

If $f: \Delta^d \rightarrow \mathbb{R}^N$ and $J$ is a d-tuple, then we can define $\Vol_J f := \int_{\Delta} \Dil_J f(x) dx$.  

Now if $T = \sum_i c_i f_i$ is a mod 2 d-chain, then we define $\Vol_d T = \sum |c_i| \Vol_d f_i$, and $\Vol_J T = \sum_i |c_i| \Vol_J f_i$, where
$|1| = 1$ and $| 0 | = 0$.  (We can do the same for chains with coefficients in a group $G$ as long as we pick a norm on $G$.)

Some of this structure survives to Riemannian manifolds and products of Riemannian manifolds.  

If $f: \Delta^d \rightarrow (M,g)$, then define $\Dil_k f(x) = \sup_\omega |df_x^* \omega|$, where $\omega \in \Lambda^k T^*_{f(x)} M$ and $| \omega | \le 1$.  Then we can define $\Vol_d f := \int_\Delta \Dil_d f(x) dx$.  This agrees with the standard definition of the volume.  We define the volume of a chain $T$ as 
above.

Consider a product manifold $M = A \times B \times C$ with a product Riemannian metric.  (In this paper, we will work with products of three factors, but the definition works equally well with any number of factors.)  If $f$ is a $C^1$ map from $\Delta^d$ to $M$, we define $\Dil_{(a,b,c)} f(x)$ by the formula

$$\Dil_{(a,b,c)} f (x) := \sup | df_x^* (\alpha \wedge \beta \wedge \gamma) |, $$

\noindent where $\alpha \in \Lambda^a T^*A$ with $|\alpha| \le 1$, $\beta \in \Lambda^b T^* B$ with $| \beta | \le 1$, and $\gamma \in \Lambda^c T^* C$ with $| \gamma | \le 1$.  

If $a + b + c = d$ then we can define the $(a,b,c)$-volume of $f$ by $\Vol_{(a,b,c)} (f) := \int_{\Delta} \Dil_{(a,b,c)} f(x) dx$.  We can define
the (a,b,c)-volume of a chain by $\Vol_{(a,b,c)} (\sum c_i f_i) = \sum |c_i| \Vol_{(a,b,c)}(f_i)$.  

\begin{lemma} If $T$ is a d-chain in $(M^N, g)$, and $M$ is a product manifold $A \times B \times C$ with a product Riemannian metric,
then $\Vol_d T$ is comparable to $\sum_{a + b + c = d} \Vol_{(a,b,c)}(T)$.  
\end{lemma}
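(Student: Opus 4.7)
The plan is to reduce the claim to a pointwise statement about exterior algebra in the cotangent space and then integrate. Fix a $C^1$ map $f : \Delta^d \to M = A \times B \times C$ and a point $x \in \Delta^d$. Since the metric is a product, concatenating orthonormal bases of $T^*_{f(x)} A$, $T^*_{f(x)} B$, and $T^*_{f(x)} C$ yields an orthonormal basis of $T^*_{f(x)} M$, and the induced inner product on $\Lambda^d T^*_{f(x)} M$ admits the orthogonal decomposition
$$ \Lambda^d T^*_{f(x)} M = \bigoplus_{a+b+c=d} V_{(a,b,c)}, \qquad V_{(a,b,c)} := \Lambda^a T^*A \otimes \Lambda^b T^*B \otimes \Lambda^c T^*C, $$
in which the basic wedge products form an orthonormal basis of each summand.

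First I would prove the following pointwise two-sided bound, with constants depending only on the dimensions:
$$ c_1 \sum_{a+b+c=d} \Dil_{(a,b,c)} f(x) \;\le\; \Dil_d f(x) \;\le\; c_2 \sum_{a+b+c=d} \Dil_{(a,b,c)} f(x). $$
Set $N_{(a,b,c)}(f,x) := \sup\{|df_x^* \omega| : \omega \in V_{(a,b,c)},\ |\omega| \le 1\}$. The definition of $\Dil_{(a,b,c)}$ as a sup over simple products $\alpha \wedge \beta \wedge \gamma$ immediately gives $\Dil_{(a,b,c)} f(x) \le N_{(a,b,c)}(f,x)$. Conversely, any unit $\omega \in V_{(a,b,c)}$ is a linear combination of the finitely many orthonormal basic wedge products spanning $V_{(a,b,c)}$, each a simple product of unit-norm factors, with coefficients of absolute value $\le 1$; hence $N_{(a,b,c)}(f,x)$ and $\Dil_{(a,b,c)} f(x)$ are comparable. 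For the right inequality in the display, an arbitrary unit $\omega \in \Lambda^d T^*_{f(x)} M$ decomposes orthogonally as $\omega = \sum_{(a,b,c)} \omega_{(a,b,c)}$ with $|\omega_{(a,b,c)}| \le 1$, so $|df_x^* \omega| \le \sum |df_x^* \omega_{(a,b,c)}| \le \sum N_{(a,b,c)}(f,x) \lesssim \sum \Dil_{(a,b,c)} f(x)$, and taking the supremum over $\omega$ yields the bound. For the left inequality, each $V_{(a,b,c)}$ sits isometrically inside $\Lambda^d T^*_{f(x)} M$, so $\Dil_{(a,b,c)} f(x) \le N_{(a,b,c)}(f,x) \le \Dil_d f(x)$, and summing over the finitely many admissible triples gives the claim.

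Finally I would integrate the pointwise bound over $\Delta^d$ to get the corresponding comparison between $\Vol_d f$ and $\sum_{a+b+c=d} \Vol_{(a,b,c)} f$, and then use linearity of both sides in $|c_i|$ to extend from single simplex maps to arbitrary chains $T = \sum_i c_i f_i$. The only real bookkeeping is in the constant $c_2$, which counts the number of triples $(a,b,c)$ with $a+b+c=d$ together with the dimensions of the spaces $V_{(a,b,c)}$; none of these depend on $f$ or $T$, so comparability holds with constants depending only on $N$ and $d$. I don't expect a serious obstacle: the only subtle point is checking that the decomposition of $\Lambda^d$ of a direct sum is orthogonal, and this follows from the fact that basic wedge products in an orthonormal basis are themselves orthonormal.
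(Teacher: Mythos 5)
Your proposal is correct and follows essentially the same approach as the paper: reduce to the pointwise comparison between $\Dil_d f(x)$ and $\sum_{(a,b,c)} \Dil_{(a,b,c)} f(x)$, get the lower bound from $\alpha \wedge \beta \wedge \gamma$ being a unit-norm $d$-form, and get the upper bound by expanding a general $\omega$ in terms of basic wedge products. Your version is just slightly more explicit about the orthogonal decomposition $\Lambda^d T^*M = \bigoplus V_{(a,b,c)}$ and the intermediate quantity $N_{(a,b,c)}$, where the paper compresses that step to ``expand $\omega$ as a sum of $C(N)$ simple terms.''
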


\begin{proof} It suffices to check that for each $f: \Delta^d \rightarrow M$ and each $x$, $\Dil_d f(x)$ is comparable to $\sum_{a + b + c = d} \Dil_{(a,b,c)} f(x)$.  

It follows from the definition that $\Dil_{(a,b,c)} f(x) \le \Dil_d f(x)$ for each $(a,b,c)$, because $\alpha \wedge \beta \wedge \gamma \in \Lambda^d T^* M$ and has norm $\le 1$.

On the other hand, if $\omega$ is an element of $\Lambda^d T^* M$ with $|\omega| \le 1$, then we can expand $\omega$ as a sum of $C(N)$ 
terms $\alpha_i \wedge \beta_i \wedge \gamma_i$, where $\alpha_i \in \Lambda^{a_i} T^* A$, $\beta_i \in \Lambda^{b_i} T^*B$, etc., and $|\alpha_i|, |\beta_i|, |\gamma_i| \le 1$.  Therefore, $\Dil_d f(x) \le C(N) \sum_{a + b + c = d} \Dil_{(a,b,c)} f(x)$.  \end{proof}

It's worth mentioning the special case of polyhedral chains.  Suppose
first that we triangulate $A$, $B$, and $C$, and take the product polyhedral structure on $M$.
Then each d-cell of the structure is a product of simplices $\Delta^a \times \Delta^b \times \Delta^c$ with $a + b + c = d$, where $\Delta^a \subset A$, etc.  So each d-cell can be assigned a ``direction"
$(a,b,c)$ telling how many dimensions of the cell come from $A$, from $B$, and from $C$.  A polyhedral d-chain is a linear combination of these d-cells.  Since we are working mod 2, we can think of a polyhedral d-chain as just a subset of these cells.  The $(a,b,c)$-volume of a polyhedral chain is just the total volume of all d-cells in $T$ with ``direction" $(a,b,c)$.

The directed volumes $\Vol_{(a,b,c)}(T)$ behave well with respect to product maps.

\begin{lemma} \label{dirvolstre} Suppose $M_1 = A_1 \times B_1 \times C_1$ and $M_2 = A_2 \times B_2 \times C_2$ are Riemannian products.  Suppose that $\Phi: M_1 \rightarrow M_2$ is a product of maps $\Phi = \Phi_A \times \Phi_B \times \Phi_C$, where $\Phi_A: A_1 \rightarrow A_2$, etc.  Suppose that
$T$ is a d-chain in $M_1$ and that $a + b + c = d$.

Then $\Vol_{(a,b,c)}(\Phi(T)) \le (\Dil_a \Phi_A) (\Dil_b \Phi_B) (\Dil_c \Phi_C) \Vol_{(a,b,c)}(T)$.

\end{lemma}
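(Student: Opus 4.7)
The plan is to reduce the inequality to a pointwise bound on the dilation of the composition $\Phi\circ f$ for a single simplex $f:\Delta^d\to M_1$, then integrate and sum over the chain. Since both $\Vol_{(a,b,c)}$ and the action $T\mapsto \Phi(T)$ are defined termwise on a chain $T=\sum c_i f_i$ via $\Phi(T)=\sum c_i\,(\Phi\circ f_i)$, it suffices to prove the corresponding inequality for each $\Phi\circ f_i$. Fix then a $C^1$ map $f:\Delta^d\to M_1$ and a point $x\in\Delta^d$; we will show
\[
\Dil_{(a,b,c)}(\Phi\circ f)(x)\;\le\;\Dil_a(\Phi_A)\,\Dil_b(\Phi_B)\,\Dil_c(\Phi_C)\,\Dil_{(a,b,c)} f(x),
\]
and integrating over $\Delta^d$ gives the desired estimate for a single simplex.

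To prove this pointwise bound, let $y=f(x)=(y_A,y_B,y_C)\in M_1$ and pick test forms $\alpha\in\Lambda^aT^*_{\Phi_A(y_A)}A_2$, $\beta\in\Lambda^bT^*_{\Phi_B(y_B)}B_2$, $\gamma\in\Lambda^cT^*_{\Phi_C(y_C)}C_2$, each of norm at most $1$. Because $\Phi=\Phi_A\times\Phi_B\times\Phi_C$ is a product, the derivative $d\Phi_y$ is block-diagonal with respect to the product decomposition of tangent spaces, so
\[
d\Phi_y^*(\alpha\wedge\beta\wedge\gamma)\;=\;\bigl(d\Phi_{A,y_A}^*\alpha\bigr)\wedge\bigl(d\Phi_{B,y_B}^*\beta\bigr)\wedge\bigl(d\Phi_{C,y_C}^*\gamma\bigr).
\]
The pointwise bound $|\Lambda^a d\Phi_{A,y_A}^*|\le\Dil_a(\Phi_A)$ (and similarly for $B,C$) from the characterization of $k$-dilation in Section \ref{basicfacts} gives $|d\Phi_{A,y_A}^*\alpha|\le\Dil_a(\Phi_A)$ and so on. Writing $\alpha'=d\Phi_{A,y_A}^*\alpha/\Dil_a(\Phi_A)$ and analogously $\beta',\gamma'$, each factor has norm at most $1$, and
\[
d\Phi_y^*(\alpha\wedge\beta\wedge\gamma)\;=\;\Dil_a(\Phi_A)\Dil_b(\Phi_B)\Dil_c(\Phi_C)\,\alpha'\wedge\beta'\wedge\gamma'.
\]

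Applying $df_x^*$ and using the chain rule $d(\Phi\circ f)_x^*=df_x^*\circ d\Phi_y^*$ yields
\[
\bigl|d(\Phi\circ f)_x^*(\alpha\wedge\beta\wedge\gamma)\bigr|\;=\;\Dil_a(\Phi_A)\Dil_b(\Phi_B)\Dil_c(\Phi_C)\,\bigl|df_x^*(\alpha'\wedge\beta'\wedge\gamma')\bigr|,
\]
and the last factor is at most $\Dil_{(a,b,c)}f(x)$ by the very definition of the directed dilation. Taking the supremum over admissible $\alpha,\beta,\gamma$ establishes the pointwise inequality. Integrating in $x$ over $\Delta^d$ gives the single-simplex case, and summing $|c_i|$-weighted over the chain $T=\sum c_i f_i$ concludes the proof.

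There is no real obstacle here; the only point that requires care is checking that the product structure of $\Phi$ makes $d\Phi_y^*$ preserve the tensor-product decomposition of $\alpha\wedge\beta\wedge\gamma$, which is immediate because forms pulled back from different factors live in orthogonal subspaces of $\Lambda^d T^*M_1$ and $d\Phi_y$ respects the splitting. Once this is observed, the estimate follows by applying the standard $k$-dilation bound on each factor separately.
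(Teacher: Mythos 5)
Your proof is correct and follows essentially the same route as the paper: reduce to a single simplex and a pointwise bound, observe that the pullback of $\alpha\wedge\beta\wedge\gamma$ under the product map factors as a wedge of pullbacks, bound each factor by the corresponding directed dilation, and conclude via the definition of $\Dil_{(a,b,c)}$. The only cosmetic difference is that you normalize $\alpha'$, $\beta'$, $\gamma'$ to have unit norm (which silently assumes the dilations are nonzero; if, say, $\Dil_a\Phi_A=0$ then $\alpha'=0$ and the inequality holds trivially), whereas the paper keeps $\alpha'=d\Phi_A^*\alpha$ unnormalized and uses the homogeneity of the directed dilation directly.
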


\begin{proof} It suffices to prove this inequality for a map $f: \Delta^d \rightarrow M_1$.  It suffices to prove that $\Dil_{(a,b,c)} (\Phi \circ f) (x)
\le  (\Dil_a \Phi_A) (\Dil_b \Phi_B) (\Dil_c \Phi_C) \Dil_{(a,b,c)} f(x) $.  Let $\alpha \in \Lambda^a T^* A_2$ with $| \alpha | \le 1$ and analogously $\beta$ and $\gamma$.

$$| d (\Phi \circ f)_x^* (\alpha \wedge \beta \wedge \gamma) | = | df_x^* (\alpha' \wedge \beta' \wedge \gamma') |,$$

\noindent where $\alpha' = d \Phi_A^* \alpha$, and analogously $\beta'$ and $\gamma'$.  Now $| \alpha' | \le \Dil_a \Phi_A$, and analogously $|\beta'|$ and $| \gamma'|$.  Therefore, 

$$ | df_x^* (\alpha' \wedge \beta' \wedge \gamma') | \le (\Dil_a \Phi_A) (\Dil_b \Phi_B) (\Dil_c \Phi_C) \Dil_{(a,b,c)} f(x).$$

\end{proof}

Finally, we adapt this idea to twisted products $\Gamma_i S^n$.  The double cover of $\Gamma_i S^n$
is $S^i \times S^n \times S^n$.  We take the product of unit sphere metrics on $S^i \times S^n \times S^n$.
We can define the $(a,b,c)$-volume of a chain in $S^i \times S^n \times S^n$.  Let $I$ be the involution of
$S^i \times S^n \times S^n$ defined by

$$ I (\theta, x_1, x_2) = (- \theta, x_2, x_1). $$

Recall that $\Gamma_i S^n$ is the quotient of $S^i \times S^n \times S^n$ by the involution $I$.  
The antipodal map on $S^i$ has no effect on
directional volumes.  Switching the two $S^n$ factors does.  So we see that $\Vol_{(a,b,c)} (I(T))
= \Vol_{(a,c,b)} (T)$.  Therefore, in $\Gamma_i S^n$ we cannot make a meaningful distinction
between $\Vol_{(a,b,c)}$ and $\Vol_{(a,c,b)}$.  But except for this ambiguity, we can define
directional volumes.  For a chain $T$ in $\Gamma_i S^n$, let $\tilde T$ denote the double cover of $T$ in $S^i \times S^n \times S^n$ and define

$$\Vol_{(a,b,c)}(T) := (1/2) \Vol_{(a,b,c)} (\tilde T) = (1/2) \Vol_{(a,c,b)} (\tilde T). $$

The directed volumes behave well with respect to the maps $\Gamma_i F$.

\begin{lemma} \label{dirvolstr2} If $F: S^m \rightarrow S^n$, and $T$ is a d-chain in $\Gamma_i S^m$, then 

$$\Vol_{(a,b,c)} \Gamma_i F (T) \le \Dil_b(F) \Dil_c(F) \Vol_{(a,b,c)} (T).$$

\end{lemma}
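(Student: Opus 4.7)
The plan is to reduce the claim to Lemma \ref{dirvolstre} by passing to the double cover. Recall that $\Gamma_i S^m$ and $\Gamma_i S^n$ are both quotients of triple products by the involution $I(\theta, x_1, x_2) = (-\theta, x_2, x_1)$, and that by construction the induced map $\Gamma_i F$ is covered by the product map
$$\Phi := id \times F \times F : S^i \times S^m \times S^m \longrightarrow S^i \times S^n \times S^n.$$
Since the two quotient projections commute with $\Phi$ and $\Gamma_i F$, the double cover of the chain $\Gamma_i F(T)$ is exactly $\Phi(\tilde T)$, where $\tilde T$ is the double cover of $T$. This identification is the only real content that needs checking, and it is immediate from the commutativity of the relevant square of quotient maps.

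Once this is in hand, I would apply Lemma \ref{dirvolstre} directly to $\Phi$, decomposed as $\Phi_A = id$ on $S^i$, $\Phi_B = F$ on the first $S^m$ factor, and $\Phi_C = F$ on the second $S^m$ factor. This gives
$$\Vol_{(a,b,c)}(\Phi(\tilde T)) \;\le\; \Dil_a(id)\,\Dil_b(F)\,\Dil_c(F)\,\Vol_{(a,b,c)}(\tilde T).$$
One then notes that $\Dil_a(id) \le 1$: indeed, the singular values of $d(id)_x$ on $S^i$ are all equal to $1$, so $\Dil_a(id) = 1$ when $a \le i$, while $\Dil_a(id) = 0$ (and both sides vanish) when $a > i$.

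Finally, by the definition of directed volume on the twisted product, $\Vol_{(a,b,c)}(\Gamma_i F(T)) = \tfrac{1}{2}\Vol_{(a,b,c)}(\Phi(\tilde T))$ and $\Vol_{(a,b,c)}(T) = \tfrac{1}{2}\Vol_{(a,b,c)}(\tilde T)$. Dividing the previous inequality through by $2$ yields the claimed bound
$$\Vol_{(a,b,c)} \Gamma_i F(T) \;\le\; \Dil_b(F)\,\Dil_c(F)\,\Vol_{(a,b,c)}(T).$$
There is no real obstacle here; the only mild subtlety is that $\Vol_{(a,b,c)}$ on $\Gamma_i S^n$ is only well-defined up to swapping the last two indices, but the ambiguity is harmless because the bound is symmetric in $b$ and $c$ whenever it is applied symmetrically in $F$. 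The whole argument is essentially bookkeeping on top of Lemma \ref{dirvolstre}.
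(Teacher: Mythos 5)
Your proposal is correct and matches the paper's proof essentially line for line: both pass to the double cover, identify the double cover of $\Gamma_i F(T)$ with $(id\times F\times F)(\tilde T)$, apply Lemma \ref{dirvolstre}, and divide through by the factor of $2$ built into the definition of directed volume on $\Gamma_i S^n$. The only cosmetic difference is that you explicitly note $\Dil_a(id)\le 1$, a step the paper leaves implicit.
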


\begin{proof} Let $\tilde T$ be the double cover of $T$ in $S^i \times S^m \times S^m$.  Then the double cover of $\Gamma_i F (T)$
is $(id \times F \times F)(\tilde T)$.  Therefore, $\Vol_{(a,b,c)} \Gamma_i F(T)$ is bounded by $ (1/2) \Vol_{(a,b,c)} (id \times F \times F)(\tilde T)$.
By Lemma \ref{dirvolstre}, this is $ \le (1/2) \Dil_b (F) \Dil_c(F) \Vol_{(a,b,c)} \tilde T = \Dil_b (F) \Dil_c (F) \Vol_{(a,b,c)} T$.
\end{proof}

We introduced this language because the directed volumes of $Z(F)$ are related to the $k$-dilation of $F$.

\begin{prop} \label{dirvolz(f)} Let $F: S^m \rightarrow S^n$ be a $C^1$ mapping and let $Z(F)$ be the
mod 2 cycle in $\Gamma_i S^n$ defined in Section \ref{cyclez(f)}.  (Recall that $i = 2n - m -1$.)  Then the
directional volumes of $Z(F)$ are bounded by the following inequality.

$$\Vol_{(a,b,c)}(Z(F)) \le C(m) \Dil_b (f) \Dil_c(f) .$$
\end{prop}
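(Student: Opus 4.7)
The starting point is the decomposition $Z(F) = \Gamma_i F(Z_0) - \nu$ from Section \ref{cyclez(f)}, where $Z_0$ is a fixed $(2n)$-chain in $\Gamma_i S^m$ (depending only on $m$ and $n$) with $\partial Z_0 = \Diag(S^m) + \Bouquet(S^m)$, and $\nu$ is a chain of zero $(2n)$-volume. First I observe that any directed volume is dominated by the total volume: from the definitions, $\Dil_{(a,b,c)} f(x) \le \Dil_{a+b+c} f(x)$, so $\Vol_{(a,b,c)}(\nu) \le \Vol_{2n}(\nu) = 0$ whenever $a+b+c = 2n$. Hence $\nu$ contributes nothing to the directed volume and it suffices to bound $\Vol_{(a,b,c)}(\Gamma_i F(Z_0))$.

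The second step is to apply Lemma \ref{dirvolstr2}. The double cover of $\Gamma_i F$ is the product map $\mathrm{id} \times F \times F : S^i \times S^m \times S^m \to S^i \times S^n \times S^n$, so in the notation of Lemma \ref{dirvolstre} we have $\Phi_A = \mathrm{id}$ (with $\Dil_a \Phi_A = 1$), $\Phi_B = \Phi_C = F$. Lemma \ref{dirvolstr2} therefore gives
$$\Vol_{(a,b,c)}(\Gamma_i F(Z_0)) \le \Dil_b(F)\,\Dil_c(F)\,\Vol_{(a,b,c)}(Z_0).$$
Combining this with the vanishing contribution from $\nu$ yields the claimed inequality provided that $\Vol_{(a,b,c)}(Z_0) \le C(m)$ for every admissible triple $(a,b,c)$ with $a+b+c = 2n$.

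The remaining task, and the one place where real work is required, is to exhibit a Lipschitz (or polyhedral) representative $Z_0$ with directed volumes bounded by a constant depending only on $m$. I would do this by following the inductive construction in the proof that $\Diag(S^m)$ and $\Bouquet(S^m)$ are homologous in $\Gamma_i S^m$. Pick once and for all a fixed Lipschitz $(m+1)$-chain $T_0$ in $S^m \times S^m$ realizing the homology between $\Diag(S^m)$ and $\Bouquet(S^m)$. Then form $B^i \times T_0$ inside $B^i \times S^m \times S^m$ and push it down to $\Gamma_i S^m$; the boundary has the desired diagonal and bouquet pieces together with a leftover cycle in $\Gamma_{i-1} S^m$, which one fills in inductively using the vanishing of homology given by Lemma \ref{homgi}. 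At each stage the chain one produces is a bounded Lipschitz chain that depends only on $m$ and $n$, so all of its directed volumes are finite constants depending only on $m$. Taking the maximum over the finitely many triples $(a,b,c)$ with $a+b+c=2n$ gives the constant $C(m)$.

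The argument is essentially mechanical once Lemma \ref{dirvolstr2} is in place; the only genuine content is the choice of a geometrically explicit $Z_0$ with bounded directed volumes, and this is an $F$-independent topological input. No isoperimetric or deformation-theoretic ingredient is needed at this stage -- those come in later when one converts these directed-volume bounds into a null-homology of $Z(F)$.
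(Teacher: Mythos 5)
Your proposal follows exactly the paper's route: you observe that $\nu$, having zero $(2n)$-volume, contributes zero directed volume in every direction; you then apply Lemma~\ref{dirvolstr2} to reduce the bound on $\Vol_{(a,b,c)}(\Gamma_i F(Z_0))$ to $\Dil_b(F)\Dil_c(F)\Vol_{(a,b,c)}(Z_0)$; and you conclude since $Z_0$ is an $F$-independent chain with bounded directed volumes. The only difference is that you spell out in more detail why one may take $Z_0$ to be a fixed Lipschitz chain with directed volumes depending only on $m$, which the paper compresses into the remark that $Z_0$ is independent of $F$; this elaboration is harmless and the argument is correct.
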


\proof  Recall that $Z(F)$ is $\Gamma_i F (Z_0) + \nu$, where $Z_0$ is a $2n$-chain in $\Gamma_i S^m$ and
$\nu$ is a $2n$-chain with zero volume.  The chain $\nu$ contributes zero directed volume in any direction.
Using Lemma \ref{dirvolstr2}, we see that $\Vol_{(a,b,c)}(Z(F)) \le \Dil_b(F) \Dil_c(F) \Vol_{(a,b,c)}(Z_0)$.  
But $Z_0$ is independent of $F$, and so $\Vol_{(a,b,c)}(Z_0) \le C(m)$.  \endproof

Estimating all the directed volumes of $Z(F)$ allows us to estimate its total volume.

\begin{Cor} If $\Dil_{m+1-n}(F) \le 1$, then the volume of $Z(F)$ is bounded as follows:

$$\Vol Z(F) < C(m) \Dil_{m+1-n} (F)^{\frac{m+1}{m+1-n}}.$$

\end{Cor}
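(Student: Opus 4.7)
The plan is to decompose the total volume of $Z(F)$ as a sum of directed volumes $\Vol_{(a,b,c)}(Z(F))$ over triples with $a+b+c = 2n$, and then apply Proposition \ref{dirvolz(f)} together with the monotonicity of $k$-dilations from Proposition \ref{dilkdill}. Let me write $k_0 = m+1-n$ for brevity.

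First I would observe that only a bounded number (depending on $m$) of triples $(a,b,c)$ contribute, and the relevant constraints are fairly restrictive. Since $Z(F)$ is a $2n$-cycle in $\Gamma_i S^n$ and its double cover lies in $S^i \times S^n \times S^n$ with $i = 2n-m-1$, any nonzero $\Vol_{(a,b,c)}$ term satisfies $a \le i$, $b \le n$, $c \le n$, and $a+b+c = 2n$. The condition $a \le 2n - m - 1$ forces $b + c \ge m+1$, and combined with $b,c \le n$ this forces both $b \ge k_0$ and $c \ge k_0$. This is the crucial observation: every relevant direction uses at least $k_0$-many $S^n$-derivatives in each of the two $S^n$ factors.

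Next, since we are assuming $\Dil_{k_0}(F) \le 1$, the monotonicity inequality $\Dil_k(F)^{1/k} \ge \Dil_l(F)^{1/l}$ for $k < l$ (Proposition \ref{dilkdill}) yields $\Dil_b(F) \le \Dil_{k_0}(F)^{b/k_0}$ and likewise for $c$. Combining with Proposition \ref{dirvolz(f)},
\begin{equation*}
\Vol_{(a,b,c)}(Z(F)) \le C(m)\, \Dil_b(F)\Dil_c(F) \le C(m)\, \Dil_{k_0}(F)^{(b+c)/k_0}.
\end{equation*}
Because $\Dil_{k_0}(F) \le 1$ and $b+c \ge m+1$, the exponent may be decreased to give
\begin{equation*}
\Vol_{(a,b,c)}(Z(F)) \le C(m)\, \Dil_{k_0}(F)^{(m+1)/k_0}.
\end{equation*}

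Finally, summing over the $O_m(1)$ admissible triples and using the lemma comparing $\Vol_{2n}(T)$ to $\sum_{a+b+c = 2n}\Vol_{(a,b,c)}(T)$, I obtain $\Vol Z(F) \le C(m)\, \Dil_{m+1-n}(F)^{(m+1)/(m+1-n)}$, as claimed. I don't foresee a genuine obstacle here: the whole argument is bookkeeping on the constraints $a+b+c=2n$, $a \le i$, $b,c \le n$, with the single substantive input being that these constraints force $b,c \ge k_0$ so that the monotonicity of $\Dil_k^{1/k}$ can be applied in the direction that is useful under the assumption $\Dil_{k_0}(F) \le 1$.
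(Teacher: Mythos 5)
Your proposal is correct and follows essentially the same argument as the paper: decompose the total volume into directed volumes $\Vol_{(a,b,c)}(Z(F))$, use the constraints $a+b+c=2n$, $a\le 2n-m-1$, $b,c\le n$ to deduce $b,c\ge m+1-n$, apply Proposition \ref{dirvolz(f)} together with the monotonicity of $\Dil_k^{1/k}$ from Proposition \ref{dilkdill}, and then use $\Dil_{m+1-n}(F)\le 1$ with $b+c\ge m+1$ to bound the exponent. The only cosmetic difference is that the paper abbreviates $D=\Dil_{m+1-n}(F)^{1/(m+1-n)}$ before carrying out the same bookkeeping.
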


\proof  The possible directed volumes of $Z(F)$ are given by directions
$(a,b,c)$ where $a + b + c = 2n$, and $a \le 2n-m-1$, $b \le n$, and $c \le n$.
From the first inequality, we see that $b+c \ge m+1$.  Since $b,c \le n$ we conclude that
$b$ and $c$
are each at least $m+1 -n$.  Let $D = \Dil_{m+1-n}(F)^{\frac{1}{m+1-n}}$.  Then
$\Dil_b(F) \le D^b$ for all $b \ge m+1-n$ (by Proposition \ref{dilkdill}).  
So for every $(a,b,c)$,
$\Vol_{(a,b,c)}(Z(F)) \le C(m) D^b D^c$.  Since $D \le 1$, this quantity is $\le C(m) D^{m+1}$, which is the
right-hand side.  Since every directed volume obeys the desired bound, so
does the total volume. \endproof

Our bound for the volume of $Z(F)$ has the following corollary connecting $\SH(F)$ with some $k$-dilations of $F$.

\begin{Cor} If the Steenrod Hopf invariant $\SH(F)$ is non-zero, then $\Dil_{m+1-n}(F) \ge c(m) > 0$.

\end{Cor}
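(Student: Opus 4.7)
The plan is to combine the previous corollary with the topological interpretation of $Z(F)$ established in Proposition \ref{homz(f)=sh(f)}. The previous corollary tells us that the total volume of $Z(F)$ goes to zero as $\Dil_{m+1-n}(F) \to 0$, since the exponent $(m+1)/(m+1-n)$ is strictly greater than $1$. Meanwhile, Proposition \ref{homz(f)=sh(f)} identifies $\SH(F)$ with the homology class $[Z(F)] \in H_{2n}(\Gamma_i S^n; \mathbb{Z}_2) = \mathbb{Z}_2$. So the result will follow once we know that the non-trivial class in this group cannot be represented by a mod-2 cycle of arbitrarily small volume.

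Concretely, I would first observe that $\Gamma_i S^n$ is a fixed compact Riemannian manifold (depending only on $m$ and $n$, hence on $m$ since $i = 2n-m-1$). On any compact Riemannian polyhedron there is a positive constant $\epsilon(m) > 0$ such that every mod-2 Lipschitz $2n$-cycle with volume less than $\epsilon(m)$ is null-homologous: this is a standard consequence of the Federer--Fleming deformation theorem, which the paper plans to use extensively in later sections and reviews in an appendix. One deforms a small cycle into the $(2n-1)$-skeleton of a fixed fine cubical subdivision of $\Gamma_i S^n$, where there are no $2n$-cycles at all.

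Given this, the argument is short: suppose $\Dil_{m+1-n}(F)$ is smaller than some threshold $c(m)$, chosen so that the previous corollary guarantees
$$\Vol Z(F) < C(m) \Dil_{m+1-n}(F)^{(m+1)/(m+1-n)} < \epsilon(m).$$
Then $Z(F)$ is null-homologous in $\Gamma_i S^n$, so by Proposition \ref{homz(f)=sh(f)} we must have $\SH(F) = 0$. Contrapositively, $\SH(F) \neq 0$ forces $\Dil_{m+1-n}(F) \ge c(m)$.

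The main obstacle is purely the filling-volume lower bound $\epsilon(m)$; everything else is bookkeeping between the preceding corollary and Proposition \ref{homz(f)=sh(f)}. Since the paper advertises a full development of the Federer--Fleming deformation machinery (and its small modifications) in later sections, the cleanest presentation is to invoke it here as a black box, defer technicalities to those sections, and present this corollary as an immediate consequence of the volume bound together with the topological identification of $[Z(F)]$ with $\SH(F)$.
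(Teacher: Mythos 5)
Your proposal is correct and follows exactly the paper's own argument: bound $\Vol Z(F)$ via the preceding corollary, note that small-volume cycles in the compact manifold $\Gamma_i S^n$ are null-homologous by the Federer--Fleming deformation theorem, and conclude via Proposition~\ref{homz(f)=sh(f)} that $\SH(F) = 0$. The paper states this in two sentences; you spell out the same three steps with a bit more care about constants, which is harmless.
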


\proof If $\Dil_{m+1-n}(F)$ is very small, then the volume of $Z(F)$ is very small.  By the Federer-
Fleming deformation theorem it follows that $Z(F)$ is null-homologous.  Hence $\SH(F) = 0$. \endproof

This estimate is much weaker than the one we want to prove, but it still has some content.  For example,
if $F$ is a homotopically non-trivial map from $S^m$ to $S^{m-1}$, then the corollary says that $\Dil_2(F)$
is bounded below.  The sharp theorem says that $\Dil_{k}(F)$ is bounded below for $k \le
\frac{m+1}{2}$.  The corollary gives the sharp value of $k$ when $m = 3$ or $4$, but not when $m \ge 5$.
If we look at a map from $S^m$ to $S^{m-3}$, with non-trivial Steenrod-Hopf
invariant,  then the corollary says
that $\Dil_4(F)$ is bounded below.  The theorem says that $\Dil_k(F)$ is bounded below
for all $k \le \frac{m+1}{2}$.  The corollary gives the sharp value of $k$ when $m = 7,8$, but not when $m \ge 9$.  

This corollary includes our first new lower bound on $k$-dilation: a map $F: S^8 \rightarrow S^5$ with non-zero Steenrod-Hopf invariant must have $\Dil_4(F) \ge c > 0$.  

Now we suppose that $\Dil_k(F)$ is tiny for some $k \le (m+1)/2$, and
we wish to prove that $Z(F)$ is null-homologous.  We cannot bound the total volume of $Z(F)$, but we can
bound the volume in some directions.

\begin{lemma} \label{contdir} Suppose that $\Dil_k(F) \le 1$.  If $b$ and $c$ are each $\ge k$, then
the directed volume $\Vol_{(a,b,c)} Z(F)$ is  $\lesssim \Dil_k(F)^2$.
\end{lemma}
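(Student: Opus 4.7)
The plan is to derive Lemma \ref{contdir} directly by combining the two ingredients already in hand: the directional-volume bound for $Z(F)$ from Proposition \ref{dirvolz(f)}, and the monotonicity inequality $\Dil_l(F)^{1/l} \le \Dil_k(F)^{1/k}$ for $k \le l$ from Proposition \ref{dilkdill}.

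First I would invoke Proposition \ref{dirvolz(f)} to obtain
$$\Vol_{(a,b,c)} Z(F) \le C(m)\, \Dil_b(F)\, \Dil_c(F).$$
Since both $b$ and $c$ are at least $k$, I would apply Proposition \ref{dilkdill} to each factor, giving $\Dil_b(F) \le \Dil_k(F)^{b/k}$ and $\Dil_c(F) \le \Dil_k(F)^{c/k}$. Here the hypothesis $\Dil_k(F) \le 1$ is crucial: it ensures that raising $\Dil_k(F)$ to an exponent $\ge 1$ only makes the quantity smaller, so $\Dil_k(F)^{b/k} \le \Dil_k(F)$ and similarly for $c$. Multiplying the two estimates then yields $\Vol_{(a,b,c)} Z(F) \le C(m)\, \Dil_k(F)^2$, which is exactly what is claimed.

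There is essentially no obstacle here; the only subtle point is remembering that the comparison $\Dil_k(F)^{b/k} \le \Dil_k(F)$ requires $\Dil_k(F) \le 1$, which is precisely the normalizing hypothesis of the lemma. In the regime where $\Dil_k(F) > 1$ one would instead get the weaker bound $\Dil_k(F)^{(b+c)/k}$, but that is not relevant for our application, where we are ultimately going to let $\Dil_k(F) \to 0$. Thus the lemma is a quick corollary of the two earlier propositions, and the role it plays in the larger argument is to identify $(a,b,c)$ with both $b,c \ge k$ as the \emph{good} directions in which $Z(F)$ has controllably small volume; the remaining directions, where either $b < k$ or $c < k$, will be the \emph{bad} directions treated separately in the subsequent sections.
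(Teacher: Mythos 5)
Your proof is correct and follows essentially the same route as the paper: combine Proposition \ref{dirvolz(f)} with the monotonicity of $k$-dilations from Proposition \ref{dilkdill}, using $\Dil_k(F) \le 1$ to conclude $\Dil_b(F), \Dil_c(F) \le \Dil_k(F)$. One small caution about the closing remark: in the paper's terminology ``good'' and ``bad'' directions are defined by whether $|b-c| \ge 2$ or $|b-c| \le 1$, not by whether $b,c \ge k$. What Lemma \ref{dirvolbad} actually shows is that every \emph{bad} direction (in the paper's sense) automatically satisfies $b,c \ge k$, so the present lemma controls the volume of $Z(F)$ in the bad directions; the good directions are precisely the ones where $\Dil_k(F)$ gives no control. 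Your labels are therefore flipped relative to the paper's conventions, though the underlying picture you describe is right.
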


\proof By Proposition \ref{dirvolz(f)}, the directed volume $\Vol_{(a,b,c)} Z(F)$ is $\lesssim 
\Dil_b(F) \Dil_c(F)$.  Since $b,c \ge k$, we know that $\Dil_b(F)^{1/b} \le \Dil_k(F)^{1/k}$ (by Proposition \ref{dilkdill}).  
Since $\Dil_k(F) \le 1$, we see that $\Dil_b(F)$
and $\Dil_c(F)$ are both $\le \Dil_k(F)$. \endproof

We call a direction $(a,b,c)$ bad if $| b-c | \le 1$ and good if $| b-c | \ge 2$.  If $k \le (m+1)/2$, then the directed volume of $Z(F)$
in the bad directions is controlled by the following lemma.

\begin{lemma} \label{dirvolbad} If $k \le (m+1)/2$, and if $(a,b,c)$ is a bad direction in $\Gamma_i S^n$, then $b$ and $c$ are $\ge k$.
Therefore, if $\Dil_k F \le 1$, then

$$ \Vol_{(a,b,c)} (Z(F)) \lesssim \Dil_k(F)^2. $$

\end{lemma}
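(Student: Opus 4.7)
The plan is to unpack what ``bad direction'' means for chains in $\Gamma_i S^n$ and show the claimed volume bound follows immediately from Lemma \ref{contdir}. The combinatorics is short, and the only real content is to check a dimension inequality; once that is in place, the geometric estimate is already in hand.

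First I would recall the constraints on an admissible direction $(a,b,c)$. Since $Z(F)$ is a $2n$-chain in $\Gamma_i S^n$, whose double cover lives in $S^i \times S^n \times S^n$, we must have $a + b + c = 2n$ with $a \le i = 2n - m - 1$ and $b, c \le n$. The first two relations give $b + c = 2n - a \ge m + 1$.

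Next I would invoke the hypothesis $|b - c| \le 1$ defining a bad direction. Assume without loss of generality $b \le c$, so that $c \le b + 1$. Then $2b + 1 \ge b + c \ge m + 1$, which gives $b \ge m/2$. Since $b$ is an integer, this forces $b \ge \lceil m/2 \rceil \ge (m+1)/2$ (checking both parities of $m$: when $m$ is even, $k \le (m+1)/2$ and integrality of $k$ yield $k \le m/2 = \lceil m/2 \rceil \le b$; when $m$ is odd, $m/2$ is a half-integer so $b \ge (m+1)/2$ directly). In either case $b \ge k$, and since $c \ge b$, also $c \ge k$.

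Finally, with both $b, c \ge k$ established, Lemma \ref{contdir} applies verbatim and yields $\Vol_{(a,b,c)}(Z(F)) \lesssim \Dil_k(F)^2$, completing the proof. There is no real obstacle here; the only subtle point is making sure the integrality argument in the ``$m$ even'' case correctly exploits that $k$ is an integer bounded by $(m+1)/2$. This lemma is best viewed as the bookkeeping step that isolates the directions in which $\Dil_k F$ gives no usable control: namely the good directions with $|b-c| \ge 2$, which will have to be dealt with by the surgery argument of Section 7 rather than by a volume bound.
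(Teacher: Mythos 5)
Your proof is correct and takes essentially the same approach as the paper (reduce to the dimension bookkeeping $a+b+c=2n$, $a\le i$, $b,c\le n$, then split on the parity of $m$ to compare $b$ and $c$ with $k$). One small slip: the displayed chain ``$b \ge \lceil m/2\rceil \ge (m+1)/2$'' is false when $m$ is even, since then $\lceil m/2\rceil = m/2 < (m+1)/2$; your parenthetical correctly patches this by instead using integrality of $k$ to get $k \le m/2$, but the displayed inequality as written should just be dropped in favor of the parity argument.
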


\begin{proof} Once we know that $b,c \ge k$, then the estimate follows from Lemma \ref{contdir}.
Since $Z(F)$ is a cycle of dimension $2n$, $a + b + c = 2n$.  We know that
$a \le i = 2n - m -1$.  Therefore, $ b+c \ge m+1$.  Since $(a,b,c)$ is a bad direction, $|b -c | \le 1$. 
It is just an elementary computation to check that $b$ and $c$ are at least $k$.

There are two cases depending on whether $m$ is even or odd.

If $m$ is even, then $k \le m/2$.  We know that $2b + 1 \ge b+c \ge m+1$, and we see that $b \ge m/2 \ge k$.  By a symmetrical
argument, $c \ge k$.  

If $m$ is odd, and $b+c = m+1$, then we must have $b = c$.  In this case, $b = c = (m+1)/2 \ge k$.  If $m$ is odd
and $b + c \ge m+2$, then we see that $2 b+1 \ge b+c \ge m+2$, and so $b \ge (m+1)/2 \ge k$.  By a symmetrical argument, $c \ge k$.

\end{proof}

We know that $Z(F)$ has only a small volume in bad directions, and we want to prove that $Z(F)$ is null-homologous.  As
a toy problem, let's consider a polyhedral $2n$-cycle $X$ with zero volume in the bad directions.  The next proposition shows
that such a cycle is null-homologous.  To make the situation precise, suppose we choose any triangulation of $S^n$.  Then let us
choose any triangulation of $S^i$ which is invariant with respect to the antipodal map.  Taking the product, we get a polyhedral structure on
$S^i \times S^n \times S^n$ which is invariant with respect to our involution $I(\theta, x_1, x_2) = (- \theta, x_2, x_1)$.  So it descends
to give a polyhedral structure on $\Gamma_i S^n$.  Each face in the polyhedral structure has a direction $(a,b,c)$ well defined up to the equivalence $(a,b,c) \sim (a,c,b)$.  Each face is either good or bad (because $|b-c| \le 1$ if and only if $|c-b| \le 1$).  

\begin{prop} \label{nobadmodel}
Let $X$ be a polyhedral (2n)-cycle in $\Gamma_i S^n$ which does not contain any $2n$-faces in bad directions.
Then $X$ is homologically trivial.
\end{prop}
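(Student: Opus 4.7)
The plan is to use mod-2 Poincar\'e duality on the closed $(i+2n)$-manifold $\Gamma_i S^n$. By Lemma~\ref{homgi2}, $H_{2n}(\Gamma_i S^n;\mathbb{Z}_2)=\mathbb{Z}_2$, so the nondegenerate mod-2 intersection pairing $H_{2n}\otimes H_i \to \mathbb{Z}_2$ reduces the claim to exhibiting a single nontrivial $i$-cycle $Y$ whose intersection number with $X$ vanishes mod~2.

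For this dual cycle I would take the section $\mathbb{RP}^i_{x_0}\subset\Gamma_i S^n$ of the fiber bundle $p:\Gamma_i S^n\to\mathbb{RP}^i$ defined by $[\theta]\mapsto[(\theta,x_0,x_0)]$, where $x_0\in S^n$ is a generic basepoint; its full preimage in the double cover $S^i\times S^n\times S^n$ is the connected submanifold $S^i\times\{x_0\}\times\{x_0\}$. Since $\mathbb{RP}^i_{x_0}$ meets the generating fiber $S^n\times S^n$ transversely in the single point $[(x_0,x_0)]$, the class $[\mathbb{RP}^i_{x_0}]$ pairs to $1$ against the generator of $H_{2n}$, so it generates $H_i(\Gamma_i S^n;\mathbb{Z}_2)$ and is the Poincar\'e dual we need.

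It remains to show $X\cdot\mathbb{RP}^i_{x_0}=0$. Passing to the double cover, a $2n$-face $\sigma$ of $X$ of direction $(a,b,c)$ lifts to product cells $\Delta^a\times\Delta^b\times\Delta^c\subset S^i\times S^n\times S^n$, while the section lifts to $S^i\times\{x_0\}\times\{x_0\}$. The two lifted pieces meet only when $x_0\in\Delta^b\cap\Delta^c$, and for generic $x_0$ this fails unless both $\Delta^b$ and $\Delta^c$ are full $n$-dimensional cells; combined with $a+b+c=2n$ this forces $(a,b,c)=(0,n,n)$, which is bad since $|n-n|=0$. By hypothesis $X$ contains no $(0,n,n)$-face, so the intersection is empty and $[X]=0$. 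The one point to verify carefully is that a generic $x_0$ really places $\mathbb{RP}^i_{x_0}$ in transverse position with all $2n$-faces of $X$ and that the geometric count matches the homological pairing; both follow from a standard general-position argument, since the set of basepoints lying in some positive-codimension cell of the triangulation of $S^n$ has measure zero, and at any genuine intersection the tangent plane of a $(0,n,n)$-face is purely vertical while the section is purely horizontal, making transversality automatic.
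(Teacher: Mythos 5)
Your proof is correct, and it takes a genuinely different route from the paper. The paper does not invoke Poincar\'e duality at all: it passes to the double cover $\tilde X \subset S^i\times S^n\times S^n$, splits it as $\tilde X = \tilde X_1 + \tilde X_2$ according to $b<c$ or $b>c$, and uses the gap $|b-c|\geq 2$ to argue that $\partial\tilde X_1$ and $\partial\tilde X_2$ are disjoint (a boundary face changes one of $a,b,c$ by one, so the sign of $b-c$ is preserved); hence each $\tilde X_j$ is itself a cycle, and each is trivial in $H_{2n}(S^i\times S^n\times S^n)$ because its $(0,n,n)$-volume vanishes. Your argument instead works directly in $\Gamma_i S^n$ via the nondegenerate mod-$2$ intersection pairing, constructing the explicit $H_i$-generator $\mathbb{RP}^i_{x_0}$ and observing that for generic $x_0$ it is \emph{disjoint} from any cycle containing no $(0,n,n)$-face. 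Two observations about the comparison: first, your hypothesis is strictly weaker than the paper's --- you only need to exclude $(0,n,n)$-faces, not all bad faces with $|b-c|\leq 1$ --- so you prove a slightly sharper warmup statement with a shorter argument. Second, the paper's more hands-on decomposition is what actually generalizes to the next section, where $Z(F)$ has small but nonzero volume in bad directions: there the double cover really is ``two halves joined by a thin bridge,'' and the perpendicular pair inequality is used to cap each half separately. A small amount of bad volume could still cause $Z(F)$ to meet $\mathbb{RP}^i_{x_0}$, and a volume bound on the $(0,n,n)$-directed part does not by itself control a mod-$2$ intersection count with a fixed section, so the Poincar\'e duality argument does not robustly survive the perturbation. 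Your proof is a clean alternative for this toy model, but the paper's is the one whose skeleton reappears in the quantitative theorem.
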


\proof Let $\tilde X$ be the double cover of $X$ in $S^i \times S^n \times S^n$.  So $\tilde X$
is a polyhedral cycle with no faces in the bad directions.  The good directions $(a,b,c)$
all have $b \not= c$, so we can divide them into two categories: the directions where $b < c$, and
the directions where $b > c$.  We let $\tilde X_1$ be the chain given by adding all the faces
of $\tilde X$ where $b < c$, and we let $\tilde X_2$ be the chain given by adding the faces
where $b > c$.   So $\tilde X = \tilde X_1 + \tilde X_2$.

Now comes the crucial point.  Because of the estimate $|b-c|\ge 2$ for good directions, the
boundaries $\partial \tilde X_1$ and $\partial \tilde X_2$ are {\it disjoint}!  If we consider a face $\Box_1$ in
$\tilde X_1$ lying in direction $(a_1, b_1, c_1)$, then we know that $b_1 \le c_1 - 2$.
Now consider a face of $\partial \Box_1$, and say that it lies in direction $(\bar a_1, \bar b_1, \bar c_1)$.  The vector
$(\bar a_1, \bar b_1, \bar c_1)$ can be found by taking $(a_1, b_1, c_1)$ and subtracting 1 from one of the three
 entries.  Therefore, $\bar b_1 < \bar c_1 $.  We repeat the analysis for a face $\Box_2$ in $\tilde X_2$.  It has
 direction $(a_2, b_2, c_2)$ with $b_2 \ge c_2 + 2$.  A face in the boundary of $\Box_2$ has direction $(\bar a_2, \bar b_2,
 \bar c_2)$, and $\bar b_2 > \bar c_2$.  Hence $\partial \tilde X_1$ and $\partial \tilde X_2$ have no faces in common.  Since $\partial
\tilde X_1 + \partial \tilde X_2 = \partial \tilde X = 0$, we see that $\tilde X_1$ and $\tilde X_2$
are each cycles!

Now $\tilde X$ is a double cover of $X$.  Each face of $X$ lifts to two faces of $\tilde X$,
one lying in $\tilde X_1$ and one lying in $\tilde X_2$.  For example, if $X$ is 8-dimensional, 
a face of $X$ in the direction $(2,2,4) = (2,4,2)$ lifts to two faces of $\tilde X \subset S^2 \times S^4
\times S^4$, one in direction $(2,2,4)$ and one in direction $(2,4,2)$.  So we see that the projection
of $\tilde X_1$ onto $X$ is a degree 1 map.

But it's easy to check that $\tilde X_1$ is null-homologous in $S^i \times S^n \times S^n$.  Since
$\tilde X_1$ has dimension $2n$, and $i < n$, $\tilde X_1$ could be homologically non-trivial
only if its projection to $S^n \times S^n$ had non-zero degree.  But $\tilde X_1$ is a (2n)-cycle 
with no volume in the $(0, n, n)$ direction - so its projection to $S^n \times S^n$ has measure
zero. \endproof

The cycle $X$ in the last proposition had no volume in the bad directions.  Our actual cycle $Z(F)$ has a
small but non-zero volume in the bad directions.  The proposition does not directly apply to $Z(F)$, but one may
still hope that $Z(F)$ is rather similar to $X$.  In the next two sections, we will modify the above argument to show that $Z(F)$ is homologically trivial.

The key point in the proof of Proposition \ref{nobadmodel} was that the double cover of $X$ split into separate cycles $\tilde X_1$ and $\tilde X_2$.  The double cover of $Z(F)$ will not literally split into two separate cycles.  Instead, the double cover will look like two large pieces joined by a thin bridge.  We will have to prove a suitable estimate about the shape of the bridge.

The estimate on the shape of the bridge involves fairly hard work.  I made some attempt to find a softer argument.  For example, I tried to find a way to approximate the cycle $Z(F)$ by a polyhedral cycle $X$ with no volume in the bad directions.  But I couldn't find any way to do this.  It's still not clear to me whether this fairly hard work is necessary...

In the next section, we formulate an inequality - called the perpendicular pair inequality - that allows us to control the geometry of the thin bridge.

\section{Perpendicular Pair Inequality}

\newtheorem*{perppair}{Perpendicular Pair Inequality}

\begin{perppair} Suppose that $z$ and $w$ are mod 2 $(n-1)$-cycles in $\mathbb{R}^N$, and suppose that $y$
is an $n$-chain with $\partial y = z + w$.  Finally, suppose that $z$ and $w$ are ``perpendicular"
to each other in the following sense: for any coordinate $(n-1)$-tuple $J$, either $\Vol_J(z) = 0$ or
$\Vol_J(w) = 0$.

Then, we can find a chain $y'$ with $\partial y' = z$ and with Hausdorff content $\HC_n(y') \le C(n,N) \Vol_n (y)$.

In addition, $y'$ lies in the $R$-neighborhood of $z$ for $R \le C(n,N) \Vol_n(y)^{1/n}$.

\end{perppair}

The directed volume $\Vol_J(z)$ is defined in Section \ref{secdirvol}.

It's an open question whether we can bound the volume of $y'$ by $C(n,N) \Vol_n(y)$.  A bound on the Hausdorff content is weaker
than a bound on the volume.  For our application to $k$-dilation estimates, this Hausdorff content estimate turns out
to be just as useful as a volume estimate would have been.

Here is an outline of this section.
First we give a review of Hausdorff content.  In particular, we prove that a cycle with sufficiently small
Hausdorff content is homologically trivial.  Next we give the proof of our $k$-dilation lower bound using
the perpendicualr pair inequality.  At the end, we give some more context for the perpendicular pair inequality
by comparing it with an open problem in geometric measure theory raised by L. C. Young in the early 60's.

We prove the perpendicular pair inequality in the next section.

\subsection{Review of Hausdorff content}

Let $E$ be a subset of Euclidean space $\mathbb{R}^N$ or of some Riemannian
manifold.  The Hausdorff contents of $E$ measure how difficult it is to cover $E$ with
balls.  To compute the d-dimensional Hausdorff content of $E$, denoted $\HC_d(E)$,
we consider all covers of $E$ by (countably many) balls: $E \subset \cup_i B(x_i, R_i)$.
The ``cost"
associated to a given cover is $\sum_i R_i^d$.  The infimal cost over all covers is 
the d-dimensional Hausdorff content of $E$.

In our case $E$ will be an $n$-dimensional chain or cycle.  The Hausdorff content obeys
$\HC_n(E) \le C(n) \Vol_n(E)$, which one proves by covering $E$ by small balls.  On the
other hand, the Hausdorff content of $E$ may be much smaller than the volume, especially
if $E$ is ``crumpled up" so that a medium ball can cover a large volume of $E$.  

There is a version of the Federer-Fleming deformation theory using Hausdorff
content in place of volume.  We need the following result:

\begin{prop} \label{fedflemhcont} Let $(M^N,g)$ be a compact Riemannian manifold.  For any dimension $n \le N$, there is a constant
$\epsilon > 0$, depending on $M^N$, $g$, and $n$, such that every $n$-cycle $z$ in $M$ with $\HC_n(z) < \epsilon$
is homologically trivial.  (The proposition holds for homology with any coefficients.)
\end{prop}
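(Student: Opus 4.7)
The plan is to reduce the claim to the classical volume-based consequence of the Federer--Fleming deformation theorem: for each compact $(M,g)$ there is a threshold $\epsilon_0 = \epsilon_0(M,g,n) > 0$ such that any $n$-cycle of total $n$-volume less than $\epsilon_0$ is null-homologous (one fixes a triangulation of $M$, deforms to its $n$-skeleton, and observes that a sufficiently small-volume cycle must carry less than one simplex's worth of volume and hence vanishes). It therefore suffices to construct, for each cycle $z$ with $\HC_n(z) < \epsilon$, a homologous cycle $z^\ast$ with $\Vol_n(z^\ast) \le C(M,g,n)\, \HC_n(z)$; taking $\epsilon < \epsilon_0/C$ then yields the conclusion.

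To build $z^\ast$ I would cover the support of $z$ by balls $B(x_i, R_i)$ realizing the Hausdorff content up to a factor of two, $\sum_i R_i^n < 2\epsilon$, and then perform a multi-scale cubical deformation. Grouping the balls dyadically by scale $S_k = \{ i : R_i \sim 2^{-k} \}$ with partial masses $\epsilon_k = \sum_{i \in S_k} R_i^n$ (so $\sum_k \epsilon_k < 2\epsilon$), I would, at each scale $k$, apply a localized Federer--Fleming cubical deformation at grid side $\sim 2^{-k}$ inside a slight enlargement of $\bigcup_{i \in S_k} B(x_i, R_i)$. Each ball in $S_k$ meets $O(1)$ cubes of such a grid, and on the deformed cycle the portion inside each $B_i$ is supported on the $n$-skeleton of $O(1)$ cubes of side $\sim R_i$, a set of $n$-volume $\sim R_i^n$. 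Summing over all balls at all scales gives $\Vol_n(z^\ast) \lesssim \sum_k \epsilon_k \lesssim \epsilon$, as required.

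The main obstacle is exactly this local deformation step, where one needs the volume of the output inside $B_i$ bounded by $R_i^n$ independently of the (possibly enormous) mass of $z$ inside $B_i$. The standard cubical deformation only bounds the output volume by a multiple of the input volume, which is not good enough. To fix this I would first carry out a preliminary replacement step in each $B_i$, exploiting contractibility of the ball: by the coarea inequality pick a radius $r_i \in [R_i, 2R_i]$ where the sphere $\partial B(x_i, r_i)$ intersects $z$ in an $(n-1)$-cycle of controlled size, fill that slice inside $B(x_i, r_i)$ using the classical isoperimetric inequality for cycles of arbitrary codimension, and replace $z \cap B(x_i, r_i)$ by this filling; the result is a cycle whose intersection with $B(x_i, 2R_i)$ has bounded-multiplicity support and mass $\lesssim R_i^n$, so the subsequent cubical deformation does the job. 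A secondary technicality is assembling the per-scale modifications into a single global homology; this is handled by processing the dyadic scales in a fixed order and tracking the $(n+1)$-chain fillings, which together amount to a single $(n+1)$-chain cobounding $z$ and $z^\ast$. Once $z^\ast$ has volume below $\epsilon_0$, the classical small-volume deformation on a fixed auxiliary triangulation of $M$ kills it, and hence $z$ is null-homologous for any coefficient group.
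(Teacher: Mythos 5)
Your high-level reduction---``show that a cycle with small Hausdorff content is homologous to a cycle with small volume, then invoke the volume version of Federer--Fleming''---is different from the paper's approach, and it contains a gap at exactly the step you flag as the main obstacle.

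The problem is the slicing step. You propose to pick $r_i \in [R_i, 2R_i]$ ``by the coarea inequality'' so that $\partial B(x_i, r_i) \cap z$ is an $(n-1)$-cycle of controlled size. But the coarea inequality bounds
$$\int_{R_i}^{2R_i} \Vol_{n-1}\bigl(\partial B(x_i, r) \cap z\bigr)\, dr \;\le\; \Vol_n\bigl(z \cap (B(x_i, 2R_i) \setminus B(x_i, R_i))\bigr),$$
and the right-hand side is not controlled by the hypothesis: $\HC_n(z) < \epsilon$ places no upper bound whatsoever on the $n$-volume of $z$ inside any fixed ball. (This is the whole point of working with Hausdorff content rather than mass---a cycle of tiny Hausdorff content can have enormous volume crumpled into a single ball.) There is also no Hausdorff-content analogue of coarea that would rescue the step: even if $z$ restricted to the annulus is covered by balls $B(y_j, \rho_j)$ with $\sum \rho_j^n$ small, the expected number of such balls that a random sphere $\partial B(x_i, r)$ meets is of order $\sum_j \rho_j / R_i$, which can be arbitrarily large when the $\rho_j$ are small, so you get no bound on the slice. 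Without a bound on the slice, the isoperimetric filling step and the subsequent cubical deformation have no input to work with.

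The paper avoids this issue by never detouring through volume bounds at all. Instead it proves a Hausdorff-content version of the Federer--Fleming pushout lemma: for a generic center $p$ in the middle half $\Delta_{1/2}$ of a unit simplex $\Delta^N$, the outward radial projection $\pi_p$ sends a ball $B(x,R) \subset \Delta$ into a ball of radius $\sim R / \Dist(p,x)$, and averaging $R'^d = (C R / \Dist(p,x))^d$ over $p \in \Delta_{1/2}$ gives $\lesssim R^d$ provided $d \le N-1$. So a cover of $E$ by balls with cost $\sum R_i^d$ is pushed to a cover with average cost $\lesssim \sum R_i^d$, i.e. $\HC_d(\pi_p(E)) \lesssim \HC_d(E)$ for a good choice of $p$. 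Iterating this skeleton by skeleton pushes $z$ into the $n$-skeleton of a fixed triangulation while only multiplying $\HC_n$ by a constant; once $\HC_n$ is below the content of any single $n$-simplex, the pushed cycle cannot cover an $n$-simplex and is therefore null-homologous. The key mechanism---radial projection shrinks \emph{balls} to \emph{balls} with controlled radius loss, independent of what the cycle looks like inside them---is exactly what your coarea step was trying to achieve and cannot. I would recommend abandoning the reduction to the volume version and instead proving the Hausdorff-content pushout estimate directly.
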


\proof The proof is based on the Federer-Fleming pushout lemma - adapted to Hausdorff content.

\begin{lemma} Let $\Delta^N$ denote a unit equilateral Euclidean simplex, and let $E \subset
\Delta$ denote a set.  For each point $p$ in the interior of $\Delta$, let $\pi_p$ denote 
the outward radial projection from $\Delta - \{ p \}$ to $\partial \Delta$.  Let $\Delta_{1/2} \subset \Delta$ denote
a concentric simplex of one half the side-length, centered at the center of mass of $\Delta$.
For any dimension $0 \le d \le N$, the following inequality holds

$$\Average_{p \in \Delta_{1/2}} \HC_d [\pi_p(E \setminus \{p\}) ]  \le C(N) \HC_d(E). $$

Also, if $\HC_d(E)$ is sufficiently small, then we can choose $p$ outside of $E$ so that

$$\HC_d [ \pi_p(E) ] \le C(N) \HC_d(E). $$

\end{lemma}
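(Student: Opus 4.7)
The plan is to reduce the averaging inequality to a single-ball estimate by subadditivity of Hausdorff content, establish the single-ball estimate by elementary radial geometry, and then extract the second statement by a Markov-type argument combined with a volume bound on $E$.

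First, I would fix a cover $E \subset \bigcup_i B(x_i, R_i)$ with $\sum_i R_i^d \le 2\HC_d(E)$, replacing any $R_i > \Diam(\Delta)$ by $\Diam(\Delta)$ without loss. Since $\pi_p(E\setminus\{p\}) \subset \bigcup_i \pi_p(B(x_i,R_i)\setminus\{p\})$, countable subadditivity of $\HC_d$ reduces the averaging inequality to proving the single-ball estimate
\[ \Average_{p \in \Delta_{1/2}} \HC_d(\pi_p(B(x,r)\setminus\{p\})) \le C(N)\, r^d \]
for each ball $B(x,r) \subset \Delta$.

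Second, for the single-ball estimate I would argue by elementary radial geometry. The case $d = N$ is trivial since $\pi_p(B(x,r)\setminus\{p\}) \subset \partial\Delta$, which is $(N-1)$-dimensional, giving $\HC_N = 0$; so we may assume $d < N$. When $|p-x| \ge r$, the cone over $B(x,r)$ with apex $p$ has half-angle at most $\arcsin(r/|p-x|)$, so $\pi_p(B(x,r))$ is contained in a single ball on $\partial\Delta$ of radius $\le C(N)\, r/|p-x|$, which gives $\HC_d(\pi_p(B(x,r))) \le C(N)(r/|p-x|)^d$. When $|p-x| < r$ we use only the trivial bound $\HC_d(\pi_p(B(x,r)\setminus\{p\})) \le \HC_d(\partial\Delta) \le C(N)$. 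Averaging over $p \in \Delta_{1/2}$ yields
\[ \Average_{p\in \Delta_{1/2}} \HC_d(\pi_p(B(x,r)\setminus\{p\})) \lesssim r^N + r^d \int_r^{C(N)} s^{N-1-d}\, ds \lesssim r^d, \]
where the condition $d < N$ is used to bound the integral by a constant as $r \to 0$.

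Third, for the second statement I would combine the average bound with a volume argument. By Markov's inequality, the bad set $B_1 = \{p \in \Delta_{1/2} : \HC_d(\pi_p(E)) > 2C(N)\HC_d(E)\}$ has measure less than $|\Delta_{1/2}|/2$. Separately, any admissible cover of $E$ with $R_i \le \Diam(\Delta)$ and $\sum R_i^d \le 2\epsilon$ satisfies $\sum R_i^N \le \Diam(\Delta)^{N-d}\sum R_i^d \le C(N)\epsilon$, hence $\Vol_N(E) \le C(N)\epsilon$; for $\epsilon$ sufficiently small this makes $\Vol_N(E \cap \Delta_{1/2}) < |\Delta_{1/2}|/2$, so $B_1$ together with $E \cap \Delta_{1/2}$ cannot exhaust $\Delta_{1/2}$, and any point in the complement serves.

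The main obstacle is the non-integrable singularity of the kernel $(r/|p-x|)^d$ at $p = x$: the averaged bound only closes because the dimension condition $d < N$ renders the integral $\int_r^1 s^{N-1-d}\,ds$ uniformly bounded, with the borderline case $d = N$ handled separately by the observation that the image lies in a lower-dimensional stratum. Otherwise the argument is a routine transcription of the classical Federer--Fleming deformation estimate from volume to Hausdorff content.
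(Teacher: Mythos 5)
Your proposal is correct and follows essentially the same route as the paper: reduce to a single-ball estimate by subadditivity, show that $\pi_p$ sends a ball $B(x,r)$ into a ball of radius $\lesssim r/\Dist(p,x)$ on $\partial\Delta$ (the paper phrases this via a trigonometric bound on projected segments, you via a cone-aperture bound, but the content and the role of the lower angle bound coming from $p \in \Delta_{1/2}$ are identical), average using integrability of $\Dist(p,x)^{-d}$ for $d \le N-1$, handle $d=N$ trivially, and get the second statement by Markov plus the observation that small $\HC_d(E)$ forces small Lebesgue measure of $E$. The only difference is that you spell out the polar-coordinate integral and the Markov step a bit more explicitly than the paper does.
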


\proof If $d > N-1$, then $\HC_d (\partial \Delta) = 0$, and the inequality is trivially true.  So we can assume $d \le N-1$.

Let $\{ B(x_i, R_i) \}$ denote any cover of $E$ with balls.  Let $p \in \Delta_{1/2}$.  Consider a ball $B(x,R)$ and define the radius
 $R' =  C(N) \Dist(p,x)^{-1} R $.  We claim that if $C(N)$ is large enough, then 
the outward projection $\pi_p [ B(x, R) \setminus \{ p \} ]$ is contained in
$B(\pi_p(x), R')$.  

First we consider the case that $\Dist(p, x) \le 10 R$.  In this case, $R' > 1$, and so $B(x', R')$ contains $\partial \Delta \supset \pi_p 
[ B(x, R) \setminus \{ p \}]$.  So we can now suppose that $\Dist(p, x) > 10 R$.

Let $ y \in B(x,R)$ $\sigma \subset B(x,R)$ be a segment from $x$ to $y$.  Then $\pi_p(\sigma)$ is a (piecewise smooth) curve in $\partial \Delta$ from
$\pi_p(x)$ to $\pi_p(y)$.  We will prove that the length of $\pi_p(\sigma)$ is $\le R'$.
Suppose that $z$ and $z'$ are on $\sigma$ and that $\pi_p(z)$ and $\pi_p(z')$ lie in the same $(N-1)$-face $F \subset \partial \Delta^N$.  
It suffices to prove that $\Dist( \pi_p(z), \pi_p(z')) \lesssim \Dist(p,x)^{-1} \Dist(z,z')$.  

Consider the triangle $T$ with vertices $p, \pi_p(z),$ and 
$\pi_p(z')$.  The angle of $T$ at the vertex $p$ is the angle between the rays $[p,z]$ and $[p, z']$.  
Because $z,z' \in B(x,R)$ and $\Dist(p,x) \ge 10R$ this angle is $\lesssim \Dist(p,x)^{-1} \Dist(z,z')$.  The lengths of all sides of $T$ are $\lesssim 1$.
Because $p \in \Delta_{1/2}$ the segments $[p, \pi_p(z)]$ and $[p, \pi_p(z')]$ hit the face $F$ at an angle $\gtrsim 1$.  Therefore, the other two
angles of $T$ are each $\gtrsim 1$.  It now follows by trigonometry that the length of $[\pi_p(z), \pi_p(z')]$ is at most a constant factor times
the angle at the vertex $p$.  This proves the desired bound.

Now let $B(x,R)$ be a ball, and let $R' = C(N) \Dist(x,p)^{-1} R$, as above.  Since $d \le N-1$, 

$$ \Average_{p \in \Delta_{1/2}} (R')^d = \Average_{p \in \Delta_{1/2}} C(N) R^d \Dist(x,p)^{-d} \lesssim R^d. $$

Consider a covering of $E$ by balls $B(x_i, R_i)$.  This inequality holds for each ball $B(x_i, R_i)$ in the covering.  
 Therefore, the average value of $\sum_i (R_i')^d$ is 
$\le C(N) \sum_i R_i^d$.  And so the average value of
$\HC_d[ \pi_p (E \setminus \{ p \} ) ]$ is bounded by $C'(N) \HC_d(E)$. 

If $d \le N$ and $\HC_d(E)$ is sufficiently small, then the measure of $E$ is less than half the measure of $\Delta_{1/2}$.  Therefore, we can choose $p \in \Delta_{1/2} \setminus E$ so that $\HC_d( \pi_p(E) ) \le C(N) \HC_d(E)$.  

\endproof

Now we prove the proposition.  Pick a triangulation of $(M^N, g)$.  The metric $g$ restricted to each simplex
is $L$-bilipschitz to the unit equilateral Euclidean simplex, where $L$ is a constant depending on $(M^n, g)$.   
Let $z \subset M^N$ be an $n$-cycle with $\HC_n(z) < \epsilon$.  We use the push-out
lemma on each $N$-face of $M^N$ to homotope $z$ into the $(N-1)$-skeleton of $M^N$.  Then we use it again on each (N-1)-face of $M$ to push $z$ into the (N-2)-skeleton, and so on, until
$z$ is pushed into the $n$-skeleton.  Each homotopy may increase the $n$-dimensional Hausdorff
content by a constant factor, so we end with a cycle of Hausdorff content $ < C \epsilon$.  Now if $\epsilon$
is too small, this cycle does not cover any $n$-simplex of $(M^N, g)$, and so it is null-homologous.  \endproof

\subsection{The proof of the $k$-dilation lower bound}

We now give the proof of the $k$-dilation lower bound using the perpendicular pair inequality.  We will 
prove the perpendicular pair inequality in the next section.

\begin{st} Let $F$ be a $C^1$ map from $S^m$ to $S^n$.  If the Steenrod-Hopf invariant
$\SH(F)$ is non-zero, and $k \le (m+1)/2$, then $\Dil_k(F) \ge c(m) > 0$.
\end{st}

\proof We recall the setup from Section \ref{cyclez(f)}.  Let $Z_0$ be any mod 2 chain in $\Gamma_i S^m$ 
with boundary $\Diag(S^m) + \Bouquet(S^m)$.  Then $\Gamma_i F(Z_0)$ is essentially a cycle $Z(F)$ in $\Gamma_i S^n$.  
According to Proposition \ref{homz(f)=sh(f)}, the homology class of $Z(F)$ determines $\SH(F)$.  In particular, $\SH(F) = 0$
if and only if $Z(F)$ is null-homologous.

We fix $ k \le (m+1)/2$.  We may assume that $\Dil_k(F) \le \epsilon = \epsilon(m)$.  If $\epsilon$ is sufficiently small, we have to show
that $\SH(F) = 0$.  It suffices to show that $Z(F)$ is null-homologous.  In Section \ref{secdirvol}, we proved some estimates about the directed volumes of
$Z(F)$.  According to Lemma \ref{contdir}, if $b, c \ge k$, then $\Vol_{(a,b,c)} (Z(F)) \lesssim \Dil_k(F)^2 \lesssim \epsilon^2$.  A direction $(a,b,c)$ is called bad if $|b - c| \le 1$.  According to Lemma \ref{dirvolbad}, $\Vol_{(a,b,c)} (Z(F)) \lesssim \Dil_k(F)^2 \lesssim \epsilon^2$ for each bad direction.

Also we showed in Proposition \ref{nobadmodel} that if $X$ is a polyhedral $2n$-cycle in $\Gamma_i S^n$ with zero volume in the bad directions, then $X$ is null-homologous.  Our plan is to modify the proof of Proposition \ref{nobadmodel}.  The main issue is that $Z(F)$ has a small but non-zero volume in the bad directions.  We will solve this main issue using the perpendicular pair inequality.  A minor issue is that $Z(F)$ is not a polyhedral cycle.  In order to set things up well, we have to slightly modify the definition of $Z(F)$ so it has something like a polyhedral structure.
We begin by doing this small modification.

Earlier, we didn't think about how to choose $Z_0$, but now let's consider that point.  We will choose a triangulation
of $S^m$ and a $\mathbb{Z}_2$-invariant triangulation of $S^i$.  Taking the product of these, we get a $\mathbb{Z}_2$-invariant polyhedral structure on $S^i \times S^m \times S^m$, and taking the quotient gives a polyhedral structure on $\Gamma_i S^m$.

Now it would be very convenient if we could choose $Z_0$ to be a polyhedral chain with respect to this polyhedral structure.  But this is impossible, because $\Diag(S^m)$ is not a polyhedral cycle, and $\partial Z_0$ needs to be $\Diag(S^m) + \Bouquet(S^m)$.  The best we can do is to write $Z_0$ as a polyhedral chain plus a small chain.  

For any $\delta > 0$, we do the following construction.  We choose a polyhedral structure on $\Gamma_i S^m$ as above, using fine triangulations of
$S^i$ and $S^m$.  (The fine triangulation needs to depend on $\delta$.)  Then we let $Z_\delta$ be a $2n$-chain with $\partial Z_\delta = \Diag(S^m) + \Bouquet(S^m)$, obeying the following estimates.  The volume of $Z_\delta$ is $\lesssim 1$ (independent of $\delta$).  Most of $Z_\delta$ is polyhedral with respect to our triangulation.  The remainder of $Z_\delta$ has volume $\le \delta$.  In other words, $Z_\delta = Z_\delta' + Z_\delta''$ where $Z_\delta'$ is polyhedral and $Z_\delta''$ has volume $\le \delta$.  We can find such a $Z_\delta$ by taking a chain $Z_0$ as above, choosing sufficiently fine triangulations, and applying the deformation theorem.

As in Section \ref{cyclez(f)}, $\Gamma_i F (Z_\delta)$ is essentially a $2n$-cycle in $\Gamma_i S^n$.  More precisely, the boundary of $\Gamma_i F(Z_\delta)$ lies in the lower-dimensional set $\Bouquet(S^n) \cup \Diag(S^n)$. Therefore, there is a $2n$-chain $\nu_\delta$ in $\Gamma_i S^n$ with volume zero and with $\partial \nu_\delta = 
\partial \left[ \Gamma_i F(Z_\delta) \right]$.  We define $Z_\delta(F)$ to be the sum $\Gamma_i F(Z_\delta) + \nu_\delta$.  Now $Z_\delta(F)$ is
a $2n$-cycle in $\Gamma_i S^n$.  Because $Z_\delta$ is a chain with boundary $\Bouquet(S^m) + \Diag(S^m)$, Proposition \ref{homz(f)=sh(f)} 
says that 
$Z_\delta(F)$ is null-homologous if and only if $\SH(F) = 0$. Assuming $\epsilon$ and $\delta$ are sufficiently small, we have to prove 
that $Z_\delta(F)$ is null-homologous.

The point of this small modification is that $Z_\delta$ consists mostly of polyhedral faces, and polyhedral faces are easier to analyze. 
Next we divide $Z_\delta(F)$ into pieces in good and bad directions.

Recall that $Z_\delta = Z_\delta' + Z_\delta''$, where $Z_\delta'$ is polyhedral and $Z_\delta''$ has volume $< \delta$.  
If $Q = \Delta^a \times \Delta^b \times \Delta^c$ is a polyhedral face of $\Gamma_i S^m$, we say that $Q$ is good if $|b-c| \ge 2$
and bad if $|b-c| \le 1$.  (Here $\Delta^a$ is a simplex in $S^i$, and $\Delta^b$ and $\Delta^c$ are simplices in $S^m$.)
We let $Z_\delta(good)$ be the union of the good faces in $Z_\delta'$.  We define $G$ to be $\Gamma_i F(Z_\delta(good))$.  The chain
$G$ is the ``good part'' of $Z_\delta(F)$.  We define $B = Z_\delta(F) - G$.  So $G$ and $B$ are $2n$-chains in $\Gamma_i S^n$ with $Z_\delta(F) = G + B$.

\begin{lemma} If $\epsilon$ and $\delta$ are sufficiently small, then we can guarantee that $\Vol_{2n} B$ is as small as we like.
\end{lemma}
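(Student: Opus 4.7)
The plan is to split $B$ into the image of the polyhedral bad faces of $Z_\delta'$ and the image of the small-volume chain $Z_\delta''$, and estimate the two pieces separately. More precisely, let $Z_\delta(\mathrm{bad}) := Z_\delta' - Z_\delta(\mathrm{good})$ denote the union of the polyhedral faces of $Z_\delta'$ lying in bad directions $(a,b,c)$ with $|b-c| \le 1$. Then
\[
B \;=\; Z_\delta(F) - G \;=\; \Gamma_i F(Z_\delta) + \nu_\delta - \Gamma_i F(Z_\delta(\mathrm{good})) \;=\; \Gamma_i F\bigl(Z_\delta(\mathrm{bad})\bigr) + \Gamma_i F(Z_\delta'') + \nu_\delta.
\]
The chain $\nu_\delta$ has zero $2n$-volume and contributes nothing, so the task is to bound the first two summands.

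For the bad polyhedral piece, I would decompose $\Vol_{2n}$ into a sum of directed volumes $\Vol_{(a,b,c)}$ and apply Lemma \ref{dirvolstr2}: for each bad direction we have
\[
\Vol_{(a,b,c)}\bigl(\Gamma_i F(Z_\delta(\mathrm{bad}))\bigr) \;\le\; \Dil_b(F)\,\Dil_c(F)\,\Vol_{(a,b,c)}(Z_\delta(\mathrm{bad})).
\]
Since $Z_\delta$ may be chosen with $\Vol Z_\delta \lesssim 1$ independent of $\delta$, the factor $\Vol_{(a,b,c)}(Z_\delta(\mathrm{bad}))$ is $\lesssim C(m)$. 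Lemma \ref{dirvolbad} guarantees $b,c \ge k$ in every bad direction, so Proposition \ref{dilkdill} combined with $\Dil_k(F) \le \epsilon \le 1$ gives $\Dil_b(F),\Dil_c(F) \le \Dil_k(F) \le \epsilon$. Summing over the finitely many bad directions yields $\Vol_{2n} \Gamma_i F(Z_\delta(\mathrm{bad})) \le C(m)\epsilon^2$.

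For the small-volume piece $\Gamma_i F(Z_\delta'')$, I would simply use that $F$ (and hence $\Gamma_i F$) is a fixed $C^1$ map with some finite Lipschitz constant $L = L(F)$, so $\Dil_{2n}(\Gamma_i F) \le L^{2n}$ and
\[
\Vol_{2n} \Gamma_i F(Z_\delta'') \;\le\; L(F)^{2n}\,\Vol_{2n}(Z_\delta'') \;\le\; L(F)^{2n}\,\delta.
\]
Combining the two estimates, $\Vol_{2n} B \le C(m)\epsilon^2 + L(F)^{2n}\delta$. Given a target $\eta > 0$, first take $\epsilon$ small enough that $C(m)\epsilon^2 < \eta/2$; then, with $F$ (and hence $L(F)$) fixed, take $\delta$ small enough that $L(F)^{2n}\delta < \eta/2$. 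I do not see a real obstacle here: the estimate on the bad polyhedral part is a direct application of the machinery set up in Section \ref{secdirvol}, and the $Z_\delta''$ piece is handled by the trivial crude bound because we are allowed to let $\delta$ depend on $F$. The only subtlety is to remember that the small-volume correction chain $\nu_\delta$ is exactly tailored to have zero $2n$-volume, so it does not spoil the estimate.
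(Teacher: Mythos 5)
Your proposal is correct and follows essentially the same argument as the paper: decompose $B = \Gamma_i F(Z_\delta'(\mathrm{bad})) + \Gamma_i F(Z_\delta'') + \nu_\delta$, bound the bad polyhedral piece by $\lesssim \epsilon^2$ using the directed-volume machinery and $b,c\ge k$, bound the $Z_\delta''$ piece by a crude Lipschitz estimate times $\delta$, and note $\nu_\delta$ has zero volume. The only cosmetic difference is that you sum over bad directions while the paper argues face by face; these are the same estimate.
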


\begin{proof} The chain $B$ has a few pieces, but they are each easy to bound.  We let $Z_\delta'(bad)$ be the union of all bad faces in $Z_\delta'$.  Now $B$ is equal to the following sum:

$$B = \Gamma_i F (Z_\delta'(bad)) + \Gamma_i F (Z_\delta'') + \nu_\delta. $$

The first term is the most interesting.  If $Q$ is a face in $Z_\delta'(bad)$, then $Q$ lies in direction $(a,b,c)$ with $a + b + c = 2n$ and $|b-c| \le 1$.  By Lemma \ref{dirvolstr2}, $\Vol \Gamma_i F(Q) \le \Dil_b (F) \Dil_c(F) \Vol (Q)$.  Since $Q$ is bad, Lemma \ref{dirvolbad} implies that $b,c \ge k$.  
Now since $\Dil_k(F) \le \epsilon \le 1$, we have $\Dil_b(F) \Dil_c(F) \le \epsilon^2$.  Hence $\Gamma_i F (Z_\delta'(bad))$ has volume $\le \epsilon^2 \Vol Z_\delta' \lesssim \epsilon^2$.

The volume of $\Gamma_i F (Z_\delta'') \le \Dil_1(F)^{2n} \delta$.  By making $\delta$ sufficiently small, we can make this term as small as we like.

Finally, $\nu_\delta$ has volume zero.

\end{proof}

So our cycle $Z_\delta(F)$ has the form $G + B$, where $G$ is a chain with volume only in the good directions and $B$ is small.  Next we study the double cover of $Z_\delta(F)$.  To simplify the notation, we abbreviate $Z_\delta(F)$ by $Z$ for the rest of the argument.

Now we consider the double cover $\tilde Z \subset S^i \times S^n \times S^n$.  We have $\tilde Z = \tilde G + \tilde B$, where $\tilde G$ is the double cover of $G$ and $\tilde B$ is the double cover of $B$.  

If $Q$ is a face in $Z_\delta'$, then the double cover of $Q$ in $S^i \times S^m \times S^m$ consists of two faces, $\tilde Q_1$ and $\tilde Q_2$.  Each of these faces is a product of simplices.  Suppose that $\tilde Q_1$ is a product $\Delta^{a_1} \times \Delta^{b_1} \times \Delta^{c_1}$ with $\Delta^{a_1} \subset S^i$, $\Delta^{b_1}$ in the first copy of $S^m$, and $\Delta^{c_1}$ in the second copy of $S^m$.  Then the other preimage face, $\tilde Q_2$, is $I (\tilde Q_1)$, where $I$ is our involution of $S^i \times S^m \times S^m$.  The face $\tilde Q_1$ is in direction $(a_1,b_1,c_1)$, and the face $\tilde Q_2$ is in direction $(a_2,b_2,c_2)=(a_1,c_1,b_1)$. 

If $Q$ is a good face then $|b_1 - c_1| = |b_2 - c_2| \ge 2$.  By choosing our labels appropriately, we can assume that $b_1 \le c_1 - 2$ and $b_2 \ge c_2 + 2$.  

The chain $\tilde G$ in $S^i \times S^n \times S^n$ is a sum of contributions from the good faces $Q \subset Z_\delta'$ as follows:

$$ \tilde G = \sum_{Q \subset Z_\delta, Q \textrm{ good }} (id \times F \times F) (\tilde Q_1) + (id \times F \times F)(\tilde Q_2). $$

So we can divide $\tilde G$ into two pieces as follows.

$$ \tilde G_1 = \sum_{Q \subset Z_\delta, Q \textrm{ good }} (id \times F \times F) (\tilde Q_1). $$

$$ \tilde G_2 = \sum_{Q \subset Z_\delta, Q \textrm{ good }} (id \times F \times F)(\tilde Q_2). $$

Let us compare our situation with the situation in the proof of Proposition \ref{nobadmodel}.  In Proposition \ref{nobadmodel}, we have a polyhedral $2n$-cycle $X$ in $\Gamma_i S^n$, with no volume in the bad directions.  We consider the double cover $\tilde X$ in $S^i \times S^n \times S^n$, and we divide it into pieces $\tilde X = \tilde X_1 + \tilde X_2$, where $\tilde X_1$ consists of the faces in good directions $(a,b,c)$ with $b < c$ and $\tilde X_2$ consists of faces in the good directions with $c < b$.  The cycle $Z$ is analogous to $X$.  The chains $\tilde G_1$ and $\tilde G_2$ are analogous to $\tilde X_1$ and $\tilde X_2$.  The chain $\tilde G_1$ lies only in good directions $(a,b,c)$ with $b< c$, and the chain $\tilde G_2$ lies only in good directions $(a,b,c)$ with $c < b$.  But the situation is more complicated because $Z$ has a small non-zero volume in bad directions.  So we have $\tilde Z = \tilde G_1 + \tilde G_2 + \tilde B$, and we know that $\tilde B$ has small volume.

In the proof of Proposition \ref{nobadmodel}, the crucial point was that $\tilde X_1$ and $\tilde X_2$ were each cycles.  In our case, $\tilde G_1$ and $\tilde G_2$ are not cycles.  Instead, I like to imagine $\tilde Z$ as two large pieces ($\tilde G_1$ and $\tilde G_2$) connected by a thin bridge ($\tilde B$).  We would like to cut out $\tilde B$ and separately cap off $\tilde G_1$.  In other words, we would like to find a small chain $Y_1$ so that $\tilde G_1 + Y_1$ make a cycle.  Then we can use this cycle the way we used $\tilde X_1$ in the proof of Proposition \ref{nobadmodel}.

This is the most delicate part of our argument.  We already know that $\tilde B$ has small volume, but this is not enough to be able to find a small $Y_1$.  For example, imagine that $\tilde Z$ was a large sphere, $\tilde B$ was a small neighborhood of the equator, $\tilde G_1$ was the part of the Northern hemisphere to the North of $\tilde B$ and $\tilde G_2$ was the part of the Southern hemisphere to the South of $\tilde B$.  Then $\tilde B$ may have arbitrarily small volume, and yet the boundary of $\tilde G_1$ cannot be filled in with a small chain. In order to find a small cap $Y_1$, we need more geometric information than just a bound on the volume of $\tilde B$.

The key point is that $\partial \tilde G_1$ and $\partial \tilde G_2$ are perpendicular, which allows us to apply the perpendicular pair inequality.  The perpendicular pair inequality exactly gives us the small chain $Y_1$ that we need.

Here are the details.  We consider $S^i \times S^n \times S^n \subset \mathbb{R}^{i+1} \times \mathbb{R}^{n+1} \times \mathbb{R}^{n+1} = \mathbb{R}^N$.  So we can think of $\tilde B$, $\partial \tilde G_1$, $\partial \tilde G_2$, etc. as chains and cycles in $\mathbb{R}^N$.  We have to check that $\partial \tilde G_1$ and $\partial \tilde G_2$ are perpendicular in the sense defined in the perpendicular pair inequality.

If $J$ is a set of numbers from $1$ to $N$, let $a(J)$ denote the number of directions in $J$ from the first factor $\mathbb{R}^{i+1}$, let $b(J)$ denote the number of directions of $J$ from the second factor $\mathbb{R}^{n+1}$, and let $c(J)$ denote the number of directions in $J$ from the third factor $\mathbb{R}^{n+1}$.  

\begin{lemma} If $T_0$ is a d-chain in $S^i \times S^m \times S^m$, and $T = (id \times F \times F)(T_0) \subset \mathbb{R}^N = \mathbb{R}^{i+1} \times \mathbb{R}^{n+1} \times \mathbb{R}^{n+1}$,
then 

$$ \Vol_{J}(T) \le \Dil_{b(J)}(F) \Dil_{c(J)}(F) \Vol_{(a(J),b(J),c(J))}(T_0) . $$

\end{lemma}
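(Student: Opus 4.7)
The plan is to reduce to a single simplex and then unpack the definitions. Since both sides of the inequality are additive over the simplices making up the chain $T_0$ (with the norm on the coefficient group pulled out in front), it is enough to prove the inequality for a single $C^1$ simplex $f_0 \colon \Delta^d \to S^i \times S^m \times S^m$ with $f := (id \times F \times F) \circ f_0$, followed by the isometric inclusion $S^i \times S^n \times S^n \hookrightarrow \mathbb{R}^N$. By the definition given in Section \ref{secdirvol},
\[
\Vol_J(f) = \int_{\Delta^d} |df_x^* e_J^*| \, dx,
\]
and $e_J^*$ factors as $e_{J_1}^* \wedge e_{J_2}^* \wedge e_{J_3}^*$, where $J = J_1 \sqcup J_2 \sqcup J_3$ collects the indices from the three Euclidean factors $\mathbb{R}^{i+1}, \mathbb{R}^{n+1}, \mathbb{R}^{n+1}$, so that $|J_1| = a(J)$, $|J_2| = b(J)$, $|J_3| = c(J)$.

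Next, since $id \times F \times F$ is a product map, the pullback factors as
\[
(id \times F \times F)^* e_J^* = \alpha \wedge \beta \wedge \gamma,
\]
where $\alpha$ is the restriction of $e_{J_1}^*$ to $TS^i$, and $\beta$, $\gamma$ are the pullbacks by $F$ (composed with $S^n \hookrightarrow \mathbb{R}^{n+1}$) of $e_{J_2}^*$ and $e_{J_3}^*$ respectively. Restricting a form to a submanifold does not increase its pointwise norm, so $|\alpha| \le 1$ pointwise on $S^i$; and Proposition \ref{kdildiff} gives $\|\beta\|_\infty \le \Dil_{b(J)}(F)$ and $\|\gamma\|_\infty \le \Dil_{c(J)}(F)$. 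Therefore the triple $\alpha$, $\Dil_{b(J)}(F)^{-1} \beta$, $\Dil_{c(J)}(F)^{-1} \gamma$ consists of forms of pointwise norm at most $1$ on the appropriate factors of $S^i \times S^m \times S^m$.

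Applying the definition of $\Dil_{(a(J),b(J),c(J))} f_0(x)$ as a supremum over triples of unit-norm forms, we conclude that
\[
|df_x^* e_J^*| = |d(f_0)_x^*(\alpha \wedge \beta \wedge \gamma)| \le \Dil_{b(J)}(F)\, \Dil_{c(J)}(F)\, \Dil_{(a(J),b(J),c(J))} f_0(x)
\]
pointwise on $\Delta^d$. Integrating over $\Delta^d$ and then summing over the simplices of $T_0$ yields the stated inequality.

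There is no real obstacle here; the argument is bookkeeping with definitions. The only points that deserve a careful check are the factorization of the pullback through the product map $id \times F \times F$, and the observation that Proposition \ref{kdildiff} applies to $F$ composed with the isometric inclusion $S^n \hookrightarrow \mathbb{R}^{n+1}$ without changing the relevant $k$-dilation. This lemma is the direct Euclidean analogue of Lemma \ref{dirvolstr2}, and the proof follows the same pattern, adapted to the fact that we are now measuring directed volumes with respect to coordinate projections in $\mathbb{R}^N$ rather than product-of-Riemannian-manifolds directed volumes on the target.
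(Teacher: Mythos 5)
Your proposal is correct and is essentially the paper's argument, merely unpacked in a single pass rather than factored through two cited sub-steps. The paper first notes $\Vol_J T \le \Vol_{(a(J),b(J),c(J))}(T)$ "by plugging in the definitions," then invokes Lemma \ref{dirvolstre} for the product map $id \times F \times F$; you instead go directly to the simplex level, factor $e_J^*$ across the three Euclidean factors, pull back through the product map, and invoke Proposition \ref{kdildiff} and the norm-nonincrease of restriction before appealing to the definition of $\Dil_{(a,b,c)}$. The one thing you make explicit that the paper leaves implicit is that composing $F$ with the isometric inclusion $S^n \hookrightarrow \mathbb{R}^{n+1}$ does not change the relevant $k$-dilations; this is indeed needed and worth saying, but it does not constitute a different route.
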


\begin{proof} Consider the product structure $\mathbb{R}^N = \mathbb{R}^{i+1} \times \mathbb{R}^{n+1} \times \mathbb{R}^{n+1}$.  Using this product structure, we can define $\Vol_{(a,b,c)}(T)$ for a d-chain $T$ in $\mathbb{R}^N$.

The directed volume $\Vol_J T \le \Vol_{(a(J), b(J), c(J))}(T)$, which follows by plugging in the definitions.

Applying Lemma \ref{dirvolstre}, we see that

$$ \Vol_{(a(J), b(J), c(J))}(T) \le \Dil_{b(J)}(F) \Dil_{c(J)}(F) \Vol_{(a(J),b(J),c(J))}(T_0). $$
\end{proof}

\begin{lemma} Let $\Delta^a \times \Delta^b \times \Delta^c$ be a face of our triangulation of $S^i \times S^m \times S^m$. 
If
$\Vol_J (id \times F \times F)(\Delta^a \times \Delta^b \times \Delta^c) > 0$, then $a(J) = a$, $b(J) = b$, and $c(J) = c$.  
\end{lemma}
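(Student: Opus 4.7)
The plan is to reduce the claim to a linear algebra fact about block-diagonal matrices by exploiting the product structure of $\phi := id \times F \times F$. From the definition of $\Vol_J$ (as an integral of $\Dil_J \phi = |\Lambda^{|J|} d\phi^* (e_J^*)|$), nonvanishing already requires $|J|$ to equal the dimension $a+b+c$ of the simplex, so the partition $J = J_A \sqcup J_{B_1} \sqcup J_{B_2}$ according to which factor of $\mathbb{R}^{i+1} \oplus \mathbb{R}^{n+1} \oplus \mathbb{R}^{n+1}$ each coordinate lies in automatically satisfies $a(J) + b(J) + c(J) = a + b + c$. What remains is to show that the individual summands must also match.

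Next I will write $e_J^* = e_{J_A}^* \wedge e_{J_{B_1}}^* \wedge e_{J_{B_2}}^*$ and use that pullback distributes over wedge products. Since $\phi$ is a product map, $\phi^* e_{J_A}^*$ only pairs nontrivially with vectors in $TS^i$, while $\phi^* e_{J_{B_1}}^* = F^* e_{J_{B_1}}^*$ pairs only with vectors in the first $TS^m$ factor, and similarly for the third piece. Concretely, at any point of $\Delta^a \times \Delta^b \times \Delta^c$, I pick an ordered basis consisting of $a$ vectors tangent to $\Delta^a$, then $b$ tangent to $\Delta^b$, then $c$ tangent to $\Delta^c$. Forming the $|J| \times (a+b+c)$ matrix whose columns are the images of these basis vectors under $d\phi$ and whose rows are indexed by $J$, the product structure of $d\phi$ produces a \emph{block-diagonal} matrix with diagonal blocks of sizes $a(J) \times a$, $b(J) \times b$, and $c(J) \times c$. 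The evaluation of $\phi^* e_J^*$ on the wedge of the basis is exactly the determinant of this square matrix.

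A block-diagonal square matrix has nonzero determinant only when each diagonal block is itself square, so $\Dil_J \phi$ being nonzero at any point forces $a(J) = a$, $b(J) = b$, and $c(J) = c$. Since $\Vol_J > 0$ requires $\Dil_J \phi$ to be nonzero on a set of positive measure, the claim follows. There is no real obstacle here: the argument is a bookkeeping exercise that encodes the statement ``a coordinate form on a product pulls back nontrivially only when the $J$-pattern exactly matches the dimensions of the factor simplices'' as a block-diagonal determinant condition. The only subtlety worth spelling out carefully is the equivalence between non-vanishing of the top-degree pullback and squareness of each block, which in the off-diagonal case comes down to the observation that an $\ell$-form on an $a$-dimensional space is automatically zero when $\ell > a$, and the total degree constraint $a(J) + b(J) + c(J) = a+b+c$ then forces at least one block to have more rows than columns whenever the partition does not match.
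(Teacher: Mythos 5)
Your proof is correct and rests on the same underlying observation as the paper's, namely that $d(\mathrm{id}\times F\times F)$ respects the three-way product decomposition and so can only pair the $J_A$ rows with the $\Delta^a$ columns, etc.; the paper just packages this differently. The paper's proof is a two-line reduction: it first records that $\Vol_{(a',b',c')}(\Delta^a\times\Delta^b\times\Delta^c)$ vanishes unless $(a',b',c')=(a,b,c)$ (the block-degree fact applied to the inclusion of the simplex), and then cites the immediately preceding lemma, which bounds $\Vol_J$ of the pushforward by $\Dil_{b(J)}(F)\,\Dil_{c(J)}(F)\,\Vol_{(a(J),b(J),c(J))}$ of the source. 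You instead compute $\Dil_J(\mathrm{id}\times F\times F)$ directly on the simplex as the determinant of a block-diagonal matrix, bypassing the auxiliary lemma entirely. The trade-off is minor: the paper's route is more modular because it reuses a bound needed elsewhere in the section, while yours is more self-contained and makes the mechanism (rank-deficiency whenever the partition sizes do not match) explicit. One small thing worth being pedantic about in a final write-up: the factorization $e_J^* = e_{J_A}^*\wedge e_{J_{B_1}}^*\wedge e_{J_{B_2}}^*$ involves a reordering of $J$ and hence a sign, which is harmless here since the norm is taken (and the chains are mod $2$), but it should be acknowledged rather than left silent.
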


\begin{proof} Note that $\Vol_{(a', b', c')} (\Delta^a \times \Delta^b \times \Delta^c) > 0$ only if
$(a', b', c') = (a,b,c)$.  Applying the previous lemma finishes the argument. \end{proof}

The boundary of $\tilde G_1$ is the sum $\sum_{Q} (id \times F \times F) (\partial \tilde Q_1)$, where the sum
goes over all the good faces $Q \subset Z_\delta$.  Pick a particular face $\tilde Q_1$, lying in direction $(a_1, b_1, c_1)$
with $b_1 \le c_1 - 2$.  Suppose a face of $\partial \tilde Q_1$ has direction $(\bar a_1, \bar b_1, \bar c_1)$.  The direction $(\bar a_1, \bar b_1, \bar c_1)$ is obtained by subtracting 1 from one of the three entries in the vector $(a_1, b_1, c_1)$.  Therefore, $\bar b_1 < \bar c_1$.
So if $\Vol_J(\partial \tilde G_1) > 0$, then we must have $b(J) < c(J)$.   But by the same argument, if $\Vol_J
(\partial \tilde G_2) > 0$, then we must have $b(J) > c(J)$.  For any $(2n-1)$-tuple $J$, either $\Vol_J(\partial \tilde G_1) = 0$
or  $\Vol_J(\partial \tilde G_2) = 0$.  So the two cycles $\partial \tilde G_1$ and $\partial \tilde
G_2$ are perpendicular in the sense of the perpendicular pair inequality.

Now we can apply the perpendicular pair inequality.  We let $\partial \tilde G_1$ and $\partial \tilde G_2$ play the roles of
$z$ and $w$, and we let $\tilde B$ play the role of $y$.  The hypotheses of the perpendicular pair inequality are satisfied
because $\partial \tilde G_1$ and $\partial \tilde G_2$ are perpendicular
and $\partial \tilde B = \partial \tilde G_1 + \partial \tilde G_2$.  Also note that $\Vol_{2n} (\tilde B)$ is as small as we like.
The perpendicular pair inequality tells us that there is a chain $Y \subset \mathbb{R}^N$ with $\HC_{2n}(Y)$
as small as we like and $\partial Y = \partial \tilde G_1$.  The chain $Y$ may not be contained in $S^i \times S^n \times
S^n$, but it is contained in the $R$-neighborhood of $S^i \times S^n \times S^n$ for $R \lesssim \Vol_{2n}(\tilde B)^{\frac{1}{2n}}$.
Since $R$ is tiny, we may retract $Y$ into $S^i \times S^n \times S^n$ without changing its Hausdorff content
much.  Hence there is a mod 2 chain $Y_1 \subset S^i \times S^n \times S^n$ with $\HC_{2n}(Y_1)$ tiny
and $\partial Y_1 = \partial \tilde G_1$.

Now we let $\tilde Z_1 = Y_1 + \tilde G_1$.  We note that $\tilde Z_1$ is a mod 2 (2n)-cycle in $S^i \times S^n \times S^n$.  
We claim that the cycle $\tilde Z_1$ is homologically trivial in $S^i \times S^{n} \times S^{n}$.  Since $\tilde Z_1$ 
is a (2n)-cycle, we just have to check that its projection to $S^{n} \times S^{n}$ has degree zero.  The 
projection of $\tilde G_1$ to $S^{n} \times S^{n}$ has measure zero, because the direction
$(0, n, n)$ is a bad direction.  On the other hand, $Y_1$ has tiny (2n)-dimensional
Hausdorff content, so the projection of $Y_1$ to $S^{n} \times S^{n}$ has tiny volume.  Hence
the projection of $\tilde Z_1$ to $S^{n} \times S^{n}$ is not surjective and has degree zero.  So
we see that $\tilde Z_1$ is homologically trivial in $S^i \times S^{n} \times S^{n}$.

Now let $\pi: S^i \times S^{n} \times S^{n} \rightarrow \Gamma_i S^{n}$ be the double
cover map.  Clearly $\pi (\tilde Z_1)$ is homologically trivial.  Now we break up the original cycle
$Z_\delta(F) = Z$ as a sum of cycles: $Z = \pi (\tilde Z_1) + \left( Z - \pi (\tilde Z_1) \right)$.  The first summand is homologically trivial.  Recall that
$Z = G + B$.  
Now $\pi (\tilde Z_1) = \pi (\tilde G_1 + Y_1) = G + \pi (Y_1)$.  So $Z - \pi (\tilde Z_1) = B - \pi (Y_1)$.
The chain $B$ has tiny volume and the chain $\pi (Y_1)$ has tiny $2n$-dimensional Hausdorff content.  Hence $Z - \pi (\tilde Z_1)$ has tiny $2n$-dimensional Hausdorff content.  By Proposition \ref{fedflemhcont}, it follows that $Z - \pi(\tilde Z_1)$ is null-homologous.
Therefore $Z = Z_\delta(F)$ is null-homologous.  Therefore $\SH(F) = 0$.  \endproof

\subsection{Context for the perpendicular pair inequality}

The perpendicular pair question is similar to a well-known open problem raised by L. C. Young in the 1960s.
In \cite{YL}, Young constructed an integral 1-cycle $z$ in $\mathbb{R}^4$ with the following
surprising property.  There is an integral 2-chain $y$ with $\partial y = 2 z$ and with area 2, but any integral chain $y'$ with $\partial y' = z$ has area strictly bigger than 1.  In fact, any integral chain $y'$ with $\partial y' = z$ has area $> 1.3$.   Notice that $y/2$ is a real chain with $\partial (y/2) = z$ and with mass 1.  But in Young's example, any integral chain $y'$ with $\partial y' = z$ has mass $> 1.3 > 1 = \Mass(y/2)$.  

Young raised the question of how large this effect could be.

\newtheorem*{2tprob}{Young's problem}

\begin{2tprob} Suppose that $z$ is an integral $(n-1)$-cycle in $\mathbb{R}^N$.  Suppose that $y$
is an integral $n$-chain with $\partial y = 2z$.  Does it follow that there is another integral chain
$y'$ with $\partial y' = z$ and $\Mass(y') \le C(n,N) \Mass(y)$?
\end{2tprob}

The perpendicular pair problem looks similar to Young's problem.  We can put them in a common framework as follows.
Suppose that $\partial y = z - w$.  Can we find a chain $y'$ with $\partial y' = z$ and with the size of $y'$
comparable to the size of $y$?  In general, the answer is certainly no.  For example, we may have $z = w$ and $y=0$.
But if $z$ and $w$ are very different from each other, it seems intuitive that filling $z$ and $w$ separately may
be approximately as good as filling $z-w$.  In Young's problem, $w = - z$.  In the perpendicular pair problem, we
know that $w$ and $z$ are perpendicular.  We can formulate a version of Young's problem for perpendicular pairs.

\newtheorem*{perppairprob}{Perpendicular Pair Problem}

\begin{perppairprob} Suppose that $z$ and $w$ are (integral or mod 2) $(n-1)$-cycles in $\mathbb{R}^N$, and suppose that $y$
is an $n$-chain with $\partial y = z + w$.  Finally, suppose that $z$ and $w$ are ``perpendicular"
to each other in the following sense: for any coordinate $(n-1)$-tuple $J$, either $\Vol_J(z) = 0$ or
$\Vol_J(w) = 0$.

Does it follow that there is a chain $y'$ with $\partial y' = z$ and $\Vol_n(y') \le C(n,N) \Vol_n (y)$?

\end{perppairprob}

It seems to me that these problems are closely related.  Young's problem is difficult, and I believe that the perpendicular
pair problem is difficult also.

\section{Proof of the perpendicular pair inequality}

In this section, we prove the perpendicular pair inequality.  First we recall the statement.

\begin{perppair} Suppose that $z$ and $w$ are mod 2 $(n-1)$-cycles in $\mathbb{R}^N$, and suppose that $y$
is an $n$-chain with $\partial y = z + w$.  Finally, suppose that $z$ and $w$ are ``perpendicular"
to each other in the following sense: for any coordinate $(n-1)$-tuple $J$, either $\Vol_J(z) = 0$ or
$\Vol_J(w) = 0$.

Then, we can find a chain $y'$ with $\partial y' = z$ and with $\HC_n(y') \le C(n,N) \Vol_n (y)$.

Also, $y'$ lies in the $R$-neighborhood of $z$ for $R \le C(n,N) \Vol_n(y)^{1/n}$.

\end{perppair}

This inequality can probably be extended to integral cycles or mod p cycles, but we only need the mod 2 case.  Focusing
on mod 2 makes the exposition a little bit cleaner, because we don't have to keep track of signs.

\subsection{The thick region}

For any number $\alpha > 0$, and any ball $B = B(x,R) \subset \mathbb{R}^N$, 
we say that $y$ is $\alpha$-thick in $B$ if
$\Vol(y \cap B) \ge \alpha R^n$.  Otherwise, we say that $y$ is $\alpha$-thin in $B$.
Now the thick region $T_\alpha(y)$ is defined to be the union of all the balls $B$ where
$y$ is $\alpha$-thick.

A standard covering argument shows that $T_\alpha(y)$ has controlled Hausdorff content.

\begin{lemma} For any $\alpha > 0$, 

$$\HC_n [ T_\alpha(y) ] \le 5^n \alpha^{-1} \Vol_n (y) . $$

\end{lemma}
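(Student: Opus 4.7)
The plan is to apply the Vitali $5r$-covering lemma to the family of all balls in which $y$ is $\alpha$-thick, and then use disjointness together with the thickness hypothesis to convert the resulting radius-to-the-$n$ sum into a volume bound.

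Let $\mathcal{F}$ denote the family of all balls $B = B(x,R) \subset \mathbb{R}^N$ on which $y$ is $\alpha$-thick, so that by definition $T_\alpha(y) = \bigcup \mathcal{F}$. First I would check that the radii of the balls in $\mathcal{F}$ are uniformly bounded: if $B(x,R) \in \mathcal{F}$, then $\alpha R^n \le \Vol_n(y \cap B) \le \Vol_n(y)$, so $R \le (\Vol_n(y)/\alpha)^{1/n}$. With bounded radii, the Vitali $5r$-covering lemma produces a countable disjoint subfamily $\{B_i\} = \{B(x_i,R_i)\} \subset \mathcal{F}$ with the property that $\{5 B_i\} := \{B(x_i, 5R_i)\}$ covers $\bigcup \mathcal{F} = T_\alpha(y)$.

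The key step is the volume bookkeeping. By definition of Hausdorff content, the covering $\{5 B_i\}$ gives
\begin{equation*}
\HC_n\bigl[T_\alpha(y)\bigr] \le \sum_i (5 R_i)^n = 5^n \sum_i R_i^n.
\end{equation*}
Since each $B_i$ lies in $\mathcal{F}$, we have $R_i^n \le \alpha^{-1} \Vol_n(y \cap B_i)$; and since the $B_i$ are pairwise disjoint, $\sum_i \Vol_n(y \cap B_i) \le \Vol_n(y)$. Combining the two inequalities,
\begin{equation*}
\HC_n\bigl[T_\alpha(y)\bigr] \le 5^n \alpha^{-1} \sum_i \Vol_n(y \cap B_i) \le 5^n \alpha^{-1} \Vol_n(y),
\end{equation*}
which is the claim.

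There is no real obstacle here; the only point that deserves a moment's attention is the application of Vitali's covering lemma, which requires the radii in $\mathcal{F}$ to be bounded, and this is immediate from the total mass bound on $y$ as noted above.
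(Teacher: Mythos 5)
Your proof is correct and follows essentially the same Vitali covering argument as the paper; the only addition is your explicit check that the radii in the thick-ball family are uniformly bounded (so that the $5r$-covering lemma applies), which the paper leaves implicit.
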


\proof The set $T_\alpha(y)$ is the union of all thick balls.  By the Vitali covering lemma, we can
find disjoint thick balls $B_i$ so that $5 B_i$ covers $T_\alpha(y)$.  Hence $\HC_n(T_\alpha(y))$
is bounded by $\sum_i (5 R_i)^n$, where $R_i$ denotes the radius of $B_i$.  But since each
$B_i$ is $\alpha$ thick, $R_i^n \le \alpha^{-1} \Vol (y \cap B_i)$.  Since the $B_i$ are disjoint,
we see that $\sum_i (5 R_i)^n \le 5^n \alpha^{-1} \Vol(y)$. \endproof
 
\subsection{Outline of the construction}

Our construction is based on applying the deformation theorem to $z$ at a dyadic sequence of scales.

By a minor approximation argument, we can reduce to the case that $z$, $w$, and $y$ are all cubical chains
in the cubical lattice with some tiny scale $s_0$.  We give this approximation argument in Section \ref{minorapprox}.
For now, we give the proof of the perpendicular pair inequalities for the case of cubical chains.

Then we consider a dyadic sequence of scales
$s_i = 2^i s_0$.  We use the deformation theorem to deform $z$ to a cubical cycle at each scale.
We let $z_i$ be a Federer-Fleming deformation of $z$ at scale $s_i$.  (So $z$ itself is $z_0$.)  
Each cycle $z_i$ is a finite sum of cubical $(n-1)$-faces of the lattice with side length $s_i$.  We will prove
that when $i$ is sufficiently large, $z_i$ is just the zero cycle.  Let us define $i_{final}$ so
that $z_{i_{final}} = 0$.

Next we build a sequence of $n$-chains $A_i$ with $\partial A_i = z_{i-1} - z_i$.
We define the chain $y'$ as $y' = \sum_{i=1}^{i_{final}} A_i$.  An easy calculation shows
that $\partial y' = z_0 - z_{i_{final}} = z$.

Our main goal is to do this construction in such a way that each $z_i$ and each $A_i$ is contained in $T_\alpha(y)$, for
some $\alpha > 0$ depending only on the dimension $N$.  (This requires a slightly modified version of the Federer-Fleming
deformation theory adapted to the situation.)
Then the Hausdorff content of $y'$ will be bounded by the Hausdorff content of $T_\alpha(y) \lesssim \Vol_n(y)$.  

This outline is based on arguments from \cite{Y}.  In \cite{Y}, R. Young uses a multiscale argument of this type 
to prove isoperimetric inequalities on the Heisenberg group.

In this section we write $A \lesssim B$ to mean $A \le C(N) B$.

\subsection{Intersection number lemma}

In this section, we use the
perpendicular hypothesis to bound some intersection numbers.

Let $R$ denote a rectangle of dimension $N - n + 1$ parallel to the coordinate axes.  If $z$ is transverse
to $R$, then we can define the mod 2 intersection number $[ z \cap R ] \in \mathbb{Z}_2$ as the number of points in the
intersection $z \cap R$ taken modulo 2.  

\newtheorem*{inl}{Intersection number lemma}

\begin{inl} Let $\alpha > 0$ and $s > 0$ be any numbers.
Let $z$ and $w$ be a perpendicular pair of $(n-1)$-cycles.  Let $\partial y = z + w$.  
Let $R_0$ be an axis parallel rectangle with dimension $N-n+1$.  Suppose that
the sidelengths of $R_0$ are at most $s$.  

Let $v$ be a vector of length at most $s$, and let $R_v$ denote the translation of $R_0$
 by $v$.  We will pick a possible translation vector
$v \in B^N(s)$ randomly (with respect to the usual volume form on the ball).

Suppose that the ball around the center of $R_0$ with radius $N s$ is 
$\alpha$-thin.  Then the intersection number $[z \cap R_v]$ is equal to 0 most of the time.
More precisely, the set of vectors $v \in B^N(s)$ so that $[z \cap R_v] \not= 0$ has
probability measure at most $C(n,N) \alpha$.
\end{inl}

\proof  The intersection number $[ (z+w) \cap R_v]$ is equal to the intersection number
$[ y \cap \partial R_v]$.  The boundary $\partial R_v$ consists of $2^{N-n+1}$ faces, which
are each rectangles contained in the ball of radius $Ns$ around the center of $R_0$.  Since $y$
is small, it is usually disjoint from all these faces.  By standard integral geometry, the
probability that $y$ intersects any of the faces of $\partial R_v$ is $\le C(n,N) \alpha$.  

Here are details of the integral geometry argument.  
Let $F$ denote a face of the boundary of $R_0$.  Let $F_v$ denote the translation of $F$ by $v$, which is a face of
the boundary of $R_v$.  Let $F^{\perp}$ denote the plane perpendicular to
$F$ and let $\pi$ denote the orthogonal projection from $\mathbb{R}^N$ to $F^\perp$.  
The projection $\pi(F_v)$ is a single point.  Let $B$ denote the ball of radius $Ns$
around the center of $R_0$.  All faces $F_v$ are contained in this ball.  By assumption, the volume
of $y \cap B$ is at most $C(n,N) \alpha s^n$.  Therefore, the projection $\pi(y \cap B)$
has volume at most $C(n,N) \alpha s^n$.  Now if $y$ intersects $F_v$, then the point $\pi(F_v)$ must
lie in $\pi(y \cap B)$.  We note that $\pi(F_v)$ is just $\pi(F) + \pi(v)$.  So $F_v$ intersects $y$ only if
$\pi(v)$ is contained in the small set $\pi(y \cap B) - \pi(F)$.  The set of $v$ obeying this condition has
probability at most $C(n,N) \alpha$.  

With high probability, $0 = [y \cap \partial R_v] = [ (z+w) \cap R_v ] = [ z \cap R_v ] + [w \cap R_v ]$.
But the two intersection numbers $[z \cap R_v]$ and $[w \cap R_v]$ can 
(almost) never cancel because $z$ and $w$ are perpendicular cycles.  
Let $J$ denote the $n-1$ coordinates that are perpendicular to $R_0$.  Note that if $\Vol_J(z) = 0$,
then $z$ is disjoint from $R_v$ for almost every $v$.  By the perpendicularity assumption, we
know that either $\Vol_J(z) = 0$ or $\Vol_J(w) = 0$.  Hence either $[z \cap R_v] = 0$ for almost
every $v$ or else $[w \cap R_v] = 0$ for almost every $v$.

Therefore, $[z \cap R_v] = 0$ except with probability $C(n, N) \alpha$.   \endproof

\subsection{The deformation operator} \label{defop}

In this section, we review the Federer-Fleming deformation operator.  The deformation operator is defined
in terms of intersection numbers.  Therefore, the intersection number lemma will allow us to prove estimates
about the deformations of $z$.

The Federer-Fleming construction is based on the skeleta of lattices and
their dual skeleta.  Let $\Sigma(s)$ be the cubical lattice at scale $s$ in $\mathbb{R}^N$.  We let
$\Sigma^d(s)$ be the d-skeleton of $\Sigma(s)$.  

Let $\bar \Sigma(s)$ be the dual cubical lattice.  Here dual means that each
vertex of $\bar \Sigma$ is the center of an N-face of $\Sigma$, while each 
vertex of $\Sigma$ is the center of an N-face of $\bar \Sigma$.  Each edge
of $\bar \Sigma$ passes through the center point of a unique (N-1)-face of $\Sigma$ etc.  For
any d-dimensional face $F^d \subset \Sigma^d$, we let $\bar F^{N-d}$ denote
the corresponding (N-d)-dimensional face of the dual skeleton $\bar \Sigma^{N-d}$.  Note that
$F$ and $\bar F$ always have the same center point.

For a vector $v$, we let $\bar \Sigma_v$ denote the translation of $\bar \Sigma$ by
$v$.  If $F$ is a d-face of $\Sigma^d$ and $\bar F$ is the corresponding face of
$\bar \Sigma$, we let $\bar F_v$ denote the translation of $\bar F$ by the vector
$v$.

With these two skeleta set up, we can define the 
Federer-Fleming deformation operator.  For each $v$, the deformation operator
takes any d-chain $T$ transverse to $\bar \Sigma_v$, and outputs $D_v(T)$ a cubical d-chain 
contained in $\Sigma^d$.  The deformation operator is defined as follows:

$$ D_v(T) : = \sum_{F^d \subset \Sigma^d} [\bar F^{N-d}_v \cap T] F . $$

Recall that $[ \bar F_v \cap T] \in \mathbb{Z}_2$ is the topological intersection number of
$\bar F_v$ and $T$.

The deformation $D_v(T)$ depends on the scale $s$.  I think it would clutter the notation too much to write
something like $D_v^s(T)$.  It will always be clear from the context which scale $s$ is being used.

We now recall some standard facts about the deformation operator.  We review the proofs of the standard facts in
Section \ref{fedflemreview}.

1. If $|v| < s/2$, and if $T$ is a cubical d-chain in $\Sigma(s)$, then $D_v(T) = T$.

2. The deformation operator commutes with taking boundaries:

$D_v( \partial T) = \partial D_v(T)$.

(provided that $\bar \Sigma_v$ is transverse to both $\partial T$ and $T$ so that
both sides of the equation are defined.)

3.  If we average over all $|v| < s/2$, then

$$ \Average_v  \Vol_d [ D_v(T) ] \le C(d, N) \Vol_d(T). $$

4. If $T$ is a d-cycle, then we can build a (d+1)-chain $A_v(T)$ in the $C(N) s$ neighborhood of $T$ with $\partial A_v(T) = T - D_v(T)$.  Moreover, if we average over all $|v| < s/2$, then

$$ \Average_v \Vol_{d+1} [ A_v(T)] \le C(d,N) s \Vol_{d}(T). $$

\vskip10pt

The intersection number lemma gives some estimates about the cycle $D_v(z)$.

\begin{lemma} \label{warmup1}  Let $\alpha > 0$ be any number.  
Let $F$ be a face of $\Sigma(s)$.  Suppose that $F$ is not
contained in $T_{\alpha}(y)$.  Then, as we consider all $|v| < s/2$, the
probability that $F$ is contained in $D_v(z)$ is at most $C(n,N) \alpha$.
\end{lemma}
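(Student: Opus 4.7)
The plan is to deduce this directly from the intersection number lemma applied with $R_0 = \bar F$. Since $D_v(z)$ is an $(n-1)$-chain, the face $F$ can only contribute with nonzero coefficient when $\dim F = n-1$, in which case its dual $\bar F$ is an axis-parallel rectangle of dimension $N-n+1$ with sidelengths $s$, centered at the center of $F$. By the very definition of $D_v$, the coefficient of $F$ in $D_v(z)$ equals the mod-$2$ intersection number $[\bar F_v \cap z]$, so it suffices to bound the probability that this intersection number is nonzero as $v$ varies over $B^N(s/2)$.

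The only thing to verify before invoking the intersection number lemma is its hypothesis: that the ball of radius $Ns$ around the center of $R_0 = \bar F$ is $\alpha$-thin. This is where the assumption $F \not\subset T_\alpha(y)$ comes in. I would pick any point $p \in F \setminus T_\alpha(y)$. By definition of $T_\alpha(y)$, no ball containing $p$ is $\alpha$-thick, so in particular
$$\Vol\bigl(y \cap B(p, 2Ns)\bigr) < \alpha (2Ns)^n.$$
Since $\mathrm{diam}(F) \le \sqrt{N}\,s \le Ns$, the ball $B(c_F, Ns)$ around the center $c_F$ of $F$ is contained in $B(p, 2Ns)$, and hence $\Vol(y \cap B(c_F, Ns)) < 2^n\alpha \cdot (Ns)^n$, i.e.\ it is $(2^n\alpha)$-thin. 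Applying the intersection number lemma with $\alpha$ replaced by $2^n\alpha$ then yields $[\bar F_v \cap z] = 0$ except for a set of $v \in B^N(s/2)$ of relative measure at most $C(n,N)\cdot 2^n \alpha$, which is of the desired form $C'(n,N)\alpha$. (Transversality of $\bar\Sigma_v$ with $z$ holds for almost every $v$, so it does not affect the probability estimate.)

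The argument presents no real obstacle; the only point worth noting is the conversion between the two senses of ``thin'' present in the setup. The hypothesis is pointwise — some point of $F$ lies outside the union $T_\alpha(y)$ of all thick balls — whereas the intersection number lemma demands thinness of a single specific ball of radius $Ns$ around the center of the translating rectangle. Passing from the former to the latter costs only the dimensional factor $2^n$ above, which is absorbed into the final constant $C(n,N)$. The heavier work has already been done inside the intersection number lemma, which is where the perpendicularity of $z$ and $w$ was used; here we merely repackage its conclusion into a statement about the deformation operator.
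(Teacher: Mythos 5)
Your proof is correct and follows the same strategy as the paper: reduce to the intersection number lemma applied with $R_0 = \bar F$, after noting that $F \not\subset T_\alpha(y)$ forces the relevant ball to be thin. The only minor difference is that you detour through the larger ball $B(p, 2Ns)$ and pick up a factor of $2^n$; the paper observes directly that $B(c_F, Ns)$ already contains the point $p \notin T_\alpha(y)$, so that ball is $\alpha$-thin with no loss --- but either way the constant is absorbed into $C(n,N)$, so this is cosmetic.
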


\proof The face $F$ is contained in $D_v(z)$ if and only
if the intersection number $[\bar F_v \cap z]$ is non-zero. 
Since $F$ is not contained in $T_\alpha(y)$, it follows that the ball
around the center of $F$ with radius $N s$ is $\alpha$-thin.  The center of $F$ is
the same as the center of $\bar F$, so the ball around the center of $\bar F$ with radius $N s$ is $\alpha$-thin.   
Now we apply the intersection number
lemma with $\bar F$ playing the role of the rectangle $R_0$.  The intersection number lemma implies that the probability that
$[\bar F_v \cap z] \not= 0$ is at most $C(n,N) \alpha$.  \endproof

It would have been helpful if $D_v(z)$ were completely contained in $T_\alpha(y)$ for some
dimensional constant $\alpha(N) > 0$.  We could then choose $s = s_i$ and define $z_i = D_v(z)$,
and we would know that $z_i \subset T_\alpha(y)$ for each $i$, accomplishing a big chunk of the plan
laid out in the outline.  Unfortunately, Lemma \ref{warmup1} is not strong enough to imply this.

The problem is that just translating the lattice $\bar \Sigma$ does not give
us enough degrees of freedom to find a deformation $D_v(z)$ with all the
properties that we would like.  We will improve the situation by moving each
vertex of $\bar \Sigma(s)$ {\it independently}.  
This will involve not just translating $\bar \Sigma$ but bending
it.

\subsection{Federer-Fleming deformations using bent dual skeleta}

Let $\Phi: \bar \Sigma(s) \rightarrow \mathbb{R}^n$ be a PL or piecewise smooth map.  We call $\Phi$ a 
``bending" of the dual skeleton.
The deformation operator associated to $\Phi$ is a small modification of the standard deformation operator.

$$D_\Phi (T) := \sum_{F^d \subset \Sigma^d(s)} [\Phi (\bar F) \cap T] F. $$

Notice that $D_\Phi(T)$ is a cubical chain in $\Sigma^d(s)$ - we do not bend or 
translate $\Sigma(s)$.  The deformation operator $D_\Phi$ is defined as long as $T$
is transverse to $\Phi(\bar \Sigma)$.

Our next goal is to construct bending functions $\Phi_i: \bar \Sigma(s_i) \rightarrow \mathbb{R}^N$ in such a way that the deformations
$D_{\Phi_i}(z) = z_i$ are contained in the thick region $T_\alpha(y)$.   This will take some work.  We record
here an important property of the deformation operator $D_\Phi$.

\begin{lemma} The deformation operator $D_{\Phi}$ commutes with boundaries.  In other words, if $T$ is any d-chain,
and $\Phi$ is transverse to both $T$ and $\partial T$, then

$$ \partial D_\Phi(T) = D_\Phi ( \partial T) .$$

\end{lemma}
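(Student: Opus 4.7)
The plan is to expand both sides using the definition of $D_\Phi$ and reduce the identity to two standard facts: a combinatorial duality between the cubical and dual cubical complexes, and the mod 2 Stokes relation for intersection numbers. Starting from the left hand side, I would write
$$ \partial D_\Phi(T) = \sum_{F^d \subset \Sigma^d(s)} [\Phi(\bar F) \cap T] \, \partial F = \sum_{G^{d-1} \subset \Sigma^{d-1}(s)} \Bigl( \sum_{F^d \supset G} [\Phi(\bar F) \cap T] \Bigr) G, $$
using (mod 2) $\partial F = \sum_{G \subset \partial F} G$ and swapping the order of summation. The right hand side is $D_\Phi(\partial T) = \sum_G [\Phi(\bar G) \cap \partial T] \, G$, so everything reduces to checking, for each $(d-1)$-face $G$, the identity $\sum_{F \supset G} [\Phi(\bar F) \cap T] = [\Phi(\bar G) \cap \partial T]$.

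The first supporting fact I would use is the standard dual incidence relation for cubical complexes: an $(N-d)$-face $\bar F$ is a boundary face of the $(N-d+1)$-face $\bar G$ exactly when $G \subset \partial F$, and (mod 2) $\partial \bar G = \sum_{F \supset G} \bar F$. Applying the piecewise smooth/PL map $\Phi$ and using that $\Phi$ commutes with taking boundaries of chains, this gives $\partial \Phi(\bar G) = \sum_{F \supset G} \Phi(\bar F)$ as a mod 2 $(N-d)$-chain.

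The second supporting fact is the mod 2 Stokes relation for intersection numbers: whenever $P$ is an $(N-d+1)$-chain and $T$ is a $d$-chain such that $\Phi$ is transverse to both $T$ and $\partial T$ (so all the intersections below are transverse and generic), one has $[\partial P \cap T] = [P \cap \partial T]$. This follows by interpreting both sides as the mod 2 count of the boundary of the $1$-dimensional transverse intersection $P \cap T$ (a compact $1$-manifold whose boundary is $\partial P \cap T \sqcup P \cap \partial T$, counted mod 2). Applied with $P = \Phi(\bar G)$, this yields $\sum_{F \supset G}[\Phi(\bar F) \cap T] = [\partial \Phi(\bar G) \cap T] = [\Phi(\bar G) \cap \partial T]$, which is exactly what was needed.

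Substituting this identity back into the expansion of $\partial D_\Phi(T)$ produces $D_\Phi(\partial T)$ term by term, finishing the proof. The only delicate point is justifying the mod 2 Stokes relation in the generality allowed (PL or piecewise smooth $\Phi$), and arranging the transversality so that all the intersection numbers in sight are well-defined; but the transversality of $\Phi$ with respect to both $T$ and $\partial T$, which is hypothesized, is precisely what ensures this. Hence the two displayed identities combine to give $\partial D_\Phi(T) = D_\Phi(\partial T)$ as claimed.
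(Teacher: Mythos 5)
Your proof is correct and follows essentially the same route as the paper's: both expand $\partial D_\Phi(T)$ over $(d-1)$-faces, invoke the cubical/dual incidence relation $\sum_{F\supset G}\bar F = \partial\bar G$, and then apply the mod 2 boundary relation for the $1$-dimensional transverse intersection $\Phi(\bar G)\cap T$ to pass from $[\Phi(\partial\bar G)\cap T]$ to $[\Phi(\bar G)\cap\partial T]$. You merely package the two supporting facts as explicitly named lemmas; the argument itself is the same.
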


\begin{proof} From the formula for $D(T)$, we see that 

$$\partial D(T) = \sum_{F^d \subset \Sigma^d(s)} [\Phi (\bar F) \cap T] \partial F.$$

Consider a (d-1)-face $G$ in $\Sigma^{d-1}$.  Let $F_1(G), ..., F_{2(N-d+1)}(G)$ be the set of all the d-faces of $\Sigma^d(s)$ that contain $G$ in their boundary.  We can rewrite the formula for $\partial D(T)$ as follows:

$$ \partial D(T) = \sum_{G^{d-1} \subset \Sigma^{d-1}(s)} \left( \sum_{j=1}^{2(N-d+1)} [\Phi( \overline{F_j(G)}) \cap T] \right) G. $$

Now the first key point is that $\sum_{j=1}^{2(N-d+1)} \overline{F_j(G)} = \partial \bar G$.  Therefore,

$$ \partial D(T) = \sum_{G^{d-1} \subset \Sigma^{d-1}(s)} [\Phi( \partial \bar G) \cap T] G. $$

 Since $\Phi$ is transverse to $T$, $\Phi(\bar G) \cap T$ is a
1-chain, and the boundary of $\Phi(\bar G) \cap T$ consists of an even number of points.  By transversality, the boundary of $\Phi(\bar G) \cap T$ is the union of $\Phi( \partial \bar G) \cap T$ and $\Phi(\bar G) \cap \partial T$.
Therefore, $[\Phi (\partial \bar G) \cap T] = [\Phi (\bar G) \cap \partial T]$.  Substituting in, we get

$$\partial D(T) =  \sum_{G^{d-1} \subset \Sigma^{d-1}(s)} [\Phi(\bar G) \cap \partial T] G = D( \partial T). $$

\end{proof}

We have to construct useful bending maps $\Phi_i: \bar \Sigma(s_i) \rightarrow \mathbb{R}^N$.  If a face $F$ is
not in the thick region $T_\alpha(y)$, then we want $[\Phi_i( \bar F) \cap z]$ to vanish.  We will prove this vanishing using the intersection number lemma.  To make this approach work, we need $\Phi_i(\bar F)$ to be a union of axis-parallel rectangles, with some translation freedom.  We set up a framework for this in the next subsection.

\subsection{Local grids and bending maps}

We can think of the cubical lattice at scale $s$ as the union of hyperplanes

$$ \{ x_j = s m \},  j=1, ..., N,  m \in \mathbb{Z} . $$

We define a grid to be a union of coordinate hyperplanes (which may not be evenly spaced).  For example, if
$h_j(m)$ are real numbers with $h_j(m) < h_j(m+1)$, then we can form a grid by taking the union of all hyperplanes of the form $\{ x_j = h_j(m) \}$, for $j = 1,..., N$, and $m \in \mathbb{Z}$.  Any grid can be expressed in this way, for some appropriate numbers $h_j(m)$.  We say that the spacing of the gird is $\le S$ if $h_j(m+1) - h_j(m) \le S$ for every $j, m$.  

For example, we can make a grid by translating the hyperplanes in the cubical lattice at scale $s$.
Given a perturbation function $p(j,m) \in [-1/4, 1/4]$, the corresponding perturbed grid is given by the
union of hyperplanes $ \{ x_j = s (m + p(j,m) ) \}$, where again $j=1, ..., N, 
m \in \mathbb{Z}$.   Since $|p(j,m)| \le 1/4$, the spacing of this perturbed grid is $\le (3/2)s$.

We can also take the union of two grids, just by taking the union of all
of the hyperplanes.  We say that one grid is contained in a second grid if each hyperplane in
the first grid is contained in the second grid.  
 
We can think of a grid as a polyhedron, and talk about its vertices, its edges, its faces, and so on.

Next we define a ``local grid" for the complex $\bar \Sigma(s)$.  A local grid is a function $G$ that
assigns a grid to each face $f$ (of any dimension) in $\bar \Sigma(s)$ in such a way that if $f_1 \subset f_2$, 
then $G(f_1) \subset G(f_2)$.  In particular, if $v_1, ..., v_{2^d}$ are the vertices of a d-dimensional
face $f \subset \bar \Sigma(s)$, then $G(f)$ must contain $\cup_{i=1}^{2^d} G(v_i)$.  We say that a local
grid $G$ has spacing $\le S$ if each grid $G(f)$ has spacing $\le S$.

For any local grid, we can define a bending function $\Phi$ that behaves nicely with respect to the grid.

\begin{lemma} \label{bendingmap} Let $G$ be a local grid for $\bar \Sigma(s)$ with spacing $\le S$.  Then there is a function $\Phi: \bar \Sigma(s) \rightarrow \mathbb{R}^N$ with the following properties.  For each d-dimensional face $f$ of $\bar \Sigma(s)$, $\Phi(f)$ is contained in the d-skeleton of
$G(f)$.  Moreover, as a chain, $\Phi(f)$ is equal to a sum of d-faces of $G(f)$.

Also, for any point $x$, $|\Phi(x) - x| \le 2 N^2 S$.

\end{lemma}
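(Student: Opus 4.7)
The plan is to construct $\Phi$ by induction on the dimension $d$ of the faces of $\bar{\Sigma}(s)$, preserving the invariants that for each $d$-face $f$ already handled, $(i)$ $\Phi|_f$ is a continuous piecewise-linear map into the $d$-skeleton of $G(f)$, $(ii)$ its pushforward $\Phi(f)$ equals a sum of $d$-cells of $G(f)$, and $(iii)$ $\partial \Phi(f)=\Phi(\partial f)$ as cellular chains. The base case is immediate: for each vertex $v$ of $\bar\Sigma(s)$, let $\Phi(v)$ be a vertex of $G(v)$ closest to $v$; since $G(v)$ has spacing $\le S$, such a vertex lies within $\tfrac{\sqrt N}{2}S$ of $v$.

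For the inductive step, suppose $\Phi$ has been defined on the $(d-1)$-skeleton and fix a $d$-face $f$ of $\bar\Sigma(s)$. The containment $G(g)\subset G(f)$ for every $(d-1)$-face $g\subset\partial f$ forces $\Phi(\partial f)$ to lie in the $(d-1)$-skeleton of $G(f)$, and invariant $(iii)$ together with $\partial\partial=0$ makes it a cellular $(d-1)$-cycle. Because the $d$-skeleton of any cubical grid in $\mathbb{R}^N$ is $(d-1)$-connected for $d\le N$, this cycle bounds a cellular $d$-chain $A_f$ in the $d$-skeleton of $G(f)$. I would then extend $\Phi$ from $\partial f$ to $f$ as any PL map onto the topological disk $|A_f|$ realizing $A_f$ as its pushforward; this is possible precisely because $\partial A_f=\Phi(\partial f)$ already matches the boundary map, so we can patch a map from the $d$-disk to $|A_f|$ onto the already-defined map on $\partial f$.

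The delicate point, and the main obstacle, is the distance bound $|\Phi(x)-x|\le 2N^2 S$. A purely abstract filling could place $A_f$ far from $f$ and blow up the bound; the fix is to constrain $A_f$ to lie in a one-grid-cell enlargement of the axis-aligned bounding box of $\Phi(\partial f)$, which can always be arranged by an elementary cubical coning construction (iteratively project $\Phi(\partial f)$ onto a corner hyperplane of its bounding box and cone). Writing $c_d$ for the worst $|\Phi(x)-x|$ on the $d$-skeleton, bounding-box containment yields that $A_f$ lies within the $c_{d-1}$-neighborhood of $\partial f$ up to one grid cell, and choosing the PL extension on $f$ to snap each coordinate direction independently to a nearby hyperplane of $G(f)$ gives $c_d\le c_{d-1}+2NS$. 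Iterating from $c_0\le\tfrac{\sqrt N}{2}S$ through $d=N$ yields $c_N\le 2N^2 S$.

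The step I expect to require the most care is producing the coordinate-snapping PL extension in such a way that it is simultaneously compatible with the cellular-pushforward structure on every $d$-face meeting $f$, so that invariants $(i)$--$(iii)$ really do persist to the next induction step; this is exactly where the nesting $G(g)\subset G(f)$ for $g\subset f$ is used, since it lets us build the extension on $f$ using hyperplanes of $G(f)$ that already contain the pieces of $\Phi(\partial f)$ we inherited from lower-dimensional faces.
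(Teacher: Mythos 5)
Your proposal takes essentially the same route as the paper: a skeleton-by-skeleton induction, using the nesting $G(g)\subset G(f)$ to ensure $\Phi(\partial f)$ already lies in the $(d-1)$-skeleton of $G(f)$, then extending over $f$ into the $d$-skeleton while tracking the displacement (the paper leaves "it's then straightforward to arrange'' where you supply the bounding-box/coning detail). One small bookkeeping slip: your recursion $c_d\le c_{d-1}+2NS$ with $c_0\le\tfrac{\sqrt N}{2}S$ gives $c_N\le\bigl(\tfrac{\sqrt N}{2}+2N^2\bigr)S$, slightly exceeding the claimed $2N^2S$, but a tighter per-step constant from the coordinate-wise snapping (at most $S$ per coordinate, so $\le\sqrt N\,S$ per step) closes the gap.
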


\begin{proof} We define $\Phi$ one skeleton at a time.  First we define $\Phi$ on the vertices of $\bar \Sigma(s)$.  Let $v$ be a vertex of $\bar \Sigma(s)$.  Since the spacing of $G(v)$ is $\le S$, we can choose a point $\Phi(v)$ with $| \Phi(v) - v| \le N s$, by pushing $v$ to the nearest vertex in $G(v)$.  Now we will define $\Phi$ on higher-dimensional skeleta so that for each $x$ in the d-skeleton, $| \Phi(x) - x | \le (d+1) N S$.

Suppose we have defined $\Phi$ on the (d-1)-skeleton of $\bar \Sigma(s)$.  We have to define $\Phi$ on a d-face $f \subset \bar \Sigma(s)$.  We have already defined $\Phi$ on $\partial f$.  For each face $f_i$ of $\partial f$, we know that $\Phi$ maps $f_i$ into the (d-1)-skeleton of $G(f_i)$.  Since $G$ is a local grid, the (d-1)-skeleton of $G(f_i)$ is contained in the (d-1)-skeleton of $G(f)$.  Therefore,
$\Phi$ maps $\partial f$ into the (d-1)-skeleton of $G(f)$.  So we can extend $\Phi$ to $f$, mapping $f$ into the d-skeleton of $G(f)$.  As a chain, $\Phi(f)$ will be equal to a sum of d-faces of $G(f)$.  By induction, we can assume that $|\Phi(x) - x| \le d N S$ for each $x \in \partial f$.  It's then straightforward to arrange that $| \Phi(x) - x | \le d N S + N S$ for each $x \in f$.    \end{proof}

\subsection{Good local grids}

Next we will construct a good local grid $G_i$ at each scale $s_i$.  The good feature of the local grid is that its faces don't intersect $z$ unnecessarily.

\begin{lemma} \label{goodlocalgrids} For each scale $i \ge 1$ and each vertex $v \in \bar \Sigma(s_i)$, we will construct a grid
$G_i(v)$ with spacing $\le (1/100) N^{-2} s_i$.  

For any face $f$ in $\bar \Sigma(s_i)$, we define $G_i(f)$ to be $\cup_{v \in f} G_i(v)$.

We will also need some grids related to two consecutive scales, $s_{i-1}$ and $s_i$.  If $v$ is a vertex of $\bar \Sigma(s_i)$, define $G_{i-1, i}(v)$ to be the union of $G_i(v)$ and $G_{i-1}(w)$, for all the vertices $w \in \bar \Sigma(s_{i-1})$ which lie within $N s_i$ of $v$.  If $f$ is a face of $\bar \Sigma(s_i)$, we define $G_{i-1,i}(f)$ to 
be the union of $G_{i-1,i}(v)$ over all vertices $v \in f$.

There exists a constant $\beta = \beta(N) > 0$ so that the following holds.

If $R$ is an $(N-n+1)$-face of $G_i(f)$ or $G_{i-1, i}(f)$, and if $R$ lies in the $4 N^2 s_i$-neighborhood of $f$, 
then $R$ will be transverse to $z$ and it will obey the following key estimate.  Let $\Ball[R]$ be the ball centered at the center of $R$ and with radius $N s_i$.  

\begin{itemize}

\item If $[z \cap R] \not= 0$, then 
$\Vol_n( y \cap \Ball[R]) \ge \beta s_i^n$.

\end{itemize}

\end{lemma}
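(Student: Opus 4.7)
The plan is to construct each local grid $G_i(v)$ by randomly perturbing the hyperplanes of a fine axis-parallel grid, then apply the Intersection Number Lemma together with a union bound to select a realization with the claimed property.

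For each vertex $v$ of $\bar\Sigma(s_i)$, each coordinate direction $j$, and each integer $k$ in a range large enough to cover the ball of radius $100 N^2 s_i$ around $v$, place a hyperplane
\[
H_j^v(k) \;=\; \{x_j = v_j + (k + p_j^v(k))\,\eta s_i\},\qquad \eta := 1/(200 N^2),
\]
where the perturbations $p_j^v(k) \in [-1/4, 1/4]$ are drawn independently and uniformly at random. Let $G_i(v)$ be the grid formed by these hyperplanes. The spacing is at most $(3/2)\eta s_i < (1/100)N^{-2} s_i$, as required, and fixing all but a single perturbation translates the corresponding hyperplane through an interval of length $\eta s_i/2$.

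Now fix any $(N-n+1)$-face $R$ of $G_i(f)$ or $G_{i-1,i}(f)$ lying within the $4 N^2 s_i$-neighborhood of $f$. Write $R$ as the intersection of $n-1$ hyperplanes of the relevant grid in $n-1$ distinct coordinate directions, bounded by further hyperplanes; set $s := \eta s_i$, so the sidelengths of $R$ are at most $s$. Suppose $\Vol_n(y \cap \Ball[R]) < \beta s_i^n$. Since the ball of radius $Ns$ around the center of $R$ is contained in $\Ball[R]$, this smaller ball is $(\beta \eta^{-n})$-thin in the sense of the Intersection Number Lemma. Conditioning on all perturbations not involved in $R$ and applying the Intersection Number Lemma to the remaining $n-1$ perturbations (which translate $R$ by a random vector of norm $\le s$), the conditional probability that $[z \cap R] \ne 0$ is at most $C(n,N)\,\beta \eta^{-n} = C'(n,N)\,\beta$, hence so is the unconditional probability.

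The number of candidate faces $R$ in the $4N^2 s_i$-neighborhood of $f$ is bounded by a polynomial $M(N)$ in $N$. Choosing $\beta = \beta(N)$ so small that $C'(n,N)\,M(N)\,\beta < 1/2$, a union bound shows that with probability at least $1/2$ every such $R$ satisfies the desired implication; transversality to $z$ is a generic (codimension-one) condition and holds almost surely. Selecting a realization satisfying both yields the required $G_i(v)$. The main obstacle is the combined grid $G_{i-1,i}$: faces that contain at least one hyperplane from $G_i$ are handled directly by the scale-$i$ randomness, but faces built entirely from $G_{i-1}$ need the step-$(i-1)$ construction to have already secured the stronger estimate at ball-radius $N s_i$ rather than $N s_{i-1}$. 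This is arranged inductively by adding the larger-ball condition to the union bound at step $i-1$, at the cost of an extra factor $(s_i/s_{i-1})^n = 2^n$ in the effective thinness threshold, which is absorbed into the final choice of $\beta(N)$.
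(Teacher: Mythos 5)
Your construction (random perturbations of a fine coordinate grid) and your use of the Intersection Number Lemma to bound the failure probability for any single face $R$ are the same ingredients the paper uses. But there is a genuine gap in how you conclude.

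Your union bound runs only over the boundedly many candidate faces $R$ in the $4N^2 s_i$-neighborhood of a \emph{single fixed} face $f$, and shows that with probability $\ge 1/2$ all of \emph{those} $R$ behave. You then ``select a realization.'' But the lemma requires the key estimate to hold \emph{simultaneously} for every face $f$ of $\bar\Sigma(s_i)$ and every scale $i \ge 1$. There are unboundedly many such faces (already at a single scale there can be $\gtrsim s_i^{-N}$ of them), so a union bound over all of them blows up, and the events for different $f$'s are not independent --- they share random variables whenever the faces share vertices or lie at consecutive scales. You cannot pick a good realization for one $f$ and then start over for the next. The paper resolves exactly this by treating the bad events $\Bad_i(f)$ as a locally dependent family: each bad event depends on $\lesssim_N 1$ of the perturbation parameters $p_{i,v}(j,m)$, each parameter influences $\lesssim_N 1$ bad events, and each has probability $\lesssim\beta$, so the Lov\'asz-local-lemma-type probability lemma (Lemma \ref{problemma}) produces a single realization avoiding all of them. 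That dependency bookkeeping is the missing step in your argument.

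Your final paragraph about the combined grid $G_{i-1,i}$ and ``adding the larger-ball condition to the union bound at step $i-1$'' is pointing at a real subtlety --- the events $\Bad_{i-1,i}(f)$ involve perturbations at two consecutive scales --- but it misdiagnoses how to handle it. In the paper's framework this requires no separate induction: the two-scale events are simply additional members of the same family, still depending on $\lesssim_N 1$ parameters each, so the local lemma absorbs them for free. What you actually need, and what your proposal lacks, is a mechanism for finding one global choice of perturbations; the local lemma supplies it.
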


Here is the heuristic explanation.  If $\Vol_n(y \cap \Ball[R])$ is very small, then the intersection number lemma implies
that a small random perturbation of $R$ will usually give $[z \cap R] = 0$.  
So by wiggling all the local grids a little bit, we can arrange the key estimate at the end of the lemma.

The proof of the good local grids lemma is probabilistic, and it depends on the following probability lemma.

\begin{lemma} Suppose that $X = \prod_{i =1}^\infty X_i$ is a product of probability spaces.  Suppose that $\Bad \subset X$ is a union $\Bad = \cup \Bad_\alpha$.  Suppose that each set $\Bad_\alpha$ has probability less than $\epsilon$.  Suppose that each set $\Bad_{\alpha}$ depends on $< C_1$ different coordinates
$x_i$ of the point $x \in X$.  Suppose that each variable $x_i$ is relevant for $< C_2$ different
sets $\Bad_{\alpha}$.  If $\epsilon < (1/2) C_2^{- C_1}$, then $\Bad$ is not all of $X$.
\end{lemma}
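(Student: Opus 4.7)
This is a Lov\'asz Local Lemma-type statement adapted to the dependency structure here: two bad events sharing no coordinates are mutually independent under the product measure, so in the dependency graph where $\Bad_\alpha \sim \Bad_\beta$ iff they share a coordinate, each event has fewer than $C_1(C_2{-}1) < C_1 C_2$ neighbors. My plan is to produce a point in $X \setminus \Bad$ by a Moser--Tardos-style resampling algorithm.

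Starting from a random $x^{(0)} \in X$, at each step, if $x^{(t)} \in \Bad_\alpha$ for some $\alpha$, I would resample the coordinates $\{x_i : i \in I_\alpha\}$ (with $|I_\alpha| < C_1$) according to their product marginals; note that each step touches only finitely many coordinates, so the infinite product space causes no trouble. I would associate to each execution its \emph{witness tree}: a rooted tree whose nodes are labeled by bad events, with the root being the most recently fixed event and the children of a node $\Bad_\beta$ being the previously-fixed events that share a coordinate with $\Bad_\beta$. The standard Moser--Tardos analysis gives that a specific witness tree with $n$ nodes is realized with probability at most $\epsilon^n$, so it suffices to bound the total weight $\sum_T \epsilon^{|T|}$ over rooted witness trees.

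The counting should be organized so that the exponent $C_2^{C_1}$ appears naturally: at each node $\Bad_\beta$, I would enumerate its potential children by first choosing one of the $< C_1$ coordinates $\Bad_\beta$ depends on and then one of the $< C_2$ other bad events involving that coordinate. Treating the $< C_1$ coordinates of a single node as one combined branching step gives a factor $< C_2^{C_1}$ for the children of each node, so the weighted sum over rooted witness trees is dominated by a geometric series with ratio $\epsilon\, C_2^{C_1}$. This converges precisely under the hypothesis $\epsilon < \tfrac{1}{2} C_2^{-C_1}$, which forces the algorithm to terminate with positive probability, yielding a point in $X \setminus \Bad$ and hence $\Bad \neq X$.

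The main obstacle is matching the precise constant $(1/2)\, C_2^{-C_1}$, as opposed to the looser generic LLL bound of the shape $(C_1 C_2)^{-1}$: this demands the coordinate-grouped counting described above, and one has to verify that no factor of $C_1!$ or similar overcounting slips in. If the witness-tree bookkeeping turns out to be awkward in the infinite-product setting, a softer backup is to truncate to finite subfamilies of $\{\Bad_\alpha\}$ (using that each event depends on finitely many coordinates), prove the statement for each finite subfamily by the same weighted counting, and pass to the full family by a cylinder-set/compactness argument on $X$.
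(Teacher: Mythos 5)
Your proposal is a genuinely different route from the paper's. The combinatorial core — encoding the children of each witness-tree node by a configuration in a set of size $C_2^{C_1}$ (one slot per coordinate, each slot empty or one of $< C_2$ events) and reading the tree off a breadth-first sequence of $n$ configurations — does bound the number of $n$-node proper witness trees rooted at a fixed event by $(C_2^{C_1})^n$, and then $\sum_{n\ge 1}(\epsilon C_2^{C_1})^n=\frac{\epsilon C_2^{C_1}}{1-\epsilon C_2^{C_1}}<1$ exactly when $\epsilon<\tfrac12 C_2^{-C_1}$. So the constant does come out as you hope, once the encoding is made precise.

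The gap is in handling the countably infinite family. Moser--Tardos is formulated for finitely many events, and your compactness fallback does not apply here: the $X_i$ are arbitrary probability spaces with no topology, so you cannot conclude $\bigcap_N(X\setminus\Bad_N)\neq\emptyset$ from nonemptiness of each finite intersection. You could instead try to argue that, almost surely, each individual coordinate $x_i$ is resampled only finitely often (so a limiting assignment exists coordinatewise), but this requires extending the witness-tree bookkeeping to a countable dependency graph and verifying that the resampling-table coupling still makes sense under the Kolmogorov product measure — a real piece of work your plan leaves open. By contrast, the paper's proof is a deterministic greedy derandomization that handles the infinite product for free: it fixes $x_1,x_2,\ldots$ one at a time, lets $P_i(\alpha)$ denote the conditional probability of $\Bad_\alpha$ given $x_1,\ldots,x_i$, uses Markov's inequality to see that for each $\alpha$ the fraction of choices of $x_{i+1}$ inflating $P_i(\alpha)$ by more than $C_2$ is $<1/C_2$, and then uses the hypothesis that fewer than $C_2$ bad sets depend on $x_{i+1}$ to choose $x_{i+1}$ so that none of those conditional probabilities inflate by more than $C_2$. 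Each $P(\alpha)$ inflates at most $C_1$ times, ending at most at $C_2^{C_1}\epsilon<1$, which forces the $\{0,1\}$-valued limit $P_\infty(\alpha)$ to be $0$. No resampling, no termination question, no compactness. If you persist with the LLL route you must resolve the countable-family issue explicitly; otherwise the sequential conditional-expectation argument is both shorter and cleaner for this statement.
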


See the appendix in Section \ref{secproblemma} for a proof of this lemma and also some more discussion.  Using the probability lemma, we now
prove our lemma on the existence of good local grids.

\begin{proof} 
For each $i$ and for each vertex $v$ in $\bar \Sigma(s_i)$, we will choose a perturbation function $p_{i,v}(j,m) \in [-1/4, 1/4]$, where $j = 1, ..., N$ and $m \in \mathbb{Z}$.  Then we define $G_i(v)$ to be the union of the hyperplanes $x_j = (1/200) N^{-2} s_i (m + p_{i,v}(j,m) )$.  The spacing of each $G_i(v)$ is $\le (1/100) N^{-2} s_i$.  

We are going to apply the probability lemma.  The space $X$ is the set of choices of $p_{i,v}(j,m) \in [-1/4, 1/4]$, where $i \ge 0$, $v$ is a vertex in $\bar \Sigma(s_i)$, $j = 1, ..., N$, and $m \in \mathbb{Z}$.  This is a product space over the index set $(i,v,j,m)$, and for each factor we put the uniform probability distribution on $[-1/4, 1/4]$.  

For almost all choices of $p_{i,v}(j,m)$, the $(N-n+1)$ skeleton of each grid $G_i(f)$ or $G_{i-1,i}(f)$ is transverse to $z$.

Now we turn to the key estimate at the end of the lemma.  We enumerate the different ways that this key estimate may fail.

Let us say that our choice of $p_{i,v}(j,m)$ lies in $\Bad_i(f)$ if $f$ is a face of $\bar \Sigma(s_i)$, and $R$ is an (N-n+1)-face of $G_i(f)$, lying in the $4 N^2 s_i$-neighborhood of $f$, and $[z \cap R] = 1$, and $\Vol_n (y \cap \Ball[R]) < \beta s_i^n$.  Define $\Bad_{i-1,i}(f)$ in the same way, using the grid $G_{i-1,i}(f)$.  

We claim that each set $\Bad_i(f)$ or $\Bad_{i-1,i}(f)$ depends on $C_1 \lesssim 1$ parameters $p_{i,v}(j,m)$.  
There are only one or two choices for $i$.  The vertex $v$ needs to belong to $f$, or at least to lie within $N s_i$ of a vertex of $f$, so there are only $\lesssim 1$ choices of vertex $v$.  There are only $N \lesssim 1$ choices of $j$ in any case.  Since the face $R$ needs to lie in the $4 N^2 s_i$-neighborhood of $f$, we only need to consider values of $m$ where the plane $x_j = s_i m_j$ lies within $C(N) s_i$ of the face $f$, and so there are only $\lesssim 1$ choices for $m$.

We also claim that each variable $p_{i,v}(j,m)$ is only relevant for $C_2 \lesssim 1$ bad sets $\Bad_i(f)$ or $\Bad_{i-1,i}(f)$.  In fact, if $p_{i,v}(j,m)$ is relevant for $\Bad_{i'}(f)$ or $\Bad_{i'-1, i'}(f)$, then we must have $i' = i$ or $i+1$, and $f$ must have a vertex lying near $v$.  This leaves $\lesssim 1$ choices for $f$.

Finally, we have to check that the probability of each set $\Bad_i(f)$ or $\Bad_{i-1,i}(f)$ is $\lesssim \beta$.  Then if we choose $\beta$ sufficiently small, the probability lemma will guarantee that there exists a choice of parameters which is not bad, and we will be done.

So let us consider the probability of $\Bad_i(f)$ (or $\Bad_{i-1,i}(f)$).  There are $\lesssim 1$ faces $R$ which could potentially violate the key estimate.  Each of the faces $R$ that we must consider is positioned by the choice of parameters $p_{i,v}(j,m)$ and maybe $p_{i-1,v}(j,m)$ for vertices $v$ near to $f$ and for a certain range of $m$.  Varying the parameters randomly essentially amounts to translating $R$ at random.  The intersection number lemma says that if $y \cap \Ball[R_0]$ has volume $< \beta s_i^n$, then the probability that $[z \cap R] \not=0$ is $\lesssim \beta$.  So the probability that $R$ violates the key estimate is $\lesssim \beta$.

\end{proof}

We now fix the local grids $G_i$ and $G_{i-1,i}$ and the constant $\beta = \beta(N) > 0$ for the rest of the proof.

\subsection{The bending maps $\Phi_i$ and the cycles $z_i$}

Using the good local grid lemma, we can now construct the bending maps $\Phi_i$ and define the cycles $z_i$.

Using Lemma \ref{bendingmap}, we define $\Phi_i$ to be a bending map with respect to the local grid $G_i$, and we fix $\Phi_i$ for the rest of the proof.  We define $z_i$ to be $D_{\Phi_i}(z)$.

The grids $G_i$ have spacing $S_i \le (1/100) N^{-2} s_i$.  By Lemma $\ref{bendingmap}$, the maps $\Phi_i$ obey $| \Phi_i(x) - x| \le 2 N^2 S_i \le (1/50) s_i$.  

\begin{lemma} \label{thickz_i} The cycle $z_i$ lies in $T_\alpha(y)$ for $\alpha \gtrsim 1$.  Moreover, if a face $F \subset \Sigma(s_i)$ belongs to $z_i$, then there is a ball around the center of $F$ with radius $\sim s_i$ and thickness $\gtrsim 1$.  
\end{lemma}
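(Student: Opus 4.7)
The plan is to read off the conclusion directly from the key estimate in Lemma \ref{goodlocalgrids} (the good local grids lemma); essentially, the hard work was already done when choosing the perturbations $p_{i,v}(j,m)$, and here we just need to unpack what ``$F \in z_i$'' means and chase neighborhoods.

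First I would unfold the definition. By construction,
$$ z_i = D_{\Phi_i}(z) = \sum_{F^{n-1} \subset \Sigma^{n-1}(s_i)} [\Phi_i(\bar F) \cap z] \, F, $$
so a face $F$ belongs to $z_i$ if and only if $[\Phi_i(\bar F) \cap z] \ne 0$. By Lemma \ref{bendingmap}, $\Phi_i(\bar F)$ is a sum of $(N-n+1)$-faces of the grid $G_i(\bar F)$; call them $R_1,\dots,R_K$. Then $[\Phi_i(\bar F) \cap z] = \sum_k [R_k \cap z]$, so at least one face $R = R_k$ satisfies $[R \cap z] \ne 0$.

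Next I would check that this $R$ sits in the $4N^2 s_i$-neighborhood of $\bar F$, so that Lemma \ref{goodlocalgrids} applies. The dual face $\bar F$ has diameter $\le N s_i$, and Lemma \ref{bendingmap} gives $|\Phi_i(x)-x| \le 2N^2 S_i \le (1/50)\, s_i$. Hence $\Phi_i(\bar F)$, and in particular $R$, lies in the $(N+1/50)s_i$-neighborhood of $\bar F$, which is well inside the $4N^2 s_i$-neighborhood. The good local grids lemma then guarantees transversality of $R$ with $z$ and furnishes the key estimate
$$ \Vol_n(y \cap \Ball[R]) \ge \beta\, s_i^n, $$
where $\Ball[R]$ is the ball of radius $N s_i$ around the center of $R$.

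Finally I would translate this to a statement about $F$. Recall that $F$ and $\bar F$ have the same center $x_F$. The center of $R$ is within distance $\mathrm{diam}(\bar F) + 2N^2 S_i \le 2N s_i$ of $x_F$, so $\Ball[R]$ is contained in the ball $B(x_F, 3N s_i)$. Therefore
$$ \Vol_n\bigl(y \cap B(x_F, 3N s_i)\bigr) \ge \beta\, s_i^n \ge \alpha (3N s_i)^n $$
for $\alpha := \beta/(3N)^n \gtrsim 1$, showing $B(x_F, 3Ns_i)$ is $\alpha$-thick. Hence $F \subset B(x_F, 3N s_i) \subset T_\alpha(y)$, and this gives both claims of the lemma simultaneously. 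The step requiring any thought is the neighborhood bookkeeping in the middle paragraph; everything else is direct invocation.
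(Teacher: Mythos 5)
Your proof is correct and follows the same route as the paper: unpack $F\in z_i$ into the existence of a face $R$ of $G_i(\bar F)$ with $[R\cap z]\neq 0$, check that $R$ is close enough to $\bar F$ for Lemma \ref{goodlocalgrids} to apply, and then chase neighborhoods to conclude that a ball of radius $\sim s_i$ around the center of $F$ is $\gtrsim 1$-thick. The only cosmetic difference is that you merge the paper's two concluding observations (``$\Ball[R]\supset F$'' and ``$B(x_F,\sim s_i)\supset\Ball[R]$'') into a single containment $\Ball[R]\subset B(x_F,3Ns_i)$; this is fine, and incidentally you also correct the paper's typo ``$(N-n-1)$-face'' to the intended $(N-n+1)$.
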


\begin{proof} For each face $F \subset \Sigma(s_i)$, $\Phi_i(\bar F)$ is equal to a sum of (N-n-1)-faces from $G_i(\bar F)$.  We know that $\Phi_i$ displaces points at most $(1/50) s_i$.  Therefore, each of these faces lies within the $(1/50) s_i$-neighborhood of $\bar F$.  

If $F$ is contained in $z_i$, then $[\Phi_i(\bar F) \cap z] = 1$.  Therefore, $[R \cap z] = 1$ for an (N-n-1)-face
$R$ in $G_i(\bar F)$, lying within the $(1/50) s_i$ -neighborhood of $\bar F$.  

By Lemma \ref{goodlocalgrids}, it follows that
$\Vol_n (y \cap \Ball[R]) \ge \beta s_i^n$, where $\Ball[R]$ denotes the ball around the center of $R$ and with radius $N s_i$.  The ball $\Ball[R]$ contains $F$, and so $F$ lies in $T_\alpha(y)$ for some $\alpha \gtrsim \beta \gtrsim 1$.  
 
Also, the ball around the center of $F$ with radius $3 N^2 s_i$ contains $\Ball[R]$, and this ball has thickness $\gtrsim \beta \gtrsim 1$.   \end{proof}

\begin{lemma} \label{s_ibound} There is a constant $C(N)$ so that if $z_i$ is non-zero, then $s_i \le C(N) \Vol_n(y)^{1/n}$.  

\end{lemma}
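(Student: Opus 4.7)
The proof plan is essentially an immediate corollary of the previous lemma. The intuition: if $z_i$ is non-zero then it contains at least one face $F$, and by Lemma \ref{thickz_i} that face sits inside a ball of radius $\sim s_i$ in which $y$ has volume bounded below by a constant times $s_i^n$. Since $\Vol_n(y)$ is an upper bound for the volume of $y$ in any ball, rearranging gives $s_i \lesssim \Vol_n(y)^{1/n}$.

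First I would note that if the mod 2 cubical chain $z_i \subset \Sigma^{n-1}(s_i)$ is non-zero, then (working mod 2) there must exist at least one $(n-1)$-face $F \subset \Sigma(s_i)$ appearing in $z_i$. Applying the second clause of Lemma \ref{thickz_i} to this face $F$, we obtain a ball $B$, centered at the center of $F$ and of radius comparable to $s_i$, such that
\[
\Vol_n(y \cap B) \ge \alpha s_i^n
\]
for some $\alpha = \alpha(N) > 0$.

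Since $y \cap B \subset y$, we conclude $\Vol_n(y) \ge \Vol_n(y \cap B) \ge \alpha s_i^n$, and so
\[
s_i \le \alpha^{-1/n} \Vol_n(y)^{1/n} =: C(N) \Vol_n(y)^{1/n},
\]
which is the desired inequality. There is no real obstacle here; the work has already been done in establishing the key estimate of Lemma \ref{goodlocalgrids} and the thickness statement of Lemma \ref{thickz_i}. This lemma is just the reading-off of the quantitative consequence: it tells us that the iterative deformation process must terminate (i.e.\ $z_i$ becomes zero) once the scale $s_i$ exceeds a constant multiple of $\Vol_n(y)^{1/n}$, which in turn bounds $i_{\text{final}}$ logarithmically and fits into the overall multiscale construction of the filling chain $y' = \sum_{i=1}^{i_{\text{final}}} A_i$.
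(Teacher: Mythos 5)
Your proof is correct and follows exactly the paper's argument: pick a face of $z_i$, apply the thickness estimate from Lemma \ref{thickz_i} to obtain a ball of radius $\sim s_i$ carrying $\gtrsim s_i^n$ volume of $y$, and rearrange. Nothing to add.
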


\begin{proof} If $z_i$ contains a face $F$, then the last lemma says that there is a ball of radius $\sim s_i$ with thickness $\gtrsim 1$.
The total volume of $y$ is at least the volume of $y$ in this ball, which is $\gtrsim s_i^n$.  \end{proof}

At this point, we define $i_{final}$ to be the smallest $i$ so that $s_i > C(N) \Vol_n(y)^{1/n}$, which guarantees that $z_{i_{final}} = 0$.  
We have $s_i \lesssim \Vol_n(y)^{1/n}$ for all $i \le i_{final}$.  

\begin{lemma} \label{radiusbound} Each cycle $z_i$ lies in the $R$-neighborhood of $z$ for $R \lesssim \Vol_n(y)^{1/n}$. 
\end{lemma}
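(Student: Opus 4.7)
The plan is to combine the displacement bound on $\Phi_i$ established just before Lemma \ref{thickz_i} with the scale bound from Lemma \ref{s_ibound}. The underlying intuition is that every face of $z_i$ has to be ``witnessed'' by a nearby point of $z$, and the distance from the face to the witness is controlled by the diameter of the bent dual cell $\Phi_i(\bar F)$.

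First I would fix a face $F \subset \Sigma(s_i)$ that appears in $z_i = D_{\Phi_i}(z)$. By the definition of $D_{\Phi_i}$, the intersection number $[\Phi_i(\bar F) \cap z]$ is nonzero, so I may choose an actual intersection point $p \in \Phi_i(\bar F) \cap z$. The dual face $\bar F$ is an $(N-n+1)$-dimensional cubical face of $\bar \Sigma(s_i)$ sharing its center point with $F$; in particular, every point of $\bar F$ lies within distance at most $\tfrac{\sqrt{N-n+1}}{2} s_i \lesssim s_i$ of the center of $F$. Since Lemma \ref{bendingmap} (applied with $S \le (1/100) N^{-2} s_i$) gives $|\Phi_i(x) - x| \le (1/50) s_i$ for every $x$, the image $\Phi_i(\bar F)$ still lies within distance $\lesssim s_i$ of the center of $F$. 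Hence $p$ lies within distance $\lesssim s_i$ of the center of $F$, and since the diameter of $F$ itself is $\lesssim s_i$, every point of $F$ lies within distance $\lesssim s_i$ of $p \in z$.

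Finally, I invoke Lemma \ref{s_ibound}: $z_i$ is nonzero only for $i \le i_{final}$, and in that range $s_i \lesssim \Vol_n(y)^{1/n}$ (indeed, by the choice of $i_{final}$ the scale $s_i$ stays below a fixed constant multiple of $\Vol_n(y)^{1/n}$). Combining this with the previous paragraph yields that every face of $z_i$ sits within distance $R \lesssim \Vol_n(y)^{1/n}$ of $z$, which is the conclusion.

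There is no real obstacle here: the lemma is essentially a bookkeeping consequence of the fact that our bending maps $\Phi_i$ have been designed to displace points by a small fraction of the scale $s_i$, together with the already-proven scale bound. The only thing to be careful about is making sure the diameter of $\bar F$ is counted correctly and that the argument applies uniformly across all scales $i \le i_{final}$, both of which are immediate from the constructions.
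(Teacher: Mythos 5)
Your proof is correct and follows essentially the same route as the paper's: use the nonzero intersection number to produce a witness point of $z$ in $\Phi_i(\bar F)$, bound the distance from $F$ to that witness via the displacement estimate $|\Phi_i(x)-x| \lesssim s_i$ and the $\sim s_i$ diameter of the dual cell, and conclude with the scale bound from Lemma~\ref{s_ibound}. You have merely spelled out the geometric bookkeeping that the paper compresses into two sentences.
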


\begin{proof} Suppose that $z_i$ contains a face $F$.  Then $\Phi_i (\bar F)$ must intersect $z$.  But $\Phi_i$ displaces points by
at most $\lesssim s_i$.  Therefore, the face $F$ is contained in the $R$-neighborhood of $z$ for $R \lesssim s_i \lesssim \Vol_n(y)^{1/n}$.
\end{proof}

\begin{lemma} The cycle $z_0$ is equal to $z$.
\end{lemma}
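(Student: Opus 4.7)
The plan is to show that the deformation operator $D_{\Phi_0}$ acts as the identity on cubical chains in $\Sigma(s_0)$; combined with the approximation argument of Section \ref{minorapprox} (which ensures $z$ is such a chain), this gives $z_0 = D_{\Phi_0}(z) = z$ immediately. This is the bent-dual analogue of Fact 1 from Section \ref{defop}, which says $D_v(T) = T$ for any cubical $T$ in $\Sigma(s)$ and any $|v| < s/2$; the proof is essentially the same.

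Write $z = \sum_F c_F F$ with $F$ ranging over the $(n-1)$-faces of $\Sigma^{n-1}(s_0)$ and $c_F \in \{0,1\}$. By definition of $D_{\Phi_0}$ and bilinearity of the intersection pairing,
$$z_0 = \sum_F [\Phi_0(\bar F) \cap z] \, F = \sum_F \left( \sum_{F'} c_{F'} [\Phi_0(\bar F) \cap F'] \right) F,$$
so it suffices to check that $[\Phi_0(\bar F) \cap F'] \equiv \delta_{F,F'} \pmod 2$ for every pair of $(n-1)$-faces of $\Sigma(s_0)$.

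The key input is the displacement estimate $|\Phi_0(x) - x| \le 2N^2 S_0 \le (1/50) s_0$ from Lemma \ref{bendingmap} (using $S_0 \le (1/100) N^{-2} s_0$). This confines the chain $\Phi_0(\bar F)$, together with its boundary, to a $(1/50) s_0$-neighborhood of $\bar F$, which is well under half the lattice spacing. A short geometric check then shows this neighborhood meets the open $(n-1)$-face $F$ but is disjoint from every other $(n-1)$-face $F' \ne F$ of $\Sigma(s_0)$: parallel faces $F'$ sit at distance $\ge s_0$ from $\bar F$, and non-parallel faces lie in coordinate hyperplanes that intersect the affine span of $\bar F$ only near $\partial \bar F$ (a region avoided since $\partial \bar F$ is at distance $s_0/2$ from the center of $F$).

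Given these disjointness statements, the straight-line homotopy from $\bar F$ to $\Phi_0(\bar F)$ never crosses any $(n-1)$-face of $\Sigma(s_0)$ other than $F$, and its boundary stays away from $F$. Mod 2 homotopy invariance of intersection numbers then gives $[\Phi_0(\bar F) \cap F'] = [\bar F \cap F']$, and the latter equals $\delta_{F,F'}$ because $\bar F$ meets $F$ transversally at exactly the common center point and misses every other cubical $(n-1)$-face. This yields $z_0 = z$. The only potential obstacle is bookkeeping --- keeping track of the small $(N-n+1)$-faces of $G_0(\bar F)$ that assemble to $\Phi_0(\bar F)$ --- but the factor $1/50 \ll 1/2$ of slack in the displacement bound makes all the transversality and disjointness arguments automatic.
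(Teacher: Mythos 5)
Your proposal is correct and follows essentially the same route as the paper: reduce to checking $[\Phi_0(\bar F)\cap F']=\delta_{F,F'}$, then run the straight-line homotopy from $\bar F$ to $\Phi_0(\bar F)$ and invoke invariance of mod $2$ intersection numbers, using the displacement bound $|\Phi_0(x)-x|\le (1/50)s_0$ against the fact that the relevant sets start a distance $\ge s_0/2$ apart. One small correction to your parenthetical: parallel faces $F'\ne F$ are at distance exactly $s_0/2$ (not $\ge s_0$) from $\bar F$ in the worst case, and the clean statement, which is the one the paper uses, is simply that $\partial \bar F$ lies at distance $\ge s_0/2$ from the whole $(n-1)$-skeleton $\Sigma^{n-1}(s_0)$; since $(1/50)s_0 < s_0/2$, the homotopy on the boundary never meets any $G$, which is all that is needed.
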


\begin{proof} If $F,G$ are $(n-1)$-faces of $\Sigma(s_0)$, we have to check that $[ \Phi_0(\bar F) \cap G]$ is 1 if $F = G$ and zero otherwise.  
This is clearly true if $\Phi_0$ is the identity.  Now $\Phi_0$ displaces each point by at most $(1/50) s_0$.  The boundary of $\bar F$ is at a distance at least $(1/2) s_0$ from $\Sigma^{n-1}(s_0)$.  So the straightline homotopy from the identity to $\Phi_0$ will never map $\partial (\bar F)$ into any $G$, and the intersection numbers will not change. \end{proof}

To finish the proof of the perpendicular pair inequality, we have to construct $n$-chains $A_i \subset T_\alpha(y)$ with
$\partial A_i = z_{i-1} - z_i$.  
 
\subsection{Homologies between deformations at different scales}
 
Now we have a deformation $D_i = D_{\Phi_i}$ based on the bending map $\Phi_i: \bar \Sigma (s_i)  \rightarrow \mathbb{R}^N$.  
 The deformation operator $D_i$ deforms chains/cycles into cubical chains/cycles inside $\Sigma(s_i)$.  
 In particular, $D_i z = z_i$ is a cubical cycle, and we know that each $z_i$ lies in $T_\alpha(y)$.  Next, we have to construct
 homologies $A_i$ from $z_{i-1}$ to $z_i$ also lying in $T_\alpha(y)$.  The cycles $z_{i-1}$ and $z_i$ are cycles at different scales: $z_{i-1}$ is a cubical cycle in $\Sigma(s_{i-1})$ and $z_i$ is a cubical cycle in $\Sigma(s_i)$.  In this section, we construct a cycle $z_{i-1}^+$ at scale $s_i$ and a homology from
 $z_{i-1}$ to $z_{i-1}^+$.  In the next section, we do the harder work of constructing a homology from
 $z_{i-1}^+$ to $z_i$.  
 
We let $v = (s_{i-1}/2, ..., s_{i-1}/2)$.  We now define $z_{i-1}^+$ using the standard deformation operator for $\Sigma(s)$:
 
 $$z_{i-1}^+ = D_v(z_{i-1}) = \sum_{F \subset \Sigma^{n-1}(s_i)} [\bar F_v \cap z_{i-1} ] F. $$

(One reason for translating by $v$ is as follows.  If $F \in \Sigma^{n-1}(s_i)$ and $\bar F$ is the corresponding face of
$\bar \Sigma^{N-n+1}(s_i)$, then $\bar F$ is not transverse to $\Sigma(s_{i-1})$, and so $\bar F$ is not transverse to $z_{i-1}$.  But $\bar F_v$ is transverse to $\Sigma(s-1)$, and so $z_{i-1}^+$ is defined.  Other reasons of our specific choice of $v$ appear below.)
 
 The cycle $z_{i-1}^+$ lies in the $C(N) s_{i-1}$-neighborhood of $z_{i-1}$.  Also, by the standard properties of the deformation operator, there
 is a homology $A_i'$ from $z_{i-1}$ to $z_{i-1}^+$ in the $C(N) s_{i-1}$-neighborhood of $z_{i-1}$.  (See Section \ref{fedflemreview} for a review of the proof.)  The following lemma shows that $A_i'$ lies in $T_\alpha(y)$.  
 
 \begin{lemma} The $C(N) s_{i-1}$ neighborhood of $z_{i-1}$ lies in $T_\alpha(y)$ for $\alpha \gtrsim 1$.
 \end{lemma}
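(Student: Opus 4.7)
The plan is to derive this lemma as a direct consequence of Lemma \ref{thickz_i} applied at scale $s_{i-1}$, by enlarging the thick balls supplied by that lemma so that they cover a whole neighborhood of $z_{i-1}$ rather than just the centers of its faces.

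First I would apply the second half of Lemma \ref{thickz_i} to the cycle $z_{i-1}$. That lemma tells us that every $(n-1)$-face $F \subset z_{i-1}$ comes equipped with a ball $B(c_F, r_F)$ of radius $r_F \sim s_{i-1}$ centered at the center $c_F$ of $F$, and with thickness $\ge \beta_0$ for some constant $\beta_0 \gtrsim 1$ (i.e.\ $\Vol_n(y \cap B(c_F, r_F)) \ge \beta_0 r_F^n$).

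Next, given an arbitrary point $p$ in the $C(N) s_{i-1}$-neighborhood of $z_{i-1}$, I would locate a face $F$ of $z_{i-1}$ whose center $c_F$ satisfies $|p - c_F| \le C'(N) s_{i-1}$; any face within $C(N) s_{i-1}$ of $p$ works, since $\Diam F \lesssim s_{i-1}$. I would then consider the enlarged ball $B(p, R)$ where $R := r_F + |p - c_F|$. By the triangle inequality $B(p,R) \supset B(c_F, r_F)$, so
$$ \Vol_n\bigl(y \cap B(p, R)\bigr) \;\ge\; \Vol_n\bigl(y \cap B(c_F, r_F)\bigr) \;\ge\; \beta_0 r_F^n. $$
Since $r_F$ and $R$ are both comparable to $s_{i-1}$, we have $r_F^n \gtrsim R^n$, so the ball $B(p,R)$ is $\alpha$-thick for some $\alpha \gtrsim 1$ depending only on $N$ (and the constants $C(N)$, $C'(N)$, $\beta_0$). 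By the definition of $T_\alpha(y)$ this shows $p \in T_\alpha(y)$, completing the proof.

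There is essentially no obstacle here: the statement is a soft geometric consequence of Lemma \ref{thickz_i}, using only the triangle inequality and the scale-homogeneity of the thickness condition. The only point to watch is ensuring the constant $\alpha$ in the conclusion is independent of $i$, which follows because $r_F/s_{i-1}$ and $|p - c_F|/s_{i-1}$ are bounded by dimensional constants uniformly in $i$.
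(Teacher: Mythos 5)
Your proof is correct and takes essentially the same approach as the paper: both deduce the lemma from the second half of Lemma \ref{thickz_i} by enlarging the thick ball around the center of a nearby face of $z_{i-1}$ so that it is centered at the given point, using the triangle inequality and the fact that all relevant radii are comparable to $s_{i-1}$. You merely make explicit the triangle-inequality step that the paper leaves implicit.
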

 
 \begin{proof} From Lemma \ref{thickz_i}, we know that if $F$ is a face of $\Sigma(s_{i-1})$ which is contained in $z_{i-1}$, then there is a
ball around the center of $F$ with radius $\sim s_{i-1}$ and thickness $\gtrsim 1$.  Therefore, the ball of radius $C(N) s_{i-1}$ around any point of $z_i$ has thickness $\gtrsim 1$.   \end{proof}
 
 \begin{lemma}  The chain $A_i'$ lies in the $R$-neighborhood of $z$ for $R \lesssim \Vol_n(y)^{1/n}$.
 \end{lemma}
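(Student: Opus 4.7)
The plan is to combine two neighborhood controls via the triangle inequality: first, that the homology $A_i'$ stays close to $z_{i-1}$ by construction, and second, that $z_{i-1}$ itself is close to $z$ by Lemma \ref{radiusbound}. The ``closeness'' scale in both cases is $\lesssim s_{i-1}$, and the key point is that $s_{i-1}$ itself is bounded by $\Vol_n(y)^{1/n}$ (up to a dimensional constant) whenever $z_{i-1}$ is non-trivial.

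More concretely, first I would invoke the construction of $A_i'$: it is produced from the standard Federer--Fleming deformation $D_v$ at scale $s_{i-1}$ applied to $z_{i-1}$, and therefore, by the last of the standard properties of $D_v$ recalled in Section \ref{defop}, $A_i'$ lies in the $C(N) s_{i-1}$-neighborhood of $z_{i-1}$. Next, we split into cases. If $z_{i-1} = 0$, then $z_{i-1}^+ = 0$ and one may choose $A_i' = 0$, so the conclusion is vacuous. Otherwise $z_{i-1}$ is non-zero, and Lemma \ref{s_ibound} gives $s_{i-1} \le C(N) \Vol_n(y)^{1/n}$. At the same time, Lemma \ref{radiusbound} (applied at the scale $i-1$) shows that $z_{i-1}$ is contained in the $R_1$-neighborhood of $z$ for some $R_1 \lesssim \Vol_n(y)^{1/n}$.

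Combining these two facts by the triangle inequality, $A_i'$ sits in the $\bigl(C(N) s_{i-1} + R_1\bigr)$-neighborhood of $z$, and both summands are $\lesssim \Vol_n(y)^{1/n}$. This yields the desired bound $R \lesssim \Vol_n(y)^{1/n}$.

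There is no real obstacle here: the statement is a bookkeeping consequence of the estimates already assembled in the previous three lemmas, and the proof is essentially a two-line triangle inequality combined with the $s_{i-1}$-bound coming from Lemma \ref{s_ibound}. The only minor point to be careful about is to treat the trivial case $z_{i-1}=0$ separately, so that the application of Lemma \ref{s_ibound} is legitimate.
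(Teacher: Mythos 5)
Your proof is correct and takes essentially the same approach as the paper: bound $A_i'$ in a $C(N)s_{i-1}$-neighborhood of $z_{i-1}$, then combine Lemma \ref{radiusbound} and Lemma \ref{s_ibound} via the triangle inequality. Your explicit case split on $z_{i-1}=0$ is harmless but unnecessary, since the paper has already recorded that $s_i \lesssim \Vol_n(y)^{1/n}$ holds for all $i \le i_{final}$ unconditionally.
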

 
 \begin{proof}  We know that $A'$ lies in the $C(N) s_i$ neighborhood of $z_i$.  By Lemma \ref{s_ibound}, $s_i \lesssim \Vol_n(y)^{1/n}$.
 By Lemma \ref{radiusbound} $z_i$ lies in the $R$-neighborhood of $z$ for $R \lesssim \Vol_n(y)^{1/n}$. \end{proof}
 
The main difficulty is to build a homology $A''_i$ from $z_{i-1}^+$ to $z_i$.  To facilitate this, it helps to describe $z_{i-1}^+$ and $z_i$ in similar ways.  Recall that $z_i = D_{\Phi_i}(z)$.  We will now construct a map $\Phi_{i-1}^+: \bar \Sigma(s_i) \rightarrow \mathbb{R}^N$, and show that
 $z_{i-1}^+$ is $D_{\Phi_{i-1}^+}(z)$.  
 
The map $\Phi_{i-1}^+$ is defined in terms of $\Phi_{i-1}$ by

$$\Phi_{i-1}^+(x) = \Phi_{i-1}(x + v). $$

\begin{lemma} The chain $z_{i-1}^+$ is equal to $D_{\Phi_{i-1}^+}(z)$.
\end{lemma}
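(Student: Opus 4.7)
The plan is to prove the equality coefficient by coefficient on each $(n-1)$-face $F \subset \Sigma^{n-1}(s_i)$. Unwinding the two definitions, the coefficient of $F$ in $z_{i-1}^+$ is $[\bar F_v \cap z_{i-1}]$, while the coefficient of $F$ in $D_{\Phi_{i-1}^+}(z)$ is $[\Phi_{i-1}^+(\bar F) \cap z] = [\Phi_{i-1}(\bar F + v) \cap z] = [\Phi_{i-1}(\bar F_v) \cap z]$. So the whole lemma reduces to the identity
$$[\bar F_v \cap z_{i-1}] \;=\; [\Phi_{i-1}(\bar F_v) \cap z].$$

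The bridge between the two sides is a combinatorial decomposition of $\bar F_v$ in terms of the finer dual complex $\bar \Sigma(s_{i-1})$. Because $s_i = 2s_{i-1}$ and $v = (s_{i-1}/2,\ldots,s_{i-1}/2)$, the translated rectangle $\bar F_v$ is aligned with $\bar \Sigma(s_{i-1})$ and partitions as a disjoint union of $2^{N-n+1}$ dual cells of $\bar \Sigma(s_{i-1})$. Writing $S(F)$ for the set of $(n-1)$-faces $F' \subset \Sigma(s_{i-1})$ whose dual $\bar{F'}$ is one of those sub-cells, we get the chain-level identity $\bar F_v = \sum_{F' \in S(F)} \bar{F'}$. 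This is the one nontrivial arithmetic check in the proof: one verifies that the fixed-coordinate shift $s_{i-1}/2$ slides each coordinate hyperplane of $\bar \Sigma(s_i)$ onto a hyperplane of $\bar \Sigma(s_{i-1})$, and that the free-coordinate range of $\bar F_v$ covers exactly two consecutive $(s_{i-1})$-cells in each free direction.

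Once this decomposition is in hand, both sides expand into the same sum. On the right, linearity of $\Phi_{i-1}$ on polyhedral chains gives $\Phi_{i-1}(\bar F_v) = \sum_{F' \in S(F)} \Phi_{i-1}(\bar{F'})$, and each summand $[\Phi_{i-1}(\bar{F'}) \cap z]$ is by definition the coefficient of $F'$ in $z_{i-1} = D_{\Phi_{i-1}}(z)$. On the left, expanding $z_{i-1}$ as a sum over $(n-1)$-faces of $\Sigma(s_{i-1})$ reduces $[\bar F_v \cap z_{i-1}]$ to $\sum_{F'} [\bar F_v \cap F'] \cdot (\text{coefficient of } F' \text{ in } z_{i-1})$; but $\bar F_v$ and $F'$ have complementary dimensions and generic position, and a short check shows that $[\bar F_v \cap F'] = 1$ precisely when $F' \in S(F)$, and $0$ otherwise. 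The two sums then coincide term by term, establishing the identity.

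The main obstacle — or rather, the only real work — is the combinatorial alignment argument in the second paragraph; this is exactly the point where the specific choice $v = (s_{i-1}/2,\ldots,s_{i-1}/2)$ is used, and it is plainly the reason the lemma preceding the statement advertises that ``other reasons of our specific choice of $v$ appear below.'' Once one sees that the translation puts $\bar F_v$ into the $\bar \Sigma(s_{i-1})$ grid without any fractional overhang, the rest is bookkeeping by linearity and the definition of the bent deformation operator.
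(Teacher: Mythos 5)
Your proposal is correct and follows essentially the same route as the paper: both arguments work coefficient-by-coefficient on an $(n{-}1)$-face $F$ of $\Sigma(s_i)$, and both hinge on the decomposition of $\bar F_v$ into the $2^{N-n+1}$ dual cells of $\bar\Sigma^{N-n+1}(s_{i-1})$ plus the fact that $[\bar{F'} \cap F''] = 1$ exactly when $F' = F''$. The only cosmetic difference is the order of definitions — you define $S(F)$ as the cells appearing in the decomposition and then check the intersection numbers, whereas the paper defines its $H(F)$ by the intersection condition and then checks it equals the decomposition — but these are the same two facts verified in opposite order.
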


\begin{proof} By definition,

$$z_{i-1}^+ = \sum_{F \subset \Sigma^{n-1}(s_i)} [\bar F_v \cap z_{i-1}] F. $$

Now $z_{i-1} = D_{\Phi_{i-1}} z = \sum_{H \subset \Sigma^{n-1}(s_{i-1})} [ \Phi_{i-1} (\bar H) \cap z] H$.

When we plug the definition of $z_{i-1}$ into the formula for $z_{i-1}^+$, we see that the coefficient of $F$ in $z_{i-1}^+$ is

$$\sum_{H \subset \Sigma^{n-1}(s_{i-1})} [\Phi_{i-1} (\bar H) \cap z] [\bar F_v \cap H] .$$

We let $H(F)$ be the set of $(n-1)$-faces $H$ in $\Sigma^{n-1}(s_{i-1})$ so that $[\bar F_v \cap H] = 1$.  Next we check that $\sum_{H \in H(F)} \bar H = \bar F_v$.  (It may be helpful to draw a picture here.)  Recall that $\bar F$ is an (N-n+1)-dimensional face in $\bar \Sigma(s_i)$.  
The translated face $\bar F_v$ lies in the (N-n+1)-skeleton of $\bar \Sigma(s_{i-1})$.  The
face $\bar F_v$ is a sum of $2^{N-n+1}$ faces of $\bar \Sigma^{N-n+1}(s_{i-1})$.  We call these faces $\bar J_1, ..., \bar J_{2^{N-n+1}}$,
where the $J_i$ are $(n-1)$-faces in $\Sigma(s_{i-1})$.  Now if $H$ and $J$ are any two faces in $\Sigma^{n-1}(s_i)$, then the intersection
number $[\bar J \cap H]$ is equal to 1 if $H = J$ and 0 otherwise.  Therefore, $[\bar F_v \cap H] = 1$ if $H$ is one of the faces $J_1, ...,
J_{2^{N-n+1}}$ and 0 otherwise.  In other words, $H(F)$ is exactly $J_1, ..., J_{2^{N-n+1}}$.  Now $\sum_{H \in H(F)} \bar H =
\sum_i \bar J_i = \bar F_v$.  

Using this information, we see that the coefficient of $F$ in $z_{i-1}^+$ is

$$\sum_{H \in H(F)} [\Phi_{i-1} (\bar H) \cap z] = [\Phi_{i-1}(\bar F_v) \cap z] = [\Phi_{i-1}^+ (\bar F) \cap z] . $$

Therefore, $z_{i-1}^+ = \sum_{F \subset \Sigma^{n-1}(s_i)} [\Phi_{i-1}^+ (\bar F) \cap z] F = D_{\Phi_{i-1}^+}(z)$.  \end{proof}

The map $\Phi_{i-1}^+$ has good properties analogous to the map $\Phi_i$.  We state this as a lemma.

\begin{lemma} For each d-dimensional face $f$ of $\bar \Sigma(s_i)$, $\Phi_{i-1}^+(f)$ is contained in the d-skeleton of
$G_{i-1,i}(f)$.  Moreover, as a chain, $\Phi_{i-1}^+(f)$ is equal to a sum of d-faces of $G_{i-1,i}(f)$.

Also, for any point $x$, $|\Phi_{i-1}^+(x) - x| \le N s_i$.

\end{lemma}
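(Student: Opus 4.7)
The plan is to reduce the claim to Lemma \ref{bendingmap} applied to $\Phi_{i-1}$, after observing that the translation vector $v = (s_{i-1}/2, \dots, s_{i-1}/2)$ is chosen precisely to align the two dual lattices. First I would verify that $v$ carries vertices of $\bar\Sigma(s_i)$ onto vertices of $\bar\Sigma(s_{i-1})$: a typical vertex of $\bar\Sigma(s_i)$ has coordinates $(a_j + 1/2) s_i = (2a_j + 1) s_{i-1}$, and adding $s_{i-1}/2$ in each slot yields $(2a_j + 3/2) s_{i-1} = (b_j + 1/2) s_{i-1}$ with $b_j = 2a_j + 1 \in \mathbb{Z}$, a vertex of $\bar\Sigma(s_{i-1})$. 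It follows that a $d$-face $f$ of $\bar\Sigma(s_i)$, being a $d$-cube of side $s_i = 2 s_{i-1}$, decomposes after translation as a mod 2 sum $f + v = g_1 + \dots + g_{2^d}$, where each $g_k$ is a $d$-face of $\bar\Sigma(s_{i-1})$ obtained by subdividing $f+v$ along midpoints in each of its $d$ spanning directions.

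Next I would apply Lemma \ref{bendingmap} (for the bending map $\Phi_{i-1}$ on $\bar\Sigma(s_{i-1})$ with local grid $G_{i-1}$) to each piece $g_k$, which yields that $\Phi_{i-1}(g_k)$ lies in the $d$-skeleton of $G_{i-1}(g_k)$ and, as a chain, is a sum of $d$-faces of $G_{i-1}(g_k)$. Summing,
\[
\Phi_{i-1}^+(f) \;=\; \Phi_{i-1}(f + v) \;=\; \sum_k \Phi_{i-1}(g_k),
\]
which is a sum of $d$-faces of $\bigcup_k G_{i-1}(g_k)$. It then suffices to check the containment $\bigcup_k G_{i-1}(g_k) \subset G_{i-1,i}(f)$. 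By definition $G_{i-1}(g_k)$ is the union of $G_{i-1}(w)$ over vertices $w$ of $g_k$, each of which lies in $f + v$, so for any vertex $v'$ of $f$,
\[
|w - v'| \;\le\; |v| + \mathrm{diam}(f) \;\le\; \tfrac{\sqrt{N}}{2}\, s_{i-1} + \sqrt{N}\, s_i \;=\; \tfrac{5\sqrt{N}}{4}\, s_i \;\le\; N s_i
\]
for $N \ge 2$. Hence $G_{i-1}(w) \subset G_{i-1,i}(v') \subset G_{i-1,i}(f)$, as required.

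Finally, the displacement bound follows from the triangle inequality: $|\Phi_{i-1}^+(x) - x| \le |\Phi_{i-1}(x+v) - (x+v)| + |v|$, where the first term is $\le 2 N^2 S_{i-1} \le s_{i-1}/50$ by Lemma \ref{bendingmap} together with the spacing bound on $G_{i-1}$, while the second is $\sqrt{N}\, s_{i-1}/2$. Both are comfortably bounded by $N s_i$.

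The only substantive point is the lattice alignment in the first paragraph. One might naively expect $\bar\Sigma(s_i)$-vertices to form a subset of $\bar\Sigma(s_{i-1})$-vertices because $s_i = 2 s_{i-1}$, but the dual half-shift breaks this: the translation $v$ is exactly the half-offset needed to restore alignment, and this is the whole reason for introducing $v$ here. Once the decomposition $f + v = \sum_k g_k$ is in hand, the rest of the lemma is bookkeeping built on top of Lemma \ref{bendingmap}, with the definition of $G_{i-1,i}$ tuned precisely so that the grids $G_{i-1}(g_k)$ arising from pieces of $f + v$ fit inside $G_{i-1,i}(f)$.
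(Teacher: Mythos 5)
Your proposal is correct and follows essentially the same route as the paper: rewrite $\Phi_{i-1}^+(f)=\Phi_{i-1}(f+v)$, decompose $f+v$ as a mod 2 sum of $2^d$ faces of $\bar\Sigma(s_{i-1})$ (the dual-lattice alignment that you verify explicitly is exactly what the paper's choice of $v$ is designed to achieve, and the paper defers this to the proof of the preceding lemma), apply Lemma \ref{bendingmap} to each piece, check the containment $G_{i-1}(h_j)\subset G_{i-1,i}(f)$ via the $Ns_i$-neighborhood clause in the definition of $G_{i-1,i}$, and close with the triangle-inequality displacement bound. The only place where your write-up is slightly looser than the paper is in the phrase ``a sum of $d$-faces of $\bigcup_k G_{i-1}(g_k)$'': faces of the individual subgrids need not be faces of the union, only refinements of them, but this is exactly the level of gloss the paper itself uses when it says ``each of these faces is a sum of $d$-faces of $G_{i-1,i}(f)$.''
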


\begin{proof} Recall that $v = (s_{i-1}/2, ..., s_{i-1}/2)$.  Now $\Phi_{i-1}^+(f) = \Phi_{i-1}(f+v)$.  As we saw in the proof of the last lemma, $f+v$ is a sum of $2^d$ d-faces in $\bar \Sigma(s_{i-1})$, $f+v = \sum h_j$.  Now $\Phi_{i-1}(h_j)$
lies in the d-skeleton of the grid $G_{i-1}(h_j) \subset G_{i-1,i}(f)$.  Therefore, $\Phi_{i-1}^+(f)$ lies in the d-skeleton of
$G_{i-1,i}(f)$.  As a chain, $G_{i-1}(h_j)$ is a sum of faces of $G_{i-1}(h_j)$ - and each of these faces is a sum of d-faces of $G_{i-1,i}(f)$.  Therefore, $\Phi_{i-1}^+(f)$ is a sum of d-faces of $G_{i-1,i}(f)$.
Finally, $|\Phi_{i-1}(x) - x| \le (1/50) s_{i-1} = (1/100) s_i$.  Now $|\Phi_{i-1}^+(x) - x| = |\Phi_{i-1}(x+v) - x| \le |v| + |\Phi_{i-1}(x+v) - (x+v)| \le (1/100) s_i + |v| = (1/100) s_i + (1/4) N^{1/2} s_i. $
\end{proof}
 
\subsection{Homologies between different deformations at the same scale}
 
 Let $\Phi_1$ and $\Phi_2$ be two ways of bending the dual skeleton $\bar \Sigma(s)$.  Let
 $D_1$ and $D_2$ be the corresponding deformation operators.  If we have a homotopy
 from $\Phi_1$ to $\Phi_2$, we will use it to define a ``homotopy'' between the deformation
 operators $\Phi_i$.
 
 Suppose that $\Phi_{1,2}: \bar \Sigma \times [1,2] \rightarrow \mathbb{R}^N$ is a homotopy from
 $\Phi_1$ to $\Phi_2$.  We define the operator $D_{1,2}$ in terms of $\Phi_{1,2}$ as follows.  Let
 $T$ be a d-dimensional chain in $\mathbb{R}^N$ transverse to all $\Phi$ maps.
 
 $$D_{1,2}(T) := \sum_{G^{d+1} \subset \Sigma^{d+1} } [ \Phi_{1,2}(\bar G \times [1,2]) \cap T ] G. $$
 
 Notice that if $T$ is a d-chain, then $D_{1,2}(T)$ is a (d+1)-chain.
 
 The key formula about $D_{1,2}$ is the following.
 
\begin{lemma} The operator $D_{1,2}$ obeys the following algebraic identity:

$$ \partial D_{1,2} (T) = D_1(T) + D_2(T) + D_{1,2} ( \partial T). $$
 
 In particular, if $z$ is a cycle, then $\partial D_{1,2}(z) = D_1(z) + D_2(z)$.
 
\end{lemma}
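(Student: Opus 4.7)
The identity is a direct generalization of the previous lemma showing $\partial D_\Phi(T) = D_\Phi(\partial T)$, and the proof plan is to mimic the earlier bookkeeping with an added ``endpoint'' term coming from the homotopy parameter.

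First I would unwind the definition of $\partial D_{1,2}(T)$ and regroup by $d$-faces. Starting from
$$\partial D_{1,2}(T) = \sum_{G^{d+1} \subset \Sigma^{d+1}(s)} [\Phi_{1,2}(\bar G \times [1,2]) \cap T]\, \partial G,$$
I would enumerate, for each $d$-face $F \subset \Sigma^d(s)$, the $(d{+}1)$-faces $G_1(F), \dots, G_{2(N-d)}(F)$ containing $F$ in their boundary. The same duality used earlier gives $\sum_j \overline{G_j(F)} = \partial \bar F$, so, regrouping and using linearity,
$$\partial D_{1,2}(T) = \sum_{F \subset \Sigma^d(s)} [\Phi_{1,2}(\partial \bar F \times [1,2]) \cap T]\, F.$$

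Second, I would use the cylinder identity in $\bar \Sigma \times [1,2]$. Working mod $2$,
$$\partial(\bar F \times [1,2]) = \partial \bar F \times [1,2] \;+\; \bar F \times \{1\} \;+\; \bar F \times \{2\},$$
so applying $\Phi_{1,2}$ and using $\Phi_{1,2}(\bar F \times \{j\}) = \Phi_j(\bar F)$ yields
$$\Phi_{1,2}(\partial \bar F \times [1,2]) = \partial\bigl(\Phi_{1,2}(\bar F \times [1,2])\bigr) + \Phi_1(\bar F) + \Phi_2(\bar F).$$
By the transversality hypothesis, the intersection $\Phi_{1,2}(\bar F \times [1,2]) \cap T$ is a $1$-chain whose boundary splits into the pieces where one factor is boundary: that is,
$$[\partial \Phi_{1,2}(\bar F \times [1,2]) \cap T] = [\Phi_{1,2}(\bar F \times [1,2]) \cap \partial T],$$
exactly as in the proof that $D_\Phi$ commutes with $\partial$.

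Third, substituting and sorting the terms:
$$\partial D_{1,2}(T) = \sum_F [\Phi_1(\bar F) \cap T]\, F + \sum_F [\Phi_2(\bar F) \cap T]\, F + \sum_F [\Phi_{1,2}(\bar F \times [1,2]) \cap \partial T]\, F.$$
The first two sums are $D_1(T)$ and $D_2(T)$ by definition, and the third sum is $D_{1,2}(\partial T)$, since $\partial T$ is $(d-1)$-dimensional and $D_{1,2}$ sends $(d-1)$-chains to $d$-chains indexed by faces $F \in \Sigma^d(s)$. If $T = z$ is a cycle, the last term vanishes, giving $\partial D_{1,2}(z) = D_1(z) + D_2(z)$.

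There is no real obstacle here, only careful bookkeeping: the one place to be watchful is the transversality assertion ensuring $[\partial(\Phi_{1,2}(\bar F \times [1,2])) \cap T] = [\Phi_{1,2}(\bar F \times [1,2]) \cap \partial T]$, which requires $\Phi_{1,2}$ to be transverse to both $T$ and $\partial T$ (the hypothesis ``transverse to all $\Phi$ maps''). Everything else is the mod-$2$ boundary identity of the cylinder $\bar F \times [1,2]$ combined with the same duality $\sum_j \overline{G_j(F)} = \partial \bar F$ used in the earlier proof.
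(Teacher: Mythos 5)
Your proof is correct and follows essentially the same route as the paper's: regroup $\partial D_{1,2}(T)$ by $d$-faces using $\sum_j \overline{G_j(F)} = \partial \bar F$, apply the mod-2 cylinder boundary identity to $\bar F \times [1,2]$, and use transversality to swap boundary across the intersection. Nothing to change.
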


\proof  This proof is similar to the proof that a deformation operator $D$ commutes with boundaries.

The left-hand side is

$$ \partial D_{1,2}(T) = \sum_{G \subset \Sigma^{d+1}} [\Phi_{1,2}( \bar G \times [1,2] ) \cap T] \partial G. $$

Now for each d-face $F$ in $\Sigma^{d}(s)$, we let $G_1(F), ..., G_{2(N-d)}(F)$ be the (d+1)-faces
having $F$ in their boundaries.  So the last equation may be rewritten as

$$\sum_{F \subset \Sigma^{d}} \sum_{j=1}^{2N-2d} [\Phi_{1,2} (\overline{G_j(F)} \times [1,2]) \cap T]
F.$$

The first point is that $\sum_j \overline {G_j(F)} = \partial \bar F$.  So our last equation may be
rewritten as

$$\partial D_{1,2}(T) = \sum_{F \subset \Sigma^d} [\Phi_{1,2}(\partial \bar F \times [1,2]) \cap T] F. \eqno{(*)}$$

Next, $\partial \bar F \times [1,2] = \partial (\bar F \times [1,2]) + \bar F \times \{1 \} + \bar F
\times \{ 2 \}$.  Plugging this formula in, we see that

$$[\Phi_{1,2}(\partial \bar F \times [1,2]) \cap T] = [\Phi_1 (\bar F) \cap T] + [\Phi_2 (\bar F)
\cap T] + [\partial \Phi_{1,2}(\bar F \times [1,2]) \cap T]. $$

Next we rearrange the last term in this equation.  By transversality, $\Phi_{1,2}(\bar F \times [1,2]) \cap T$ is a 1-chain with 
an even number of boundary points.  Therefore,
$[\partial \Phi_{1,2} (\bar F \times [1,2]) \cap T] = [\Phi_{1,2} (\bar F \times [1,2]) \cap \partial T]$.  Using this substitution,
we see

$$[\Phi_{1,2}(\partial \bar F \times [1,2]) \cap T] = [\Phi_1 (\bar F) \cap T] + [\Phi_2 (\bar F)
\cap T] + [\Phi_{1,2}(\bar F \times [1,2]) \cap \partial T]. $$

Putting this formula for the intersection number back into equation $(*)$, we see that $\partial D_{1,2} T = D_1(T) + D_2(T) + D_{1,2}(\partial T)$.

\endproof

In our setting, we need to build a homology from $z_{i-1}^+ = D_{\Phi_{i-1}^+} (z)$ to
$z_i = D_{\Phi_i} (z)$.  Here $\Phi_{i-1}^+$ and $\Phi_i$ are both maps
defined on  $\bar \Sigma_{s_i}$.  To get a homology from $z_{i-1}^+$ to $z_i$ we
need a homotopy from $\Phi_{i-1}^+$ to $\Phi_i$.  We will build this homotopy by the same
local grid method that we used to build $\Phi_i$ in the first place.

We know that $\Phi_i$ and $\Phi_{i-1}^+$ each map each d-face $f$ of $\bar \Sigma(s_i)$ into the d-skeleton of $G_{i-1,i}(f)$.  We extend to get a homotopy $\Phi_{i-1,i}: \bar \Sigma(s_i) \times [1,2] \rightarrow \mathbb{R}^N$ which maps each d-face of $\bar \Sigma(s_i) \times [1,2]$ into the d-skeleton of $G_{i-1,i}(f)$.  Since $\Phi_i$ and $\Phi_{i-1}^+$ each have displacement $\le N s_i$, we can arrange that $|\Phi_{i-1,i}(x,t) - x| \le 3 3 N s_i$.  

We define a homology $A_i''$ to be $A_i'' = D_{\Phi_{i-1,i}} (z)$.  Since $z$ is a cycle and $\Phi_{i-1,i}$ is a homotopy from $\Phi_{i-1}^+$ to $\Phi_i$, $\partial A_i'' = D_{\Phi_{i-1}^+}(z) + D_{\Phi_i}(z) = z_{i-1}^+ + z_i$.  

\begin{lemma} The chain $A_i''$ lies in $T_\alpha(y)$ for $\alpha \gtrsim 1$.
\end{lemma}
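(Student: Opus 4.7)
The plan is to mimic almost verbatim the proof of Lemma \ref{thickz_i}, which showed that $z_i = D_{\Phi_i}(z)$ lies in $T_\alpha(y)$. The chain $A_i''$ is constructed in exactly the same way, except that the bending map $\Phi_i: \bar\Sigma(s_i) \to \mathbb{R}^N$ is replaced by the homotopy $\Phi_{i-1,i}: \bar\Sigma(s_i) \times [1,2] \to \mathbb{R}^N$ and the local grid $G_i$ is replaced by $G_{i-1,i}$. Since $G_{i-1,i}$ was manufactured to satisfy the same key estimate as $G_i$ in the good local grids lemma (Lemma \ref{goodlocalgrids}), the same mechanism should work.

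Concretely, I would start by picking an $n$-face $G \subset \Sigma(s_i)$ that appears with coefficient $1$ in $A_i'' = D_{\Phi_{i-1,i}}(z)$. Then by definition $[\Phi_{i-1,i}(\bar G \times [1,2]) \cap z] = 1$. Since $\bar G \times [1,2]$ is an $(N-n+1)$-face of $\bar\Sigma(s_i) \times [1,2]$, the construction of $\Phi_{i-1,i}$ (which maps each $d$-face into the $d$-skeleton of $G_{i-1,i}$ evaluated at the corresponding face of $\bar\Sigma(s_i)$) tells us that, as a chain, $\Phi_{i-1,i}(\bar G \times [1,2])$ is a sum of $(N-n+1)$-faces of $G_{i-1,i}(\bar G)$. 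Consequently some such face $R$ must satisfy $[R \cap z] = 1$.

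At this point I would invoke Lemma \ref{goodlocalgrids}. First I need to verify that $R$ lies in the $4N^2 s_i$-neighborhood of $\bar G$; this follows from the displacement bound $|\Phi_{i-1,i}(x,t) - x| \le C(N) s_i$ together with $\mathrm{diam}(\bar G) \lesssim s_i$. The lemma then gives $\Vol_n(y \cap \Ball[R]) \ge \beta s_i^n$, where $\Ball[R]$ is the ball of radius $N s_i$ around the center of $R$. Since the center of $R$ lies within $C(N) s_i$ of the center of $\bar G$ (which coincides with the center of $G$), the ball of radius $C'(N) s_i$ around the center of $G$ contains $\Ball[R]$ and therefore has thickness $\gtrsim \beta \gtrsim 1$. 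This puts $G$ into $T_\alpha(y)$ for some dimensional $\alpha > 0$, exactly as in the proof of Lemma \ref{thickz_i}.

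The only obstacle, and it is a mild one, is bookkeeping: checking that the face $\bar G \times [1,2]$ falls inside the scope of the good-grid estimate for $G_{i-1,i}$ at the face $\bar G$, and that the displacement constants are small enough to fit inside the $4N^2 s_i$-tolerance. Both are built into the construction of $\Phi_{i-1,i}$ and $G_{i-1,i}$ in the previous two subsections, so the verification should be routine. One can additionally extract a radius bound: since $A_i''$ lies in the $C(N) s_i$-neighborhood of $\bar\Sigma(s_i)$-faces whose associated $R$ meets $z$, and $s_i \lesssim \Vol_n(y)^{1/n}$ for $i \le i_{\text{final}}$, the chain $A_i''$ sits in the $R$-neighborhood of $z$ for $R \lesssim \Vol_n(y)^{1/n}$, matching the companion estimate needed to build $y'$.
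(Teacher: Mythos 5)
Your proposal matches the paper's proof essentially verbatim: both take a face of $A_i'' = D_{\Phi_{i-1,i}}(z)$, note that the nonzero intersection number $[\Phi_{i-1,i}(\bar G \times [1,2]) \cap z]$ forces some $(N-n+1)$-face $R$ of $G_{i-1,i}(\bar G)$ to intersect $z$ nontrivially, and then invoke the good local grid lemma to get $\Vol_n(y \cap \Ball[R]) \ge \beta s_i^n$, which translates into thickness $\gtrsim 1$ of a ball around the center of $G$. You are marginally more careful than the paper in explicitly verifying the $4N^2 s_i$-neighborhood hypothesis via the displacement bound, and you correctly give the dimension $N-n+1$ for $R$ (the paper has a typo reading $N-n-1$), but the argument and the appeal to Lemma \ref{goodlocalgrids} are identical.
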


\begin{proof} Suppose that $F \subset \Sigma^{d+1}(s_i)$ is a (d+1)-face of $A_i''$.  
By definition, we know that $[ \Phi_{i-1,i}(\bar F \times [1,2]) \cap z ] \not= 0$.  But $\Phi_{i-1,i}(\bar F \times [1,2])$ is 
a sum of (N-n-1)-faces of $G_{i-1,i}(\bar F)$ all lying within a $4 N s_i$-neighborhood of $F$.  For one of these faces, $R$, we must have $[R \cap z] \not= 0$.  

Recall that $\Ball[R]$ is the ball around the center of $R$ with radius $N s_i$.   By the good local grid lemma, Lemma \ref{goodlocalgrids}, it follows that $\Vol_n (y \cap \Ball[R]) \ge \beta s_i^n \gtrsim s_i^n$.  Therefore, there is a ball around the center of $F$ with radius $\sim s_i$ and thickness $\gtrsim 1$.  So $F \subset T_\alpha(y)$ for some $\alpha \gtrsim 1$.   \end{proof}

\begin{lemma} The chain $A_i''$ lies in the $R$-neighborhood of $z$ for $R \lesssim \Vol_n(y)^{1/n}$.
\end{lemma}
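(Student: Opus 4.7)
The plan is to mimic the proof of the analogous statement for $A_i'$, using the estimates we have on the displacement of $\Phi_{i-1,i}$ together with the already-established bound $s_i \lesssim \Vol_n(y)^{1/n}$ for all $i \le i_{\mathrm{final}}$.

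First I would unwind the definition. Since $A_i'' = D_{\Phi_{i-1,i}}(z)$ and $z$ is an $(n-1)$-chain, $A_i''$ is an $n$-chain in $\Sigma^n(s_i)$. If $F \subset \Sigma^n(s_i)$ is an $n$-face that appears in $A_i''$, then by the definition of the deformation operator $D_{\Phi_{i-1,i}}$ we have $[\Phi_{i-1,i}(\bar F \times [1,2]) \cap z] \neq 0$. In particular, the set $\Phi_{i-1,i}(\bar F \times [1,2])$ must actually meet $z$ at some point $q$.

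Next I would use the displacement bound on $\Phi_{i-1,i}$. We built $\Phi_{i-1,i}$ so that $|\Phi_{i-1,i}(x,t) - x| \le C(N) s_i$, and $\bar F$ has the same center as $F$ with diameter $\lesssim s_i$. So every point of $\Phi_{i-1,i}(\bar F \times [1,2])$ lies within distance $C'(N) s_i$ of $F$, and in particular the point $q \in z$ found above does. Hence every face $F$ of $A_i''$ lies within $C'(N) s_i$ of $z$, so $A_i''$ itself lies in the $C'(N) s_i$-neighborhood of $z$.

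Finally, I would invoke Lemma \ref{s_ibound}: we only need this estimate for $i \le i_{\mathrm{final}}$, and for each such $i$ we have $s_i \lesssim \Vol_n(y)^{1/n}$. Combining with the previous paragraph gives that $A_i''$ lies in the $R$-neighborhood of $z$ for $R \lesssim \Vol_n(y)^{1/n}$, as claimed. There is no real obstacle here — this is the direct analogue of the earlier neighborhood bound for $A_i'$, and all the ingredients (displacement bound on $\Phi_{i-1,i}$, size bound on $s_i$) have already been proved.
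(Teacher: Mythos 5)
Your proof is correct and follows exactly the same approach as the paper: a nonzero intersection number forces $\Phi_{i-1,i}(\bar F \times [1,2])$ to meet $z$, the displacement bound $|\Phi_{i-1,i}(x,t)-x|\lesssim s_i$ then places the face $F$ within $\lesssim s_i$ of $z$, and $s_i\lesssim \Vol_n(y)^{1/n}$ finishes it.
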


\begin{proof} If $F$ is a face of $A_i''$, then we see that $\Phi_{i-1,1}(\bar F \times [1,2])$ must intersect $z$.  Since $| \Phi_{i-1,i}(x,t) - x| \lesssim s_i$, we see that $F$ must lie in the $C(N) s_i$-neighborhood of $z$.  But since $i \le i_{final}$, $s_i \lesssim \Vol_n(y)^{1/n}$. 
\end{proof}

Now $A_i := A_i' + A_i''$ is a homology from $z_{i-1}$ to $z_i$, contained in the thick region
$T_\alpha(y)$.  Also, $A_i$ is contained in the $R$-neighborhood of $z$ for $R \lesssim \Vol_n(y)^{1/n}$.  

This completes the proof of the perpendicular pair inequality for cubical cycles and chains $z, w, y$.  In the next
subsection we explain how to reduce the general proposition to the case of cubical cycles and chains.

\subsection{Approximating by cubical cycles and chains} \label{minorapprox}

Let us recall the hypotheses of the perpendicular pair inequality.  We have mod 2 $(n-1)$-cycles $z$ and $w$,
and a mod 2 $n$-chain $y$ in $\mathbb{R}^N$.  We know that $\partial y = z + w$.  We know that $z$ and $w$ are
perpendicular in the sense that for any coordinate $(n-1)$-tuple $J$, either $\Vol_J(z) = 0$ or $\Vol_J(w) = 0$.

We want to approximate these chains and cycles by some cubical chains and cycles $\tilde z$, $\tilde w$, and $\tilde y$.
For some tiny constant $s_0$, we will choose $\tilde z$ and $\tilde w$ as cubical cycles in $\Sigma(s_0)$ and
$\tilde y$ as a cubical chain in $\Sigma(s_0)$.  In order to preserve the structure of the problem, we need to check
that $\tilde z$ and $\tilde w$ are still perpendicular, and that $\Vol(\tilde y) \lesssim \Vol(y)$.  Finally, we need a homology
$A$ from $z$ to $\tilde z$ with volume as small as we like.  Given all these things, we can quickly reduce the perpendicular pair
inequality to the cubical case.  By the cubical case, we can find a chain $\tilde y'$ with $\partial \tilde y' = \tilde z$ and with
$\HC_n(\tilde y') \lesssim \Vol_n(\tilde y) \lesssim \Vol_n(y)$.  Finally, we define $y' = A + \tilde y'$.  We see that $\partial y' = z$.  Also,
$\HC_n(y') \le \HC_n(A) + \HC_n(\tilde y') \lesssim \epsilon + \Vol_n(y)$, where $\epsilon$ is as small as we like.

We do the cubical approximation by the Federer-Fleming deformation operator.  We let $v$ be a vector with $|v| \le s_0/2$, which we can choose later, and we define for each d-chain $T$

$$D_v (T) := \sum_{F^d \subset \Sigma^d(s_0)} [\bar F_v \cap T] F. $$

\noindent We let $\tilde z = D_v(z)$, $\tilde w = D_v(w)$, and $\tilde y = D_v(y)$.  We will use some standard properties of the Federer-Fleming deformation operator, which are reviewed in Section \ref{fedflemreview}.
The deformation operator commutes with taking boundaries, and
so $\partial \tilde y = \tilde z + \tilde w$.   We can choose $v \in B^N(s/2)$ so that $\Vol_n(\tilde y) \lesssim \Vol_n(y)$,
and so that there is a homology $A$ from $z$ to $\tilde z$ with volume $\lesssim s_0 \Vol_{n-1}(z)$.  By taking $s_0$ small enough, we can make this homology as small as we like. 

We still have to check that $\tilde z$ and $\tilde w$ are perpendicular.   To do this, we check that the deformation operator $D_v$ is well-behaved with respect to directed volumes $\Vol_J$ in Euclidean space.

\begin{lemma} If $T$ is any mod 2 d-chain in $\mathbb{R}^N$, and if $J$ is a d-tuple of integers from 1 to $N$, then 

$$\Average_{v \in B^N(s/2)} \Vol_J [ D_v (z) ] \le C(N) \Vol_J z. $$

\end{lemma}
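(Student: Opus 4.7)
The plan is to reduce the inequality to a standard integral-geometric identity for the projection $\pi_J:\mathbb{R}^N\to \mathbb{R}^J$. First, expanding by the definition of the deformation operator and using that every $d$-face $F\subset\Sigma^d(s)$ is an axis-parallel $s$-cube with some direction $I(F)\subset\{1,\ldots,N\}$ of size $d$, one has $\Vol_J(F)=s^d$ when $I(F)=J$ and zero otherwise. Bounding the mod $2$ intersection number by the geometric intersection count $\#(\bar F_v\cap T)$,
$$\Vol_J D_v(T)=\sum_{F\subset\Sigma^d(s)}[\bar F_v\cap T]\,\Vol_J(F)\;\le\; s^d\sum_{F:\,I(F)=J}\#(\bar F_v\cap T).$$

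Next, since $B^N(s/2)\subset[-s/2,s/2]^N$ and the two regions have comparable volume, averaging a non-negative quantity over the ball is bounded by a dimensional constant $C(N)$ times averaging over the cube, so it suffices to prove the inequality with $v$ uniform on $[-s/2,s/2]^N$.

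The heart of the argument is a tiling identity. Fix $v$ and write $v=(v_J,v_{J^c})$ in the splitting $\mathbb{R}^N=\mathbb{R}^J\times\mathbb{R}^{J^c}$. Pick one face $F_0$ with $I(F_0)=J$, set $q_0:=\pi_J(c(F_0))$ where $c(F)$ denotes the center of $F$, and note that the centers of all faces $F$ with direction $J$ are exactly $c(F_0)+s\mathbb{Z}^N$. For each fixed $k\in\mathbb{Z}^d$, the dual cubes $\bar F_v$ corresponding to $c(F)\in c(F_0)+(sk,s\mathbb{Z}^{N-d})$ are $s$-cubes parallel to $\mathbb{R}^{J^c}$ which exactly tile the affine $(N-d)$-plane $\pi_J^{-1}(q_0+v_J+sk)$. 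Therefore
$$\sum_{F:\,I(F)=J}\#(\bar F_v\cap T)\;=\;\sum_{k\in\mathbb{Z}^d}\#\bigl(T\cap\pi_J^{-1}(q_0+v_J+sk)\bigr).$$
This quantity is independent of $v_{J^c}$, so averaging over the cube reduces to averaging $v_J$ uniformly over $[-s/2,s/2]^d$. Unfolding the sum over $k$ (the translates $v_J+sk$ with $k\in\mathbb{Z}^d$ sweep $\mathbb{R}^d$ exactly once) and invoking the coarea formula,
$$\Average_{v_J\in[-s/2,s/2]^d}\sum_{k\in\mathbb{Z}^d}\#\bigl(T\cap\pi_J^{-1}(q_0+v_J+sk)\bigr)\;=\;s^{-d}\int_{\mathbb{R}^J}\#(T\cap\pi_J^{-1}(q))\,dq\;=\;s^{-d}\Vol_J(T).$$
Combining this with the prefactor $s^d$ and the ball-to-cube constant yields $\Average_{v\in B^N(s/2)}\Vol_J D_v(T)\le C(N)\Vol_J(T)$.

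The genuine content is the tiling identity; everything else is bookkeeping. The step I expect to require the most care is justifying the coarea step in the Lipschitz chain setting (interpreting $\#(T\cap\pi_J^{-1}(q))$ as a multiplicity count via the area formula, and noting that for almost every $v$ the plane $\pi_J^{-1}(q_0+v_J+sk)$ is transverse to $T$) and tracking the dimensional constants through the ball-versus-cube comparison.
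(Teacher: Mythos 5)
Your proof is correct and rests on the same Federer--Fleming integral-geometric idea as the paper's, but you organize it more cleanly. The paper argues face by face: for each $F$ in direction $J$ it bounds the probability that $\bar F_v$ meets $T$ by $C(N)\,s^{-d}\Vol_J(T\cap \Ball[F])$ (the exponent printed as $-N$ in the paper should be $-d$), and then sums over $F$, using the bounded overlap of the balls $\Ball[F]$. You instead make the global observation that, for each fixed $v$, the dual faces $\bar F_v$ over all $F$ in direction $J$ exactly tile a union of affine $(N-d)$-planes $\pi_J^{-1}(q)$; combined with Fubini and the area formula this gives the cube-average as an exact identity $s^d\cdot s^{-d}\Vol_J(T)=\Vol_J(T)$, and the ball-to-cube comparison then supplies the constant. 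Your version avoids both the localization to balls and the bounded-overlap count, so it is slightly more precise and self-contained. Both proofs turn on the same key fact, namely that $\pi_J(\bar F_v)$ is a single point when $F$ has direction $J$, so the whole estimate reduces to a statement about the one-parameter family $v_J$ and the projected set $\pi_J(T)$; your final caveats about transversality for almost every $v$ and the Lipschitz area formula are exactly the (standard) points that need checking, and they go through.
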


\begin{proof} Let $F$ be a d-face of $\Sigma^d(s)$ in the direction $J$.  We have to consider $[\bar F_v \cap T]$.  Let $\Ball[F]$ denote the ball around the center of $F$ with radius $2N s$.  Note that for all $v \in B^N(s/2)$, $\bar F_v$ is contained in $\Ball[F]$.  Let $\pi_J$ be the orthogonal projection onto the $J$-plane.  The probability that $[\bar F_v \cap T] \not=0$ is bounded by $C(N) s^{-N} \Vol \pi_J( T \cap \Ball[F] ) \le C(N) s^{-N} \Vol_J (T \cap \Ball[F])$.  Therefore, 

$$\Average_{v \in B^N(s/2)} \Vol_J [D_v(T)] \le C(N) \sum_{F \subset \Sigma^d(s), \text{$F$ in direction $J$}} \Vol_J(T \cap \Ball[F]) \le C(N) \Vol_J(T). $$

\end{proof}

In particular, if $\Vol_J(T) = 0$, then $\Vol_J ( D_v(T) ) = 0$ almost surely.  Therefore, for almost every $v$, $\tilde z$ and $\tilde w$ are still perpendicular.  This finishes the reduction of the perpendicular pair inequality to the cubical case.

\section{Thick tubes} \label{twistedtubes}

In this section, we define a tube in $\mathbb{R}^N$ to be an embedding $I: S^1 \times B^{N-1} \rightarrow \mathbb{R}^N$.  Recall that an 
embedding is called $k$-expanding if it increases or preserves the $k$-volume of each $k$-dimensional surface.  In other words, an embedding is
$k$-expanding if its inverse has $k$-dilation $\le 1$.  A tube with 
$k$-thickness equal to $R$ is a $k$-expanding embedding $I$
from $S^1(\delta) \times B^{N-1}(R)$ into $\mathbb{R}^N$, where $\delta > 0$
may be arbitrarily small.  (Here we write $S^1(\delta)$ for the circle of radius $\delta$.)  We will usually denote a tube by the letter $T$.  We will say
that a tube $T$ lies in some set $U \subset \mathbb{R}^N$ if the image of the embedding lies in $U$.

For example, consider a solid torus of revolution in $\mathbb{R}^3$ given by revolving a disk around the z axis.  If we take a disk of radius $1$ 
with center at a distance 2 from the axis, then we get a tube of thickness $\sim 1$ contained in a ball of radius 3 around the origin.  Surprisingly, 
there are tubes of thickness 1 contained in arbitrarily small balls.  Their geometry is quite different from a solid torus of revolution.

\newtheorem*{GZ}{Thick tube example in three dimensions}
\begin{GZ} For every radius $\epsilon > 0$, there is a tube
$T$ with 2-thickness 1 embedded in $B^3(\epsilon) \subset \mathbb{R}^3$.
\end{GZ}

(As far as I know, the first example was constructed by Zel'dovitch - see \cite{Ar}.  We discuss Zel'dovitch's construction in Section \ref{zeltube}.)

\proof  We begin with $S^1(\delta) \times B^2(1)$, where we may choose $\delta$ as small
as we like.  This product isometrically embeds in
$S^1(\delta) \times [0,2] \times [0,2]$.  Now for each $\lambda > 1$, we can make a 2-expanding map from this space into $S^1(\delta \lambda) \times [0, 2 \lambda^{-1}] \times
[0, 2 \lambda]$ by dilating each coordinate by an appropriate factor.   We choose
$\lambda = \delta^{-1/2}$, so the image of the embedding is $S^1(\delta^{1/2}) \times [0, 2 \delta^{1/2}]
\times [0, 2 \delta^{-1/2}]$.  Now the annulus $S^1(\delta^{1/2}) \times [0, 2 \delta^{1/2}]$ has
a 1-expanding embedding into $B^2( 10 \delta^{1/2})$.  Hence our original space has a
2-expanding embedding into the cylinder $B^2(10 \delta^{1/2}) \times [0, 2 \delta^{-1/2}]$.
Note that this cylinder has volume $\sim \delta^{1/2}$.  The cylinder admits a 1-expanding
embedding into $B^3(100 \delta^{1/6})$. (See the appendix in Section \ref{bilipembed} for the details of this embedding.)  \endproof

The situation is different for linked tubes.  This phenomenon was discovered by Gehring in \cite{Ge}.   He proved a result similar
to the following.

\newtheorem*{GLi}{Gehring-type inequality for linked tubes}

\begin{GLi} If $T_1$ and $T_2$ are disjoint tubes in the unit 3-ball, with 2-thickness
$R_1$ and $R_2$, and with linking number $L$, then the following inequality holds:

$$ L R_1^2 R_2^2 \le C. $$

\end{GLi}

\proof  Let $I_1: S^1(\delta) \times B^2(R_1) \rightarrow B^3(1)$ and $I_2: S^1(\delta) \times B^2(R_2) \rightarrow
B^3(1)$ be our 2-expanding embedding maps.  Recall that $T_i$ is the image of $I_i$.  
 Taking the inverses of our embedding maps, we get maps
$\pi_i: T_i \rightarrow B^2(R_i)$ with 2-dilation at most 1.  By the coarea formula,
we can find a fiber of $\pi_1$ with length at most $\Vol(T_1) / \Area(B^2(R_1))$ which
is at most $C R_1^{-2}$.  This fiber bounds a disk of area at most $C R_1^{-4}$ by
the isoperimetric inequality.  Also, the fiber bounds a disk of area at most $C R_1^{-2}$
by the cone inequality.

Now this disk cuts across the tube $T_2$ at least $|L|$ times.  More precisely, if we let
$D$ denote the disk, then the map $\pi_2$ from $D \cap T_2$ to $B^2(R_2)$ has
degree $L$.  Hence we see that $D$ has area at least $L \pi R_2^{2}$.  Using the upper bound
for the area of $D$ from the cone inequality, we see $L \pi R_2^2 \le C R_1^{-2}$, and so $L R_1^2 R_2^2 \le C$.  

This proves the result.  If we use the upper bound for the area of $D$ coming from the traditional isoperimetric
inequality, we see that $L R_2^2 \le C R_1^{-4}$ and so $L R_1^4 R_2^2 \le C$.  This latter inequality is stronger
when $R_1 \gg 1$.  \endproof

To summarize, a tube $T$ with 2-thickness 1 may be squeezed into an arbitrarily small
ball in $\mathbb{R}^3$, but if $T_1$ and $T_2$ are linked tubes with 2-thickness 1, then they
cannot be squeezed into a small ball in $\mathbb{R}^3$.

Now we recall the idea of the twisting number of a tube in $\mathbb{R}^3$.   Let
$p_1$ and $p_2$ be two points in $B^2$.  Then consider the two circles $I (S^1 \times \{p_1\})$
and $I (S^1 \times \{ p_2 \} )$ in $\mathbb{R}^3$.  The twisting number of the tube $T$ is equal
to the linking number of these two circles.  Moreover, let $B_1$ and $B_2$ be disjoint disks
in $B^2$ with centers at $p_1$ and $p_2$.  Then $I$ restricted to $S^1 \times B_1$ defines
a tube $T_1$ and $I$ restricted to $S^1 \times B_2$ defines a tube $T_2$.  We can arrange that
$T_1$ and $T_2$ each have thickness at least one third the thickness of $T$.  Therefore,
we get an inequality for the twisting number of a thick tube:

\newtheorem*{GT}{Inequality for twisted tubes}

\begin{GT} Suppose that $T$ is a tube in the unit 3-ball with 2-thickness $R$ and
twisting number $t$.  Then $R^4 |t| \le C$.
\end{GT}

(For large values of $t$, I don't know whether this inequality is sharp.)

To summarize, in three dimensions, a tube with 2-thickness 1 may be squeezed into an arbitrarily small ball $B^3(\epsilon)$,
but a tube with a non-zero twisting number cannot.  
In the early 1990's, Freedman and He \cite{FH} extended Gehring's work, proving estimates for general knots and links.  For example,
they proved that a 3-dimensional tube with 2-thickness 1 contained
in a small ball must be unknotted.

Next we discuss the situation in dimension $N \ge 4$.  We begin with the generalization of the thick tube example.

\newtheorem*{GZ2}{Thick tube example in higher dimensions}
\begin{GZ2} If $N \ge 3$ and $k > N/2$, then there is a tube $T$ with $k$-thickness 1 embedded in
$B^N(\epsilon) \subset \mathbb{R}^N$ for every $\epsilon > 0$.
\end{GZ2}

\proof  We begin with $S^1(\delta) \times B^{N-1}(1)$, where we may choose $\delta$ as small
as we like.  This product isometrically embeds in
$S^1(\delta) \times [0,2]^{N-1}$.  Now for each $\lambda > 1$, we can make a $k$-expanding map from this space into $S^1(\delta \lambda^{\frac{1}{k-1}}) \times [0, 2 \lambda^{-1}] \times
[0, 2 \lambda^{\frac{1}{k-1}}]^{N-2}$ by dilating each coordinate by an appropriate factor.   We choose $\lambda$ so that
$\delta \lambda^{\frac{1}{k-1}} = \lambda^{-1}$.  By making $\delta$ small, we can make $\lambda$ as large as we like.  We are now working with $S^1(\lambda^{-1}) \times [0, 2 \lambda^{-1}]
\times [0, 2 \lambda^{\frac{1}{k-1}}]^{N-2}$.  Now the annulus $S^1(\lambda^{-1}) \times [0, 2 \lambda^{-1}]$ has
a 1-expanding embedding into $B^2( 10 \lambda^{-1})$.  Hence our original space has a
$k$-expanding embedding into the cylinder $B^2(10 \lambda^{-1}) \times [0, 2 \lambda^{\frac{1}{k-1}}]^{N-2}$.
Note that this cylinder has volume $\sim \lambda^{-2 + (N-2)/(k-1)}$.  Since $k > N/2$, the exponent is negative, and
we can make the volume as small as we like.  Therefore, this cylinder admits a 1-expanding embedding into an
arbitrarily small ball.  (See the appendix in Section \ref{bilipembed} for the details of this last embedding.)  \endproof

I don't know whether the range of $k$ in this result is sharp.  There is more discussion in the open problems section, section \ref{openproblems}.

In dimension $N \ge 4$, any two circles are unlinked, and so there is no linking number between tubes.  It turns out that it is still possible to define the twisting number of a tube in $\mathbb{Z}_2$.

A tube is given by an embedding $I: S^1 \times B^{N-1} \rightarrow \mathbb{R}^N$.  The `core circle'
of the tube is $I ( S^1 \times \{ 0 \} )$.  The tube structure defines a trivialization of the normal
bundle of this circle -- in other words a framing of this circle.  The twisting number comes from this framing - it measures how the trivialization `twists' as we move around the circle.

As a warmup, suppose that the core circle is just the standard circle $S^1 \subset \mathbb{R}^N$, defined by the equations $x_1^2 + x_2^2 = 1$, $x_3 = ... = x_N = 0$, and suppose that $I: S^1 \rightarrow S^1$ is the identity map.  This core circle has a standard framing, given by $\{ r, e_3, ..., e_N \}$, where $r$ is the outward radial vector $(x_1, x_2)$, and $e_3, ..., e_N$ are the coordinate vectors.  Now we can compare the framing coming from out embedding $I$ with this standard framing.  At each point in $S^1$, the two framings each give a basis of $N S^1$.  Therefore, we can get our framing by applying an element of $\GL_{N-1}(\mathbb{R})$ to the standard framing at each point $x \in S^1$.  So our framing induces a continuous map from $S^1$ to $\GL_{N-1}(\mathbb{R})$.  If $I$ is orientation-preserving, we see that the image of our map lies in the orientation-preserving maps $\GL_{N-1}^+(\mathbb{R})$, which is homotopic to $\SO(N-1)$.  Therefore, our framing induces an element of $\pi_1 (\SO(N-1))$.  If $N \ge 4$, then $\pi_1(\SO(N-1)) = \mathbb{Z}_2$, and this homotopy class is the twisting number of the tube.  (If $I$ is orientation reversing, our map goes from $S^1$ to $\GL_{N-1}^-(\mathbb{R})$, which is also homotopic to $\SO(N-1)$, and the twisting number is defined in the same way.)

However, this definition was just a warmup, and we still have to define the twisting number for a general embedding $I: S^1 \times B^{N-1} \rightarrow \mathbb{R}^N$.  The main problem is to define a standard framing for an arbitrary embedded circle in $\mathbb{R}^N$.  Because $N \ge 4$, any two circles are isotopic.  Therefore, there is an orientation-preserving diffeomorphism $\Psi$ of $\mathbb{R}^N$ so that $\Psi \circ I$ is the identity map from $S^1$ to the standard $S^1$.  The map $\Psi$ gives an isomorphism from the normal bundle of the core circle to the normal bundle of the standard $S^1$.  Using $\Psi$, we can pull back the standard framing of $S^1$ to give a standard framing of the core circle.  Comparing this standard framing with the framing induced by the tube, we get a map $S^1 \rightarrow \GL_{N-1}(\mathbb{R})$, and the homotopy class of the map gives a twisting number in $\mathbb{Z}_2$.  A priori, this definition may depend on the choice of $\Psi$, but it turns out that it does not.  This was first established by Pontryagin in his study of framed cobordism and homotopy groups of spheres.

There is a nice description of (some of) this work in Milnor's book \cite{M}.  Pontryagin defined a notion of framed cobordism, which is an equivalence relation on closed framed $k$-manifolds within a given ambient manifold.  Pontryagin proved that for $N \ge 4$, there are exactly two equivalence classes of framed 1-manifolds in $\mathbb{R}^N$.  He proved that two framed circles are framed cobordant if and only if they have the same twisting number.

Pontryagin went on to make an important connection between framed cobordisms and homotopy groups of spheres.
Let $T$ be a twisted tube in the unit $N$-ball.  The tube $T$ is defined by an embedding
$I$ from $S^1 \times B^{N-1}$ into the unit $N$-ball, with image $T \subset B^N$.  Using the inverse of $I$, we
construct a map $\pi: T \rightarrow B^{N-1}$, sending the boundary of $T$ to the boundary of $B^{N-1}$.
We let $h$ be a degree 1 map from $(B^{N-1}, \partial B^{N-1})$ to $S^{N-1}$, sending the boundary $\partial B^{N-1}$
to the basepoint of $S^{N-1}$.  Let us consider the unit N-ball, $B^N$, as the upper hemisphere of $S^N$.  Now we construct a map
$F$ from $S^N$ to $S^{N-1}$ as follows.  If $x \in T$, then we define $F(x) = h( \pi(x) )$.  If $x$ is not in $T$,
then we define $F(x)$ to be the basepoint of $S^{N-1}$.  This definition gives a continuous map because
if $x \in \partial T$, then $\pi(x) \in \partial B^{N-1}$ and $h ( \pi(x))$ is the basepoint of $S^{N-1}$.  Pontryagin showed
that the homotopy type of the map $F$ depends only on the framed cobordism class of the core circle.  In particular, he proved the following theorem.

\newtheorem*{PT}{Pontryagin's theorem}

\begin{PT} If $N \ge 4$, then the twisting number of the tube $T$ in $\mathbb{Z}_2$ is non-zero if
and only if the map $F$ is non-contractible. \end{PT}

Since $F$ is non-contractible if and only if $\SH(F) = 1$, we see that the twisting number of $T$ is equal to the Steenrod-Hopf invariant of $F$.

Combining our main theorem on $k$-dilation with Pontryagin's theorem about twisting numbers of tubes, we get the following estimate for twisted tubes.

\begin{prop} \label{kthicktube} If $T$ is a tube in the unit ball in $\mathbb{R}^N$ with non-zero twisting number, and if $k \le (N+1)/2$,
then the $k$-thickness of $T$ is $\le C(N)$.
\end{prop}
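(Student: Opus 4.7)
The plan is to combine Pontryagin's theorem (stated just before the proposition) with the Steenrod squares and $k$-dilation inequality from the introduction. Given a tube $T$ with non-zero twisting number, Pontryagin's construction produces a map $F\colon S^N\to S^{N-1}$ with $\SH(F)\ne 0$, and the goal is to check that the $k$-dilation of $F$ is controlled by a negative power of the thickness $R$. Then the Steenrod square inequality forces $R\le C(N)$.

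More concretely, suppose $I\colon S^1(\delta)\times B^{N-1}(R)\to B^N$ is our $k$-expanding embedding with image $T$. Its inverse $\pi\colon T\to S^1(\delta)\times B^{N-1}(R)$ then satisfies $\Dil_k(\pi)\le 1$. Composing with the orthogonal projection to $B^{N-1}(R)$ (which has $k$-dilation $\le 1$) gives a map $T\to B^{N-1}(R)$ with $k$-dilation $\le 1$. Next I compose with a fixed degree-$1$ map $h\colon (B^{N-1}(R),\partial B^{N-1}(R))\to (S^{N-1},\text{basepoint})$ obtained by rescaling a model map $h_0\colon (B^{N-1}(1),\partial)\to (S^{N-1},\text{basepoint})$; a straightforward check gives $\Dil_1(h)\lesssim R^{-1}$ and hence $\Dil_k(h)\lesssim R^{-k}$. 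The resulting map $h\circ\pi\colon T\to S^{N-1}$ has $k$-dilation $\lesssim R^{-k}$, sends $\partial T$ to the basepoint, and extends by the basepoint to a map $F\colon S^N\to S^{N-1}$ (viewing $B^N$ as the upper hemisphere of $S^N$). On the complement of $T$ the map is constant, so $\Dil_k(F)\lesssim R^{-k}$. After a standard smoothing that does not increase $k$-dilation by more than a dimensional factor, we may assume $F$ is $C^1$.

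By Pontryagin's theorem, the hypothesis that $T$ has non-zero twisting number means $F$ is homotopically non-trivial, which in this codimension-one setting is equivalent to $\SH(F)\ne 0$. Since $k\le (N+1)/2$, the Steenrod squares and $k$-dilation theorem applied with $m=N$, $n=N-1$ yields $\Dil_k(F)\ge c(N)>0$. Combining this with the upper bound $\Dil_k(F)\lesssim R^{-k}$ gives $R^{-k}\gtrsim c(N)$, hence $R\le C(N)$, as desired.

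The only mildly delicate point is the construction of $h$ and the verification that the extension of $h\circ\pi$ by the basepoint is genuinely $C^1$ with the claimed $k$-dilation bound; this is standard but should be executed with a bump-function cutoff near $\partial T$ so that the derivative vanishes smoothly as we cross the boundary. Apart from that, the proof is a direct assembly of Pontryagin's theorem and the Steenrod-square lower bound.
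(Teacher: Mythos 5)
Your argument is correct and is essentially the same as the paper's: Guth defines $F$ by composing the inverse of the $k$-expanding embedding with the projection to $B^{N-1}$ and a degree-one collapse map, extends by the basepoint, invokes Pontryagin's theorem to identify non-zero twisting with $\SH(F)\neq 0$, and uses the main theorem's lower bound $\Dil_k(F)\gtrsim 1$ against the easy bound $\Dil_k(F)\lesssim R^{-k}$. You simply spell out the derivation of $\Dil_k(F)\lesssim R^{-k}$ and the smoothing step, which the paper's one-line proof leaves implicit.
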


\begin{proof} We write $A \lesssim B$ for $A \le C(N) B$.  If the tube $T$ has $k$-thickness $R$, then the map $F$ has $k$-dilation $\lesssim R^{-k}$.
By our main theorem, if $k \le (N+1)/2$, then the $k$-dilation of every non-trivial map $S^N$ to $S^{N-1}$ is $\gtrsim 1$.
If $k \le (N+1)/2$, then we see that $1 \lesssim R^{-k}$.  Therefore the tube $T$ has $k$-thickness $R \lesssim 1$. \end{proof}

Proposition \ref{kthicktube} is most interesting for odd dimensions $N \ge 5$.  In this case, the thick tube example in higher dimensions shows that
we may embed a tube in the unit N-ball with arbitrarily large $(N+1)/2$-thickness, but Proposition \ref{kthicktube} shows that if the $(N+1)/2$-thickness
is too large, then the twisting number must be zero.

\section{Quantitative general position arguments} \label{sectionquantemb}

In the next section, we will prove the h-principle for $k$-dilation stated in the introduction.  In the construction, we need to construct
embeddings with some geometric control.  We will construct the embeddings using the h-principle for immersions and general position arguments,
and we need quantitative versions of these arguments to control the geometry of the embeddings.

Our setup will be the following.  We have $P$ a polyhedron embedded in a manifold $(M^m, g)$, and we are considering maps to a manifold $(N^m, h)$ of the 
same dimension $m$.  We start with a map $I_0: M \rightarrow N$ and we want to  
perturb the map to an embedding from some neighborhood $U \supset P$ to $N$.  We recall some basic results about this situation using immersion theory and 
general position arguments, and then we give quantitative versions.  We begin by outlining all the results, and then we come back to the proofs.

We start by recalling the h-principle for immersions.  This result was first proven by Smale and Hirsch, see \cite{EM} for references.

\newtheorem*{hprin}{The h-principle for immersions}

\begin{hprin} (a special case) Suppose that $P \subset M^m$ is a polyhedron of dimension $p \le m-1$,

and $N^m$ is a manifold of the same dimension $m$,

and $I_0: M \rightarrow N$ is a smooth map

and $T_0: TM \rightarrow TN$ is a fiberwise isomorphism covering $I_0$.  

Then there is an open neighborhood $U$ containing $P$, and an immersion $I_1: U \rightarrow N$ so that $dI_1$ is homotopic to $T_0$ through
fiberwise isomorphisms $TU \rightarrow TN$.

\end{hprin}

We want to make this result more quantitative by estimating the size of $U$ and the local bilipschitz constant of $I_1$.  The standard argument
actually gives quantitative estimates, and we will get the following.

\begin{prop} \label{quantimm1} For any $s, \mu > 0$ the following holds.  Suppose that $P \subset M^m$ is a polyhedron of
dimension $p \le m-1$, and suppose that the following hypotheses hold:

\begin{enumerate}

\item $P \subset (M^m, g)$, and the pair $(P, M)$ has bounded geometry at scale $s$,

\item $(N^m, h)$ is a Riemannian manifold with bounded geometry
at the larger scale $10^m s$,

\item $I_0: M \rightarrow N$ has Lipschitz constant at most 1,

\item $T_0: TM \rightarrow TN$ is a fiberwise isomorphism covering $I_0$, with fiberwise bilipschitz constant $\le \mu$, 

\item the scale invariant quantity $s \| \nabla T_0 \|_{C^0} \le \mu$,

\end{enumerate}

Then there are constants $L$ and $W$ that depend only on $m, \mu,$ and the bounds
on the geometry of $M, N,$ and $P$, so that the following holds.

Let $U_W$ denote the $Ws$-neighborhood of $P \subset M$.  

Then $I_0$ can be homotoped to a smooth immersion $I_1: U_W \rightarrow N$ which is locally $L$-bilipschitz.

Also, $dI_1$ is homotopic to $T_0$ in the category of fiberwise isomorphisms $TU_W \rightarrow TN$, and the distance
from $I_0(x)$ to $I_1(x)$ is $\le L s$ for each $x \in U_w$.  

\end{prop}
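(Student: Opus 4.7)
The plan is to adapt Hirsch's proof of the h-principle for immersions, keeping careful track of constants throughout. Since $P$ has codimension at least $1$ in $M$, the bounded geometry of $(P,M)$ at scale $s$ lets me choose a handle decomposition of a neighborhood of $P$ in which every handle has index at most $p \le m-1$, with bounded combinatorial complexity and each handle of diameter comparable to $s$. I will build $I_1$ handle by handle, ordered by index, starting from a linearization of $I_0$ via $T_0$ on a tiny neighborhood of the $0$-handles: on a ball of radius $\sim s$ around each $0$-handle I replace $I_0$ by $\exp_{I_0(x_0)} \circ T_0(x_0) \circ \exp_{x_0}^{-1}$, which is locally $C(m,\mu)$-bilipschitz by the bounded geometry of $N$ at scale $10^m s$.

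The inductive step extends an immersion already defined near the handles of index $< k$ across a new $k$-handle $H$, where $k \le p \le m-1$. On the attaching region of $H$ the map $I_1$ is already locally bilipschitz with a constant depending on the previous stages, and on $H$ the formal isomorphism $T_0$ is prescribed. Because the codimension $m-k \ge 1$, the Smale--Hirsch h-principle on $H$ produces an actual immersion extending the given boundary values with derivative homotopic to $T_0$ rel $\partial H$. The hypothesis $s\|\nabla T_0\|_{C^0} \le \mu$ ensures that on a handle of size $s$ the formal data $T_0$ does not oscillate too much, so the extension can be realized with a local bilipschitz constant depending only on $m$, $\mu$, and the previous constants. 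Equivalently, one can invoke the holonomic approximation theorem of Eliashberg--Mishachev directly, which packages this local extension in a single step and is already stated in a quantitatively explicit form.

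The main obstacle is to keep the bilipschitz constant from growing unboundedly through the induction, since a naive implementation would compose bilipschitz bounds at each of the $p+1$ stages. I would handle this by arranging the handle decomposition so that each $k$-handle only overlaps its neighbors in a controlled product collar, and by performing each extension only on the interior portion of the handle, so that neighboring handles only need to be glued, not redefined, on the overlaps. With this setup the final constants $L$ and $W$ depend only on $m$, $\mu$, and the geometry bounds on $M$, $N$, and $P$, since the total number of handle indices is bounded in terms of $m$.

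Finally, the auxiliary claims come for free from the construction. The homotopy of $dI_1$ to $T_0$ is built into each Smale--Hirsch extension and assembles globally over the handle decomposition, because compatible choices of homotopy on overlaps can be made inductively. The displacement bound $d(I_0(x), I_1(x)) \le Ls$ follows because each local modification moves points a distance at most $C(m,\mu)\, s$, and a given point is touched by only boundedly many such modifications.
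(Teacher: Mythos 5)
Your overall strategy matches the paper's: an inductive construction over a cell-by-cell (you say handle-by-handle, the paper says simplex-by-simplex) decomposition of a neighborhood of $P$, applying the relative Smale--Hirsch h-principle at each stage, with constants controlled because there are only $p+1 \le m$ stages and bounded geometry gives uniformity within each stage. The paper works with the simplicial structure of the polyhedron $P$ directly (a sequence $(\bar I_j, \bar T_j)$ holonomic on a $W_j$-neighborhood of the $j$-skeleton, with active regions on disjoint simplex collars), which is essentially the same bookkeeping as your handle decomposition with product collars.

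The one point where your proposal cuts a corner is the assertion that the Eliashberg--Mishachev holonomic approximation theorem ``is already stated in a quantitatively explicit form.'' It is not: the standard statement is purely qualitative, and the heart of the paper's proof of this proposition is a re-examination of the Eliashberg--Mishachev argument (the block decomposition of $N_{W_0}\Delta^1 \subset \mathbb{R}^2$, the interpolation $I_{J,J+1} = \rho I_J + (1-\rho)I_{J+1}$, and crucially the choice of block spacing $\epsilon$ depending only on $W_0, A_0, L_0$) to verify that the output constants $W_1, A_1, L_1$ depend only on $d, m, W_0, A_0, L_0$. Your hypothesis (5), that $s\|\nabla T_0\|_{C^0} \le \mu$, is exactly what makes this work (it bounds the $C^1$ norm $A_0$ of the formal data), and you cite it correctly, but the claim that the bilipschitz constant of the extension is controlled ``depending only on $m$, $\mu$, and the previous constants'' is precisely what needs to be checked inside the proof of the h-principle, not read off from the statement. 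So the missing step is real, though it is a verification by inspection of an existing proof rather than a new idea, and your framework otherwise aligns with the paper's.
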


At the end of this subsection, we will give a precise formulation of bounded geometry.

Since this statement is rather long, we take a moment to explain why we need all the hypotheses.  Basically, we are perturbing
$I_0$ at a scale $\sim s$ in order to make $I_1$ a bilipschitz immersion
on a neighborhood of size $\sim s$.  To do this, we need to know that the geometry of the domain and range is controlled at this scale.  Also, if
$I_0$ did not have a controlled Lipschitz constant, we could not expect to make the Lipschitz constant $\lesssim 1$ by a small perturbation.
The map $T_0$ is supposed to be a model for $dI_1$, so we need to know that it is bilipschitz on each fiber.  The subtlest hypothesis is
hypothesis (5), which says
that $s \| \nabla T_0 \|_{C^0} \lesssim \mu$.  This condition prevents $T_0$ from spinning around too much.  To see that this is necessary,
consider the following example.  Suppose that $N$ is the Euclidean plane $\mathbb{R}^2$, $M \subset \mathbb{R}^2$ is the annulus defined by
$1/2 < |x| < 2$,  $P$ is the unit circle, and $I_0$ is the function zero.  Suppose that for each point $x \in M$, $T_0$
is an isometry from $\mathbb{R}^2$ to $\mathbb{R}^2$, so $T_0$ gives a map from $S^1$ to $\SO(2)$.  If $T_0$
has a high degree, wrapping $S^1$ many times around $\SO(2)$, then it's impossible to find an immersion $I_1$ which obeys the conclusions of the
proposition.

Next we would like to know if we can make $I_1$ an embedding.  This is a very difficult question in general, but there is one
simple result coming from a general position argument.  
If the dimension of $P$ is $\le (m-1)/2$, then a generic perturbation of $I_1$ is an embedding from $P$ into $N$.  Since
$I_1$ is an immersion, a generic perturbation of $I_1$ is an embedding from a small neighborhood of $P$ into $N$.  We will give
a quantitative version of this general position argument leading to the following embedding estimate.

\begin{prop} \label{quantemb1}  For any $s, \mu > 0$ the following holds.  Suppose that $P \subset M^m$ is a polyhedron of
dimension $p \le (m-1)/2$, and suppose that the following hypotheses hold:

\begin{enumerate}

\item $P \subset (M^m, g)$, and the pair $(P, M)$ has bounded geometry at scale $s$,

\item $(N^m, h)$ is a Riemannian manifold with bounded geometry
at the larger scale $10^m s$,

\item $I_0: M \rightarrow N$ has Lipschitz constant at most 1,

\item $T_0: TM \rightarrow TN$ is a fiberwise isomorphism covering $I_0$, with fiberwise bilipschitz constant $\le \mu$, 

\item the scale invariant quantity $s \| \nabla T_0 \|_{C^0} \le \mu$,

\item $I_0$ maps at most $\mu$ vertices of $P$ into any $s$-ball in $N$.

\end{enumerate}

Then there are constants $L$ and $W$ that depend only on $m, \mu,$ and the bounds
on the geometry of $M, N,$ and $P$, so that the following holds.

Let $U_W$ denote the $Ws$-neighborhood of $P \subset M$.  

Then $I_0$ can be homotoped to a smooth embedding $I: U_W \rightarrow N$ which is locally $L$-bilipschitz.

Also, $dI$ is homotopic to $T_0$ in the category of fiberwise isomorphisms $TU_W \rightarrow TN$,  and the distance
from $I_0(x)$ to $I(x)$ is $\le L s$ for each $x \in U_w$.  

\end{prop}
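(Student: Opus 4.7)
The plan is to first apply Proposition \ref{quantimm1} to obtain a locally bilipschitz immersion on a slightly enlarged neighborhood of $P$, then perturb it to remove self-intersections by a quantitative general position argument exploiting the dimensional inequality $2p < m$.

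First I would invoke Proposition \ref{quantimm1} with a larger width constant $W' > W$ to produce a smooth immersion $I_1 : U_{W'} \to N$ with $dI_1$ homotopic to $T_0$, locally $L_1$-bilipschitz, and with $\Dist(I_0(x), I_1(x)) \le L_1 s$. The task is then to perturb $I_1$ to a genuine embedding on the smaller neighborhood $U_W$ while preserving (up to a bounded factor) the local bilipschitz constant and the homotopy class of the derivative. The key is that the self-intersection locus of $I_1|_P$ lives in dimension $2p$ inside the $m$-dimensional target, and the inequality $2p \le m-1$ means this locus is generically empty.

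Next I would set up the quantitative general position argument. Consider the simplices $\sigma$ of the polyhedral structure on $P$; bounded geometry of the pair $(P,M)$ at scale $s$ implies that each simplex has diameter $\sim s$ and that at most a bounded number of simplices meet any $s$-ball in $M$. Because $I_1$ is locally $L_1$-bilipschitz, whenever $I_1(\sigma_1) \cap I_1(\sigma_2) \ne \emptyset$ for disjoint simplices $\sigma_1, \sigma_2 \subset P$, the two simplices must lie at distance $\lesssim L_1 s$ in $M$, and the image $I_1(\sigma_1)$ lies in an $L_1 s$-ball in $N$. Now hypothesis (6) on vertex density, combined with the bounded geometry of $N$ at scale $10^m s$, bounds the number of simplices of $P$ whose images lie in any fixed $L_1 s$-ball of $N$ by a constant $C(m, \mu)$. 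Consequently, each simplex of $P$ has only a bounded number of potential self-intersection partners.

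I would then perform the perturbations one simplex at a time. For a pair $(\sigma_1, \sigma_2)$ with $I_1(\sigma_1) \cap I_1(\sigma_2) \ne \emptyset$, choose a vector field on $N$ supported in an $L_1 s$-ball covering $I_1(\sigma_1)$, of $C^0$-norm $\lesssim s$ and $C^1$-norm $\lesssim 1$, whose time-one flow displaces $I_1(\sigma_1)$ by a generic vector of length $\sim s$. Since $\dim \sigma_1 + \dim \sigma_2 = 2p \le m-1$, a generic such displacement moves $I_1(\sigma_1)$ off $I_1(\sigma_2)$; transversality allows us to choose the displacement to simultaneously avoid all bounded-many partners of $\sigma_1$. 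Because the flow has $C^1$-norm $\lesssim 1$, composing with $I_1$ changes its derivative and its bilipschitz constant by only a bounded factor and keeps $dI_1$ in the same homotopy class. Performing these perturbations simultaneously on a maximal set of well-separated simplices and iterating a bounded number of times (the number of rounds depending only on $m$ and the bounds on geometry) removes all self-intersections of $I_1$ restricted to $P$.

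Finally, once $I_1|_P$ is an embedding, a compactness/continuity argument together with the uniform local bilipschitz constant shows that $I_1$ is an embedding on some smaller tubular neighborhood $U_W \subset U_{W'}$ where $W$ depends only on $m, \mu$ and the geometry bounds; setting $I := I_1|_{U_W}$ completes the argument. The main obstacle is the quantitative bookkeeping in the perturbation step: the perturbations must be performed uniformly at scale $s$ across an unbounded polyhedron, so one needs hypothesis (6) together with a coloring argument (partition the simplices into a bounded number of classes, each class consisting of widely separated simplices that can be perturbed in parallel) to ensure that the cumulative Lipschitz distortion stays bounded by a constant $L$ depending only on $m, \mu$, and the geometry bounds.
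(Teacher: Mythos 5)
Your outline shares the paper's overall strategy (apply Proposition \ref{quantimm1} to get a locally bilipschitz immersion, then use a quantitative general position argument together with hypothesis (6) and a Lov\'asz-type parallelization to kill self-intersections), but the perturbation mechanism you propose is broken. You take a vector field $V$ on $N$ supported in an $L_1 s$-ball covering $I_1(\sigma_1)$ and consider the time-one flow $\phi_1$ of $V$, writing ``composing with $I_1$''. The map $\phi_1 \circ I_1$ is $I_1$ post-composed with a diffeomorphism of $N$, which is injective if and only if $I_1$ is: post-composition can never separate two preimages of a common point. Worse, precisely when $I_1(\sigma_1) \cap I_1(\sigma_2) \ne \emptyset$, both image sets meet the support of $V$ and move together under $\phi_1$. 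What is needed is a perturbation that is \emph{localized in the domain} $M$ near $\sigma_1$ and does nothing near $\sigma_2 \subset M$, so that the two image pieces can slide past each other. The paper's proof realizes this by pre-composing $I_1$ with the time-$t$ flow $\Phi$ of a random vector field supported in small balls of $M$ (so $I_2 = I_1 \circ \Phi$), and then restricting $I_2$ to a much smaller tubular neighborhood $U_{w_2}$; injectivity of $I_2|_{U_{w_2}}$ is equivalent to injectivity of $I_1$ on the displaced tube $\Phi(U_{w_2})$, which is exactly what the random choice controls. Your argument needs a comparable cut-off in the domain, which is never stated.

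Two secondary gaps. First, your claim that $I_1(\sigma_1) \cap I_1(\sigma_2) \ne \emptyset$ forces $\Dist_M(\sigma_1, \sigma_2) \lesssim L_1 s$ is false: local bilipschitz bounds say nothing about distant domain points landing on the same image point (non-injectivity of an immersion is exactly this phenomenon). The correct source of the ``bounded number of intersection partners per simplex'' conclusion is hypothesis (6) combined with the fact that $I_1$ is a small $C^0$-perturbation of $I_0$, which you do use, so the conclusion survives, but the stated justification is wrong. Second, the passage from ``$I_1|_P$ is injective'' to ``$I_1$ embeds a tubular neighborhood of $P$ of \emph{uniform} width $Ws$'' needs an argument. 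Compactness alone gives some tube width depending on the particular map; to get a width $\gtrsim s$ depending only on the bilipschitz constant you need something like the paper's Lemma \ref{injlemma}, a degree-theoretic/path-lifting argument showing a locally $L$-bilipschitz immersion of a Euclidean ball is injective on the concentric $(1/10)L^{-2}$-ball. Finally, your iterated ``coloring'' scheme (perturb one class, re-examine, repeat a bounded number of rounds) has to contend with the possibility of creating new near-intersections at each round and compounding bilipschitz distortion; the paper's one-shot probabilistic argument (choose all parameters simultaneously, apply the probability lemma) sidesteps this bookkeeping entirely, which is cleaner than what you sketch.
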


The hypotheses in this proposition are mostly the same as those in Proposition \ref{quantimm1}.  We assume that the dimension of
$P$ is $\le (m-1)/2$ instead of $m-1$.  Hypotheses (1)-(5) are exactly the same.  We had to add one
new hypothesis: that $I_0$ maps $\le \mu$ vertices of $P$ into any $s$-ball of $N$.  Since $I$ is supposed to be a bilipschitz embedding
from $U_W$, the images of the balls centered at vertices of $P$ with radius $W s$ will contain disjoint balls of radius $\sim s$.  
Therefore, the map $I$ cannot cram too many vertices of $P$ into a ball of radius $s$.  And since $I$ is only a small
perturbation of $I_0$, we need $I_0$ to obey this condition as well. 

To prove Proposition \ref{quantemb1}, we need to revisit the general position argument and give quantitative estimates.  This proof
is the main work involved in this section.

Proposition \ref{quantemb1} gives an embedding with quantitative control, but it doesn't tell us what isotopy class
the embedding will be in.  If $p = \Dim P = (m-1)/2$, then there will typically be infinitely many different isotopy classes
of embeddings from $P$ into $N$, and we cannot get uniform geometric control for all the isotopy classes.  But 
under the stronger condition that the dimension of $P$ is $ < (m-1)/2$, then we can construct a geometrically controlled
embedding in any isotopy class.

\newtheorem*{qel}{Quantitative embedding lemma}

\begin{qel}    For any $s, \mu > 0$ the following holds.  Suppose that $P \subset M^m$ is a polyhedron of
dimension $p < (m-1)/2$, and suppose that the following hypotheses hold:

\begin{enumerate}

\item $P \subset (M^m, g)$, and the pair $(P, M)$ has bounded geometry at scale $s$,

\item $(N^m, h)$ is a Riemannian manifold with bounded geometry
at the larger scale $10^m s$,

\item $I_0: M \rightarrow N$ has Lipschitz constant at most 1,

\item $T_0: TM \rightarrow TN$ is a fiberwise isomorphism covering $I_0$, with fiberwise bilipschitz constant $\le \mu$, 

\item the scale invariant quantity $s \| \nabla T_0 \|_{C^0} \le \mu$,

\item $I_0$ maps at most $\mu$ vertices of $P$ into any $s$-ball in $N$.

\item $I': M \rightarrow N$ is an embedding and $dI'$ is homotopic to $T_0$ in the category of fiberwise isomorphisms from $TM$ to $TN$.

\end{enumerate}

Then there are constants $L$ and $W$ that depend only on $m, \mu,$ and the bounds
on the geometry of $M, N,$ and $P$, so that the following holds.

Let $U_W$ denote the $Ws$-neighborhood of $P \subset M$.  

Then $I_0$ can be homotoped to a smooth embedding $I: U_W \rightarrow N$ which is locally $L$-bilipschitz and isotopic to $I'$,
 and the distance from $I_0(x)$ to $I(x)$ is $\le L s$ for each $x \in U_w$.  

\end{qel}

The hypotheses here are almost the same as in Proposition \ref{quantemb1}.  We assume that $\Dim P < (m-1)/2$ instead of
$\Dim P \le (m-1)/2$.  Hypotheses (1) - (6) are exactly the same.  Hypothesis (7) says that the data $(I_0, T_0)$ is homotopic
to $(I', dI')$ for some embedding $M \rightarrow N$.  In this case, we get an embedding $I$ with controlled geometry isotopic
to $I'$.  

The quantitative embedding lemma is the result that we will use to prove the h-principle for $k$-dilation. It 
follows easily from Proposition \ref{quantemb1}, so we give the proof here.  The point is that the embedding $I$
we constructed in Proposition \ref{quantemb1} is automatically isotopic to $I'$.

\begin{proof} By Proposition \ref{quantemb1}, we can construct an L-bilipschitz embedding $I: U_W \rightarrow N$ so that
$dI$ is homotopic to $dI'$ in the category of fiberwise isomorphisms from $TU_W$ to $TN$.  By the h-principle for immersions,
$I$ is regular homotopic to $I'$.  We will prove that $I$ is also isotopic to $I'$.  The point is that there is only one isotopy
class of embeddingss $U_W \rightarrow N$ regular homotopic to $I'$.

Let $h_t$ be a regular homotopy from $h_0 = I'$ to $h_1 = I$.

We can assume that $h_t$ is in general position.  Because $p < (m-1)/2$, $h_t: P \rightarrow N$
will be an embedding for every $t$.  Now we can choose some
tiny $\epsilon > 0$, so that $h_t: U_\epsilon \rightarrow N$ is an embedding for all $t$.  

Notice that $U_W$ deformation retracts into $U_\epsilon$ for any $0 < \epsilon < W$.  In more details, let $\Psi_t: U_W \rightarrow U_W$ be an isotopy with $\Psi_0$ equal to the identity and $\Psi_1(U_W) \subset U_\epsilon$.  Now we define $H_t: U_W \rightarrow N$ with $t \in [0,3]$ as follows.  For times $t \in [0,1]$, we define $H_t = h_0 \circ \Psi_t$.  For times $t \in [1,2]$, we define $H_t = h_{t-1} \circ \Psi_1$.  For times $t \in [2,3]$, we define $H_t = h_1 \circ \Psi_{3-t}$.  The maps $H_t$ give an isotopy from $I' = h_0$ to $I = h_1$. \end{proof}

In the next two subsections we will give the proofs of the propositions.  To finish this subsection, we give a precise formulation of the
phrase bounded local geometry.  We say that a Riemannian manifold has bounded local geometry at scale $s$ if each ball of radius
$s$ is diffeomorphic to a Euclidean ball of radius $s$ with bilipschitz norm $\lesssim 1$ and $C^2$ norm $\lesssim s^{-1}$.  (The definition
is scale invariant.)  We say that $P \subset M$ has bounded
geometry at scale $s$, if for each $k$-simplex we can choose the above coordinates so that the $k$-simplex
is mapped to a standard equilateral $k$-simplex in the Euclidean ball, and if in addition, each edge of $P$ has length $\le s$, 
the distance between any two disjoint closed faces of $P$ is $\gtrsim s$, and the dihedral angles of $P$ are $\gtrsim 1$.

\subsection{Constructing immersions with geometric control}

In this section, we prove Proposition \ref{quantimm1}.
This result essentially follows from the standard proof of the immersion theory by keeping track of constants.

\begin{proof}  By scaling, we may assume that the scale $s$ is equal to 1.

We write $A \lesssim B$ to mean that $A \le C B$ for a constant $C$ that only 
depends on $m, \mu,$ and the bounded local geometry of $M, N, P$.  

This construction is based on the relative h-principle for immersions.  We will use the following version of the h-principle.

Let $\Lin(\mathbb{R}^m, \mathbb{R}^m)$ denote the linear maps from Euclidean space $\mathbb{R}^m$ to itself.  Let $\Bil(L) \subset \Lin(\mathbb{R}^m, \mathbb{R}^m)$ denote the linear isomorphisms with bilipschitz constant $\le L$.  

\newtheorem*{quantimm}{Relative h-principle for immersions with quantitative control}

\begin{quantimm} Suppose that $\Delta^d \subset \mathbb{R}^m$ is a unit equilateral d-simplex in Euclidean space, with $d < m$.

Let $N_W \Delta^d$ be the $W$-neighborhood of $\Delta^d$ in $\mathbb{R}^m$.  

Suppose we have smooth maps 

$$I_0: N_{W_0} \Delta^d \rightarrow B^m(1), $$

$$T_0: N_{W_0} \Delta^d \rightarrow \Lin(\mathbb{R}^m, \mathbb{R}^m) . $$

Suppose that $T_0$ and $d I_0$ agree on $N_{W_0} \partial \Delta$.

Suppose that $T_0$ and $I_0$ have $C^1$ norms $\le A_0$.

Suppose that the image of $T_0$ lies in $\Bil(L_0) \subset \Lin(\mathbb{R}^m, \mathbb{R}^m)$.  

Then there are smooth homotopies $I: N_{W_0} \Delta \times [0,1] \rightarrow B^m(1)$ and $T: N_{W_0} \Delta \times [0,1] \rightarrow \Lin(\mathbb{R}^m, \mathbb{R}^m)$ with the following properties.

For all $t$, $I_t = I_0$ and $T_t = T_0$ on $N_{W_0/2} \partial \Delta$ and outside of $N_{2 W_1} \Delta$.

$T_1$ and $d I_1$ agree on $N_{W_1} \Delta$.

The maps $I$ and $T$ have $C^1$ norms $\le A_1$.

The image of $T_t$ lies in $\Bil(L_1)$ for all $t$.

In this theorem, the constants $W_0, L_0, A_0$ may be arbitrary, and the constants $W_1, A_1, L_1$ depend on them, as well as on $d, m$.  

\end{quantimm}

This result is essentially the standard relative h-principle for immersions.  The proof may be found in \cite{EM}, pages 21-35 and 66-68.  The only non-standard ingredient is that the constants $W_1, A_1, L_1$ only depend on the ingredients $d, m, W_0, A_0, L_0$.  This can be observed by following the proof in \cite{EM} and keeping track of the constants at each step.

Without writing out the entire proof of the h-principle for immersions, I want to try to give some explanation of why the constants are controlled.  
The proof of the h-principle for immersions in \cite{EM} is based on a fundamentally 2-dimensional construction which is then repeated several times. 
We begin with $I_0, T_0$ defined on $N_{W_0} \Delta^1 \subset \mathbb{R}^2$.  The functions may depend on other variables
also, but we can suppress the dependence and think of the other variables as just parameters.  We say that a pair $(I,T)$ is holonomic if $
dI = T$.  We choose a set of evenly spaced points along $\Delta^1$
with some spacing $\epsilon$ - a crucial small number that we will choose later.  Next, we define a rectangular block centered at each of these points 
with width (along $\Delta^1$) of $4 \epsilon$ and height $2 W_0$.  On each block, say block $J$, we perturb $(I_0, T_0)$ to a holonomic pair $(I_J,
T_J)$.  To be explicit, let us take $I_J$ be an affine function so that $(I_J, T_J) = (I_J, dI_J)$ agrees with $(I_0, T_0)$ at the center of block $J$.
The blocks overlap, so we can also define some functions that interpolate between the
$(I_J, T_J)$ on the overlaps.  We define holonomic pairs $(I_{J, J+1}, T_{J, J+1})$ which agree with $(I_J, T_J)$ in the middle part of the intersection 
(say $N_{W_0/4} \Delta$)
and agree with $(I_{J+1}, T_{J+1})$ on the outside part of the intersection (say outside of $N_{3 W_0/4} \Delta$).  We can do this by taking weighted
averages of $I_J$ and $I_{J+1}$, say $I_{J, J+1} = \rho I_J + (1 - \rho) I_{J+1}$, where $\rho$ is 1 near the middle of the intersection and on the
outside part of the intersection.  Gluing together the different
$I_J$ and $I_{J, J+1}$ we get a a function $I$ which is holonomic on $N_{W_1} \Delta$ except on some vertical slits
with spacing $\epsilon/2$  -- see the pictures on pages 27-28 of \cite{EM}.  (On the slits, $I$ is not even defined.)
At each point, the linear transformation $T_J$ is $L_0$-bilipschitz because $T_J$ is constant and it agrees with $T_0$ at one point.
Now $T_{J, J+1}$ is the derivative of a weighted average of $I_j$ and $I_{j+1}$.  If $I_j$ and $I_{j+1}$ are very close together in $C^1$, then
the weighted average will also have controlled bilipschitz constant.  Hence the map $I$ is an immersion with controlled bilipschitz constant (on
the complement of the slits).  
Finally, we precompose $I$ with a map $\phi: N_{W_0}\Delta^1 \rightarrow N_{W_0} \Delta^1$ whose image avoids the slits.  The image is a thin neighborhood
of a rapidly oscillating curve.  The resulting map is an immersion.

To get quantitative estimates, we just need to bound explicitly the characters that enter the story in terms of $W_0, A_0, L_0$.  We write
$A \lesssim B$ for $A \le C(W_0, A_0, L_0) B$.  We let $x_J$ be the center of block $J$.  Then $T_J = T_0(x_J)$, and so $|T_{J+1} - T_J| \le
A_0 \epsilon$.  Also $|I_0(x_{J+1}) - I_0(x_J)| \le A_0 \epsilon$, and so $|I_J(x) - I_{J+1}(x)| \le A_0 \epsilon + 4 L_0 \epsilon$, for each
$x$ in the overlap of block $J$ and block $J+1$.  In summary, the $C^1$ distance from $(I_J, T_J)$ to $(I_{J+1}, T_{J+1})$ is $\lesssim \epsilon$.
To check that $T_{J, J+1}$ has controlled bilipschitz constant, we compute:

$$ T_{J, J+1} = d I_{J, J+1} = d \left( \rho I_J + (1- \rho) I_{J+1} \right) = T_{J+1} + \rho (T_J -  T_{J+1}) + d \rho (I_J - I_{J+1}). $$

We have $|T_{J} - T_{J+1}| \lesssim \epsilon$, $|I_J - I_{J+1}| \lesssim \epsilon$ and $|d \rho| \le 100 W_0^{-1} \lesssim 1$.  As long as we choose
$\epsilon$ very small compared to $(1/ L_0)$ and $W_0$ and $(1/A_0)$, we see that $T_{J, J+1}$ is still $2 L_0$-bilipschitz.   This is the key step where we
choose the size of $\epsilon$ - and we see that $\epsilon$ only depends on $W_0, A_0,$ and $L_0$.  Once we have controlled
 $I_{J, J+1}$, we see that the bilipschitz constant and $C^1$ norm of $I, T$ is controlled.  Once we have picked $\epsilon$, then we know 
how closely the slits are located, and we can bound the size of arbitrarily many derivatives of $\phi$.   Given bounds on the size of 
the derivatives of $\phi$, we can then bound the bilipschitz constant and norms of $I \circ \phi$.  

To construct the map $I_1$ we must repeat this two-dimensional argument $d$ times, but each time the quantitative analysis goes like in the last paragraphs.
This finishes our explanation of why the constants $W_1, A_1, L_1$ depend only on $d, m, W_0, A_0, L_0$.  

The condition that $(I_t, T_t) = (I_0, T_0)$ for all $x$ outside of $N_{2 W_1} \Delta$ doesn't appear in \cite{EM}, but it's trivial to add.  Suppose that
$(I'_t, T'_t)$ obey all the other conditions of the theorem.  Let $\rho: N_{W_0} \Delta \rightarrow [0,1]$ be equal to 1 on $N_{W_1} \Delta$ and equal 
to zero outside of $N_{2 W_1} \Delta$.  Then set $(I_t, T_t) = (I_{\rho(x) t}', T_{\rho(x) t}')$.  This finishes our discussion of the relative h-principle
for immersions with quantitative control stated above.  Now we apply it to prove Proposition \ref{quantimm1}.

Let $I_0$ and $T_0$ be as in Proposition $\ref{quantimm1}$.  
We homotope our map $I_0$ and our initial data $T_0$ to a bilipschitz immersion by applying this result one skeleton at a time.  
We construct a sequence of (homotopic) maps, $(I_0, T_0)$, $(\bar I_0, \bar T_0)$, $(\bar I_1, \bar T_1)$, etc. with the following properties:

\begin{itemize}

\item The maps $(\bar I_j, \bar T_j)$ are all defined on a $W_0$-neighborhood of $P$.

\item All the maps $\bar I_j$ agree with $I_0$ at the vertices of $P$.

\item The $C^1$ norms of $\bar I_j$ and $\bar T_j$ are bounded by $A_j$.

\item The bilipschitz constant of $\bar T_j$ is $\le L_j$.

\item The maps $\bar T_j$ are all homotopic to $T_0$ in the category of fiberwise isomorphisms $T U_{W_j} \rightarrow TN$.

\item On a $W_j$ neighborhood of the j-skeleton of $P$, we have $d \bar I_j = \bar T_j$, and so on this neighborhood $\bar I_j$ is
an immersion with local bilipschitz constant $\le L_j$.  

\item The map $\bar I_j$ sends the $W_j$-neighborhood of each j-face into a ball of radius $\le 10^j$ in $(N, h)$.

\item The constants $A_j$ and $L_j$ are $\lesssim 1$ and $W_j \gtrsim 1$.  

\end{itemize}

Constructing $(\bar I_0, \bar T_0)$ is elementary.  

Then each homotopy from $(\bar I_{j-1}, \bar T_{j-1})$ to $(\bar I_j, \bar T_j)$ is constructed 
by using the quantitative relative h-principle on each simplex.  Since $\bar I_{j-1}$
maps a $W_{j-1}$-neighborhood of each (j-1)-simplex to a ball of radius $10^{j-1}$, it follows that it maps each $W_{j-1}$-neighborhood of each
j-simplex to a ball of radius $10^j$.  Since $N$ has bounded local geometry at scale $10^m$, we can pick a $C^2$-controlled change of coordinates from this
ball to the unit $m$-ball.  Using these coordinates and the relative h-principle for immersions with quantitative control, we homotope $\bar I_{j-1}$
to $\bar I_j$ around the given j-simplex.  This homotopy is constant except on $N_{2 W_j} \Delta \setminus N_{W_{j-1}/2} \partial \Delta$.  Because
of our control on the angles and geometry of $P$, we can choose $W_j$ so that these active regions don't overlap.

Each $\bar T_j$ is homotopic to $\bar T_{j-1}$.  The constants $A_0, L_0, W_0$ depend on
$\mu$, $m$, and the bounded local geometry of $M, P, N$.  The constants $A_j, L_j, W_j$ depend on $A_{j-1}, L_{j-1}, W_{j-1}$ 
and the bounded local geometry of $M,
N, P$.    By induction, we have all $A_j, L_j \lesssim 1$ and all $W_j \gtrsim 1$. 
In this way, we arrive at an immersion $I_1: U_{W} \rightarrow (N,h)$ which is locally $L$-bilipschitz, where $W \gtrsim 1$ and $L \lesssim 1$.  
We also see that $d I_1$ is homotopic to $T_0$ in the category of fiberwise isomorphisms.  The distance from $\bar I_{j-1}(x)$ to $\bar I_j(x)$ is
$\le 10^{j+1}$, and so the distance from $I_0(x)$ to $I_1(x)$ is $\le 10^{m+1}$.  By choosing $L$ sufficiently large, this is also less than $L$.

\end{proof}

\subsection{Constructing embeddings with geometric control}  In this subsection, we give the proof of Proposition \ref{quantemb1}. 

\begin{proof}

By scaling, we may assume that the scale $s$ is equal to 1.

We write $A \lesssim B$ to mean that $A \le C B$ for a constant $C$ that only 
depends on $m, \mu,$ and the bounded local geometry of $M, N, P$.  

By Proposition \ref{quantimm1}, we can find an immersion $I_1: U_{w_1} \rightarrow N$ which is locally $L_1$-bilipschitz,
where $w_1 \gtrsim 1$ and $L_1 \lesssim 1$.  We also know that $dI_1$ is homotopic to $T_0$ and that the distance from $I_0(x)$ to $I_1(x)$ is always
$\lesssim 1$.  Our goal is to modify $I_1$ to make it an embedding.  A general position argument
shows that a generic perturbation of $I_1$ is an embedding from $P$ to $N$, and hence from some tiny neighborhood of $P$ to $N$.  
We will make this argument more quantitative.  Here is an outline of what we will do.  (Recall that $U_w$ denotes
the $w$-neighborhood of $P$ in $M$.)

Step 1. We slightly deform $I_1$ to another immersion $I_2$, by flowing
on a vector field.  The map $I_2: U_{w_1} \rightarrow N$ still has
controlled local bilipschitz constant.  

The map $I_2$ depends on many parameters (which are used to specify the vector field).  We will prove that for some
values of these parameters, the map $I_2$ obeys the conclusion.

Step 2. The restriction of $I_2$ to a ball $B(p, r)$ is actually an embedding as long
as $p \in U_{w_1/2}$ and $r$ is sufficiently small.  This step holds for all the choices of the parameters.

Step 3. For some smaller scale $w_2 \ll w_1$, the mapping $I_2: U_{w_2} 
\rightarrow N$ is actually an embedding for some choices of the parameters
used to define $I_2$.  This last step is a quantitative version of the general position
arguments.  We check that only a bad coincidence would force $I_2$ to be non-injective, and we check that
there are some choices of the parameters which avoid all the bad coincidences.

The map $I_2$ obeys the conclusion: it embeds $U_{w_2}$ into
$N$ with controlled bilipschitz constant.  We take $W = w_2$.

\vskip10pt

{\bf Step 1: Putting $I_1$ in ``general position''}

\vskip5pt

We now perturb $I_1$ by precomposing it with the flow from a 
vector field.  

Let us pick an open cover of $\bar U_{w_1}$ using balls of radius
$w_1'$.  Here $w_1'$ is a constant that we will choose below.
We will have $w_1' \le w_1/100$ and $w_1' \gtrsim 1$.
Since $(M^m, g)$ is locally bounded at scale $1$, we can
arrange that the cover has bounded multiplicity.  We call the balls
in the cover $B_j$.  We let $\Psi_j$ be smooth non-negative functions, supported
on $B_j$, so that $\sum_j \Psi_j$ is equal to 1 on $U_{w_1}$ (and $\le 1$ everywhere).  
For $1 \le l \le m$, we let $V_{j, l}$ be vector fields defined on 
$B_j$ which are essentially orthonormal and essentially constant.

For any numbers $a_{j,l} \in [-1, 1]$, we can build the vector field
$V = \sum_{j,l} a_{j,l} \Psi_j V_{j,l}$.  Note that $| \nabla \Psi | \lesssim 1$, and so
$| \nabla V | \lesssim 1$.   We define the map $\Phi: (M, g)
\rightarrow (M,g)$ to be the time $t_{flow}$ flow of the vector field
$V$.  Here $t_{flow} \gtrsim 1$ is a small time which we will choose below.

Now we define our perturbed map $I_2$ to be $I_1 \circ \Phi$.

By choosing the flow time $t_{flow} \gtrsim 1$ sufficiently small compared to $w_1'$, 
we can arrange that the map $\Phi$ is bilipschitz with bilipschitz constant $\le 2$, 
and that it moves each point a distance $\le w_1' $.  Therefore, $\Phi$ maps $U_{w_1/2}$ into
$U_{w_1}$.  If we restrict the map $I_2$ to $U_{w_1/2}$, we
get an immersion with local bilipschitz constant $L \le 2 L_1 \lesssim 1$.

The map $I_2$ has all of the properties that we want, except that we don't know whether it's an embedding.
The embedding $\Phi: U_{w_1/2} \rightarrow U_{w_1}$ is isotopic to the identity, so $dI_2$ is homotopic to $T_0$.  It's
easy to check that the distance from $I_2(x)$ to $I_0(x)$ is $\lesssim 1$.  In steps 2 and 3, we will check that
for some values of the parameters $a_{j,l}$, the map $I_2$ restricted to $U_{w_2}$ is injective for some $w_2 \gtrsim 1$.

\vskip10pt

{\bf Step 2. Injectivity on small balls}

\vskip5pt

A bilipschitz immersion isn't always injective, but if we restrict a bilipschitz immersion to a small centrally located
ball, then the restriction is automatically injective.  We begin with a lemma about bilipschitz immersions in
Euclidean space.

\begin{lemma} \label{injlemma} Suppose that $ I $ is a locally L-bilipschitz immersion
from $B^m(1)$ into $\mathbb{R}^m$.  Then the restriction of 
$I$ to the ball of radius $r$ around 0 is an embedding for $r = (1/10)
L^{-2}$.
\end{lemma}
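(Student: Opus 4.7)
Plan: The argument combines the global $L$-Lipschitz bound on $I$ (which follows from the local bilipschitz hypothesis plus the convexity of $B^m(1)$) with an inverse function style argument localized at the origin. The factor $L^{-2}$ in the conclusion comes from composing two $L$-Lipschitz bounds: one for $I$ pushing points outward by at most $L$, one for the local inverse pulling them back by at most $L$.

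The first step is to observe that for any two points $p,q\in B^m(1)$, the segment joining them lies in the domain, so integrating along this segment against the local bilipschitz hypothesis yields $|I(p)-I(q)|\le L|p-q|$. In particular, $I$ sends $B(0,r)$ into $B(I(0),Lr)$.

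The second step is to use the local $L$-bilipschitz property at $0$ to produce a ball $B(0,\sigma)$ on which $I$ is an $L$-bilipschitz homeomorphism onto its image. A standard fact about bilipschitz maps between open subsets of $\mathbb{R}^m$ (using invariance of domain together with the lower bilipschitz bound on the boundary sphere) gives $I(B(0,\sigma))\supseteq B(I(0),\sigma/L)$, and a single-valued continuous inverse $I^{-1}\colon B(I(0),\sigma/L)\to B(0,\sigma)$ that is itself $L$-Lipschitz. With $\sigma$ comparable to $1$ (the natural scale of the domain $B^m(1)$), this inverse is defined on a ball of radius $\sim L^{-1}$ around $I(0)$.

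To finish, suppose for contradiction that $p,q\in B(0,r)$ satisfy $I(p)=I(q)=:y$ with $p\ne q$, where $r=(1/10)L^{-2}$. From the first step, $|y-I(0)|\le Lr=(1/10)L^{-1}$, so $y$ lies well inside $B(I(0),\sigma/L)$, the domain of the local inverse. Since both $p$ and $q$ lie in $B(0,r)\subset B(0,\sigma)$ and both map to $y$, uniqueness of the inverse forces $p=q$, contradicting $p\ne q$. The main obstacle is extracting the quantitative bilipschitz scale $\sigma$ from the hypothesis; the precise choice $r=(1/10)L^{-2}$ is forced by the inequality $Lr\le\sigma/L$ together with $\sigma\sim 1$, and the safety factor $1/10$ absorbs the constants in the standard bilipschitz image-ball inclusion.
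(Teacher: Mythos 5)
The proposal has a genuine gap, and it is at the heart of what the lemma is about: in your second step you assert that the local bilipschitz hypothesis ``at $0$'' produces a ball $B(0,\sigma)$ on which $I$ is an $L$-bilipschitz \emph{homeomorphism} onto its image, with $\sigma$ ``comparable to $1$.'' But the hypothesis ``locally $L$-bilipschitz immersion'' only controls the derivative: $|v|/L \le |dI_x(v)| \le L|v|$ for every tangent vector $v$ and every $x \in B^m(1)$. It does \emph{not} supply any quantitative scale on which $I$ is injective. The Inverse Function Theorem gives that each point has \emph{some} neighborhood on which $I$ is injective, but the size of that neighborhood is not controlled by $L$ or by anything in the hypotheses; extracting a quantitative injectivity scale is precisely the content of the lemma. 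So the existence of the ball $B(0,\sigma)$ with $\sigma \sim 1$ (or any $\sigma$ quantitatively bounded below) on which $I$ is a bilipschitz homeomorphism onto its image is exactly what needs to be proved, and invoking it as a ``standard fact'' makes the argument circular. Once you have that ball, the rest of your reasoning (covering ball inclusion from invariance of domain, Lipschitz inverse, uniqueness of preimage) is fine, but all the difficulty has been hidden in the unjustified step. Notice also that if your step 2 were correct with $\sigma \sim 1$, the lemma would be true with radius $\sim L^{-1}$ instead of $L^{-2}$, which should have been a warning sign.

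The paper's proof takes a genuinely different route that avoids assuming any quantitative injectivity. It argues by contradiction: if two distinct points $x,y\in B(r)$ have $I(x)=I(y)$, the segment $[x,y]$ has length $\le 2r$, its image under $I$ is a loop of length $\le 2rL$ based at $I(x)=I(y)$, and one contracts this loop to its basepoint by linear rescaling. The immersion $I$ has a homotopy-lifting property as long as lifts stay away from $\partial B^m(1)$, and the lift of each intermediate loop has length $\le 2rL^2$; for $r=(1/10)L^{-2}$ this is $\le 1/5$, so the lifts remain in $B(1/2)$ and never hit the boundary. The contraction therefore lifts all the way to the constant loop, forcing $x=y$. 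The factor $L^{-2}$ is the signature of applying the Lipschitz bound twice (once mapping forward, once lifting back), and the homotopy-lifting mechanism is what replaces the quantitative local inverse that your step 2 tried to conjure.
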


\proof The immersion $I$ obeys a version of the homotopy lifting
property as long as the lifts don't touch $\partial B^m$.

Suppose that $K$ is a compact polyhedron and $g_0: K \rightarrow \mathbb{R}^m$ is a continuous map
which happens to have a lift: in other words, there is a map $\tilde g_0: K \rightarrow B^m(1)$ so
that $g_0 = I \circ \tilde g_0$.  

Now let $g_t: K \rightarrow \mathbb{R}^m$ be a homotopy of $g_0$, defined for $t \in [0,1]$.

The homotopy $g_t$ always lifts to a unique homotopy $\tilde g_t: K \rightarrow B^m(1)$ for
$t$ in a small interval around 0.  Then there are two possibilities.

{\bf Case A.} The homotopy $g_t$ lifts to a homotopy $\tilde g_t$ for all $t \in
[0,1]$.

{\bf Case B.} The homotopy $g_t$ lifts to a homotopy $\tilde g_t$ for $t$ in a
maximal interval $[0, T)$, and  
for every $\epsilon > 0$, the image $\tilde g_t(K)$ touches the
$\epsilon$-neighbhorhood of $\partial B^m(1)$ for some $t < T$.

Now, suppose that $I$ is not an embedding on $B(r)$.  In
that case, there are two distinct points $x,y \in B(r)$ with $I(x)
= I(y)$.  By translating $I$, we may suppose $I(x) = I(y) = 0$.

Now let $\tilde g_0: [0,1] \rightarrow B^m(1)$ parametrize the segment
from $x$ to $y$, with $\tilde g_0(0)=x$ and $\tilde g_0(1) = y$.  This segment
has length at most $2r $. Then we let $g_0 = I \circ \tilde g_0: [0,1] \rightarrow
\mathbb{R}^m$.  The function $g_0$ parametrizes a curve in
$\mathbb{R}^m$ with $g_0(0) = g_0(1) = 0$.  The length of the
curve is at most $2 r L$.

Next we homotope $g_0$ to zero by rescaling.  We define $g_t(s) =
(1-t) g_0(s)$ for $t \in [0,1]$.  We see that $g_t(0) = g_t(1)
= 0$ for all $t$ and that $g_1(s) = 0$ for all $s$.  The length of
the curve parametrized by $g_t$ decreases monotonically, and so it is always 
$ \le 2 r L$.

Now we consider the lifts $\tilde g_t$ of $g_t$.  These lifts exist on some
interval $[0, T)$ or $[0,1]$.   For every $t$ where the lift $\tilde g_t$ is
defined, $\tilde g_t(0) = x$
and $\tilde g_t(1) = y$.  Moreover, each lift has length at most $2
r L^2$.  Because $r = (1/10) L^{-2}$, each lift
has length at most $(1/5)$.  Also, $r \le 1/10$, and
so $x$ and $y$ lie in $B(1/10)$.  Now $\tilde g_t$ parametrizes a curve from $x$
to $y$ of length at most $(1/5)$.  This curve must lie entirely in
$B(1/2)$.

Because of this bound, Case B above is excluded.   Therefore, we
can define lifts $\tilde g_t$ for all $t \in [0,1]$.  But $\tilde g_1$ is a
lift of the constant curve $g_1$, and so $\tilde g_1$ is a constant. 
However, $\tilde g_1(0) = x$ and $\tilde g_1(1) = y$.  This contradiction shows
that $I$ is an embedding on $B(r)$ as claimed.  \endproof

At this point we may choose the constant $w_1'$.  In Step 1, we needed to know
that $w_1' \le w_1/100$.  We define $w_1'$ to be the much smaller number
$w_1' = 10^{-6} L^{-3} w_1$, where $L$ is the local bilipschitz constant of $I_2$.  
We know that $w_1 \gtrsim 1$ and $L \lesssim 1$, and so $w_1' \gtrsim 1$.

\begin{lemma}
Suppose that $p \in U_{w_1/4}$ and $r = 100 w_1' $.  Then the
restriction of $I_2$ to $B(p,r)$ is an embedding.
\end{lemma}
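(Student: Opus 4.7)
The plan is to reduce to the Euclidean Lemma \ref{injlemma} using the bounded local geometry of $M$ and $N$. Since $p \in U_{w_1/4}$ and the target radius $r = 100 w_1' \le w_1/4$, the ball $B(p, r)$ sits inside $U_{w_1/2}$, where $I_2$ is defined and locally $L$-bilipschitz. The idea is simply that Lemma \ref{injlemma} guarantees injectivity of $I_2$ on a ball of radius $\gtrsim L^{-2}$ around $p$, and the constant $10^{-6}$ in the definition of $w_1'$ was chosen precisely so that $100 w_1'$ is smaller than this injectivity radius.

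The first step is to install bilipschitz coordinates on a larger ball. Using bounded local geometry of $(M,g)$ at scale $1$, I would pick a bilipschitz chart $\phi : B(p, r_0) \to B^m(r_0) \subset \mathbb{R}^m$ with bilipschitz constant $\lesssim 1$ and some $r_0 \gtrsim 1$ (chosen $\le w_1/4$ so that $B(p, r_0) \subset U_{w_1/2}$). The image $I_2(B(p, r_0))$ has diameter $\le L r_0 \lesssim 1$, so bounded local geometry of $(N,h)$ at scale $10^m$ supplies a bilipschitz chart $\psi$ on a neighborhood of $I_2(p)$ large enough to contain $I_2(B(p, r_0))$, again with bilipschitz constant $\lesssim 1$. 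In these charts, $\tilde I_2 := \psi \circ I_2 \circ \phi^{-1}$ is a locally $L'$-bilipschitz immersion from $B^m(r_0)$ into $\mathbb{R}^m$, with $L' \lesssim L$.

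The second step is to apply Lemma \ref{injlemma}. Rescaling the domain by $r_0^{-1}$ turns $\tilde I_2$ into a locally $L'$-bilipschitz immersion on the unit ball, so the lemma says it is injective on the ball of radius $\tfrac{1}{10}(L')^{-2}$ about the origin. Undoing the rescaling and the charts, $I_2$ is an embedding on $B(p, \rho)$ for some $\rho \ge c L^{-2}$, where $c > 0$ depends only on $m$ and on the geometry bounds for $M, N$.

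The last step is the bookkeeping check that $r \le \rho$. Using $w_1' = 10^{-6} L^{-3} w_1$ together with $w_1 \lesssim 1$ and $L \ge 1$, one computes $r = 100 w_1' = 10^{-4} L^{-3} w_1 \le 10^{-4} L^{-2}$, and the factor $10^{-6}$ in the definition of $w_1'$ was calibrated precisely so that this is dominated by $c L^{-2}$. No genuine obstacle arises — the content of the lemma is simply that the injectivity scale for a locally bilipschitz immersion, supplied by Lemma \ref{injlemma}, dominates the flow displacement $w_1'$ that was used to perturb $I_1$ into $I_2$.
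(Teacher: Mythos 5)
Your proof follows the same route as the paper: pass to bilipschitz Euclidean coordinates via the bounded-geometry hypotheses, apply Lemma \ref{injlemma}, and verify that $r$ is smaller than the resulting injectivity radius. The one cosmetic difference is the choice of intermediate scale: the paper works in coordinates on $B(p,R)$ with $R = 100 L^2 r$, which turns the final check into the explicit numerical inequality $r < (1/10)(4L)^{-2} R$ in $L$ alone (independent of $w_1$ and of any implicit constants from bounded geometry), whereas your version of the check mixes in $w_1$ and the unspecified bilipschitz constant of the chart, so the assertion that ``$10^{-6}$ was calibrated precisely so that this is dominated by $c L^{-2}$'' would need to be verified more carefully --- but the structure of the argument is the same.
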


\begin{proof} Let $R = 100 L^2 r$.  Because
$r = 100 w_1' \le 10^{-4} L^{-3} w_1$, we see that $R \le L^{-1} w_1/100$.  In particular $B(p, R)$ is contained in $U_{w_1/2}$.
Therefore, we know that $I_2: B(p, R) \rightarrow N$ is a locally $L$-bilipschitz immersion.  The image $I_2 ( B(p,R))$
must lie in a ball in $N$ of radius $\le L R \le w_1/100$.  Because of the bounded local geometry of $M$ and $N$, we know
that $B(p,R)$ and this target ball in $N$ are each 2-bilipschitz to balls in Euclidean space.  In particular, we can use geodesic coordinates centered at $p$ on the ball $B(p, R)$.  In these coordinates, $B(p, R)$ is mapped to $B^m(R)$ and $B(p, r)$ is mapped to $B^m(r) \subset B^m(R)$.  The resulting map from $B^m(R)$ to $\mathbb{R}^m$ is $4 L$ bilipschitz.  Since $r  <  (1/10) (4 L)^{-2} R$, Lemma \ref{injlemma} implies that this map restricted to $B^m(r)$ is an embedding.  Therefore, the map $I_2: B(p,r) \rightarrow N$ is an embedding.  \end{proof}

\vskip10pt

{\bf Step 3. General position estimates}

\vskip5pt

Now we restrict $I_2$ to $U_{w_2}$ for $w_2 = \epsilon w_1'$.  The number
$\epsilon > 0$ is a small constant that we will choose later.  (Eventually, we will choose $\epsilon \gtrsim 1$, but
until we choose $\epsilon$, we write lemmas that hold for every $\epsilon > 0$.)

Recall that in Step 1, we defined a cover of $U_{w_1}$ by balls $B_j$
of radius $w_1'$.  Now we choose a cover of $U_{w_2}$ with
balls $B'_k$ of radius $w_2 = \epsilon w_1'$.  We can choose a covering with bounded multiplicity.  Because
$P$ has dimension $p$, we see that for each $j$, $U_{w_2} \cap B_j$ is covered by
$\lesssim \epsilon^{-p}$ balls $B'_k$.

By Step 2, we know that $I_2$ restricted to $B(p,r)$ is injective as long as
$p \in U_{w_1/4}$ and $r \le 100 w_1'$.  In particular, $I_2$ is injective on each
ball $B_j$ that intersects $U_{w_2}$.  

Recall that the map $I_2$ depends on the parameters $a_{j,l}$.  We will prove that $I_2: U_{w_2} \rightarrow N$ is an embedding
for some choice of the parameters $a_{j,l}$ and for some $\epsilon \gtrsim 1$.  
Let us define $\Bad(j_1, j_2)$ to be the set of parameters so that there exists $x_1 \not= x_2$ and $I_2(x_1) = 
I_2(x_2)$, where $x_1 \in U_{w_2} \cap B_{j_1}$ and $x_2 \in U_{w_2} \cap B_{j_2}$.
If $a_{j,l}$ are parameters so that $I_2$ is not injective, then the parameters must lie
in one of the sets $\Bad(j_1, j_2)$.

We can assign a probability measure to the set of parameter choices, by choosing each
parameter $a_{j,l}$ uniformly at random in $[-1, 1]$.  With this probability measure, we will
prove that each set $\Bad(j_1, j_2)$ is small.  This is the key step of the proof, where we use
the condition that $p \le (m-1)/2$.

\begin{lemma} The measure of $\Bad(j_1, j_2)$ is $\lesssim \epsilon$.  

\end{lemma}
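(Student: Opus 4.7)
The plan is to split into two cases by the distance $\Dist(B_{j_1}, B_{j_2})$. If the centers of $B_{j_1}$ and $B_{j_2}$ lie within $50 w_1'$ of each other, then $B_{j_1} \cup B_{j_2}$ fits inside a single ball $B(p, 100 w_1')$ with $p \in U_{w_1/4}$, and Step 2 forces $\Bad(j_1,j_2) = \emptyset$. So it suffices to treat the well-separated case $\Dist(B_{j_1}, B_{j_2}) \ge 50 w_1'$.

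In that case, cover $U_{w_2} \cap B_{j_1}$ by balls of radius $w_2$ centered on a $w_2$-net $\{y_1^{(i)}\}$ inside $P$, and similarly cover $U_{w_2} \cap B_{j_2}$. Because $P$ has dimension $p$, each net has $\lesssim (w_1'/w_2)^p = \epsilon^{-p}$ points. If a parameter choice lies in $\Bad(j_1,j_2)$, witnessed by $x_1, x_2$ with $I_2(x_1) = I_2(x_2)$, then picking nearest net points $y_1^{(i)}, y_2^{(i')}$ and using that $I_2$ is locally $L$-bilipschitz gives $|I_2(y_1^{(i)}) - I_2(y_2^{(i')})| \le 4 L w_2$ for some pair $(i,i')$. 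Hence $\Bad(j_1,j_2)$ is contained in a union of $\lesssim \epsilon^{-2p}$ events $E(i,i') := \{|I_2(y_1^{(i)}) - I_2(y_2^{(i')})| \le 4 L w_2\}$, and it suffices to show $\Pr[E(i,i')] \lesssim \epsilon^m$.

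To bound $\Pr[E(i,i')]$, pick any $j$ with $\Psi_j(y_1^{(i)}) \gtrsim 1$ (such $j$ exists since $\sum_j \Psi_j \equiv 1$ on $U_{w_1}$ and the cover has bounded multiplicity). Because $B_j \subset B(y_1^{(i)}, 2 w_1')$ while $\Dist(y_1^{(i)}, y_2^{(i')}) \ge 50 w_1'$, provided $t_{flow}$ is chosen small enough compared to $w_1'$, the flow orbit of $y_2^{(i')}$ never enters $B_j$, and so $\Phi(y_2^{(i')})$ is independent of the parameters $(a_{j,1},\ldots,a_{j,m})$. Fix all other parameters and consider the map $a \mapsto \Phi(y_1^{(i)})$ from $[-1,1]^m$ into $M$. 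To leading order in $t_{flow}$ its $l$-th partial is $t_{flow} \Psi_j(y_1^{(i)}) V_{j,l}(y_1^{(i)})$; since the $V_{j,l}$ are essentially orthonormal and $\Psi_j(y_1^{(i)}) \gtrsim 1$, this map is bilipschitz with constant $\sim t_{flow}^{-1}$ onto its image, and post-composing with the $L$-bilipschitz $I_1$ preserves this up to a bounded factor. Therefore the preimage of the $4Lw_2$-ball around the fixed point $I_2(y_2^{(i')})$ has $m$-dimensional Lebesgue measure $\lesssim (w_2/t_{flow})^m \lesssim \epsilon^m$. Integrating over the remaining parameters gives $\Pr[E(i,i')] \lesssim \epsilon^m$, and summing yields $\Pr[\Bad(j_1,j_2)] \lesssim \epsilon^{-2p} \cdot \epsilon^m = \epsilon^{m-2p} \le \epsilon$ by the dimension hypothesis $p \le (m-1)/2$.

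The hard part is the derivative estimate in the third paragraph: one must verify that $a \mapsto \Phi(y_1^{(i)})$ is genuinely bilipschitz with constant $\sim t_{flow}^{-1}$ uniformly over the remaining fixed parameters $a_{j',l'}$. This reduces to bounding the second-order terms in the expansion $\Phi(y) = y + t_{flow} V(y) + O(t_{flow}^2 \|\nabla V\|_{C^0})$. Since $|a_{j',l'}| \le 1$ and the cover $\{B_j\}$ has bounded multiplicity with $\|\nabla \Psi_j\|_\infty \lesssim 1$, one has $\|\nabla V\|_{C^0} \lesssim 1$ uniformly across parameter choices. Choosing $t_{flow}$ small enough (depending only on $L$, this gradient bound, and the local geometry) then yields the required bilipschitz bound, and the dimension condition $p \le (m-1)/2$ enters precisely to make $\epsilon^{-2p} \cdot \epsilon^m \le \epsilon$ in the final sum.
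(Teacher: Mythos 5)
Your argument is correct and follows essentially the same route as the paper: a case split on the distance between the two balls, a $w_2$-net covering giving $\lesssim \epsilon^{-2p}$ events, independence of $\Phi(y_2^{(i')})$ from the chosen block's parameters, and the $\sim t_{flow}$ bilipschitz estimate on $\vec a \mapsto \Phi_{\vec a}(y_1^{(i)})$ yielding $\lesssim \epsilon^m$ per event. The paper packages the net as the cover $\{B'_k\}$ and performs the close/far split at the level of $B'_{k_1}, B'_{k_2}$ rather than $B_{j_1}, B_{j_2}$, and proves the bilipschitz lower bound by a Gronwall-type comparison of the integral flow equation rather than by a derivative-plus-remainder expansion, but these are cosmetic differences.
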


\begin{proof} Let us define $\Bad(k_1, k_2)$ to be the set of parameters so that
there exists $x_1 \not= x_2$ and $I_2(x_1) = I_2(x_2)$, where $x_1 \in B'_{k_1}$ and $x_2 \in B'_{k_2}$.
We are going to prove that the probability of each $\Bad(k_1, k_2)$ is $\lesssim \epsilon^m$.  We give a rough intuition why
this is true.  The images $I_2(B'_{k_1})$ and $I_2(B'_{k_2})$ are approximately balls of radius $\sim \epsilon$.  
When we change the parameters, we randomly move
these balls in $N$ a distance $\sim 1$.  So the probability that they intersect is $\lesssim \epsilon^m$.  

Now $\Bad(j_1, j_2)$ can be covered by $\cup \Bad(k_1, k_2)$, taking the union over all $B'_{k_1}$ that intersect $B_{j_1}$
and all $B'_{k_2}$ that intersect $B_{j_2}$.  Because $P$ has dimension $p$, the number of balls $B_{k_1}$ that intersect
$B_{j_1}$ is $\lesssim \epsilon^{-p}$.  Therefore, the probability of $\Bad(j_1, j_2)$ is $\lesssim \epsilon^{m - 2p}$.  
Because $p \le (m-1)/2$, this probability is $\lesssim \epsilon$. 

It remains to carefully prove that for each $k_1, k_2$, the probability of $\Bad(k_1, k_2)$ is $\lesssim \epsilon^m$.  
If the distance from $B'_{k_1}$ to $B'_{k_2}$ is $< 40 w_1'$, then by Step 2, $I_2$ is always injective
on a ball containing $B'_{k_1}$ and $B'_{k_2}$.  In this case $\Bad(k_1, k_2)$ is empty and it has probability zero.  

If the distance from $B'_{k_1}$ to $B'_{k_2}$ is $\ge 40 w_1'$, then we can find a ball $B_{j_0}$ so
that $\Psi_{j_0} \gtrsim 1$ on $B'_{k_1}$ and $B_{j_0}$ is far from $B'_{k_2}$.  We fix $j_0$, and we 
consider changing the parameters $a_{j_0,l}$ while holding fixed all the other parameters (the $a_{j,l}$ with $j \not= j_0$). 

Changing the parameters $a_{j_0,l}$ affects the vector field $V$ only on the ball $B_{j_0}$.  Since the flow $\Phi$ moves
each point at most $w_1'$ (for any choice of parameters), it follows that changing $a_{j_0,l}$ affects $\Phi(x)$ only if $x$
lies in the $w_1'$-neighborhood of $B_{j_0}$.  Therefore, changing $a_{j_0,l}$ affects $I_2(x)$ only if $x$ lies in the
$w_1'$-neighborhood of $B_{j_0}$.  Since the distance from $B_{k_2}'$ to $B_{j_0}$ is at least $35 w_1'$, 
we see that changing the parameters $a_{j_0, l}$ does not change $I_2$ on $B_{k_2}'$.  

Since $I_2$ is $L$-Lipschitz, the image $I_2(B_{k_2}')$ is contained in a ball of radius $\le L w_2 = L \epsilon w_1' \lesssim \epsilon$.  
Let $2 B_{j_0}$ be the ball with the same center as $B_{j_0}$ and twice the radius.  Since $B_{k_1}'$ intersects $B_{j_0}$, $B_{k_1}'$ is
totally contained in $2 B_{j_0}$.  The map $I_1: 10 B_{j_0} \rightarrow N$ is a locally $L$-bilipschitz embedding.  
Therefore, $I_1^{-1} (I_2 (B_{k_2}') \cap 2 B_{j_0}$ is contained
in a ball of radius $\lesssim L \epsilon \lesssim \epsilon$ - call this ball $B(\epsilon)$.  Notice that $B(\epsilon)$ doesn't depend
on the parameters $a_{j_0, l}$.  

If $I_2(B_{k_1}')$ intersects $I_2(B_{k_2}')$, then
$\Phi( B_{k_1}')$ must intersect $B(\epsilon)$.  Since $\Phi$ is 2-bilipschitz, $\Phi$ must map the center of the ball $B_{k_1}'$ into 
the double of $B(\epsilon)$.  Let $x_0$ denote the center of $B_{k_1}'$.  
Recall that we randomly choose the parameters $a_{j_0,l}$ for $1 \le l \le m$, and fix $a_{j,l}$ for
all $j \not= j_0$.  We have to prove that the probability that $\Phi(x_0)$ lies in a fixed ball of radius $\lesssim \epsilon$ is
$\lesssim \epsilon^m$.

We pick coordinates for $10 B_{j_0}$ so that the vector fields $V_{j_0, l}$ are just the coordinate vector fields $\partial_l$.  Then
we can write $V$ in these coordinates as $V_0 + \Psi \vec{a}$, where $\Psi = \Psi_{j_0}$ and $\vec{a}$ is the vector with components
$a_{j_0, l}$.  Then we define $\Phi^t_{\vec{a}}(x)$ to be the result of the time $t$ flow of the vector field $V = V_0 + \Psi \vec{a}$ 
with initial condition $x$.  By the fundamental theorem of calculus in this coordinate chart, we see that

$$ \Phi_{\vec{a}}^t (x_0) = x_0 + \int_0^t \left[ V_0(\Phi_{\vec{a}}^s(x_0)) + \Psi(\Phi_{\vec{a}}^s(x_0)) \vec{a} \right] ds. \eqno{(*)}$$

Subtracting and taking norms, we get

$$ \sup_{0 \le s \le t} | \Phi_{\vec{a}}^s (x_0) - \Phi_{\vec{b}}^s(x_0) | \le t ( | \nabla V_0| + | \nabla \Psi | )  \sup_{0 \le s \le t} | \Phi_{\vec{a}}^s (x_0) 
- \Phi_{\vec{b}}^s(x_0) | + t |\vec{a} - \vec{b} |. $$

Therefore, there exists a time $t_0 \gtrsim 1$ so that for all $t \le t_0$ and all $\vec{a}, \vec{b}$ with components $\le 1$, we have 

$$ | \Phi_{\vec{a}}^t (x_0) - \Phi_{\vec{b}}^t(x_0) | \le 2 t |\vec{a} - \vec{b} |. $$

Plugging this back into formula (*), we see that there is a smaller time $t_1 \gtrsim 1$ and a constant $c_1 \gtrsim 1$ 
so that for all $t \ge t_1$ and all $\vec{a}, \vec{b}$ with components $\le 1$ we have

$$ | \Phi_{\vec{a}}^t (x_0) - \Phi_{\vec{b}}^t(x_0) | \ge c_1 t |\vec{a} - \vec{b}|. $$

We choose the flow time $t_{flow}$ in the definition of $\Phi$ so that both these bounds hold.  Now the choice of all possible $\vec{a}$ so that
$\Phi_{\vec{a}}^t(x_0)$ lies in the bad target $B(2 \epsilon)$ is contained in a ball of radius $\lesssim \epsilon$ and probability
$\lesssim \epsilon^m$.  So the probability that $\Phi( B'_{k_1})$ intersects $B(\epsilon)$ is $\lesssim \epsilon^m$.  Therefore,
the probability of $\Bad(k_1, k_2)$ is $\lesssim \epsilon^m$.   \end{proof}

This lemma is useful, but there is still some ways to go in our proof.  We have shown that each set $\Bad(j_1, j_2)$ is small,
but we have no control over the number of sets $\Bad(j_1, j_2)$.

Our situation can be described as follows.  Suppose that $X = \prod_{i \in I} X_i$ is a (countable or finite) product of probability spaces.  Suppose that $\Bad \subset X$ is a ``bad" set, consisting of a union $\Bad = \cup_\alpha \Bad_\alpha$.  We would
like to find a not-bad element of $X$ i.e. an element $x \in X$ which is not in $\Bad$.  We know that the measure (probability) of each $\Bad_{\alpha}$ is less than $\epsilon$ a small number.  But,
we have no control over the number of sets $\Bad_{\alpha}$.   We can still find an element outside of $\Bad$ provided that the sets $\Bad_\alpha$ are ``localized" in the following sense.

\begin{lemma} Suppose that $\Bad$ is the union of sets $\Bad_\alpha$ each with probability less than $\epsilon$.  
Suppose that each set $\Bad_{\alpha}$ depends on $< C_1$ different coordinates
$x_i$ of the point $x$.  Suppose that each variable is relevant for $< C_2$ different bad
sets $\Bad_{\alpha}$.  If $\epsilon < (1/2) C_2^{- C_1}$, then $\Bad$ is not all of $X$.
\end{lemma}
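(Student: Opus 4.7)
Proof plan: I will construct a point $x\in X\setminus\Bad$ by a greedy procedure, choosing the coordinates of $x$ one at a time and controlling the conditional probability of every bad set as more coordinates get fixed. The key tool is Markov's inequality applied to the (martingale) fact that conditioning on a new coordinate preserves a bad set's probability in expectation.

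First I observe that there are only countably many bad sets: each $\Bad_\alpha$ depends on at least one coordinate, each coordinate is relevant to fewer than $C_2$ bad sets, and there are only countably many coordinates. For each bad set, let $B_\alpha\subset\mathbb{N}$ denote the finite set (of size ${<}C_1$) of coordinates on which $\Bad_\alpha$ depends, and for a partial assignment $\sigma$ write $p_\alpha(\sigma):=P(\Bad_\alpha\mid\sigma)$. I choose values $v_1,v_2,\ldots$ for $x_1,x_2,\ldots$ in turn, writing $\sigma_k$ for the resulting partial assignment after step $k$ and $n_\alpha(k):=|B_\alpha\cap\{1,\ldots,k\}|$. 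The invariant I will maintain is
\[
p_\alpha(\sigma_k)\le (2C_2)^{n_\alpha(k)}\,\epsilon\qquad\text{for every }\alpha.
\]
This holds trivially at $k=0$.

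The inductive step is the heart of the argument. When $v_{k+1}$ is chosen at random from $X_{k+1}$, only the (fewer than $C_2$) bad sets $\alpha$ with $k+1\in B_\alpha$ have their conditional probability change. For any such $\alpha$, $\mathbb{E}_{v_{k+1}}[p_\alpha(\sigma_{k+1})]=p_\alpha(\sigma_k)$, so Markov gives $P(p_\alpha(\sigma_{k+1})>2C_2\cdot p_\alpha(\sigma_k))<1/(2C_2)$. A union bound over the ${<}C_2$ affected $\alpha$'s shows the growth invariant fails with probability ${<}1/2$. Separately, whenever setting $x_{k+1}$ fully determines some bad set $\alpha$ (so $p_\alpha(\sigma_{k+1})\in\{0,1\}$), I need the outcome to be $0$; Markov again gives $P(p_\alpha(\sigma_{k+1})=1)=p_\alpha(\sigma_k)\le(2C_2)^{C_1-1}\epsilon$, and a union bound over the ${<}C_2$ candidates bounds this failure probability by $2^{C_1-1}C_2^{C_1}\epsilon$. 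Under a hypothesis of the form $\epsilon<c(C_1,C_2)$, these two failure probabilities together are ${<}1$, so a valid $v_{k+1}$ exists and the procedure continues. The constructed $x$ then satisfies $p_\alpha=0$ for every $\alpha$ as soon as all of $B_\alpha$ is set, so $x\notin\Bad$.

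The hard part will be matching the precise numerical constant $\epsilon<\tfrac12 C_2^{-C_1}$ stated in the lemma. The greedy bookkeeping above works whenever $\epsilon$ is smaller than a constant of the form $\bigl((2C_2)^{C_1}\bigr)^{-1}$, which is of the correct qualitative shape but slightly worse. Tightening to the stated bound likely requires either a more delicate balancing between the "growth" and "finalization" Markov bounds (e.g.\ by letting the growth factor in the invariant vary with the step, or by noting that a bad set can be finalized at most once, so the finalization failures accumulate only against a small total mass), or a preliminary decomposition step in which the bad sets are properly colored according to the conflict graph (two $\Bad_\alpha$'s adjacent if they share a variable, maximum degree ${<}C_1 C_2$) so that within each color class the events are independent and the Markov arguments become sharp. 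In either case the overall strategy -- greedy assignment with conditional-probability control via Markov -- is what produces an admissible $x$, and the exponent $C_1$ on $C_2$ in the final bound is naturally explained by the fact that a given bad set can have its conditional probability "boosted" at most once per coordinate in $B_\alpha$, i.e.\ at most $C_1-1$ times before being finalized.
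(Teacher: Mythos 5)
Your overall strategy --- fixing coordinates one at a time and controlling each $\Bad_\alpha$'s conditional probability via Markov's inequality --- is exactly the argument the paper gives, so the approach is correct. But your bookkeeping is unnecessarily complicated, and the complication is precisely what costs you the stated constant. Two points:

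\textbf{You do not need the factor $2$ in the growth invariant.} When $x_{k+1}$ is chosen at random, Markov gives $P\bigl(p_\alpha(\sigma_{k+1}) \ge C_2\, p_\alpha(\sigma_k)\bigr) \le 1/C_2$ for each affected $\alpha$. Since strictly fewer than $C_2$ bad sets depend on $x_{k+1}$, the union bound over the affected $\alpha$'s gives total failure probability at most $(C_2-1)/C_2 < 1$, which already leaves a good choice of $x_{k+1}$. The headroom you carved out with the extra factor $2$ --- so as to budget $1/2$ probability for ``finalization failures'' --- is what degrades your constant to $(2C_2)^{-C_1}$ instead of the stated $(1/2)C_2^{-C_1}$.

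\textbf{The separate ``finalization'' union bound is redundant.} Once all coordinates in $B_\alpha$ are set, the conditional probability $p_\alpha$ is either $0$ or $1$. The growth invariant $p_\alpha(\sigma_k) \le C_2^{\,n_\alpha(k)}\epsilon$ gives, in the limit, $p_\alpha \le C_2^{\,|B_\alpha|}\epsilon \le C_2^{\,C_1}\epsilon < 1$ under the stated hypothesis. Since $p_\alpha$ is binary and strictly less than $1$, it must equal $0$ --- no additional Markov/union step is needed. You noticed this fact (``$p_\alpha = 0$ as soon as all of $B_\alpha$ is set'') but did not let it replace the finalization check; doing so removes the second source of failure entirely, removes the need for the factor $2$, and immediately yields the lemma's stated constant with no coloring or variable-rate refinements.

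So the gap is not in the method but in not pushing the argument to its clean form; the ``delicate balancing'' and ``proper coloring'' you speculate about at the end are not needed.
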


We give a proof of this probability lemma in Section \ref{secproblemma}.

In order to apply this lemma, we must estimate the constants $C_1$ and $C_2$ in our situation.

\begin{lemma} \label{C_1est} Suppose that $\Bad(j_1, j_2)$ depends on the value of a parameter $a_{j,l}$.  Then the distance
from $B_j$ to $(B_{j_1} \cup B_{j_2})$ is $\le w_1'$.

Therefore, each set $\Bad(j_1, j_2)$ depends on $\lesssim 1$ parameters $a_{j,l}$.
\end{lemma}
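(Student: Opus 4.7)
The plan is to trace the chain of dependencies backward from $\Bad(j_1, j_2)$ to the parameters $a_{j,l}$, using the localized structure of the construction of $I_2$. Recall from Step 1 that $I_2 = I_1 \circ \Phi$, where $\Phi$ is the time-$t_{flow}$ flow of the vector field $V = \sum_{j,l} a_{j,l} \Psi_j V_{j,l}$. Since $\Psi_j$ is supported on $B_j$, the parameter $a_{j,l}$ affects $V$ only inside $B_j$.

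First I would show that $\Phi(x)$ depends on $a_{j,l}$ only when $x$ lies within distance $w_1'$ of $B_j$. The point is that $\Phi(x)$ is determined by integrating $V$ along a flow curve of total length $\le w_1'$ starting at $x$; if this curve never enters $B_j$, then $a_{j,l}$ is irrelevant to the computation of $\Phi(x)$. A short Gronwall-type argument, essentially the one carried out in Step 3 around formula $(*)$, makes this rigorous and also shows that the flow curve cannot be pushed into $B_j$ by changing $a_{j,l}$, since the total displacement is bounded by $w_1'$ for every choice of parameters.

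Next I would argue that if $\Bad(j_1, j_2)$ depends on $a_{j,l}$, then the restriction of $I_2$ to $B_{j_1} \cup B_{j_2}$ must depend on $a_{j,l}$: by definition $\Bad(j_1, j_2)$ detects coincidences $I_2(x_1) = I_2(x_2)$ with $x_1 \in B_{j_1} \cap U_{w_2}$ and $x_2 \in B_{j_2} \cap U_{w_2}$, so a change in membership in $\Bad(j_1, j_2)$ forces a change in one of these values. Since $I_1$ does not depend on the parameters, $\Phi$ itself must change on $B_{j_1} \cup B_{j_2}$. By the previous paragraph this forces some point of $B_{j_1} \cup B_{j_2}$ to lie within distance $w_1'$ of $B_j$, which is the first claim of the lemma.

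For the second claim, I would count: by the first claim, the only relevant $j$ are those with $B_j$ meeting the $w_1'$-neighborhood of $B_{j_1} \cup B_{j_2}$, which is contained in a union of two balls of radius $3 w_1'$. Since the cover $\{B_j\}$ was chosen with bounded multiplicity in Step 1 (using the bounded local geometry of $(M^m,g)$ at scale $1$), only $\lesssim 1$ balls $B_j$ of radius $w_1'$ can intersect this set. As $l$ ranges over the fixed range $1,\dots,m$, the total number of relevant parameters $a_{j,l}$ is $\lesssim 1$. The only substantive step is the Gronwall-type estimate controlling dependence of $\Phi(x)$ on $a_{j,l}$; everything else is bookkeeping on the covering structure.
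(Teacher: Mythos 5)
Your proposal is correct and follows essentially the same route as the paper: the heart of both arguments is that $a_{j,l}$ affects $V$ only on $B_j$, that $\Phi$ displaces points by at most $w_1'$ for every choice of parameters, and hence $\Phi(x)$ (and thus $I_2(x)$) cannot depend on $a_{j,l}$ once $\Dist(x, B_j) > w_1'$; the counting via bounded multiplicity of the covering is also standard and matches. The Gronwall-type justification you invoke is merely spelling out what the paper has already packaged into the statement "$\Phi$ moves each point a distance $\le w_1'$" from Step 1, so it adds rigor but not a new idea.
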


\begin{proof} Recall that $I_2 = I_1 \circ \Phi$, and that $\Phi$ moves each point a distance $\le w_1'$.    The parameter $a_{j,l}$ only affects $V$ on $B_j$.  If the distance from $x$ to $B_j$ is $> w_1'$, then $\Phi(x)$ will not depend on the parameter $a_{j,l}$.  Therefore, $I_2(x)$ will not depend on the parameter $a_{j,l}$.  So if the parameter $a_{j,l}$ affects $\Bad(j_1, j_2)$, then there must be a point $x$ in either $B_{j_1}$ or $B_{j_2}$ which lies a distance $\le w_1'$ from $B_j$.  \end{proof}

Next, we have to estimate the number of different sets $\Bad(j_1, j_2)$ which are influenced by a single parameter $a_{j,l}$.  As
a first step we prove the following lemma.

\begin{lemma} If $\Bad(j_1, j_2)$ is non-empty for some choice of the parameters $a_{j,l}$, then 
the distance from $I_0(B_{j_1})$ to $I_0(B_{j_2})$ is $\lesssim 1$.
\end{lemma}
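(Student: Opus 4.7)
The plan is to unpack the definition of $\Bad(j_1, j_2)$ being nonempty and then use the triangle inequality, chaining together the various approximate identifications we have between the maps $I_0$, $I_1$, $I_2$, and the flow $\Phi$. All the constants involved — the radius $w_1'$, the Lipschitz bound for $I_0$, the displacement bound for $\Phi$, and the $C^0$ distance between $I_0$ and $I_1$ — are already controlled by $\lesssim 1$, so we just have to assemble them.

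More concretely, suppose some choice of parameters $a_{j,l}$ puts us in $\Bad(j_1,j_2)$. By definition there are points $x_1 \in U_{w_2} \cap B_{j_1}$ and $x_2 \in U_{w_2} \cap B_{j_2}$ with $x_1 \ne x_2$ and $I_2(x_1) = I_2(x_2)$, which unwinds to $I_1(\Phi(x_1)) = I_1(\Phi(x_2))$. For $i = 1, 2$, I would estimate the distance in $N$ from $I_0(x_i)$ to the common value $I_1(\Phi(x_i))$ by inserting $I_0(\Phi(x_i))$ as an intermediate point. The first leg $\Dist(I_0(x_i), I_0(\Phi(x_i))) \le \Dist(x_i, \Phi(x_i)) \le w_1' \lesssim 1$ uses that $I_0$ is $1$-Lipschitz and that $\Phi$ moves each point by at most $w_1'$, which we arranged in Step 1. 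The second leg $\Dist(I_0(\Phi(x_i)), I_1(\Phi(x_i))) \lesssim 1$ is exactly the $C^0$ closeness of $I_0$ and $I_1$ coming from Proposition \ref{quantimm1}.

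Applying the triangle inequality to the two chains and to the identity $I_1(\Phi(x_1)) = I_1(\Phi(x_2))$ gives $\Dist(I_0(x_1), I_0(x_2)) \lesssim 1$. Finally, each ball $B_{j_i}$ has radius $w_1' \lesssim 1$, so using that $I_0$ is $1$-Lipschitz, the image $I_0(B_{j_i})$ has diameter $\lesssim 1$. One more application of the triangle inequality yields $\Dist(I_0(B_{j_1}), I_0(B_{j_2})) \lesssim 1$, which is the desired conclusion.

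There is no real obstacle here; the content of the lemma is just that all the relevant maps are close to $I_0$ (in the Lipschitz sense for $\Phi$ and in the $C^0$ sense for $I_1$), so a coincidence $I_2(x_1) = I_2(x_2)$ can only happen if $I_0(x_1)$ and $I_0(x_2)$ were already $O(1)$-close to begin with. The lemma will be used together with Lemma \ref{C_1est} and the bounded-multiplicity of the cover $\{B_j\}$ plus hypothesis (6) (that $I_0$ maps $\lesssim \mu$ vertices of $P$ into any unit ball of $N$) to bound the constant $C_2$ in the probability lemma, so that we can conclude that some parameter choice avoids all sets $\Bad(j_1, j_2)$.
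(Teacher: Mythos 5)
Your proof is correct and follows essentially the same approach as the paper's: both reduce to the fact that $\Dist(I_0(x), I_2(x)) \lesssim 1$, which the paper states directly (having noted it in Step 1), while you unpack it into the two legs through $I_0(\Phi(x))$ using the displacement bound on $\Phi$ and the $C^0$-closeness of $I_0$ to $I_1$ from Proposition \ref{quantimm1}. The final step about the diameters of the balls $B_{j_i}$ is harmless but unnecessary, since finding one pair $(x_1,x_2)$ with $\Dist(I_0(x_1), I_0(x_2)) \lesssim 1$ already bounds the set distance.
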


\begin{proof} We saw above that $\Dist( I_0(x), I_2(x) ) \lesssim 1$.  If $\Bad(j_1, j_2)$ is non-empty (for some parameters $a_{j,l}$), then we can 
find $x_1 \in B_{j_1}$ and $x_2 \in B_{j_2}$ with $I_2(x_1) = I_2(x_2)$.  Then we conclude that the distance from $I_0(x_1)$ to 
$I_0(x_2)$ is $\lesssim 1$. \end{proof}

\begin{lemma} Each parameter $a_{j,l}$ influnces $\lesssim 1$ bad sets $\Bad(j_1, j_2)$.
\end{lemma}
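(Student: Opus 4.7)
The plan is to combine the two preceding lemmas, using the previous $C_1$-type localization to constrain the pair $(j_1,j_2)$ in the domain $M$, and the range-side lemma together with hypothesis (6) of Proposition \ref{quantemb1} to constrain it in the target $N$. Fix a parameter $a_{j,l}$ and suppose that $a_{j,l}$ influences $\Bad(j_1,j_2)$. By Lemma \ref{C_1est}, the distance from $B_j$ to $B_{j_1}\cup B_{j_2}$ is at most $w_1'$, so after possibly relabeling we may assume $\Dist(B_j, B_{j_1}) \le w_1'$.

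The count of $j_1$ is immediate. The balls $B_{j_1}$ all have radius $w_1' \sim 1$ and the cover has bounded multiplicity, so the number of $j_1$ with $\Dist(B_j, B_{j_1}) \le w_1'$ is $\lesssim 1$.

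The count of $j_2$ is where we use hypothesis (6). By the lemma just proved, if $\Bad(j_1, j_2)$ is non-empty for some choice of parameters, then $\Dist(I_0(B_{j_1}), I_0(B_{j_2})) \lesssim 1$. Since $B_{j_2} \subset U_{w_1}$ is a ball of radius $w_1' \sim 1$, and $P \subset M$ has bounded geometry at scale $s=1$ with $w_1 \sim 1$, the center of $B_{j_2}$ lies within distance $\lesssim 1$ of some vertex $v_{j_2}$ of $P$. Since $I_0$ is $1$-Lipschitz (hypothesis (3)), $I_0(v_{j_2})$ lies within distance $\lesssim 1$ of $I_0(B_{j_2})$, and hence within distance $\lesssim 1$ of the fixed bounded set $I_0(B_{j_1})$. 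Thus $I_0(v_{j_2})$ lies in a ball in $N$ of radius $\lesssim 1$, which we may cover by $\lesssim 1$ unit balls. By hypothesis (6), each such unit ball contains the images of at most $\mu$ vertices of $P$, so there are $\lesssim 1$ possible vertices $v_{j_2}$. Finally, by bounded multiplicity of the cover, each vertex of $P$ has $\lesssim 1$ balls $B_{j_2}$ nearby, giving $\lesssim 1$ choices of $j_2$.

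Combining, there are $\lesssim 1$ pairs $(j_1,j_2)$ for which $a_{j,l}$ influences $\Bad(j_1,j_2)$. The only subtle point is the use of hypothesis (6): without it, the map $I_0$ could pile arbitrarily many regions of $M$ on top of each other in $N$, destroying the range-side count; this is precisely why that hypothesis was built into the statement of Proposition \ref{quantemb1}.
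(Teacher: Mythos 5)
Your proof follows the same route as the paper: Lemma \ref{C_1est} localizes $j_1$ in the domain, and the preceding lemma combined with hypothesis (6) localizes $j_2$ in the target. You spell out the vertex-counting step a bit more explicitly than the paper does, but the logic is identical and correct.
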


\begin{proof}  Fix a parameter $a_{j,l}$.  Suppose that $\Bad(j_1, j_2)$ depends on $a_{j,l}$.  By Lemma \ref{C_1est}, we see that $B_j$ lies fairly close
to either $B_{j_1}$ or $B_{j_2}$.  After changing the labels, we can assume that the distance from $B_j$ to $B_{j_1}$ is $\lesssim w_1'$.  This leaves only $\lesssim 1$ choices for $j_1$.  Let us fix a choice of $j_1$, and consider how many choices we have for $j_2$ so that $\Bad(j_1, j_2)$ depends on $a_{j,l}$. 

If $I_0(B_{j_1})$ and $I_0(B_{j_2})$ are far apart, then the last lemma tells us that $\Bad(j_1, j_2)$ is empty.  (And if $\Bad(j_1, j_2)$ is empty, 
it does not depend on $a_{j,l}$.)  So we only need to consider $j_2$ so that the distance from $I_0(B_{j_1})$ to $I_0(B_{j_2})$ is $\lesssim 1$.   In other
words, we just need to count the number of $j_2$ so that $I_0$ maps $B_{j_2}$ into a certain ball of radius $\lesssim 1$.
We will show that the number of such $j_2$ is $\lesssim \mu \lesssim 1$.  We know that $I_0$ maps at most $\mu \lesssim 1$ vertices of $P$ into any unit ball of $N$.  Since $N$ has bounded geometry at scale 1, it follows that $I_0$ maps $\lesssim 1$ vertices of $P$ into any ball of radius $\lesssim 1$.
Then by the bounded geometry of $P$, and since $I_0$ is Lipschitz, it follows that $I_0$ maps $\lesssim 1$ balls $B_{j_2}$ into any ball of radius
$\lesssim 1$.  \end{proof}

Now we can finish the proof of the embedding proposition.

If we choose $\epsilon$ small enough, then the probability lemma guarantees that we can find a choice of parameters $a_{j,l}$ which is not in any bad set $\Bad(j_1, j_2)$.  Therefore, $I_2: U_{w_2} \rightarrow N$ is an embedding.  Plugging in our inequalities, we see that we can choose $\epsilon \gtrsim 1$, and so $w_2 \gtrsim 1$ also.  The number $w_2$ is the $W$ from the statement of the proposition.  The embedding $I_2: U_{w_2} \rightarrow N$ is locally $L$-bilipschitz for $L \lesssim 1$.  

\end{proof}

\section{An h-principle for $k$-dilation}

In this section, we prove the h-principle for $k$-dilation stated in the introduction.

\begin{hp} Suppose that $F_0$ is a map from $S^m$ to $S^n$ with $m > n$, and that $k > (m+1)/2$.
Then for any $\epsilon > 0$, we can homotope $F_0$ to a map $F$ with $k$-dilation less than
$\epsilon$.
\end{hp}

\subsection{Zeldovitch's construction of a thick tube} \label{zeltube}

Our proof of the h-principle is based on Zel'dovitch's construction of thick tubes in $B^3(1)$.  As motivation, and in
order to describe the main ideas, we outline Zel'dovitch's construction here.  Zel'dovitch was an astrophysicist who was studying the
motion of magnetized fluid in neutron stars, and his physical problem led him to the following construction.  His work is
described more in the paper \cite{Ar}.

We won't use the results from this section anywhere, so we just sketch the main ideas.  Zel'dovitch's construction gives an alternate
proof of the thick tube example from Section \ref{twistedtubes}.

\begin{GZ} (Zel'dovitch) For any radius $R$, there is some $\delta = \delta(R) > 0$ and a 2-expanding embedding from $S^1(\delta)
\times B^2(R)$ into the unit 3-ball.
\end{GZ}

Let $\{ Q_i \}$ be a collection of small disjoint squares in $B^2(R)$ with side length $\delta$, filling 
most of the area of $B^2(R)$.  We want to build a 2-expanding embedding from $I: S^1(\delta) \times B^2(R)$ into $B^3(1)$.  
Let's try to first construct $I$ on $S^1(\delta) \times Q_i$.  Notice that $S^1(\delta) \times Q_i$ is basically
$S^1(\delta) \times B^2(\delta)$, and there is a 1-expanding embedding from this tube into a ball of radius $\sim \delta$.  
We have $\sim R^2 \delta^{-2}$ such tubes in $S^1(\delta) \times B^2(R)$, and there are $\sim \delta^{-3}$ such balls in
$B^3(1)$.  We choose $\delta$ small enough so that the number of balls is larger than the number of tubes, and then
we embed each tube in a ball.  In this way we can construct a 2-expanding embedding $I$ from $\cup_i S^1(\delta) \times Q_i$
into $B^3(1)$.  We can even arrange that $I$ increases all areas by a factor of 10.

So far we have defined $I$ on $\cup_i S^1(\delta) \times Q_i$.  We have to extend it to an embedding on the whole domain.  Why is
it possible to do this?  If we let $I_0$ be a standard embedding from $S^1(\delta) \times B^2(R)$ into the 3-ball (unknotted and
with twisting number zero), then the images of the small tubes will be unlinked, and we can isotope $I_0$ to our embedding $I:
\cup_i S^1(\delta) \times Q_i \rightarrow B^3(1)$.  Therefore, our map $I$ extends to some embedding from $S^1(\delta) \times B^2(R)
\rightarrow B^3(1)$.

This embedding $I$ is 2-expanding on $\cup_i S^1(\delta) \times Q_i$, but it has terrible properties on the complement of this
region.  We call the complement of $\cup_i S^1(\delta) \times Q_i$ the interstitial
region.  Since $I$ is an embedding, there is some number $\beta(I) > 0$ so that each surface of area $A$ in the domain is mapped to
a surface of area $\ge \beta(I) A$, but we have no estimate for $\beta(I)$.  
We can fix this problem by squeezing the interstital region in the following way.

For any $\epsilon > 0$, there is a diffeomorphism $\Psi_\epsilon: S^1(\delta) \times B^2(R) \rightarrow S^1(\epsilon \delta) \times B^2(R)$ 
with Lipschitz constant $\le 2$ and with 2-dilation $\lesssim \epsilon$ on the interstitial region.  This diffeomorphism is just a product
of a map in each factor.  The map on the circle is just a rescaling.  In the map $B^2(R) \rightarrow B^2(R)$, 
the squares $Q_i$ grow a bit, and the interstitial region between them shrinks to a very thin neighborhood of a graph.

If we choose $\epsilon$ small enough, then $I \circ \Psi_\epsilon^{-1}: S^1(\epsilon \delta) \times B^2(R) \rightarrow B^3(1)$ will be a 2-expanding 
embedding.  If we take a small surface in $\cup_i S^1(\delta) \times Q_i$, then $\Psi_{\epsilon}^{-1}$ doesn't compress areas by more than
a factor of $2^2=4$, and $I$ expands areas by a factor of 10.  If we take a small surface in the interstitial region with area $A$, then $I
\circ \Psi_\epsilon^{-1}$ maps the surface to an image with area at least $\beta(I) \epsilon^{-1} A$.  As long as we choose $\epsilon$ small
enough, we are done.

Next we ask if we can construct an embedding with non-zero twisting number.  Here the answer is no.  The problem is that in 
an embedding with non-zero twisting number the images of the smaller tubes $S^1(\delta) \times Q_i$ would have to be linked
with each other, and so they couldn't lie in different balls.

But in dimension $m \ge 4$, the smaller tubes would be unlinked and the construction above would go through, giving an (m-1)-expanding
embedding.  If $m \ge 4$, then for every $R > 1$, there is some $\delta(R)$, and an $(m-1)$-expanding embedding from $S^1(\delta)
\times B^{m-1}(R)$ into $B^m(1)$ with twisting number 1.  Now using this thick tube and the 
Pontryagin-Thom collapse, we get homotopically non-trivial maps from $S^m$ to $S^{m-1}$ with arbitrarily small $(m-1)$-dilation.

In the proof of the h-principle, we will generalize this method.  Instead of $S^1 \times B^{m-1}$, we will work more generally with
$Y \times B^n$ for an $(m-n)$-dimesional manifold $Y$.  Instead of a union of cubes $\cup_i Q_i$ - which is a neighborhood of a 0-dimensional
polyhedron -- we will work with neighborhoods of higher dimensional polyhedra in $B^n$.

\subsection{An outline of the proof} \label{outlinehprinc}

We will construct a degree 1 map $\Psi: S^n \rightarrow S^n$, and a degree 1 map
$G: S^m \rightarrow S^m$, and the map $F$ will be $\Psi \circ F_0 \circ G$.  Since degree 1 maps of
spheres are homotopic to the identity, we see that $G$ and $\Psi$ are each homotopic to the identity,
and so $F$ is homotopic to $F_0$.  By choosing $G$ and $\Psi$ judiciously, we will arrange that 
$\Dil_k(F) < \epsilon$.

Our construction depends on a small parameter $\delta > 0$.  The maps $\Psi$ and $G$ depend
on $\delta$, and so the final map $F$ depends on $\delta$.  We will show that as $\delta \rightarrow 0$,
$\Dil_k(F) \rightarrow 0$ also.  We have to keep track of how the dilations and other geometric quantities depend
on $\delta$.  We write $A \lesssim B$ if $A \le C(F_0) B$, where $C(F_0)$ is a constant that may depend on
the map $F_0$ but does not depend on $\delta$.

We can assume that $F_0$ is smooth.  We let $y_0 \in S^n$ be a regular value of $F_0$.  We let $Y = F_0^{-1}(y_0) \subset S^m$.  So $Y$
is a submanifold of dimension $m-n$.  Now we let $B_r(y_0)$ be the ball around $y_0$ with a small
radius $r \gtrsim 1$.  By choosing $r$ appropriately small, we can be sure that 
$F_0^{-1} (B_r(y_0))$ is diffeomorphic to $Y \times B_r(y_0)$.  
We can choose a map $\pi_Y: F_0^{-1}(B_r(y_0)) \rightarrow Y$ so that $\pi_Y \times F_0: F_0^{-1}(B_r(y_0)) \rightarrow
Y \times B_r(y_0)$ is a diffeomorphism.  

Since $r$ is small, $B_r(y_0)$ is nearly Euclidean, and we choose an identification with $B^n_r \subset \mathbb{R}^n$.
We let $Q^{n-k} \subset B^n_r$ be the (n-k)-skeleton of the 
cubical grid with side length $\delta$ intersected with $B^n_r$.  (More precisely, $Q$ is the union of all the faces of cubical grid of dimension $\le n-k$ which lie entirely in $B^n_r$.)  We let $V_W$ be the $W \delta$-neighborhood of $Q^{n-k} \subset B_r(y_0)$, where
$W > 0$ is a small constant depending on $F_0$ which we will choose later.  The constant $W$ will be independent of $\delta$ and so $W \gtrsim 1$.  Using our identification of $B^n_r$ with $B_r(y_0)$, we can think of $V_W$ as an open subset of $B_r(y_0) \subset S^n$. 

Now we can describe the map $\Psi: S^n \rightarrow S^n$.

\begin{lemma} \label{squeeze}  We will construct a degree 1 map $\Psi$ from $S^n$ to $S^n$ with the following properties.
On the set $V_W \subset S^n$, the 1-dilation of $\Psi$ is $\lesssim 1$.  
On the complement of $V_W$, the $k$-dilation of $\Psi$ is identically zero.  This happens because $\Psi$
maps the complement of $V_W$ into a $(k-1)$-dimensional subset of $S^n$.
\end{lemma}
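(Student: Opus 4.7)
The plan is to define $\Psi$ explicitly cube-by-cube on $B^n_r$ via a product of one-dimensional pinch maps, and then extend to $S^n$ by collapsing the exterior to a single basepoint.

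Identify $B^n_r$ with a Euclidean ball so that the cubical grid becomes the standard lattice of side $\delta$. Fix a piecewise-linear pinch $f \colon [-\delta/2, \delta/2] \to [-\delta/2, \delta/2]$ satisfying $f(\pm \delta/2) = \pm \delta/2$, with $f \equiv 0$ on the middle segment $[-\delta/2 + W\delta,\, \delta/2 - W\delta]$, and linear on the two outer segments of length $W\delta$. On each $n$-cube $C$ with center $c_C$, set $\Psi_C(y) = c_C + (f(y_1 - c_C^1), \ldots, f(y_n - c_C^n))$. Because $f(\pm \delta/2) = \pm \delta/2$, the formulas on two adjacent cubes automatically agree on the shared $(n-1)$-face, so $\Psi$ is continuous on the union $\Omega$ of all grid cubes fully inside $B^n_r$. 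The Lipschitz constant of $f$ equals $1/(2W)$, so $\Psi_C$ has Lipschitz constant $\lesssim 1/W$; since $W$ is a fixed constant depending only on $F_0$, this is $\lesssim 1$, giving the 1-dilation bound on $V_W$.

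For the image of $V_W^c$, observe that $y \in C$ lies in $V_W^c$ precisely when at least $n - k + 1$ of the shifted coordinates $y_i - c_C^i$ sit in the middle interval where $f$ vanishes. Hence $\Psi(y)$ has at least $n - k + 1$ coordinates equal to the corresponding cube-center coordinates, so $\Psi(y)$ lies in a $(k-1)$-dimensional coordinate subspace through a dual lattice point. Taking the union over all cubes, $\Psi(V_W^c \cap \Omega)$ is contained in a $(k-1)$-dimensional polyhedron, which one may think of as the dual $(k-1)$-skeleton of the grid.

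To extend $\Psi$ to $S^n$, pick a basepoint $p_0 \in S^n$ and modify $\Psi$ in a thin collar of $\partial B^n_r$. The collar can be chosen disjoint from $V_W$ (by applying the cube construction only on cubes fully contained in a slightly smaller ball, which still captures all of $V_W$), so the modification does not affect the 1-dilation bound on $V_W$. Within the collar I interpolate $\Psi$ to the constant map at $p_0$ along a path inside the $(k-1)$-skeleton together with an arc to $p_0$, preserving the $(k-1)$-dimensional image condition, and then set $\Psi \equiv p_0$ on $S^n \setminus B^n_r$. Degree 1 follows because $\Psi\vert_\Omega$ is homotopic to the identity cube-by-cube via the straight-line homotopy from $f$ to the identity, and the collapse of the exterior to $p_0$ is the standard degree-1 quotient of a ball. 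The main obstacle is the collar modification: arranging continuity at $\partial \Omega$, degree 1, and the $(k-1)$-dimensional image on $V_W^c$ simultaneously, but this is essentially routine cellular bookkeeping once the cube-wise formula is in place.
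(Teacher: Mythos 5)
Your pinch-map construction on the cubes is a correct and more explicit alternative to the paper's soft argument. The paper never writes down the retraction directly: it works on the torus $T^n$, uses the fact that $T^n\setminus\Sigma_{per}^{n-k}$ deformation retracts onto $\bar\Sigma_{per}^{k-1}$, and invokes the homotopy extension property to produce a smooth map $R_{per}$ homotopic to the identity which collapses the complement of the $W$-neighborhood into the dual $(k-1)$-skeleton; it then lifts to a $\mathbb{Z}^n$-periodic $R$ on $\mathbb{R}^n$ with $|R(y)-y|\le C(W)$ and $\Dil_1 R\le C(W)$. Your product-of-one-dimensional-pinches formula manufactures such an $R$ by hand, and your verification that points of $V_W^c$ (at least $n-k+1$ coordinates in the middle interval) get sent into the dual $(k-1)$-skeleton is correct, as is the Lipschitz bound $1/(2W)\lesssim 1$. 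Up to smoothing the PL function $f$ and replacing the $\ell^\infty$-box neighborhood by the metric one (which only changes $W$ by a dimensional constant), this is a perfectly good replacement for Lemma~\ref{periodsqueeze}.

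The gap is the extension to $S^n$, and it is not ``routine cellular bookkeeping.'' You propose to define $\Psi$ only on the union $\Omega$ of cubes inside a smaller ball and then to interpolate across a collar ``along a path inside the $(k-1)$-skeleton.'' But on the inner boundary $\partial\Omega$, your map $\Psi\vert_{\partial\Omega}$ is an $(n-1)$-dimensional family of points in the dual $(k-1)$-skeleton $K$, and to extend it across the collar with image still in $K$ you would need $\Psi\vert_{\partial\Omega}$ to be nullhomotopic as a map into $K$. Nothing forces this: $K$ is a $(k-1)$-dimensional polyhedron with plenty of nontrivial topology (e.g., for $k=2$ it is a graph with many cycles), and there is no a priori reason the restriction of your cube map to $\partial\Omega$ is nullhomotopic inside it. The paper sidesteps this entirely by a different device which you should adopt: define $R_\delta$ by your periodic formula on \emph{all} of $\mathbb{R}^n$ (not just on cubes inside a ball), observe that $|R_\delta(y)-y|\lesssim\delta$, and set $\Psi=\Phi\circ R_\delta$ on a ball $3B'$ and $\Psi\equiv q$ outside, where $\Phi\colon S^n\to S^n$ is a degree-1 map collapsing $S^n\setminus B'$ to $q$. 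Because the displacement of $R_\delta$ is $\lesssim\delta$, points in $3B'\setminus 2B'$ are sent by $R_\delta$ outside $B'$ and then to $q$ by $\Phi$, so the two definitions agree and no collar interpolation is needed; the image of $V_W^c$ is contained in $\Phi(\bar\Sigma_\delta^{k-1})\cup\{q\}$, which is $(k-1)$-dimensional, and degree~$1$ follows from the straight-line homotopy $R_{t,\delta}$ to the identity exactly as you describe.
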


We chose the dimension of $Q$ to be $n-k$ in order to make this lemma work.  The complement of $V_W$ is a neighborhood of a polyhedron of dimension $k-1$.  The map $\Psi$ retracts the complement of $V_W$ onto this polyhedron, while $V_W$ gets thicker in order to fill the vacated region.  

We define $U_W := F_0^{-1}(V_W) \subset S^m$.  On the complement of $U_W$, Lemma \ref{squeeze} implies that the $k$-dilation of $\Psi \circ F_0$ is zero.  So we only have to worry about what happens on $U_W$.  We use the map $G$ in order to deal with this region.  To explain our strategy, we need to define
an auxiliary metric on $U_W$.

We let $h_0$ be the unit sphere metric on $S^n$, and hence on $V_W \subset B_r(y_0) \subset S^n$.  We let $g_0$ be the unit sphere metric on $S^m$ and $g_Y$ be the restriction of $g_0$ to $Y$.  The set $U_W$ is diffeomorphic to $Y \times V_W$.  We let $g_1$ be the metric $\delta^2 g_Y + h_0$.  More precisely,
the map $\pi_Y \times F_0$ is a diffeomorphism from $U_W$ to $Y \times V_W$, and we define $g_1$ so that $\pi_Y \times F_0$ is an isometry from $(U_W, g_1)$ to $(Y, \delta^2 g_Y) \times (V_W, h_0)$.  In particular, the map $F_0: (U_W, g_1) \rightarrow (V_W, h_0)$ has 1-dilation equal to 1.

Now we are ready to describe the degree 1 map $G: S^m \rightarrow S^m$.

\begin{lemma} \label{precomp}  If $k > (m+1)/2$, then there exists $W \gtrsim 1$ and a degree 1 map $G$ from $S^m$ to $S^m$ with the following property.  
If we view $G$ as a map from $(G^{-1}(U_W), g_0)$ to $(U_W, g_1)$, then
it has 1-dilation bounded by $\lesssim \delta^a$ for some exponent $a > 0$.
\end{lemma}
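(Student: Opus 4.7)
The plan is to construct $G$ by first producing an embedding $\iota: U_W \hookrightarrow S^m$, which will serve as $G^{-1}$ on $U_W$ and stretch $g_1$-lengths by a factor of $\delta^{-a}$ in $g_0$, and then extending $G$ continuously to a degree-one map on the rest of the sphere. The decisive observation is that $U_W$ is a tubular neighbourhood of the polyhedron $P := F_0^{-1}(Q^{n-k}) \cong Y \times Q^{n-k}$ of dimension $p = (m-n) + (n-k) = m-k$, and our hypothesis $k > (m+1)/2$ gives $p < (m-1)/2$. This is precisely the dimensional input required by the quantitative embedding lemma of Section~\ref{sectionquantemb}.

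I would fix a small positive exponent, for concreteness $a := (m-n)/(2m)$, and set the scale $s := \delta^{1-a}$. Then I would apply the quantitative embedding lemma with source $(M,g) := (U_W, \delta^{-2a} g_1)$ and target $(N,h) := (S^m, g_0)$. In the rescaled source metric, the fibres $Y$ have diameter $\sim \delta^{1-a}$ and the $\delta$-grid on $V_W$ has spacing $\sim \delta^{1-a}$, so $(P, M, g)$ has bounded geometry at scale $s$; the target has bounded geometry at all small scales. A volume count gives $\operatorname{vol}(M,g) \sim W^k \delta^{(m-n)/2}$, which is much smaller than $\operatorname{vol}(N,h) \sim 1$, and the vertex count of $P$ is $\sim\delta^{-n}$, yielding an easily realisable average density of vertices $\ll 1$ per $s$-ball of $N$. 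For the initial data, I would take $I_0 : M \to N$ to be a Lipschitz-$1$ ``folding'' that is locally a near-isometry with uniformly bounded multiplicity, constructed by tiling both $M$ and $N$ by cells of diameter $\sim s$ and matching the cells of $M$ consistently to cells of $N$; I would take $T_0$ to be a smooth fibrewise isometry covering $I_0$, and the reference embedding $I'$ to be the identity inclusion $U_W \hookrightarrow S^m$. Hypothesis (7) of the quantitative embedding lemma is automatic: since $p < (m-1)/2$, any two embeddings of $P$ into $S^m$ are isotopic, and $dI'$ is homotopic to $T_0$ through fibrewise isomorphisms because $\mathrm{GL}^+(m,\mathbb{R})$ is connected.

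The quantitative embedding lemma then yields an embedding $\iota: U_{W_1} \hookrightarrow S^m$ for some $W_1 \gtrsim 1$, locally $L$-bilipschitz with $L \lesssim 1$ from $(M,g)$ to $(N,h)$. Translating back to $g_1$, one has $|d\iota(v)|_{g_0} \ge L^{-1} \delta^{-a} |v|_{g_1}$ for every tangent vector $v$. Set $W := W_1$, $T := \iota(U_W) \subset S^m$, and $G|_T := \iota^{-1}$. As a map from $(T,g_0)$ to $(U_W, g_1)$, $G|_T$ has 1-dilation at most $L \delta^a \lesssim \delta^a$. Finally I would extend $G$ to a smooth, continuous, degree-one map $S^m \to S^m$ by sending $T^c$ into $U_W^c$ matching $G$ on $\partial T$; since $\partial T \cong \partial U_W$ as oriented tubular boundaries and $\iota^{-1}$ is orientation-preserving, such an extension exists. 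The preimage $G^{-1}(U_W)$ is contained in $T$, so it inherits the desired dilation bound $\lesssim \delta^a$.

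The principal obstacle is the construction of the initial Lipschitz-$1$ folding map $I_0$ satisfying hypothesis~(6) with $\mu \lesssim 1$. Because $(U_W, \delta^{-2a} g_1)$ has intrinsic diameter $\delta^{-a}$, which grows as $\delta \to 0$, while $(S^m, g_0)$ has diameter $\pi$, any Lipschitz-$1$ map from $M$ to $N$ must fold many times; arranging this folding cellularly so that each $s$-ball of $N$ receives only a bounded number of cells of $M$ (and hence a bounded number of vertices of $P$) requires careful bookkeeping using the product structure of $U_W \cong Y \times V_W$. A secondary technical point is the smooth, orientation-preserving extension of $G$ past $T$ so as to have global degree one; this is routine once $\partial T$ is identified with $\partial U_W$ by the orientation-preserving map $\iota^{-1}$.
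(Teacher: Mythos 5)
Your overall strategy matches the paper's: apply the quantitative embedding lemma from Section~\ref{sectionquantemb} to the rescaled metric $g=\delta^{-2a}g_1$ at scale $s=\delta^{1-a}$, use the condition $p=m-k<(m-1)/2$ to guarantee an embedding in the isotopy class of the inclusion $U_W\hookrightarrow S^m$, and let $G^{-1}$ be an extension of this embedding. (Your choice $a=(m-n)/(2m)$ works; the paper takes the optimal $a=(m-n)/m$.) However, there are two genuine gaps in your verification of the hypotheses of the quantitative embedding lemma.

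First, your treatment of hypothesis (7) is incorrect. You assert that $dI'$ is homotopic to $T_0$ ``because $\GL^+(m,\mathbb{R})$ is connected,'' but connectedness of the group only lets you join $T_0(x)$ to $dI'(x)$ pointwise in $x$; it does not produce a homotopy of the \emph{maps} $M\to\GL^+(m)$, which are classified by $[M,\GL^+(m)]$, and this set need not be a singleton (already for $M\simeq Y\times B_r$ with $Y$ containing a circle, $[M,\GL^+(m)]\supset\pi_1(\SO(m))$). Nor is it enough to pick ``a'' fiberwise isometry covering $I_0$: different choices differ by a map $M\to O(m)$, and a generic choice will fail (7). The paper sidesteps this by \emph{building} $T_0$ out of $dI'$: it fixes a parallel-transport trivialization $\Triv^N$ of $TN$ over a hemisphere, defines $\Triv^M:=\Triv^N\circ dI'\circ S$ for a diagonal rescaling $S$, and sets $T_0=(\Triv^N)^{-1}\circ\Triv^M$ over $I_0$; then the homotopy to $dI'$ is manifest (homotope the base map, then homotope $S$ to the identity). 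Without such a construction, (7) is not automatic.

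Second, you never address hypothesis (5), the bound $s\|\nabla T_0\|_{C^0}\lesssim 1$. This is not a formality: $T_0$ as a bundle map from $(TM,g)$ to $(TN,h)$ could \emph{a priori} spin arbitrarily fast, and in the paper the verification occupies a nontrivial calculation (Leibniz rule applied to $\Triv^M=\Triv^N\circ dI'\circ S$, using that $\nabla S=0$ because the splitting $TM=TY\oplus TB_r$ is parallel in both relevant metrics). Any proof that supplies a concrete $T_0$ needs an argument here. Your construction of the folding map $I_0$ is also only sketched, but you correctly flag this as the principal obstacle; the paper's cleaner route is to project to the $B_r$-factor, identify it with a Euclidean ball of radius $\delta^{-1}s$, and fold a thin rectangle $B^n(\delta^{-1}s)\times[-s,s]^{m-n}$ into a hemisphere via the bilipschitz embedding of the appendix, which has built-in bounded multiplicity.
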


With these two lemmas, we prove the h-principle.  We define $F$ to be the composition $\Psi \circ F_0 \circ G$.  Since $\Psi$ and $G$ are degree 1 maps of spheres, they are each homotopic to the identity and so $F$ is homotopic to $F_0$.  Now we estimate the $k$-dilation of the map $F$.  

The $k$-dilation of $F$ is the supremum of $|\Lambda^k dF_x|$.  We consider two cases, depending on whether $x$ lies in $G^{-1}(U_W)$.  If $x$ lies in $G^{-1}(U_W)$, then we proceed as follows.  We view $F$ as a composition of maps

$$(G^{-1}(U_W), g_0) \rightarrow (U_W, g_1) \rightarrow (V_W, h_0) \rightarrow (S^n, h_0).$$

\noindent (The first map is $G$, the second map is $F_0$, and the last map is $\Psi$.)  
For the first map, the derivative $dG_x$ has norm $\lesssim \delta^a$ by Lemma \ref{precomp}.  For the second map, the norm
of the derivative is $\le 1$ by the definition of $g_1$.  For the third map, the norm of the derivative is $\lesssim 1$ by Lemma \ref{squeeze}.  Therefore, the derivative $dF_x$ has norm $\lesssim \delta^a$.  By making $\delta$ small, we can arrange that
$| dF_x|$ and $| \Lambda^k dF_x|$ are as small as we like.
 
Next we consider the case that $x$ does not lie in $G^{-1}(U_W)$.  In this case we have no control over
$dG_x$.  But we know that $G(x)$ does not lie in $U_W$ and so $F_0(G(x))$ does not lie in $V_W$.  Therefore, $\Lambda^k d \Psi_{F_0(G(x))}$ is zero.  And so $\Lambda^k dF_x$ is
zero also.  In summary, in each of the two cases, $|\Lambda^k dF_x| < \epsilon$, and so $\Dil_k(F) < \epsilon$.  

Now we discuss the construction of the map $G$.  The map $G$ is in fact a diffeomorphism, and we will construct its inverse $G^{-1}$.  The main task is to define $G^{-1}$ on the set $U_W$.  We construct it using the following lemma.

\begin{lemma} \label{emb1}  If $k > (m+1)/2$, then there is a constant $W \gtrsim 1$, and an embedding $I: (U_W, g_1) \rightarrow (S^m, g_0)$ which is isotopic to the inclusion $U_W \subset S^m$, and which increases all lengths by a factor $\gtrsim \delta^{-a}$.  Here $a = \frac{m-n}{m} > 0$.
\end{lemma}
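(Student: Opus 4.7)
The plan is to reduce Lemma \ref{emb1} to the Quantitative Embedding Lemma proved in Section \ref{sectionquantemb}. Let $P = F_0^{-1}(Q^{n-k})$, regarded as a polyhedron inside $U_W$ via the identification $U_W \simeq Y \times V_W$; note $\dim P = (m-n) + (n-k) = m-k$, and the hypothesis $k > (m+1)/2$ yields $\dim P < (m-1)/2$, which is precisely the codimension regime in which QEL applies. Since we want an embedding that expands lengths by $\gtrsim \delta^{-a}$, it suffices to produce an embedding $I : (U_W, \tilde g_1) \to (S^m, g_0)$ that is locally $L$-bilipschitz for $L \lesssim 1$ in the \emph{rescaled} metric $\tilde g_1 := \delta^{-2a} g_1$; translating back to $g_1$ automatically gives length expansion $\gtrsim \delta^{-a}$.

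Next I verify the geometric hypotheses of QEL in this rescaled setting. In $\tilde g_1$ the rescaled $Y$-factor has diameter $\delta^{1-a}\operatorname{diam}_{g_Y}(Y) \lesssim \delta^{1-a}$, and each cubical face of $Q^{n-k}$ has $\tilde g_1$-diameter $\delta^{-a}\cdot\delta = \delta^{1-a}$, so the pair $(P,U_W)$ has bounded geometry at scale $s = C\delta^{1-a}$ with natural product cells $Y\times \Delta$. The target $(S^m,g_0)$ has bounded geometry at scale $1 \gg 10^m s$ for $\delta$ small. The key numerology is that $a = (m-n)/m$ gives $(1-a)m = n$: the number of vertices of $P$ is $\sim \delta^{-n}$, which is exactly the number of disjoint $s$-balls that fit in $S^m$. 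So $P$ can be spread across $S^m$ with $O(1)$ vertices per $s$-ball, but no more — the exponent $a$ is sharp.

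To build the initial data $(I_0, T_0)$, I would decompose $S^m$ into $\sim \delta^{-n}$ essentially disjoint coordinate balls $B_\alpha$ of $g_0$-radius $\sim \delta^{1-a}$, and correspondingly decompose $U_W$ into pieces $P_\alpha \simeq Y \times (\text{face of }Q)$ of $\tilde g_1$-diameter $\sim \delta^{1-a}$. Assign pieces to balls bijectively, define $I_0$ on each piece by a $1$-Lipschitz (in $\tilde g_1 \to g_0$) map onto $B_\alpha$, and glue by a Lipschitz extension (or equivalently use a $\delta$-fine triangulation to route the gluing along low-dimensional skeleta). On each piece define $T_0$ fiberwise as the linear isomorphism sending a $\tilde g_1$-orthonormal frame on $TU_W$ to a $g_0$-orthonormal frame on $TB_\alpha$; this is automatically a fiberwise isometry, and since it varies only over pieces of diameter $s$, the scale-invariant bound $s\|\nabla T_0\|_{C^0} \lesssim 1$ holds. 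Hypothesis (6) is then immediate: each $s$-ball in $S^m$ contains $\lesssim 1$ vertices of $I_0(P)$. Finally, I use the original inclusion $I' = \iota : U_W \hookrightarrow S^m$ as the reference embedding for hypothesis (7); since $T_0$ and $d\iota$ are both orientation-preserving fiberwise isomorphisms and $\operatorname{GL}^+_m$ is connected, a homotopy in the space of fiberwise isomorphisms covering a homotopy of base maps $I_0 \leadsto \iota$ exists (and this is where we use that any two maps $U_W \to S^m$ are homotopic). Applying QEL gives a locally $L$-bilipschitz embedding $I : U_{W'} \to S^m$ isotopic to $\iota$ for some $W' \gtrsim 1$, which is the desired map after taking $W = W'$.

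The main obstacle is the construction of $I_0$ in the previous paragraph: one needs to spread roughly $\delta^{-n}$ pieces of $U_W$ evenly across $S^m$ by a continuous, $1$-Lipschitz (in $\tilde g_1$) map while keeping each vertex of $P$ in its own $s$-ball. The fact that this is even possible is precisely the numerical coincidence $(1-a)m = n$ forced by the choice $a=(m-n)/m$; any larger expansion exponent $a$ would overcrowd the target and QEL would fail. Everything else is bookkeeping: verifying that scaling turns the original problem into a well-posed QEL instance, checking the scale-invariant derivative bound $s\|\nabla T_0\|_{C^0}\lesssim 1$, and confirming that the resulting embedding is isotopic to the inclusion (which follows from $\dim P < (m-1)/2$ via the standard general-position isotopy already used inside the proof of QEL).
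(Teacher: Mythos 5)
Your high-level plan agrees with the paper's: reduce to the Quantitative Embedding Lemma, identify $P = Q \times Y$ of dimension $m-k < (m-1)/2$, rescale $g_1$ by $\delta^{-2a}$ to work at scale $s = \delta^{1-a}$, and observe that the numerical coincidence $(1-a)m = n$ is what makes the vertex-count hypothesis possible. That part is right, and is essentially the paper's argument.

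However, there are two genuine gaps in the way you build and verify the initial data $(I_0, T_0)$.

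First, your construction of $T_0$ is too loose. You propose to take, on each piece, ``the linear isomorphism sending a $\tilde g_1$-orthonormal frame on $TU_W$ to a $g_0$-orthonormal frame,'' but there is an $O(m)$'s worth of ambiguity in that choice, and if made independently on different pieces of diameter $s$ the resulting $T_0$ need not satisfy $s\|\nabla T_0\|_{C^0} \lesssim 1$: it could oscillate between adjacent pieces, picking up a derivative of order $s^{-1}$ times a full rotation. Verifying hypothesis (5) is actually the longest and most delicate computation in the paper's proof; it goes through by writing $T_0$ explicitly as $(\Triv^N)^{-1}\circ\Triv^M$ with $\Triv^M = \Triv^N \circ dI' \circ S$, where $S$ is the diagonal scaling by $\delta^{1-a}$ on $TY$ and $\delta^{-a}$ on $TB_r(y_0)$, and then applying the Leibniz rule term by term using that $\nabla S = 0$ (because the product splitting of $TM$ is parallel for both metrics) and that $\nabla dI'$, $\nabla\Triv^N$ are $\delta$-independent. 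Your assertion that the bound ``holds because $T_0$ varies only over pieces of diameter $s$'' replaces this computation with a claim that isn't true without the specific structured choice.

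Second, your justification of hypothesis (7) is incorrect as stated. Connectedness of $\operatorname{GL}_m^+$ only gives a pointwise path between $T_0(x)$ and $dI'(x)$ for each $x$; it does not give a homotopy of bundle maps continuous in $x$. Indeed, fixing a homotopy of base maps, the discrepancy $T_0 (dI')^{-1}$ defines a map $U_W \to \operatorname{GL}_m^+$ whose homotopy class is not automatically trivial — $U_W$ retracts onto $P = Q\times Y$ and $\pi_1(\operatorname{GL}_m^+) \ne 0$, so there is a genuine potential obstruction. The paper evades this entirely by \emph{designing} $T_0$ so the homotopy is built in: $T_0 = (\Triv^N)^{-1}\circ\Triv^N\circ dI'\circ S$ reduces, after homotoping $I_0 \rightsquigarrow I'$, to $dI'\circ S$, and then one contracts the scaling $S$ to the identity through the (contractible) positive diagonal group. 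So the homotopy is by construction, not by a general cellular obstruction-theory argument. To make your version correct you would need to show that your arbitrarily chosen frames can in fact be chosen to define the same $T_0$ as the paper's, which amounts to redoing the paper's construction.

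Your description of $I_0$ is also vaguer than what the paper does (project $M = B_r(y_0)\times Y$ onto the $B_r(y_0)$ factor, then compose with a fixed $C(n)$-bilipschitz ``folded'' embedding $i_0$ of a long rectangle into $S^m$), but that part is more a matter of presentation than a logical gap: the paper's projection-plus-folding realizes concretely the ``spread the pieces over $S^m$'' idea and simultaneously makes the vertex-count check and the Lipschitz bound transparent.
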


Given this lemma, the construction of $G$ is straightforward.  Since $I$ is isotopic to the inclusion map $U_W \subset S^m$, we can extend $I$ to a diffeomorphism from $S^m$ to $S^m$.  This diffeomorphism is $G^{-1}$.  Because $I$ is very expanding on $U_W$, it follows that $G$ is very contracting on $G^{-1}(U_W)$.  So Lemma \ref{emb1} implies Lemma \ref{precomp}.

Lemma \ref{emb1} is the main step in the proof of the h-principle.  It is proven by a quantitative general position argument.  The set $U_W$ is a small neighborhood of a polyhedron $P$ of dimension $p = m-k$.  The condition $k > (m+1)/2$ implies that $p < (m-1)/2$.  By a standard general position argument, any two embeddings from $P$ into $S^m$ are isotopic.  The quantitative embedding lemma from Section \ref{sectionquantemb} allows us to construct embeddings
from neighborhoods of $P$ with geometric control of the embedding.  With a little work, we will see that Lemma \ref{emb1} follows from the quantitative
embedding lemma.

Before turning to the proofs, we describe the construction in a simple example, to try to help the reader visualize the
situation.

Let us suppose that $k =n $ and that $m = n+1$.  Since $k=n$ the dimension of $Q$ is zero.  So $Q$ is simply a grid of points 
with spacing $\delta$, with a total of $\sim \delta^{-n}$ points.  The set $V_W$ is simply a union of balls, centered at the points of $Q$ and with radius $W \delta$.

Since $m = n+1$, the set $Y = F_0^{-1}(y_0)$ is a compact 1-dimensional manifold in $S^m$.  So $Y$ is a union of circles.  For simplicity, let's consider the case that $Y$ is a single circle.  The circle $Y$ is independent of $\delta$, so the length of $Y$ is $\sim 1$.  Now the set $U_W = F_0^{-1}(V_W)$ consists of $\sim \delta^{-n}$ cylinders.  Each cylinder is diffeomorphic to $S^1 \times B^n$.  Geometrically, each cylinder is roughly a product of a circle of length $\sim 1$ and an $n$-ball of radius $\sim \delta$.  

So far, we have discussed the geometry of $U_W$ in the metric $g_0$.  In the metric $g_1$, each cylinder is a product of a circle with length $\sim \delta$ by a ball with radius $\sim \delta$.  The metric $g_1$ is much smaller than $g_0$ in the direction along the fibers of $F_0$, and it approximately agrees with $g_0$ perpendicular to the fibers of $F_0$.

We can construct an embedding from $(U_W, g_1)$ into $(S^m, g_0)$ as follows.  First find $\delta^{-n}$ disjoint balls inside of $(S^m, g_0)$.  Each
ball will have radius $\sim \delta^{\frac{n}{n+1}}$.  Embed each tube of $(U_W, g_1)$ into one of these balls.  In the metric $g_1$, the tube has length $\sim \delta$ and thickness $\sim \delta$.   But in the target ball, the image will be a tube of length $\sim \delta^{\frac{n}{n+1}}$ and thickness $\sim \delta^{\frac{n}{n+1}}$.  Therefore, the embedding can expand all lengths by a factor $\sim \delta^{- \frac{1}{n+1}}$ as desired.

So far this discussion works for any $n \ge 2$.  We still need to make sure that our embedding is isotopic to the inclusion $U_W \subset S^m$.  Here we will see a difference between the case $n=2$ and the case $n \ge 3$.  In the case $n=2$, if the map $F_0: S^3 \rightarrow S^2$ has non-zero Hopf invariant, then the tubes of $U_W$ are all linked with each other.  In the embedding from the last paragraph, the tubes are mapped into disjoint balls,
and so their images are unlinked.   Therefore, the embedding is not isotopic to the identity.  (In this case, $k = (m+1)/2$, and the hypotheses of Lemma \ref{emb1} are not satisfied.)  But when $n = 3$, this obstruction disappears because tubes in $S^4$ cannot be linked.  We still need to arrange that on each tube individually, the embedding we construct is isotopic to the inclusion, but this turns out to be reasonably straightforward.  The key point is that 1-dimensional curves in $S^3$ can be linked, but 1-dimensional curves in $S^4$ cannot be.

At this point, we can say more about how general position arguments are relevant to our problem.  Recall that the set $V_W$ is a small neighborhood of a polyhedron $Q$ of dimension $n-k$.  The set $U_W := F_0^{-1}(V_W)$ is a small neighborhood of a polyhedron $P = Y \times Q$ of dimension $(m-n) + (n-k) = m-k$.  Let $p = m-k$ be the dimension of $P$.  The hypothesis of our construction is that $k > (m+1)/2$, which is equivalent to $p < (m-1)/2$.  So our 
set $U_W$ is a small neighborhood of a polyhedron of dimension $p < (m-1)/2$.  The quantitative general position arguments from the last
section show that, because of the dimension condition $p < (m-1)/2$, the set $U_W \subset S^m$ may be isotoped rather freely.  

For our particular application, we want to perform a certain type of isotopy.  Recall that $F_0: U_W \rightarrow V_W$ is a submersion with fibers diffeomorphic to $Y$.  We want to isotope $U_W$ so that the cross-section grows while allowing the fibers to shrink.  In the example above, $U_W$
is a union of tubes of length $\sim 1$ and radius $\sim \delta$.  The fiber is a circle of length $\sim 1$ and the cross-section is an $(m-1)$-ball of radius $\sim \delta$.  We described how to isotope these tubes so that the cross-section grows from radius $\sim \delta$ to radius $\sim \delta^{\frac{n-1}{n}}$.  The isotopy also shrinks the length of the fibers from $\sim 1$ to $\sim \delta^{\frac{n-1}{n}}$.  Lemma \ref{emb1} generalizes this construction to all dimensions, as long as $k > (m+1)/2$.

\subsection{The squeezing map} \label{squeezesec}

In this section, we prove Lemma \ref{squeeze}.  First we recall the statement.

\vskip10pt

{\it We will construct a degree 1 map $\Psi$ from $S^n$ to $S^n$ with the following properties.
On the set $V_W \subset S^n$, the 1-dilation of $\Psi$ is $\lesssim 1$.  
On the complement of $V_W$, the $k$-dilation of $\Psi$ is identically zero.  This happens because $\Psi$
maps the complement of $V_W$ into a $(k-1)$-dimensional subset of $S^n$.}

\vskip10pt

\begin{proof}  We begin by constructing a squeezing map in the setting of the cubical lattice in $\mathbb{R}^n$.  Let $\Sigma$ be
the unit cubical lattice in $\mathbb{R}^n$.  Let $\bar \Sigma$ denote the dual polyhedral structure to
$\Sigma$.  So there is a vertex of $\bar \Sigma$ in the center of each $n$-cube of $\Sigma$, and there is an edge of $\bar \Sigma$ perpendicular to each $(n-1)$-face of $\Sigma$, etc.  So $\bar \Sigma$ is also a unit cubical lattice, shifted by $(1/2, ..., 1/2)$ relative to $\Sigma$.  We let $\Sigma^d$ denote the d-skeleton of $\Sigma$ and $\bar \Sigma^d$ denote the d-skeleton of $\bar \Sigma$.  The key point of our construction is that the complement of $\Sigma^{n-k}$ retracts to $\bar \Sigma^{k-1}$.  The next lemma gives a more quantitative version of this fact.

\begin{lemma} \label{periodsqueeze} For any $W > 0$, there is a $\mathbb{Z}^n$-periodic map $R$ from $\mathbb{R}^n$ to $\mathbb{R}^n$ with the following properties:

\begin{itemize}

\item $R$ maps the complement of the $W$-neighborhood of $\Sigma^{n-k}$ to $\bar \Sigma^{k-1}$,

\item For any point $y \in \mathbb{R}^n$, $|R(y) - y | \le C(W)$,

\item $\Dil_1 R \le C(W)$.

\end{itemize}

\end{lemma}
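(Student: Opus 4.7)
The key topological fact underlying the lemma is that $\mathbb{R}^n \setminus \Sigma^{n-k}$ deformation retracts onto $\bar\Sigma^{k-1}$: each dual $d$-cell of $\bar\Sigma$ with $d \ge k$ is a $d$-cube that meets a unique primal $(n-d)$-cell of $\Sigma^{n-k}$ transversally at its center, and the dual cell minus this center retracts radially onto its boundary. Iterating this for $d = n, n-1, \ldots, k$ yields a retraction from $\mathbb{R}^n \setminus \Sigma^{n-k}$ onto $\bar\Sigma^{k-1}$. My plan is to realize this retraction by an explicit $\mathbb{Z}^n$-equivariant PL map, using the $W$-neighborhood cutoff to turn each radial projection into a Lipschitz radial ``squash''.

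For the top dimension I would define $R_n: \mathbb{R}^n \to \mathbb{R}^n$ by
\[
R_n(x) := v(x) + \frac{x - v(x)}{2 \max(|x - v(x)|_\infty,\, W)},
\]
where $v(x) \in \mathbb{Z}^n$ is the nearest primal vertex in the sup-norm. On the face $|x - v|_\infty = 1/2$ between two adjacent dual $n$-cubes the formula simplifies to $R_n(x) = x$, which both ensures continuity across dual cell boundaries and makes $R_n$ commute with $\mathbb{Z}^n$-translation. Outside the $W$-sup-neighborhood of $\Sigma^0$, $R_n$ maps into $\bar\Sigma^{n-1}$ (the faces of dual $n$-cubes). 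Elementary computation gives $|R_n(x) - x| \le \sqrt{n}/2$ and $\Dil_1 R_n \lesssim 1/W$. Analogous $\mathbb{Z}^n$-equivariant maps $R_{n-1}, \ldots, R_k$ at lower dimensions, each a radial squash within the appropriate dual cells away from their central primal cell-midpoints, are built in the same way; the composite $R := R_k \circ R_{k+1} \circ \cdots \circ R_n$ is then the desired map. Its displacement is bounded by a dimensional constant and its Lipschitz constant by a product $\lesssim W^{-(n-k+1)}$, both acceptable as $C(W)$.

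The main technical obstacle is ensuring that the successive radial squashes interact cleanly: a point originally far from $\Sigma^1$ but close to $\Sigma^0$ gets stretched by $R_n$ and could in principle end up near a primal edge, which would wreck the next iteration $R_{n-1}$. To handle this I would use a geometric sequence of cutoff radii, replacing the single $W$ at step $d$ by $W_d$ chosen so that $R_d$ never moves points out of their appropriate ``safe region'' with respect to $\Sigma^{n-d+1}$. Choosing $W_d = c^{n-d} W$ for a small dimensional constant $c$ suffices; one then checks by induction on $d$ that the composite $R_d \circ \cdots \circ R_n$ maps the complement of $N_W(\Sigma^{n-d})$ into $\bar\Sigma^{d-1}$, and the final Lipschitz estimate $\Dil_1 R \le C(W)$ follows by multiplying the individual Lipschitz bounds. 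The induction itself is routine PL bookkeeping within the dual cell structure and introduces no new ideas beyond the first step.
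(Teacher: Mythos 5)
Your construction is a genuinely different proof strategy from the one in the paper, though it rests on the same topological fact: $\mathbb{R}^n \setminus \Sigma^{n-k}$ deformation retracts onto $\bar\Sigma^{k-1}$ by iterated radial projection from the centers of the dual cells. The paper treats that fact as a black box. It passes to the torus $T^n = \mathbb{R}^n/\mathbb{Z}^n$, uses the homotopy extension property to promote the (restricted) deformation retraction to a smooth map $R_{per}\colon T^n \to T^n$ homotopic to the identity which collapses $T^n \setminus N_W(\Sigma_{per}^{n-k})$ into $\bar\Sigma_{per}^{k-1}$, and then gets the displacement and Lipschitz bounds for free from compactness of $T^n$. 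That argument is a few lines and delivers exactly the qualitative bounds the lemma asks for; yours is explicit and, if completed, would give an effective $C(W) \sim W^{-(n-k+1)}$, which the paper does not need.

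Your top-dimensional map $R_n$ is correct: the displacement bound $\le \sqrt{n}/2$, the Lipschitz bound $\lesssim W^{-1}$, the continuity across dual cell walls (since $R_n = \mathrm{id}$ on $\{|x-v(x)|_\infty = 1/2\}$ when $W<1/2$), and the $\mathbb{Z}^n$-equivariance (which in fact comes from $v(x+m)=v(x)+m$, not from the boundary identity) all check out. But there is a real gap in the iteration as written. $R_n$ is surjective onto $\mathbb{R}^n$: the $W$-neighborhood of $\Sigma^0$ is stretched to fill the interiors of the dual $n$-cubes, while its complement is sent to $\bar\Sigma^{n-1}$. The ``analogous'' map $R_{n-1}$ --- a radial squash within each dual $(n-1)$-cell away from its center --- is naturally defined only on $\bar\Sigma^{n-1}$, not on $\mathbb{R}^n$, and you never say how $R_{n-1},\dots,R_k$ are to be extended off the relevant skeleta so that the composite makes sense on all of $\mathbb{R}^n$. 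One standard fix is to cone each $R_d$ radially over the higher-dimensional dual cells, but then the displacement and Lipschitz bounds for the extension must be re-verified; that is not done in the proposal. The ``routine PL bookkeeping'' you defer is also more than routine: at each stage one must show that the image of the set far from $\Sigma^{n-d}$ avoids a definite neighborhood of the centers of the dual $d$-cells, which requires tracking angular separation under the radial projections (an estimate of the flavor $|u - e_j| \ge \Dist(x, E_j)/|x-v|_\infty$). Your geometric cutoff sequence $W_d = c^{n-d}W$ is a sensible hedge, but the lemma is not proved until those extensions and estimates are supplied. In sum, your plan can be completed, but the finished argument would be substantially longer than the paper's torus/HEP/compactness proof; if explicit dependence on $W$ is not needed, the soft route is far more economical.
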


\begin{proof} We will work on the torus $T^n = \mathbb{R}^n / \mathbb{Z}^n$.  Because $\Sigma$ and $\bar \Sigma$ are periodic, they descend to polyhedral structures on the torus, which we call $\Sigma_{per}$ and $\bar \Sigma_{per}$.  Now $T^n \setminus \Sigma_{per}^{n-k}$ deformation retracts to $\bar \Sigma_{per}^{k-1}$.  By the homotopy extension property, we can homotope the identity map to a smooth map $R_{per}$ which retracts the complement of the $W$-neighborhood of $\Sigma_{per}^{n-k}$ to $\bar \Sigma_{per}^{k-1}$.  Then we lift $R_{per}$ to a $\mathbb{Z}^n$-periodic map $R$ from $\mathbb{R}^n$ to itself.

The first property follows because $R_{per}$ maps $T^n \setminus N_W(\Sigma_{per}^{n-k})$ to $\bar \Sigma_{per}^{k-1}$.  The second property follows because $R_{per}$ is homotopic to the identity.  The last property follows because $R_{per}$ is a smooth map on a compact manifold.  \end{proof}

The map $R$ ``squeezes'' $\mathbb{R}^n \setminus N_W(\Sigma^{n-k})$ into $\bar \Sigma^{k-1}$, and it expands $N_W(\Sigma^{n-k})$ to fill in the space.  We are going to do the same thing at a small scale $\delta$ on the sphere $S^n$.  First we switch scales.

We let $\Sigma_\delta$ be the cubical lattice of side length $\delta$ in $\mathbb{R}^n$, we let $\bar \Sigma_\delta$ be the dual structure, and so on.  By just rescaling the lemma above we get the following.

\begin{lemma} \label{periodsqueezedelta} Let $W > 0$ be fixed independent of $\delta$.  Then for each $\delta > 0$, there is a $\delta \mathbb{Z}^n$-periodic map $R_\delta$ from $\mathbb{R}^n$ to $\mathbb{R}^n$ with the following properties:

\begin{itemize}

\item $R_\delta$ maps the complement of the $W \delta$-neighborhood of $\Sigma^{n-k}_\delta$ to $\bar \Sigma^{k-1}_\delta$,

\item For any point $y \in \mathbb{R}^n$, $|R_\delta(y) - y | \lesssim \delta $,

\item $\Dil_1 R_\delta \lesssim 1 $.

\end{itemize}

\end{lemma}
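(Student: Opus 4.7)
The plan is to obtain $R_\delta$ by rescaling the map $R$ constructed in Lemma \ref{periodsqueeze} with the same choice of $W$. Specifically, I would define
\[
R_\delta(y) := \delta \cdot R(y/\delta).
\]
Then I would verify the three conclusions by unwinding what rescaling does to each of the three properties of $R$.

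First, since $R$ is $\mathbb{Z}^n$-periodic, the map $y \mapsto R(y/\delta)$ is $\delta \mathbb{Z}^n$-periodic, and multiplying by $\delta$ preserves this periodicity. Second, scaling by $\delta$ carries $\Sigma^{n-k}$ to $\Sigma^{n-k}_\delta$ and $\bar\Sigma^{k-1}$ to $\bar\Sigma^{k-1}_\delta$, and it carries the $W$-neighborhood of $\Sigma^{n-k}$ to the $W\delta$-neighborhood of $\Sigma^{n-k}_\delta$. So the first property of $R$ translates directly into the first property of $R_\delta$. Third, the displacement bound satisfies
\[
|R_\delta(y) - y| = \delta \left| R(y/\delta) - y/\delta \right| \le C(W)\,\delta,
\]
which is the second property. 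Fourth, a conjugation by a dilation leaves the Lipschitz constant unchanged, so $\Dil_1 R_\delta = \Dil_1 R \le C(W)$, giving the third property. Since $W$ is fixed independent of $\delta$, the constant $C(W)$ is absorbed into the $\lesssim$ notation.

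There is no real obstacle here: the lemma is a purely mechanical rescaling of the already-established Lemma \ref{periodsqueeze}, and the only thing to be careful about is making sure that the $W$ in the hypothesis of Lemma \ref{periodsqueeze} matches the $W$ appearing in the statement of Lemma \ref{periodsqueezedelta}, which it does by construction.
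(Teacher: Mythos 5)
Your proof is correct and takes exactly the approach the paper has in mind: the paper simply says ``By just rescaling the lemma above we get the following,'' and your definition $R_\delta(y) = \delta R(y/\delta)$ is that rescaling, with all three properties verified by direct computation. One small clarification worth being aware of: in Lemma \ref{periodsqueeze} the map $R$ is a lift of a torus map homotopic to the identity, so ``$\mathbb{Z}^n$-periodic'' really means $R(x+v) = R(x) + v$ for $v \in \mathbb{Z}^n$ (equivalently $R - \mathrm{id}$ is periodic), and correspondingly $R_\delta(y + \delta v) = R_\delta(y) + \delta v$; your computation respects this even if the phrase ``$y \mapsto R(y/\delta)$ is $\delta\mathbb{Z}^n$-periodic'' reads slightly loosely.
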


Finally, we adapt this squeezing map from $\mathbb{R}^n$ to the unit sphere $S^n$.  Recall that $B_r(y_0) \subset S^n$ is close to Euclidean because we can assume that $r \le 1/10$.  We use the exponential map to identify $B_r(y_0)$ with $B^n_r \subset \mathbb{R}^n$.  Recall that $Q^{n-k} \subset B^n_r$ is just the union of all faces of $\Sigma^{n-k}$ inside of $B^n_r$.  Recall that $V_W \subset B^n_r$ is just the $W \delta$-neighborhood of $Q$.  Since $B^n_r$ is identified with $B_r(y_0)$, we can also think of $V_W$ as a subset of $B_r(y_0) \subset S^n$.

We are now ready to construct the map $\Psi$.  Let $B' = B_{r/4}(y_0)$, which we identify with $B^n_{r/4} \subset \mathbb{R}^n$.  Let $\Phi: S^n \rightarrow S^n$ be a degree 1 map which collapses $S^n \setminus B'$ to a point $q$.  We define $\Psi$ as follows.

\begin{itemize}

\item If $y \in 3 B'$, then $\Psi(y) = \Phi \circ R_\delta (y)$.

\item If $y \notin 3 B'$, then $\Psi(y) = q$.  

\end{itemize}

First we check that these definitions match up to give a globally defined smooth map.  We should say a bit more about the definition in the first case.  If $\delta > 0$ is small enough, then $R_\delta$ maps $B_{3r/4}^n$ into $B_r^n$; so by a slight abuse of notation we can think of it as a map from $3 B'$ into $B_r(y_0) \subset S^n$.  The definitions match because if $y$ lies in $3 B' \setminus 2 B'$, then $R_\delta(y)$  maps $y$ to the complement of $B'$, and so $\Phi \circ R_\delta(y) = q$.  

We check that $\Psi$ has degree 1.  We let $R_{t,\delta}(y) = (1-t) R_\delta(y) + t y$ be a straight-line homotopy from $R_\delta$ to the identity.  We define $\Psi_t$ by replacing $R_\delta$ with $R_{t,\delta}$ in the definition above.  Each $R_{t, \delta}$ obeys the displacement bound $|R_{t, \delta}(y) - y| \lesssim \delta$, and so the argument above shows that $\Psi_t$ is a continous family of maps.  Therefore, $\Psi$ is homotopic to $\Phi$ and has degree 1.

Next we check the geometric properties of $\Psi$.  Since $R_\delta$ and $\Phi$ each have 1-dilation $\lesssim 1$, we see that $\Psi$ has 1-dilation $\lesssim 1$.  Suppose that $y$ is in the complement of $V_W$.  We have to check that $\Lambda^k d\Psi_y = 0$.  If $y$ does not lie in $2 B'$, then $\Psi$ collapses a neighborhood of $y$ to a point, and so $d\Psi_y = 0$.  Suppose $y$ lies in $2 B'$ but not in $V_W$.  After identifying $B_r(y_0)$ with $B^n_r$, $y$ lies in $B_{r/2}^n$ but not in $N_{W \delta} \Sigma_\delta^{n-k}$.  Therefore, $R_\delta$ maps a neighborhood of $y$ to
the $(k-1)$-dimensional polyhedron $\bar \Sigma_\delta^{k-1}$.  Therefore, $\Lambda^k d R_{\delta,y} = 0$, and so $\Lambda^k d \Psi_y = 0$ as well. \end{proof}

\subsection{Quantitative embedding}

Now we show that Lemma \ref{emb1} follows from the quantitative embedding lemma in the last section.  First we recall the statement
of Lemma \ref{emb1}.

\vskip10pt

{\it If $k > (m+1)/2$, then there is a constant $W \gtrsim 1$, and an embedding $I: (U_W, g_1) \rightarrow (S^m, g_0)$ which is isotopic to the inclusion $U_W \subset S^m$, and which increases all lengths by a factor $\gtrsim \delta^{-a}$.  Here $a = \frac{m-n}{m} > 0$. }

\vskip10pt

Now we give the proof of Lemma \ref{emb1}.

\begin{proof}  In order to apply the quantitative embedding lemma, we need to define, $M$, $P$, $N$, $I_0$, etc.

We let $M$ be $B_r(y_0) \times Y$.  Recall that $h_0$ is the unit sphere metric on $B_r(y_0) \subset S^n$, and that $g_Y$ is the metric on
$Y \subset S^m$ induced from the unit sphere metric of $S^m$.  Recall that $g_1$ is defined to be $h_0 + \delta^2 g_Y$.  The volume of $(M, g_1)$ is $\sim \delta^{m-n}$.  We rescale the metric to have volume $\sim 1$: we let $g = \delta^{-2 \frac{m-n}{n}} g_1 = \delta^{-2a} g_1$.  We have now defined $(M,g)$.

We recall that $Q \subset B_r(y_0)$ is the (n-k)-skeleton of a cubical lattice with spacing $\delta$ (with respect to the metric $h_0$).  We recall that $P = Q \times Y \subset B_r(y_0) \times Y = M$.  After picking a triangulation of $Y$, we can view $P$ as a polyhedron embedded in $M$.  The pair $P \subset (M, g_1)$ has bounded local geometry at scale $\delta$.  (In fact, the local geometry of the $P \subset (M, g_1)$ at scale $\delta$ is essentially independent of $\delta$.  In other words, if we take any $\delta > 0$, and then look at a $\delta$-neighborhood in $(M, g_1)$ and rescale it to size 1, the result will be essentially independent of $\delta$.)  After rescaling, we see that $P \subset (M, g)$ has
bounded geometry at scale $s = \delta^{-a} \delta = \delta^{1-a}$.  

Because $k > (m+1)/2$, we recall that $p = \Dim P < (m-1)/2$.  (To check this, note that $Q$ was has dimension $n-k$, $Y$ has dimension $m-n$, and so $p = (n-k) + (m-n) = m - k$.)

The target $N$ is just $(S^m, g_0)$, which has bounded geometry at scale 1.

Recall that $F_0 \times \pi_Y: F_0^{-1}(B_r(y_0)) \rightarrow B_r(y_0) \times Y$ is a diffeomorphism.  The inverse of this diffeomorphism is our embedding $I' : M \rightarrow N$.  (We have $M = B_r(y_0) \times Y \rightarrow F_0^{-1}(B_r(y_0)) \subset S^m = N$.)

It remains to construct a 1-Lipschitz map $I_0: (M, g) \rightarrow (S^m, g_0)$ which maps at most $\mu \lesssim 1$ vertices
of $P$ into any ball of radius $s$ in $(S^m, g_0)$.  Geometrically, the $Y$-factor in $(M,g)$ is very small, and so $(M,g)$ looks 
essentially like an $n$-dimensional disk of radius $\delta^{-a} = \delta^{-1} s$.  As our mapping, we fold up this disk inside of $(S^m, g_0) = N$.  

Here are the details.  We begin with the projection $\pi_B: M = B_r(y_0) \times Y \rightarrow B_r(y_0)$.  Recall that the metric $g$ is
a product metric $g = \delta^{-2a} h_0 + \delta^{2-2a} g_Y$.  So $\pi_B: (M,g) \rightarrow (B_r(y_0), \delta^{-2a} h_0)$ has Lipschitz constant 1.
Now $(B_r(y_0), \delta^{-2a} h_0)$ is bilipschitz equivalent to a Euclidean ball of radius $\delta^{-a} = \delta^{-1} s$ (and the bilipschitz constant is $\lesssim 1$).  So up to a controlled bilipschitz error, we can identify $(B_r(y_0), \delta^{-2a} h_0)$ with the Euclidean ball $B^n(\delta^{-1}s)$.  

Next we want to fold up this $n$-ball inside of $(S^m, g_0)$.  To fold it up in a useful way, we consider the product $B^n (\delta^{-1}s) \times [-s, s]^{m-n}$, which is an $m$-dimensional convex set with volume $\sim 1$.  It admits an embedding $i_0$ into a hemisphere of $(S^m, g_0)$ with bilipschitz constant
$\lesssim 1$.  (For the details of this embedding, see the appendix in Section \ref{bilipembed}.)  By a slight rescaling, we can
arrange that $i_0: (B_r(y_0), \delta^{-2a} h_0) \rightarrow (S^m, g_0)$ has Lipschitz constant $\le 1$.  Now we define $I_0 = i_0 \circ \pi_B$.  The map $I_0: M \rightarrow N$ has Lipschitz constant $\le 1$.

We have to check that $I_0$ maps $\mu \lesssim 1$ vertices of $P$ into each ball of radius $s$ in $N = (S^m, g_0)$.
If $B(s)$ is a ball of radius $s$ in $(S^m, g_0)$, then $i_0^{-1}(B(s))$ is contained in  $\lesssim 1$
balls of radius $\lesssim s$ in $(B_r(y_0), \delta^{-2a} h_0)$.  
Since $(Y, \delta^{2-2a} g_Y)$ has diameter $\lesssim s$, the preimage $\pi_B^{-1}$ of a ball of radius $\lesssim s$ in $(B_r(y_0), \delta^{-2a} h_0)$ is contained in a ball of radius $\lesssim s$ in $(M, g)$.  Therefore, $I_0^{-1}( B(s) )$ is contained in $\lesssim 1$ balls of radius $\lesssim s$ in $(M, g)$.  Since $P \subset (M, g)$ has uniformly bounded local geometry at scale $s$, we see that $I_0^{-1}(B(s))$ contains $\mu \lesssim 1$ vertices of $P$.

Finally, we have to define $T_0$.  Without loss of generality, we can assume that $F_0^{-1}(B_r(y_0)) = I'(M)$ is contained in a hemisphere of $(S^m, g_0)$, and that $I_0: M \rightarrow (S^m, g_0)$ is contained in the same hemisphere.  We let $N_{hemi} \subset N$ be this hemisphere.  The tangent
bundle of the hemisphere $N_{hemi}$ is trivial.  We pick a particular trivialization by doing parallel transport on the geodesics to the pole, and we get
a trivialization map $\Triv^N: T N_{hemi} \rightarrow \mathbb{R}^m$, which is an isometry on each tangent space.  Also $| \nabla \Triv^N | \lesssim 1$.  
We would like to also find a trivialization of $TM$.  One option is to take $\Triv^N \circ dI': TM \rightarrow \mathbb{R}^m$.  This is a trivialization, but
the map on each tangent space is far from an isometry, because the map $I': (M,g) \rightarrow (N,h)$ is far from an isometry.  Recall that $TM = TY \oplus TB_r(y_0)$.  Let $S: TM \rightarrow TM$ be the map that multiplies each vector in the $Y$-direction by $\delta^{1-a}$ and each vector in the $B_r(y_0)$-direction by $\delta^{-a}$.  Then $dI' \circ S: TM \rightarrow TN$ has bilipschitz constant $\lesssim 1$ (at each point of $M$).  We define $\Triv^M = \Triv^N \circ dI' \circ S$.  It has bilipschitz constant $\lesssim 1$ on each tangent space.  With these two trivializations, we can define a fiberwise isomorphism $T_0: TM \rightarrow TN$ covering $I_0$ as the unique map that commutes with the two trivializations.  In other words, if $x \in M$ and $v \in T_xM$, then 

$$ T_0 (x,v) = \left( I_0(x), (\Triv^N_{I_0(x)})^{-1} \left( \Triv^M_x v \right) \right). $$

Because both trivializations are bilipschitz, the map $T_0$ has fiberwise bilipschitz constant $\lesssim 1$. 

We have to show that our $T_0$ is homotopic to $dI'$ in the category of fiberwise isomorphisms.  Because $I_0$ and $I'$ both map $M$ to a contractible
hemisphere, they are homotopic.  So we may homotope $T_0$ to

$$ \left( I'(x), (\Triv^N_{I'(x)})^{-1} \left( \Triv^M_x v \right) \right) = $$

$$ \left( I'(x), dI' \circ S (x,v) \right). $$

To finish, we may homotope $S$ to the identity, and thus homotope this last map to $dI'$.  

Finally, we have to check that $T_0$ does not oscillate too rapidly -- in particular that $s | \nabla T_0 | \lesssim 1$.

Roughly speaking, if $dist_g(x_1, x_2) = d$, then the distance in $(S^m, g_0)$ from $I'(x_1)$ to $I'(x_2)$ is $\lesssim s^{-1} d$, and so the change between
$T_0(x_1)$ and $T_0(x_2)$ is $\lesssim s^{-1} d$ also.  Therefore, $s | \nabla T_0 | \lesssim 1$.  

Here are more details.  We have $T_0$ a bundle map from $(TM, g)$ to $(TN, h = g_0)$.  Each bundle is equipped with a metric and a connection, and so
we can define $\nabla T_0$ and $| \nabla T_0 |$.  

We have $T_0(x,v) =  \left( I_0(x), (\Triv^N_{I_0(x)})^{-1} \left( \Triv^M_x v \right) \right). $   The first component $I_0(x)$ is harmless, because $I_0: 
(M,g) \rightarrow (N,h)$ has $| \nabla I_0 | \lesssim 1$.  We concentrate on the second component.  The trivialization $\Triv^N$ is also harmless since
we have $| \nabla \Triv^N | \lesssim 1$.  So it suffices to check that $| \nabla \Triv^M | \lesssim s^{-1}$.  We recall that

$$ \Triv^M := \Triv^N \circ dI' \circ S . $$

To analyze this composition, we have to think about the domain and range of each map in the right way, as follows:

$$ S: (TM, g) \rightarrow (TM, g_Y \oplus h_0),$$

$$ dI': (TM, g_Y \oplus h_0) \rightarrow (TN, h), $$

$$ \Triv^N: (TN, h) \rightarrow \mathbb{R}^m.$$

Each of the spaces in the list above is a bundle equipped with a metric and a corresponding connection.  (If we like, $\mathbb{R}^m$ is a trivial bundle 
over a point.)  Using the relevant metrics, we can define the operator norm of $S, dI',$ and $\Triv^N$, and they are all $\lesssim 1$.  
Using the relevant metrics and connections, we can define $\nabla S$, $\nabla dI'$, and $ \nabla \Triv^N$.  They are each quite nice.
The first of them, $\nabla S$, is identically zero.  This is because the splitting $TM = TY \oplus TB_r(y_0)$ is parallel with respect to both $g_Y \oplus
h_0$ and $g = \delta^{2-2a} g_Y \oplus \delta^{-2a} h_0$.  The other two obey $| \nabla dI' | \lesssim 1$ and $| \nabla \Triv^N | \lesssim 1$, since after
all neither of these tensors depends on $\delta$.  Just to be clear, when we say $| \nabla dI' | \lesssim 1$, this means that if $v$ is a tangent
vector in $TM$, then $| \nabla_v dI' | \lesssim |v|_{g_Y \oplus h_0}$.  Finally we expand $\nabla_v \Triv^M$ using the Leibniz rule:

$$ \nabla_v \Triv^M = \nabla_{dI'(v)} \Triv^N \circ dI' \circ S + \Triv^N \circ \nabla_v dI' \circ S + \Triv^N \circ dI' \circ \nabla_v S. $$

The last term is just 0.  Since the operators are all bounded and $| \nabla \Triv^N |$ and $|\nabla dI'|$ are bounded, we see that the norm
of this expression is bounded by

$$ |dI'(v)|_{h} + |v|_{g_Y \oplus h_0} \lesssim s^{-1} |v|_{g}.$$

In summary, we have shown that $ | \nabla_v \Triv^M | \lesssim s^{-1} |v|_g$ which is equivalent to $|\nabla \Triv^M | \lesssim s^{-1}$.  

We have now checked all the hypotheses of the quantitative embedding lemma, and we may apply it to finish the proof of Lemma \ref{emb1}.  
The quantitative embedding lemma 
gives us an $L$-bilipschitz embedding $I: (U_W, g) \rightarrow (S^m, g_0)$ isotopic to $I'$ with $W \gtrsim 1$, $L \lesssim 1$.  
In the context of the quantitative embedding lemma, $U_W$ is defined to be the $Ws$-neighborhood of $P \subset (M,g)$.  
But this definition agrees with the previous definition of $U_W$ as $V_W \times Y$.  The embedding $I': U_W \rightarrow S^m$ is just
the inclusion map, and so $I$ is isotopic to the inclusion.  Finally, if we use the metric $g_1 = \delta^{2a} g$ on $U_W$, then
we see that the embedding $I$ from $(U_W, g_1)$ into $(S^m, g_0)$ expands all lengths by a factor $\gtrsim \delta^{-a}$ as desired. \end{proof}

This finishes the proof of Lemma \ref{emb1}, and hence the proof of the h-principle for k-dilation.

\section{Some previous lower bounds for $k$-dilation} \label{prevlower}

For context, we recall here several approaches to get lower bounds on $k$-dilation of maps from the unit $m$-sphere to the
unit $n$-sphere.  There are very few known techniques for this problem.

The most basic estimates for $k$-dilation have to do with $k$-dimensional homology.
Suppose $F: (M, g) \rightarrow (N,h)$.  If $\Sigma^k$ is a $k$-dimensional surface
in $M$, then by definition $\Vol_k (F (\Sigma)) \le \Dil_k(F) \Vol_k(\Sigma)$.  This estimate
passes to homology.  For example, we can assign a volume to a class $h \in H_k(M; \mathbb{Z})$
as the smallest $k$-dimensional volume (or mass) of a cycle $z$ in the class $h$.  Then
$\Vol_k( F_* (h) ) \le \Dil_k(F) \Vol_k (h)$.  This argument implies that a degree $D$ map from the unit
$n$-sphere to itself has $n$-dilation at least $|D|$, which is sharp.

If $m > n$, then maps from $S^m$ to $S^n$ are homologically trivial, and this simple method doesn't give
any information.  The next homotopy classes that were studied were classes with non-zero Hopf invariant.  These
homotopy classes are well-understood by the following theorem.

\begin{ho} (\cite{GMS}, pages 358-359) Let F be a map from $S^{4n-1}$ to
$S^{2n}$.  Then the norm of the Hopf
invariant of $F$ is bounded by $C(n) \Dil_{2n}(F)^2$.  Since the Hopf
invariant is an integer, any map with non-zero Hopf invariant has
$2n$-dilation at least $C(n)^{-1/2}$.
\end{ho}

\proof Let $\omega$ be a $2n$-form on $S^{2n}$ with $\int \omega =
1$.  The pullback $F^*(\omega)$ is a closed $2n$-form on
$S^{4n-1}$.  Since $H^{2n}(S^{4n-1}) = 0$, this form is exact. 
We let $P F^*(\omega)$ denote any primitive of $F^*(\omega)$. 
Then the Hopf invariant of F is equal to $\int_{S^{4n-1}} P
F^*(\omega) \wedge F^*(\omega)$.

We take $\omega$ to be a multiple of the volume form, so $|\omega|
< C$ at every point of $S^{2n}$.  The norm of $F^*(\omega)$ is
bounded by $C \Dil_{2n}(F)$ pointwise.  Therefore, the $L^2$ norm of
$F^*(\omega)$ is bounded by $C \Dil_{2n}(F)$.  

Using Hodge theory, we can
choose $P F^*(\omega)$ to be perpendicular to all of the exact
$(2n-1)$-forms.  For this choice, the $L^2$ norm of $P F^*(\omega)$
is bounded by $\lambda^{-1/2} \| F^* \omega \|_2$, where $\lambda$ is the
smallest eigenvalue of the Laplacian on exact (2n)-forms.  The
eigenvalue $\lambda$ is greater than zero and depends only on n. 
Finally, the norm of the Hopf invariant is bounded by
$|F^*(\omega)|_{L^2} |P F^*(\omega)|_{L^2}$, which is bounded by
$C(n) \Dil_{2n}(F)^2$. \endproof

The 2-dilation for $k=2$ has stronger properties than for $k > 2$, and
there are several special techniques for dealing with it.
The most important result in this direction is the theorem of
Tsui and Wang mentioned in the introduction.

\begin{rtwo} (Tsui and Wang, \cite{TW}) Let $F$ be a $C^1$
map from $S^m$ to $S^n$, where $m \ge 2$.  If the 2-dilation of $F$
is less than 1, then $F$ is nullhomotopic.
\end{rtwo}

The proof by Tsui and Wang uses the mean curvature flow to deform
the graph of the map $F$ as a submanifold of $S^m \times S^n$. 
They prove that the mean curvature flow converges to the graph of
a constant function and that at each time $t$ the flowed
submanifold is the graph of a map $F_t$.  Therefore, $F_t$
provides a homotopy from $F$ to a constant map.

In \cite{GCC} (page 179), Gromov proved a slightly weaker theorem in the same
spirit.  For each $m$ and $n$, there exists a number
$\epsilon(m,n) > 0$, so that any $C^1$ map from $S^m$ to $S^n$
with 2-dilation less than $\epsilon(m,n)$ is null-homotopic.  The proof is based on the
uniformization theorem and the borderline Sobolev inequality.
Here is a sketch of the proof.  We view the map from $S^m$ to $S^n$ as a family of
maps from $S^2$ to $S^n$, parametrized by $B^{m-2}$, where the maps at the 
boundary of $B^{m-2}$ are constant maps.  Let's call the family $F_a: S^2 \rightarrow
S^n$, where $a \in B^{m-2}$.  If the 2-dilation of the original map is less than
$\epsilon$, then each image $F_a(S^2)$ has area less than $4 \pi
\epsilon$.  We change coordinates on each copy of $S^2$ using the uniformization theorem,
so each map $F_a$ becomes conformal.  With care, this can be done in a way that
is continuous in $a$.  After the change of coordinates, we get a (homotopic) map $\tilde F_a:
S^2 \rightarrow S^n$, where each map $\tilde F_a$ has Dirichlet energy less than $C \epsilon$.
By the borderline Sobolev inequality, each map $\tilde F_a$ has BMO norm $\lesssim \epsilon$.
To define the BMO norm, we think of $\tilde F_a$ as a map from $S^2$ to $\mathbb{R}^{n+1}$.
Let $B$ be any ball in $S^2$.  Let $M_a(B)$ be the mean value of $\tilde F_a$ on $B$.  The \
bordeline Sobolev inequality says that the mean value of $|\tilde F_a - M_a(B)|$ on the ball $B$
is $\lesssim \epsilon$.  Now $\tilde F_a$ maps $S^2$ into $S^n \subset \mathbb{R}^{n+1}$.  There is
no reason that $M_a(B)$ must lie in $S^n$.  But the last inequality implies that $M_a(B)$ is rather
close to $\tilde F_a(x)$ for many points $x \in B$.  In particular, it implies that $M_a(B)$ is $\lesssim \epsilon$
from $S^n$.  Now we can homotope $\tilde F_a$ to a constant map by averaging over balls of radius $r$ and
sending $r$ from 0 to $\pi/2$.  This homotopy depends continuously on $a$.  The homotopy does not lie
in $S^n$, but the BMO inequality tells us that it lies in the $C \epsilon$-neighborhood of $S^n$.  As long
as $C \epsilon < 1/2$, we can modify the homotopy to lie entirely in $S^n$.  In summary, we get a homotopy
from our original map to a new map that factors through $B^{m-2}$ and so is contractible.

The following result of Joe Coffey (unpublished) is also relevant to 2-dilation.

\begin{prop} The space of non-surjective pointed maps from $S^2$ to $S^2$ has vanishing homotopy groups.
\end{prop}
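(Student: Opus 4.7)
The plan is as follows. Let $X \subset \Map_*(S^2, S^2)$ denote the space of non-surjective pointed maps, with basepoint $f_0$ the constant map at $* \in S^2$. Since any pointed map satisfies $f(*) = *$, we have $* \in f(S^2)$, so every $f \in X$ must miss some point $p \in S^2 \setminus \{*\}$. For each non-empty finite $T \subset S^2 \setminus \{*\}$, set
\begin{equation*}
X_T := \{f \in X : T \cap f(S^2) = \emptyset\} = \Map_*\bigl(S^2, S^2 \setminus T\bigr),
\end{equation*}
an open subset of $X$ containing $f_0$, satisfying $X_T \cap X_{T'} = X_{T \cup T'}$. The family $\{X_{\{p\}}\}_{p \in S^2 \setminus \{*\}}$ is thus an open cover of $X$ whose pairwise (and higher) intersections are exactly the $X_T$ for larger $T$.

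The first key fact is that every $X_T$ is weakly contractible. Indeed, $S^2 \setminus T$ is a point when $|T| = 1$ and otherwise is homotopy equivalent to the wedge $\vee^{|T|-1} S^1$, which is a $K(\pi, 1)$ for a free group. Since $\Map_*(S^2, Y) = \Omega^2 Y$ for well-pointed $Y$, we have
\begin{equation*}
\pi_j(X_T) = \pi_j(\Omega^2(S^2 \setminus T)) = \pi_{j+2}(S^2 \setminus T) = 0 \qquad \text{for all } j \geq 0.
\end{equation*}
So every non-empty finite intersection of the cover $\{X_{\{p\}}\}$ is weakly contractible.

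To show $\pi_k(X, f_0) = 0$ for a given $k \geq 0$, represent a class by a pointed map $F\colon (S^k, s_0) \to (X, f_0)$. Compactness of $F(S^k)$ yields finitely many $p_1, \ldots, p_N \in S^2 \setminus \{*\}$ with $F(S^k) \subset Y := \bigcup_{i=1}^N X_{\{p_i\}}$. The collection $\{X_{\{p_i\}}\}_{i=1}^N$ is then a finite numerable open cover of $Y$ all of whose non-empty finite intersections $X_{T_I}$ (with $T_I = \{p_i\}_{i \in I}$) are weakly contractible and contain $f_0$, and the nerve of the cover is the full simplex $\Delta^{N-1}$. By the nerve theorem for open covers with weakly contractible intersections (see e.g.\ Hatcher, Corollary 4G.3, or Dugger's primer on homotopy colimits; applicable since $\Map_*(S^2, S^2)$ is metrizable and hence paracompact), $Y$ is weakly equivalent to $\Delta^{N-1}$, hence weakly contractible. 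Since $F$ factors through $Y \hookrightarrow X$ and $f_0 \in Y$, the class $[F]$ vanishes in $\pi_k(Y, f_0)$, and therefore in $\pi_k(X, f_0)$.

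The main technical ingredient is the nerve theorem in this form, and the essential input to it is the $K(\pi, 1)$ calculation above. A more hands-on alternative, which may be preferable if one wants to avoid invoking the nerve theorem, is to build the null-homotopy explicitly: choose a partition of unity $\{\rho_i\}$ subordinate to $\{F^{-1}(X_{\{p_i\}})\}$, form the classifying map $\phi = (\rho_i)_i\colon S^k \to \Delta^{N-1}$, and construct a null-homotopy $H\colon S^k \times [0,1] \to X$ inductively over the skeleta of the cell structure on $S^k$ pulled back from $\Delta^{N-1}$, using at each stage the weak contractibility of the relevant $X_{T_I}$ (together with the nesting $X_{T_J} \subset X_{T_I}$ for $T_I \subset T_J$, which makes the extensions across faces compatible). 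Either route gives $\pi_k(X) = 0$ for every $k \geq 0$.
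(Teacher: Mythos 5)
Your proof is correct and follows essentially the same route as the paper: cover the non-surjective maps by the open sets $\Map_*(S^2, S^2\setminus\{p\})$, observe that all finite intersections $\Map_*(S^2, S^2\setminus T)=\Omega^2(S^2\setminus T)$ are weakly contractible because $S^2\setminus T$ is aspherical, and combine the nerve theorem with compactness of $F(S^k)$. The paper phrases the asphericity as ``the universal cover of $S^2\setminus T$ is contractible'' rather than via the $\Omega^2$/$K(\pi,1)$ computation, but the content is identical.
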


Sketch.  Let $\Map_{NS}(S^2, S^2)$ be the space of non-surjective maps from $S^2$ to $S^2$, taking the base point
of the domain to the basepoint of the range.
Let $X \subset S^2$ be a finite subset, and let $\Map_X \subset \Map_{NS}(S^2, S^2)$ be the set of maps from
$S^2$ to $S^2$ which do not contain $X$ in their image.  The first key point is that $\Map_X$ is
contractible.  This happens because the universal cover of $S^2 \setminus X$ is contractible.
The space $\Map_{NS}(S^2, S^2)$ is the union $\cup_{p \in S^2} \Map_p$.  Each of the sets $\Map_p$ is contractible.  
Any finite interesction $\cap_{i=1}^I \Map_{p_i}$ is the space $\Map_X$ for $X = \cup p_i$, and so it is contractible.
Now by a standard argument with nerves, any finite union $\cup_{i=1}^I \Map_{p_i}$ is contractible.  The sets $\Map_{p}$
are also open.  Therefore, if $f: S^p \rightarrow \Map_{NS}(S^2, S^2)$ is a continuous map, the image $f(S^p)$ lies in a
 finite union of sets $\Map_p$.  Therefore, $f$ is contractible.  By this argument, 
 all the homotopy groups of $\Map_{NS}(S^2, S^2)$ vanish.  This finishes the sketch of the proof.

As a corollary, we see that every map from the unit $m$-sphere to the unit 2-sphere
with 2-dilation $< 1$ is contractible, recovering a special case of the theorem of Tsui and Wang.

These three techniques give strong results about 2-dilation, but they haven't yet led to any results about $k$-dilation for
$k \ge 3$.  Can the mean curvature flow shed any light on $k$-dilation for $k \ge 3$?  The Riemann mapping theorem seems
inherently two-dimensional.  Coffey's proof uses the fact that the complement of a finite (non-empty) set in $S^2$ is aspherical.  This fact is special to two dimensions.  But studying the space of non-surjective maps may yield some results in any dimension.
Let $\Map_{NS}(S^n, S^n)$ denote the space of non-surjective basepoint-preserving 
maps from $S^n$ to $S^n$.  Clearly $\Map_{NS}(S^n, S^n) \subset \Map(S^n, S^n)$.  This inclusion induces a map of homotopy groups

$$ \pi_q(\Map_{NS}(S^n, S^n)) \rightarrow \pi_q (\Map(S^n, S^n)) = \pi_{n+q}(S^n). $$

If $a \in \pi_{n+q}(S^n)$ can be realized by maps with $n$-dilation $ < 1$, then $a$ will lie in the image of 
$\pi_q(\Map_{NS}(S^n, S^n)$.  
Other than Coffey's theorem, I don't know of any example where this image has been calculated.  For example, it would be interesting to know whether the image of $\pi_3( \Map_{NS}(S^4, S^4))$ in $\pi_7(S^4)$ contains classes with non-zero Hopf invariant.

There is another basic fact about $k$-dilation which is relevant to our discussion.
This result has to do with the $C^0$ limits of maps with bounded $k$-dilation.  It appears as an exercise in \cite{GPDR} (page 23, exercise B3).  

\begin{prop} \label{c^0closed} Suppose that $F_i:  B^m \rightarrow \mathbb{R}^n$ is a sequence of $C^1$ maps
from the unit $m$-ball to $\mathbb{R}^n$, where each map has $k$-dilation $\le 1$.  Suppose that $F_i$ converges in $C^0$ 
to a limit $F$.  If $F$ is $C^1$ then $\Dil_k(F) \le 1$.  In fact, if $F$ is just differentiable at one point $x$, then $\Dil_k ( dF_x) \le 1$.
\end{prop}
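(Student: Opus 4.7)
The plan is to reduce to the pointwise statement: if $F$ is differentiable at $x$, then $\Dil_k(dF_x)\le 1$ (the $C^1$ version follows at once by applying this at each point). Set $L(y):=F(x)+dF_x(y-x)$. If $\Dil_k(dF_x)=0$ there is nothing to do, so assume otherwise and let $V\subset\mathbb{R}^m$ be the $k$-plane spanned by the top $k$ singular directions of $dF_x$. For small $r>0$, let $\Sigma_r\subset (x+V)\cap B^m$ be the affine $k$-disk of radius $r$ centered at $x$. Then $L|_{\Sigma_r}$ is an injective affine map onto an ellipsoidal $k$-disk $L(\Sigma_r)$ lying in the affine $k$-plane $P:=F(x)+dF_x(V)$, and $\mathcal{H}^k(L(\Sigma_r))=\Dil_k(dF_x)\cdot \mathcal{H}^k(\Sigma_r)$. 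Differentiability of $F$ at $x$ gives $\sup_{\Sigma_r}|F-L|=o(r)$, and by $C^0$ convergence I can pick $i=i(r)$ so that $\sup_{\Sigma_r}|F_i-L|\le \epsilon r$ with $\epsilon=\epsilon(r)\to 0$ as $r\to 0$.

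To lower-bound $\mathcal{H}^k(F_i(\Sigma_r))$, I introduce the orthogonal projection $\pi:\mathbb{R}^n\to P$. Since $\pi\circ L=L$ on $\Sigma_r$, the map $\pi\circ F_i:\Sigma_r\to P$ is $\epsilon r$-close to the affine isomorphism $L:\Sigma_r\to L(\Sigma_r)$. A standard degree argument then applies: for any target $y$ in the interior of $L(\Sigma_r)$ at distance $>2\epsilon r$ from $\partial L(\Sigma_r)$, the straight-line homotopy between $\pi\circ F_i$ and $L$ avoids $y$ on $\partial\Sigma_r$, so $\deg(\pi\circ F_i,y)=\deg(L,y)=1$. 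In particular $y\in\pi(F_i(\Sigma_r))$, so $\pi(F_i(\Sigma_r))$ contains the $2\epsilon r$-inner core of $L(\Sigma_r)$, which has $\mathcal{H}^k$-measure $(1-O(\epsilon))\,\mathcal{H}^k(L(\Sigma_r))$. Since $\pi$ is $1$-Lipschitz, it does not increase $k$-dimensional Hausdorff measure, giving
\[
\mathcal{H}^k(F_i(\Sigma_r))\ \ge\ \mathcal{H}^k(\pi(F_i(\Sigma_r)))\ \ge\ (1-O(\epsilon))\,\Dil_k(dF_x)\cdot \mathcal{H}^k(\Sigma_r).
\]

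The hypothesis $\Dil_k(F_i)\le 1$ provides the opposite inequality $\mathcal{H}^k(F_i(\Sigma_r))\le \mathcal{H}^k(\Sigma_r)$. Combining the two and letting $r\to 0$ (with $i\to\infty$ chosen accordingly) yields $\Dil_k(dF_x)\le 1$, as required. The only step that is not essentially formal is the lower bound on $\mathcal{H}^k(F_i(\Sigma_r))$: a priori an image can have small Hausdorff measure even with large Jacobian, because $F_i$ could fold along directions transverse to $P$. Projecting onto $P$ and invoking the degree/homotopy argument is precisely what defeats folding in the relevant directions, converting $C^0$-closeness of $F_i$ to $L$ into a genuine covering statement. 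I expect this projection-plus-degree step to be the main point of the argument; everything else is quantitative approximation using only $C^0$ convergence and differentiability at $x$.
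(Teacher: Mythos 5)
Your argument is correct and is essentially the same as the paper's. Both proofs hinge on the same two ideas: orthogonally project $F_i$ onto the affine $k$-plane $dF_x(V)$ (which cannot increase $k$-volume) and then use a degree/winding-number argument to show the projected image covers all but a thin boundary collar of the linear model's image. The paper packages the degree step as a separate lemma about maps $\bar B^k \to \mathbb{R}^k$ converging in $C^0$ to a linear map, and then reduces to that lemma by zooming in via the rescaled maps $G_i(x) = 2^i \pi_Q(F_i(x/2^i))$; you instead work directly with small disks $\Sigma_r$ and let $r \to 0$. This is a cosmetic difference in presentation rather than a genuinely different route.
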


We begin with the following special case.

\begin{lemma} Let $F_i: \bar B^k \rightarrow \mathbb{R}^k$ be a sequence of maps from the closed $k$-ball to
$\mathbb{R}^k$ that converges in $C^0$ to a linear map $L$.  If $\Dil_k(F_i) \le 1$, then $\Dil_k(L) \le 1$.
\end{lemma}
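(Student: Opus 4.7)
The plan is to reduce the lemma to showing $|\det L| \le 1$, since the $k$-dilation of a linear map $L\colon \mathbb{R}^k \to \mathbb{R}^k$ is exactly $s_1(L)\cdots s_k(L) = |\det L|$. If $\det L = 0$ there is nothing to prove, so assume $\det L \ne 0$.

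First I would rewrite the hypothesis $\Dil_k(F_i) \le 1$ in integral form: by the proposition relating $k$-dilation to singular values,
$$ \int_{\bar B^k} |\Lambda^k dF_i|\, dx \le \Vol(\bar B^k). $$
By the area formula applied to each $F_i$, the left side equals $\int_{\mathbb{R}^k} N(y, F_i)\, dy$, where $N(y,F_i) = \#F_i^{-1}(y)$ counted with multiplicity. Thus
$$ \int_{\mathbb{R}^k} N(y, F_i)\, dy \le \Vol(\bar B^k) \quad \text{for all } i. $$

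Next I would use degree theory to get a lower bound on $N(y, F_i)$ for $y$ in the interior of $L(B^k)$. Fix a compact set $K$ contained in the interior of the ellipsoid $L(B^k)$. There exists $\epsilon > 0$ such that $L^{-1}(y)$ has distance at least $\epsilon$ from $\partial B^k$ for every $y \in K$, and such that $|L(x) - y| \ge \delta > 0$ for every $y \in K$ and every $x \in \partial B^k$ (for some $\delta$ depending on $K$ and $L$). Since $F_i \to L$ uniformly on $\bar B^k$, for $i$ sufficiently large $\|F_i - L\|_{C^0} < \delta/2$, so on $\partial B^k$ the straight-line homotopy between $F_i$ and $L$ avoids every $y \in K$. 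By homotopy invariance of degree, $\deg(F_i, B^k, y) = \deg(L, B^k, y) = \operatorname{sgn}(\det L) = \pm 1$ for $y \in K$. In particular $N(y, F_i) \ge 1$ for all $y \in K$ and all large $i$.

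Combining the two inequalities gives
$$ \Vol(K) \le \int_K N(y, F_i)\, dy \le \int_{\mathbb{R}^k} N(y, F_i)\, dy \le \Vol(\bar B^k). $$
Letting $K$ exhaust the interior of $L(B^k)$ yields $\Vol(L(B^k)) = |\det L|\,\Vol(\bar B^k) \le \Vol(\bar B^k)$, hence $|\det L| \le 1$, which is $\Dil_k(L) \le 1$. The only delicate point is justifying the degree argument under mere $C^0$ convergence, but standard topological degree for continuous maps (Brouwer degree) behaves well under $C^0$ small perturbations on the boundary, and the area-formula identity $\int |\Lambda^k dF_i| = \int N(y,F_i)\,dy$ is standard for $C^1$ maps; I do not expect either step to present real difficulty.
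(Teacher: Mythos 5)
Your proposal is correct and follows essentially the same route as the paper: the paper's key step is exactly the Brouwer winding-number argument showing that $F_i(B^k)$ covers $L(B^k)$ minus an $\epsilon$-neighborhood of $L(S^{k-1})$, after which it compares $\Vol_k L(B^k)$ with $\Vol_k F_i(B^k) \le \Vol_k(B^k)$. The only difference is bookkeeping: you route the volume bound through the area formula $\int_{\bar B^k}|\Lambda^k dF_i| = \int N(y,F_i)\,dy$ together with the pointwise bound $|\Lambda^k dF_i|\le 1$, whereas the paper simply invokes the defining inequality $\Vol_k F_i(B^k) \le \Dil_k(F_i)\Vol_k(B^k)$ directly; this makes your write-up a touch heavier than necessary but not incorrect.
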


\begin{proof} 
The key point is the following.  If $| F_i - L |_{C^0} \le \epsilon$, then $F_i (B^k)$ must cover $L(B^k)$ except for the
$\epsilon$-neighborhood of $L(S^{k-1})$.  This fact is well-known but not completely elementary.  It follows from Brouwer's theory of degrees and winding numbers.  If $y$ is contained in $L(B^k)$, then the winding number of $L: S^{k-1} \rightarrow \mathbb{R}^k \setminus \{ y \}$ is equal to 1.  
We make a straight-line homotopy from $L$ to $F_i$.  If $y$ is not in the $\epsilon$-neighborhood of $L(S^{k-1})$, then this homotopy never hits $y$, and so $F_i: S^{k-1} \rightarrow \mathbb{R}^k \setminus \{ y \}$ has winding number 1.  Therefore, $F_i(B^k)$ must cover $y$.

Taking $\epsilon \rightarrow 0$, we see that $\Vol_k L(B^k) \le \limsup_i \Vol_k F_i(B^k) \le \Vol_k (B^k)$, and so $\Dil_k(L) \le 1$.  \end{proof}

By scaling the domain and the range, the unit $k$-ball may easily be replaced by a ball of any radius $r > 0$.

Now we turn to the general proposition, which follows from the lemma and some rescaling.

\begin{proof}  We do a proof by contradiction.  Suppose that $x_0$ is a point where $F$ is differentiable,
and $d F_{x_0}$ has $k$-dilation $> 1$.  By translating, we can assume without loss of generality that $x_0 = 0$
and $F(x_0) = 0$.
Next, let $P^k \subset \mathbb{R}^m$ be a $k$-plane through $0$, so that the $k$-dilation of
$dF_0$ restricted to $P$ is $ > 1$.  Let $Q^k = dF_0 (P^k)$, a $k$-plane in $\mathbb{R}^n$.
By taking a subsequence of the $F_i$, we can arrange that $|F_i(x) - F(x)| < 3^{-i}$ for all $x$ in the unit ball and all $i \ge 1$.
 Now we consider a sequence of maps $G_i$ from $P$ to $Q$ made by scaling down, applying $F_i$, projecting onto $Q$, and scaling
back up.  If $\pi_Q$ denotes the orthogonal projection from $\mathbb{R}^n$ to $Q$, we can write $G_i$ by the formula

$$ G_i(x) = 2^i \pi_Q \left(   F_i( x /2^i) )\right). $$

\noindent The maps $G_i$ all have $k$-dilation $\le 1$.   

Next, let $H_i(x)$ be defined by using $F$ in place of $F_i$ in the definition of $G_i$:

$$ H_i(x) = 2^i \pi_Q \left(   F( x /2^i) )\right). $$

\noindent Since $F$ is differentiable at 0, the maps $H_i$ converge locally in $C^0$ to the linear map $dF_0$. 

If $|x| \le 1$, then $| G_i(x) - H_i(x) | \le 2^{i} | F_i(x 2^{-i}) - F(x 2^{-i})| \le (3/2)^{-i}$.  Therefore, the maps
$G_i$ converge to $dF_0$ in $C^0$ on the unit ball.  Since $\Dil_k(G_i) \le 1$, the lemma implies that
 $dF_0$ has $k$-dilation $\le 1$.   \end{proof}

As far as I know, all the proofs of Proposition \ref{c^0closed} need degree theory or a similar contribution from topology.
For perspective on the role of topology, consider the following counterexample.  
Suppose that $F_i: \mathbb{R}^2 \rightarrow \mathbb{R}^2$ is a sequence of $C^1$ maps (or even smooth maps). 
Suppose that at each point, each derivative $dF_i$ has singular values $s_1 \le s_2$ obeying the inequality $s_1^{1/2} s_2 \le 1$.
Roughly speaking, this means that $F_i$ is allowed to stretch space in one direction by a factor $\Lambda > 1$ as long as it shrinks
in the perpendicular direction by a factor $\Lambda^2$.  It turns out that $C^0$ limits of the $F_i$ do not have to obey the same condition on singular values.  For example, let $B > 1$ be any number, and let $L$ be the linear map $L(x_1, x_2) = (x_1/ B, B x_2)$.  The map $L$ has singular values $s_1 = B^{-1}$ and $s_2 = B$.  Therefore, $s_1(L)^{1/2} s_2(L) = B^{1/2}$, which can be arbitrarily large.  Nevertheless, for any $B > 1$, there exists a sequence of maps $F_i: \mathbb{R}^2 \rightarrow \mathbb{R}^2$ which obey the condition $s_1^{1/2} s_2 \le 1$ pointwise and converge to $L$ in $C^0$.  This construction is described in Appendix A of \cite{GUT}.

\section{Open problems} \label{openproblems}

The main question we considered in the paper was the following. Fix a homotopy class $a \in \pi_m(S^n)$ and an integer $k$.  Can the class $a$ be realized by a sequence of maps $F_j: S^m \rightarrow S^n$ with $\Dil_k(F_j) \rightarrow 0$?  This question was previously understood for maps of non-zero degree or non-zero Hopf invariant.  Our main theorem answers this question for the non-zero homotopy class in $\pi_m(S^{m-1})$ when $m \ge 4$.  There are a few other cases where the answer is known, especially in low dimensions, but the question is open for most homotopy classes.

To give some perspective, we record here what we know about some low-dimensional homotopy groups of spheres.  We use the lists of homotopy groups of spheres and the suspension maps between them
given in \cite{T} on pages 39-42.  By the 
theorem of Tsui and Wang, no homotopy class can be realized with arbitrarily small 2-dilation.  For maps $a \in \pi_m(S^2)$, this
theorem completely answers our question.  In the next few paragraphs, we consider a few homotopy groups of $S^3$, $S^4$, and $S^5$.

We start with the homotopy groups of $S^3$.  No homotopy class can be realized with arbitrarily small 2-dilation.  
The group $\pi_4(S^3)$ is isomorphic to $\mathbb{Z}_2$, and the non-trivial element is the suspension of the Hopf fibration.  By the suspension construction, it can be realized with arbitrarily small 3-dilation.  The group $\pi_5(S^3)$ is also isomorphic to $\mathbb{Z}_2$, and the non-trivial element is the suspension of an element from $\pi_4(S^2)$.  By the suspension
construction, it can be realized with arbitrarily small 3-dilation.  The group $\pi_6(S^3)$ is isomorphic to $\mathbb{Z}_{12}$.  One non-trivial element of $\pi_6(S^3)$ is a suspension of an element from $\pi_5(S^2)$.  This one element can be realized by maps with arbitrarily small 3-dilation.  For the other (non-zero) elements of $\pi_6(S^3)$, it is an open question whether they can be realized with arbitrarily small 3-dilation.

We next consider the homotopy groups of $S^4$.  The group $\pi_5(S^4)$ is isomorphic to $\mathbb{Z}_2$.  Our main theorem says that the non-trivial class can be realized by maps with arbitrarily small 4-dilation but not arbitrarily small 3-dilation.  The group $\pi_6(S^4)$ is also isomorphic to $\mathbb{Z}_2$.  The non-trivial element is the double suspension of a class from $\pi_4(S^2)$.  By the suspension construction, it can be realized by maps with arbitrarily small 4-dilation.  I don't know whether it can be realized with arbitrarily small 3-dilation.  The group $\pi_7(S^4)$ is isomorphic to $\mathbb{Z} \oplus \mathbb{Z}_{12}$.  The elements with non-trivial Hopf invariant cannot be realized with arbitrarily small 4-dilation.  The other elements are suspensions of elements in $\pi_6(S^3)$.  By the suspension construction, they can be realized with arbitrarily small 4-dilation.  One element is a double suspension of an element in $\pi_5(S^2)$.  It can be realized with arbitrarily small 3-dilation.  I don't know whether the other torsion elements can be realized by maps with arbitrarily small 3-dilation.

Finally, we consider some homotopy groups of $S^5$.  The group $\pi_6(S^5)$ is isomorphic to $\mathbb{Z}_2$.  Our main theorem says that the non-trivial class can be realized by maps with arbitrarily small 4-dilation but not arbitrarily small 3-dilation.  The group
$\pi_7(S^5)$ is isomorphic to $\mathbb{Z}_2$, and the non-trivial element is the triple suspension of an element in $\pi_4(S^2)$.  By the suspension construction it can be realized by maps with arbitrarily small 4-dilation.  I don't know whether it can be realized by maps with arbitrarily small 3-dilation.  The group $\pi_8(S^5)$ is isomorphic to $\mathbb{Z}_{24}$.  By the h-principle, every class can be realized by maps with arbitrarily small 5-dilation.  The classes with non-zero Steenrod-Hopf invariant cannot be realized with arbitrarily small 4-dilation.  These classes correspond to the odd numbers in $\mathbb{Z}_{24}$.  The other classes are all double suspensions of classes in $\pi_6(S^3)$, and one class (the class corresponding to the number 12) is the triple suspension of a class in $\pi_5(S^2)$.  The suspension construction implies that the triple suspension can be realized by maps with arbitrarily small 4-dilation.  I don't know whether any of the double suspensions can be realized with arbitrarily small 4-dilation, or whether any non-trivial map can be realized with arbitrarily small 3-dilation.

We also briefly consider the homotopy groups of small codimension.  For $m \ge 4$, $\pi_m(S^{m-1}) = \mathbb{Z}_2$.  Our main theorem says that the non-trivial class can be realized by maps with arbitrarily small $k$-dilation if and only if $k > (m+1)/2$.  Next we consider the group $\pi_m(S^{m-2})$.   The homotopy group $\pi_m(S^{m-2})$ is equal to $\mathbb{Z}_2$ for all $m \ge 4$, and the suspension is an isomorphism.  The suspension construction implies that the non-trivial class can be realized by maps with arbitrarily small $k$-dilation for all $k > m/2$.  (This improves slightly on the h-principle, which imples that the non-trivial class can be realized by maps with arbitrarily small $k$-dilation for all $k > (m+1)/2$.)  By the Tsui-Wang theorem, we know that none of these classes can be realized with arbitrarily small 2-dilation.  But we don't know a lower bound on the 3-dilation for any of these classes.  The group $\pi_m(S^{m-3})$ is isomorphic to $\mathbb{Z}_{24}$ for all $m \ge 8$.  By the h-principle, any of these elements can be realized with arbitrarily small $k$-dilation for $k > (m+1)/2$.  Half of the elements have non-zero Steenrod-Hopf invariant.  These elements can be realized with arbitrarily small $k$-dilation only if $k > (m+1)/2$, so we understand them well.  The elements with zero Steenrod-Hopf invariant are all suspensions from $\pi_6(S^3)$.  By the suspension construction, they can all be represented by maps with arbitrarily small $k$-dilation for $k > m/2$.  One of the elements is a suspension from $\pi_5(S^2)$.  By the suspension construction, it can be represented by maps with arbitrarily small $k$-dilation for $k > (2/5)m$.

We can use the $k$-dilation to define a filtration on the homotopy groups of spheres.  We say that $a \in V_k \pi_m (S^n)$ if the class $a$ can be realized by maps with arbitrarily small $k$-dilation.  It is relatively easy to check that
$V_k \pi_m(S^n)$ is a subgroup of $\pi_m (S^n)$, and that $0 = V_1 \pi_m(S^n) \subset  V_2 \pi_m(S^n) \subset ...
\subset V_n \pi_m (S^n) \subset \pi_m(S^n)$.  We will give the proof below.

The definition of $V_k \pi_m(S^n)$ does not depend on the choice of a metric on $S^m$ or $S^n$.  We have been working with the unit sphere metrics in this paper.  Suppose that we choose other metrics $g$ on $S^m$ and $h$ on $S^n$.  Let $\Dil_k^{(g,h)}(F)$ be the $k$-dilation of the map $F$ from $(S^m, g)$ to $(S^n, h)$.  Let $g_0$ and $h_0$ be the unit sphere metrics.  Suppose that $g$ is $L_1$ bilipschitz to $g_0$ and $h$ is $L_2$-bilipschitz to $h_0$.  Then

$$ L_1^{-k} L_2^{-k} \le \frac{\Dil_k^{(g,h)}(F)}{\Dil_k^{(g_0,h_0)}(F)} \le L_1^k L_2^k .$$

\noindent Therefore, if $F_i: S^m \rightarrow S^n$ is a sequence of maps, then $\Dil_k^{(g,h)}(F_i) \rightarrow 0$ if and only if $\Dil_k^{(g_0, h_0)}(F_i) \rightarrow 0$.  In particular, the definition of $V_k \pi_m(S^n)$ is independent of the choice of metric.

We can define a similar filtration on the homotopy groups of any finite simplicial complex.  Let $X$ be a finite simplicial complex.  Equip each simplex with the standard metric.  Then we say that $a \in \pi_m(X)$ belongs to $V_k \pi_m(X)$ if there are maps $F_i: S^m \rightarrow X$ in the homotopy class $a$ with $\Dil_k(F_i) \rightarrow 0$.  The $V_k \pi_m(X)$ form a filtration of $\pi_m(X)$: they are each subgroups of $\pi_m(X)$, with $0 = V_1 \pi_m(X) \subset V_2 \pi_m(X) \subset ... \subset V_m \pi_m(X) \subset \pi_m(X)$.  

\begin{lemma} The set $V_k \pi_m(X)$ is a subgroup of $\pi_m(X)$.  
\end{lemma}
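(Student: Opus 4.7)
The plan is to verify the three subgroup axioms: $V_k\pi_m(X)$ contains $0$, is closed under inverses, and is closed under the group operation. The identity is represented by the constant map, which has $\Dil_k = 0$, so $0 \in V_k\pi_m(X)$ is immediate. For inverses, if $a \in V_k\pi_m(X)$ is represented by a sequence $F_i : S^m \to X$ with $\Dil_k(F_i) \to 0$, I would take $r : S^m \to S^m$ to be an orientation-reversing isometry fixing the basepoint; then $F_i \circ r$ represents $-a$, and $\Dil_k(F_i \circ r) = \Dil_k(F_i) \to 0$ since $r$ is an isometry.

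The main step is closure under addition, and the key idea is that the pinch-map definition of $[F]+[G]$ can be packaged through an auxiliary gluing map whose $k$-dilation depends only on $m$ and $k$, not on the representatives being combined. I would fix once and for all two disjoint closed balls $B_1, B_2 \subset S^m$ of radius $1/4$, neither containing the basepoint $* \in S^m$, together with smooth based degree-one collapse maps $\phi_j : B_j \to S^m$ (with $\phi_j(\partial B_j) = *$) that are identically equal to $*$ on a collar neighborhood of $\partial B_j$. Extending each $\phi_j$ by the constant $*$ on $S^m \setminus B_j$ gives a smooth map $\phi_j : S^m \to S^m$ depending only on $m$, with some fixed bound $\Dil_k(\phi_j) \le C(m)$.

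Given $a, b \in V_k\pi_m(X)$ represented by $F_i, G_i$ with $k$-dilations tending to zero, I would define $H_i : S^m \to X$ to equal $F_i \circ \phi_1$ on $B_1$, $G_i \circ \phi_2$ on $B_2$, and the basepoint $x_0 \in X$ on the complement. The collar condition makes this gluing $C^1$: near $\partial B_j$ the composition $F_i \circ \phi_j$ already equals the constant $x_0$, since $\phi_j \equiv *$ there and $F_i(*) = x_0$. The standard pinch description of addition in $\pi_m(X)$ identifies $[H_i] = [F_i] + [G_i] = a+b$, while submultiplicativity of $k$-dilation under composition gives
\[
\Dil_k(H_i) \;\le\; C(m)\,\max\bigl(\Dil_k(F_i),\, \Dil_k(G_i)\bigr) \;\longrightarrow\; 0,
\]
so $a+b \in V_k\pi_m(X)$. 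The only mild technical point is ensuring $H_i$ is genuinely $C^1$ across $\partial B_j$; the collar-constancy condition on $\phi_j$ is precisely the device that handles this, and I do not anticipate any further obstacles.
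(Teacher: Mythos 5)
Your proposal is correct and follows essentially the same strategy as the paper: compose $F_i$ and $G_i$ with a fixed pinch/collapse map whose $k$-dilation is bounded independently of $i$, and invoke submultiplicativity of $\Dil_k$ under composition; the paper packages the pinch as a single degree-$(1,1)$ map $S^m \to S^m \vee S^m$ while you describe it via two disjoint ball-collapses, but these are just different bookkeeping for the same construction. Your handling of inverses via an orientation-reversing isometry also matches the paper exactly.
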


\begin{proof}
If $a$ lies in $V_k \pi_m(X)$, then let $f_i$ be a sequence of maps
from $S^m$ to $X$ in the homotopy class $a$ with $k$-dilation
tending to zero.  Let $I$ be a reflection, mapping $S^m$ to
itself with degree -1, and taking the basepoint of $S^m$ to
itself.  Then the maps $f_i \circ I$ have $k$-dilations tending to
zero and lie in the homotopy class $-a$.  Therefore $-a$ lies in
$V_k \pi_m(X)$.  

Next, suppose that $a$ and $b$ lie in $V_k
\pi_m(X)$.  Again, let $f_i$ be a sequence of (pointed) maps in
the class $a$ with $k$-dilation tending to zero, and let $g_i$ be a
sequence of (pointed) maps in the homotopy class $b$ with
$k$-dilation tending to zero.  Let $I$ be a map from $S^m$ to $S^m
\vee S^m$ with degree (1,1).  Let $h_i$ be the map from $S^m \vee
S^m$ to $X$ whose restriction to the first copy of $S^m$ is equal
to $f_i$ and whose restriction to the second copy of $S^m$ is
equal to $g_i$.  Then the sequence $h_i \circ I$ has $k$-dilation
tending to zero.  Each map in the sequence lies in the homotopy
class $a+b$.  So $a+b$ lies in $V_k \pi_m(X)$. \end{proof}

\begin{lemma} For any finite simplicial complex $X$, the subgroups $V_k \pi_m(X)$ are nested, with $V_k \pi_m(X) \subset
V_{k+1} \pi_m(X)$. 
\end{lemma}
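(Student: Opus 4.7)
The plan is to derive this inclusion directly from Proposition \ref{dilkdill}, the pointwise singular-value comparison stated in Section \ref{basicfacts}. That proposition says that for $k < l$ and any $C^1$ map $F$ between Riemannian manifolds,
\[
\Dil_k(F)^{1/k} \ge \Dil_l(F)^{1/l}.
\]
Specializing to $l=k+1$ and rearranging, this gives $\Dil_{k+1}(F) \le \Dil_k(F)^{(k+1)/k}$. The inequality is actually pointwise in the singular values, so it applies equally well to Lipschitz maps from $S^m$ into a simplicial complex $X$ equipped with the standard piecewise flat metric: on the open set where $F$ is smooth (which has full measure), one has $\Dil_{k+1} F(x) \le \Dil_k F(x)^{(k+1)/k}$ at every point of differentiability, and taking the essential supremum yields the same bound for the global dilations.

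Given this, the proof of the inclusion is immediate. Let $a \in V_k \pi_m(X)$, and pick a sequence of maps $F_i : S^m \to X$ representing $a$ with $\Dil_k(F_i) \to 0$. For all sufficiently large $i$ we have $\Dil_k(F_i) \le 1$, and hence
\[
\Dil_{k+1}(F_i) \le \Dil_k(F_i)^{(k+1)/k} \le \Dil_k(F_i) \longrightarrow 0.
\]
Therefore $a \in V_{k+1} \pi_m(X)$. Combined with the previous lemma that each $V_k \pi_m(X)$ is a subgroup, this gives the filtration $0 = V_1 \pi_m(X) \subset V_2 \pi_m(X) \subset \cdots \subset V_m \pi_m(X) \subset \pi_m(X)$, as claimed. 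There is no serious obstacle here; the only minor subtlety is verifying that Proposition \ref{dilkdill}, originally stated for smooth maps between Riemannian manifolds, applies to maps into a simplicial complex, but this follows because the singular-value inequality is pointwise and $F$ is differentiable almost everywhere on each simplex.
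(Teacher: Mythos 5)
Your proof is correct and follows essentially the same route as the paper: both invoke Proposition \ref{dilkdill} to get $\Dil_{k+1}(F_i)^{1/(k+1)} \le \Dil_k(F_i)^{1/k}$ and conclude that small $k$-dilation forces small $(k+1)$-dilation. The paper's proof is just a terser version of your argument; your extra remark about almost-everywhere differentiability of Lipschitz maps into the complex is a reasonable point the paper leaves implicit, but it does not change the substance.
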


\begin{proof} For any map $F$, $\Dil_{k+1} F^{\frac{1}{k+1}} \le \Dil_k F^{1/k}$ by Proposition
\ref{dilkdill}.   In
particular, if $f_i$ is a sequence of maps with $k$-dilation
tending to zero, then the (k+1)-dilation of $f_i$ also tends to
zero.  Therefore, $V_k \pi_m(X) \subset V_{k+1} \pi_m(X)$. 
\end{proof}

These two lemmas show that $V_k \pi_m(X)$ form a filtration of $\pi_m(X)$.  
The filtration $V_k$ also behaves naturally under mappings.  

\begin{lemma} If $\Psi: X \rightarrow Y$ is a continuous pointed
mapping between finite simplicial complexes, then $\Psi_*: \pi_m(X)
\rightarrow \pi_m(Y)$ takes $V_k \pi_m (X)$ into $V_k \pi_m(Y)$.
\end{lemma}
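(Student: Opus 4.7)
The plan is to replace $\Psi$ by a homotopic Lipschitz map and then use the fact that $k$-dilation is sub-multiplicative under composition. Specifically, suppose $a \in V_k \pi_m(X)$ and let $f_i \colon S^m \to X$ be a sequence of pointed $C^1$ maps in the class $a$ with $\Dil_k(f_i) \to 0$. I want to produce maps in the class $\Psi_*(a) \in \pi_m(Y)$ with $k$-dilation tending to zero.

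First, I would homotope $\Psi$ to a nice representative. Since $X$ and $Y$ are finite simplicial complexes, the simplicial approximation theorem (applied after a sufficiently fine subdivision of $X$) gives a pointed simplicial map $\tilde \Psi \colon X \to Y$ that is pointed-homotopic to $\Psi$, and a small smoothing produces a $C^1$ map with the same homotopy class. A simplicial map between finite simplicial complexes (with the standard metrics on simplices) is globally Lipschitz; let $L$ denote its Lipschitz constant, so $\Dil_1(\tilde \Psi) \le L$. Now $\Psi_* = \tilde \Psi_*$ on $\pi_m$, so $[\tilde \Psi \circ f_i] = \Psi_*(a)$ for every $i$.

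Next I invoke the behavior of $k$-dilation under composition. For any $C^1$ maps $g$ and $h$ the chain rule gives $\Lambda^k d(h \circ g)_x = (\Lambda^k dh_{g(x)}) \circ (\Lambda^k dg_x)$, so taking operator norms and supremizing yields $\Dil_k(h \circ g) \le \Dil_k(h) \cdot \Dil_k(g)$. Combining this with Proposition~\ref{dilkdill} (or just the definition in terms of singular values), $\Dil_k(\tilde \Psi) \le \Dil_1(\tilde \Psi)^k \le L^k$. Therefore
\[
\Dil_k(\tilde \Psi \circ f_i) \le L^k \Dil_k(f_i) \longrightarrow 0,
\]
which exhibits $\Psi_*(a)$ as an element of $V_k \pi_m(Y)$.

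The only subtlety is the passage from a continuous $\Psi$ to a Lipschitz representative $\tilde \Psi$, and arranging this in a pointed manner — but simplicial approximation can be carried out while fixing the basepoint, since the basepoints lie in the $0$-skeleton, and the straight-line simplicial homotopy preserves basepoints as long as we insist the approximation sends basepoint to basepoint. No serious obstacle arises.
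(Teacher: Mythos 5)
Your proof is correct and takes the same approach as the paper: homotope $\Psi$ to a Lipschitz (PL/simplicial) representative with Lipschitz constant $L$, then compose with the $f_i$ and use the bound $\Dil_k(\tilde\Psi \circ f_i) \le L^k \Dil_k(f_i)$. Your write-up just spells out a few more of the details (simplicial approximation, sub-multiplicativity of $k$-dilation) than the paper does.
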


\begin{proof}  First homotope $\Psi$ to a PL map with some finite
Lipshitz constant $L$.  Let $a$ be a class in $V_k \pi_m(X)$,
realized by mappings $f_i: S^m \rightarrow X$ with $k$-dilation
tending to zero.  The map $\Psi \circ f_i$ from $S^m$ to $Y$ has
$k$-dilation less than $L^k \Dil_k(f_i) \rightarrow 0$.  Each
map $\Psi \circ f_i$ lies in the homotopy class $\Psi_*(a)$. 
Therefore, $\Psi_*(a)$ lies in $V_k \pi_m(Y)$.
\end{proof}

In particular, if $\Psi$ is a homotopy equivalence, then $\Psi_*$ maps $V_k \pi_m(X)$ bijectively to $V_k \pi_m(Y)$.  In other words, the filtration we have defined is homotopy invariant.  Very little is known about $V_k \pi_m(X)$ for spaces $X$ besides $S^n$.

In the rest of this section, we mention a few other open problems.

\subsection{Are there minimizers in the $k$-dilation problem?}

One might try to study the $k$-dilation of mappings using the calculus of variations.  I'm not sure how much can
be achieved in this direction.  Let's start by framing some questions.
Suppose that we pick a homotopy class $a \in \pi_m(S^n)$, and we try to minimize the $k$-dilation of maps
$F: S^m \rightarrow S^n$ in the homotopy class $a$.  Will the infimum be achieved by a $C^1$ map?  
If not, will the infimum be achieved in some weaker space of maps?  The results in this paper
give a little bit of information about these questions, which we summarize here.

Our information about these questions comes from the following estimate.

\begin{prop} If $F: S^4 \rightarrow S^3$ has non-trivial Steenrod-Hopf invariant, then
$\Dil_2(F) \Dil_3(F) > c > 0$.
\end{prop}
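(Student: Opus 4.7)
The plan is to revisit the cycle $Z(F)$ construction from Section \ref{cyclez(f)} in this very special low-dimensional case and observe that the directed volume bounds of Proposition \ref{dirvolz(f)} already suffice -- there is no need for the elaborate cut-and-paste machinery of the main theorem. Here $m = 4$ and $n = 3$, so $i = 2n - m - 1 = 1$, and $Z(F)$ is a mod-$2$ cycle of dimension $2n = 6$ in $\Gamma_1 S^3$. By Proposition \ref{homz(f)=sh(f)}, the homology class of $Z(F)$ in $H_6(\Gamma_1 S^3; \mathbb{Z}_2)$ equals $\SH(F) \cdot [S^3 \times S^3]$, so it is enough to show that if the product $\Dil_2(F) \Dil_3(F)$ is sufficiently small, then $Z(F)$ is null-homologous.

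The key point is that in this case the list of admissible directions $(a,b,c)$ with $a + b + c = 6$, $a \leq 1$, and $b, c \leq 3$ is very short: the only possibilities are $(0,3,3)$ and the equivalent pair $(1,2,3) \sim (1,3,2)$. Proposition \ref{dirvolz(f)} then yields
\begin{equation*}
\Vol_{(0,3,3)}(Z(F)) \lesssim \Dil_3(F)^2, \qquad \Vol_{(1,2,3)}(Z(F)) \lesssim \Dil_2(F)\, \Dil_3(F),
\end{equation*}
and summing over directions,
\begin{equation*}
\Vol_6 \bigl(Z(F)\bigr) \lesssim \Dil_3(F)^2 + \Dil_2(F)\, \Dil_3(F).
\end{equation*}

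The next step is to dominate the $\Dil_3(F)^2$ term by $\Dil_2(F) \Dil_3(F)$ up to a dimensional constant. Set $\epsilon = \Dil_2(F) \Dil_3(F)$ and assume $\epsilon \leq 1$. If $\Dil_2(F) \leq 1$, then Proposition \ref{dilkdill} gives $\Dil_3(F) \leq \Dil_2(F)^{3/2} \leq \Dil_2(F)$, so $\Dil_3(F)^2 \leq \Dil_2(F) \Dil_3(F) = \epsilon$. If instead $\Dil_2(F) > 1$, then $\Dil_3(F) < \epsilon \leq 1$ and hence $\Dil_3(F)^2 \leq \Dil_3(F) \leq \epsilon$. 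Either way $\Vol_6(Z(F)) \lesssim \epsilon$.

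Finally, since $\Gamma_1 S^3$ is a closed Riemannian manifold, the standard Federer-Fleming deformation theorem produces a constant $\epsilon_0 = \epsilon_0(\Gamma_1 S^3) > 0$ such that every mod-$2$ $6$-cycle of volume less than $\epsilon_0$ is null-homologous. Choosing $c > 0$ so that $\Dil_2(F) \Dil_3(F) \leq c$ forces $\Vol_6(Z(F)) < \epsilon_0$, we conclude $Z(F) \sim 0$ and hence $\SH(F) = 0$. Contrapositively, $\SH(F) \neq 0$ implies $\Dil_2(F) \Dil_3(F) > c$. There is no serious obstacle here -- the whole argument is a direct consequence of the directed-volume computation, using that in this borderline dimension the ``bad'' direction $(0,3,3)$ still only contributes $\Dil_3(F)^2$, which is automatically controlled by $\Dil_2(F) \Dil_3(F)$.
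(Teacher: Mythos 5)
Your proof is correct and follows exactly the same route as the paper: enumerate the admissible directions $(0,3,3)$ and $(1,2,3)\sim(1,3,2)$ for $Z(F)$ in $\Gamma_1 S^3$, bound $\Vol_6 Z(F)\lesssim \Dil_2(F)\Dil_3(F)+\Dil_3(F)^2$ via Proposition \ref{dirvolz(f)}, and invoke Federer--Fleming together with Proposition \ref{homz(f)=sh(f)}. You are a bit more careful than the paper in making explicit the case analysis showing $\Dil_3(F)^2$ is absorbed by $\Dil_2(F)\Dil_3(F)$ when the latter is small; the paper states the conclusion in one line, implicitly relying on the same monotonicity from Proposition \ref{dilkdill}.
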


\proof This argument is based on the construction of the cycle $Z(F)$ in Section \ref{cyclez(f)}.
Our cycle $Z(F)$ is a 6-cycle in $\Gamma_1 S^3$.  By Proposition \ref{dirvolz(f)}, the directed volume 
$\Vol_{(a,b,c)}(Z(F))$ is bounded by $C \Dil_b(F) \Dil_c(F)$.  Now $a + b + c \le 6$, and we know $a \le 1$, and $b,c \le 3$.  
Hence the vector $(b,c)$ is $(2,3)$, $(3,2)$,
or $(3,3)$.  So the total volume of $Z(F)$ is bounded by $C \Dil_2(F) \Dil_3(F) + C \Dil_3(F) ^2$.  If $F$ has non-trivial Steenrod-Hopf invariant, then
Proposition \ref{homz(f)=sh(f)} implies
 $Z(F)$ is homologically non-trivial.  In this case, the total volume of $Z(F)$ cannot be too small.  Therefore $\Dil_2(F) \Dil_3(F)$ is bounded below. \endproof

If $F: S^4 \rightarrow S^3$ has non-trivial Steenrod-Hopf invariant and yet $\Dil_3(F) < \epsilon$, then 
we see $\Dil_2(F) \gtrsim \epsilon^{-1}$ and so $\Dil_1(F) \gtrsim \epsilon^{-1/2}$.
(It's unclear how sharp these estimates are.  The mappings constructed in Proposition \ref{suspmeth} have 3-dilation $\epsilon$, 2-dilation
$\sim \epsilon^{-2}$, and 1-dilation $\sim \epsilon^{-1}$.)

As a corollary, we see that there is no homotopically non-trivial $C^1$ map $F$ from $S^4 \rightarrow S^3$ with $\Lambda^3 dF = 0$.  

So let's consider the problem of minimizing $\Dil_3(F)$ among all homotopically
non-trivial maps $S^4 \rightarrow S^3$.  It follows from the h-principle or from Proposition \ref{suspmeth} that the infimum is equal to zero.  
Since there is no homotopically non-trivial $C^1$ map from $S^4$ to $S^3$ with 3-dilation zero, we see that
the infimum is not achieved by a $C^1$ map.  

Now we could try to consider less regular maps.  It's easy to define the $k$-dilation of a piecewise $C^1$-map.  
With a little extra work, we could probably define the $k$-dilation for Lipschitz maps, for example by using Rademacher's theorem.  However, I believe that the last proposition could be extended to Lipschitz maps.  It would imply that $\Lip(F)^2 \Dil_3(F) > c > 0$ for any homotopically non-trivial map $F: S^4 \rightarrow S^3$.  This would imply that the infimum of $\Dil_3(F)$ is not achieved by any Lipschitz map.  

For maps which are not even Lipschitz, I am not sure how to define the $k$-dilation.

Can we extend the 3-dilation to some appropriate ``weak space of maps" where the infimum is achieved?

\subsection{The rank of the derivative and the topology of the mapping}

When I first began to work on this subject, I expected
that every homotopically non-trivial map from $S^m$ to $S^n$ must have $n$-dilation at least
$c(m,n) > 0$.  My incorrect intuition about the problem came partly from Sard's theorem.
According to Sard's theorem, every $C^\infty$ map $F$ from $S^m$ to $S^n$ has a full measure
set of regular values.  If the $n$-dilation of $F$ is zero, then every point in the domain is a critical point.
Hence every point in the image of $F$ is
a critical value.  According to Sard's theorem, if $F$ is $C^\infty$ with zero $n$-dilation, then the image of $F$ has measure zero.  
So we see that every $C^\infty$
map from $S^m$ to $S^n$ with $n$-dilation zero is contractible.  At first, I expected that this result
should extend to maps with sufficiently tiny $n$-dilation - but it does not.

In \cite{W}, Whitney discovered that Sard's theorem is false for $C^1$ maps.  
In the mid 1970's, Hirsch raised the question if there could be a surjective $C^1$ map from
$B^3$ to $B^2$ with 2-dilation zero.  In \cite{K}, Kaufman produced such a map.
Kaufman's technique can easily be generalized to construct surjective $C^1$ maps from
$S^3$ to $S^2$ with zero 2-dilation.  The maps constructed this way are contractible.
In fact, we have seen that
every map from $S^3$ to $S^2$ with zero two-dilation is contractible. 

We say that a $C^1$ map $F$ from one manifold to another has rank $< k$ if
the rank of $dF_x$ is less than $k$ for each $x$ in the domain.  A map has
rank less than $k$ if and only if it has $k$-dilation equal to zero.  The rank of a
map is a differential topological invariant.
We have very little knowledge about how the rank of a map is related to its homotopy type.

\newtheorem*{DTQ}{Rank of the derivative and topology of mappings} 

\begin{DTQ}  Let $F: S^m
\rightarrow S^n$ be a $C^1$ map with rank $< k$.  What can we conclude about the
homotopy type of $F$?
\end{DTQ}

We don't know any homotopically non-trivial $C^1$ map from $S^m$ to $S^n$ with rank $< n$.  Does one exist?  

A related question is whether there are homotopically non-trivial $C^1$ maps $F_i: S^m \rightarrow S^n$ with $\Dil_n(F_i) \rightarrow 0$
and uniformly bounded 1-dilation.

Added in proof. Recently, Wenger and Young addressed this question in \cite{WY}.  They proved the following result (page 2 of \cite{WY}).

\begin{theorem} (Wenger, Young) If $n+1 \le m < 2n-1$, then any Lipschitz map $f: S^m \rightarrow S^n$ can be extended to a
Lipschitz map $B^{m+1} \rightarrow B^{n+1}$ whose derivative has rank $\le n$ almost everywhere.
\end{theorem}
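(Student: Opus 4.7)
The plan is to prove this by induction on $n$, using Freudenthal's suspension theorem combined with an explicit extension formula for suspensions. Since $m \le 2n-2$, the stable-range hypothesis of Freudenthal's theorem (\cite{H}, Corollary 4.24) is satisfied, so the suspension map $\pi_{m-1}(S^{n-1}) \to \pi_m(S^n)$ is surjective; thus $f$ is homotopic to some $\Sigma g$ for a Lipschitz $g : S^{m-1} \to S^{n-1}$, and one may choose a Lipschitz homotopy. This homotopy defines a Lipschitz map on a thin collar $S^m \times [1-\delta, 1] \subset B^{m+1}$ whose image lies in $S^n$; because $\dim S^n = n$, the rank of this portion is automatically $\le n$. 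It remains to extend $\Sigma g$ over the inner ball.

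Parameterize $B^{m+1} = \{(z,t) \in \mathbb{R}^m \times \mathbb{R} : |z|^2 + t^2 \le 1\}$ and $B^{n+1}$ analogously. Given a Lipschitz extension $G : B^m \to B^n$ of $g$, define
$$ F(z, t) \;=\; \Bigl(\sqrt{1-t^2}\,G\bigl(z/\sqrt{1-t^2}\bigr),\, t\Bigr), \quad |t| < 1, $$
and extend by continuity at the poles $t = \pm 1$. Using $|G| \le 1$, one has $|F|^2 = (1 - t^2)|G|^2 + t^2 \le 1$, so $F$ maps $B^{m+1}$ into $B^{n+1}$; on the boundary sphere $|z|^2 + t^2 = 1$, the vector $z/\sqrt{1-t^2}$ lies in $S^{m-1}$ where $G$ equals $g$, giving $F(z,t) = \Sigma g(z,t)$. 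A direct calculation with $u = z/\sqrt{1-t^2}$ shows the Jacobian of $F$ has a block form whose rank equals $\mathrm{rank}(dG_u) + 1$ (the last row contributes the $t$-direction, orthogonal to the image of the $z$-block). Using $|z| \le \sqrt{1-t^2}$ one checks that $F$ is globally Lipschitz, including near the poles. So the task reduces to extending $g$ to a Lipschitz $G : B^m \to B^n$ with $\mathrm{rank}(dG) \le n-1$ almost everywhere -- which is exactly the theorem for the pair $(m-1, n-1)$. The new pair satisfies the hypothesis $n \le m-1 < 2(n-1) - 1$ precisely when $m \le 2n - 3$, so the induction closes except at the boundary case $m = 2n - 2$.

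The main obstacle is the base case $m = 2n - 2$, where the reduced pair sits at the very edge of the stable range. I would attack this case directly via a Pontryagin--Thom construction: after smoothing, pick a regular value $y_0 \in S^n$ of $f$, so that $Y = f^{-1}(y_0) \subset S^{2n-2}$ is a framed $(n-2)$-submanifold. By general position (using $2(n-1) < 2n-1$) and $H_{n-2}(B^{2n-1}) = 0$, $Y$ bounds an embedded $(n-1)$-submanifold $Z \subset B^{2n-1}$. If the framing of $\partial Z = Y$ extends to a framing of the normal bundle of $Z$, one defines $F$ to collapse the complement of a tubular neighborhood $N(Z) \cong Z \times D^n$ to a basepoint of $B^{n+1}$ and to project $N(Z)$ onto $D^n \subset B^{n+1}$ via the framing; this map is Lipschitz with image contained in an $n$-disk and so has rank $\le n$ everywhere. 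The delicate technical point is the framing-extension obstruction, which lives in $H^*(Z, \partial Z; \pi_*(\SO(n)))$: one would need to show $Z$ may be chosen (perhaps by surgery or connect-summing with small framing-correcting handles, exploiting the fact that the codimension $n \ge 3$ leaves enough room) so that the obstruction vanishes, tying this back to the vanishing of the class of $Y$ under framed cobordism. Alternatively, one might directly combine the $h$-principle for $k$-dilation from this paper (valid since $n > (m+1)/2$ in the range $m < 2n-1$) with a Kaufman-type folding construction to produce the base-case extension without passing through framed cobordism.
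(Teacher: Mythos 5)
The paper does not prove this statement; it appears as an "added in proof" remark quoting a result of Wenger and Young from page 2 of \cite{WY}, so there is no in-paper proof to compare against, and I judge your argument on its own terms. It contains two genuine gaps.

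First, the explicit suspension extension $F(z,t)=\bigl(\sqrt{1-t^2}\,G(z/\sqrt{1-t^2}),t\bigr)$ is \emph{not} Lipschitz. Take $z=0$ with $G(0)\ne 0$; then
$$|F(0,t_1)-F(0,t_2)|\ge |G(0)|\,\bigl|\sqrt{1-t_1^2}-\sqrt{1-t_2^2}\bigr|,$$
and with $t_1=1-\eta$, $t_2=1-2\eta$ the right-hand side is of order $\sqrt{\eta}$ while $|t_1-t_2|=\eta$, so the Lipschitz constant blows up like $\eta^{-1/2}$ near the pole. Your claim that "one checks that $F$ is globally Lipschitz, including near the poles" is therefore false as written. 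This defect is repairable: replace the round ball by the bilipschitz-equivalent bi-cone $\{(z,t):|z|\le 1-|t|\}$ and set $\Sigma G(z,t)=\bigl((1-|t|)\,G(z/(1-|t|)),t\bigr)$. There the estimates $\bigl||t_1|-|t_2|\bigr|\le|t_1-t_2|$ and $|z_i|\le 1-|t_i|$ yield a genuine global Lipschitz bound, and your block-triangular rank computation still gives $\mathrm{rank}\,d(\Sigma G)=\mathrm{rank}\,dG+1$ almost everywhere. But you need to say this; the formula you wrote does not work.

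Second, and fatally, the Pontryagin--Thom base case $m=2n-2$ cannot succeed. If the framed $(n-2)$-manifold $Y=f^{-1}(y_0)\subset S^m$ bounds a framed $(n-1)$-manifold $Z\subset B^{m+1}$, with normal framing extending the one induced by $f$, then the Pontryagin--Thom construction applied to $Z$ produces a map $B^{m+1}\to S^n$ whose restriction to $S^m$ is a Pontryagin--Thom representative of $(Y,\mathrm{fr})$, hence homotopic to $f$; since $B^{m+1}$ is contractible, this exhibits $f$ as null-homotopic. Equivalently (using that $m<2n-1$ places us in the stable range), a framed null-bordism of $(Y,\mathrm{fr})$ is \emph{exactly} what it means for $[f]=0$ in $\pi_m(S^n)\cong\Omega_{n-2}^{\mathrm{fr}}$. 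So the "framing-extension obstruction" you hope to kill by surgery is not an incidental characteristic class that clever handle attachments can remove---it is the class $[f]$ itself, and the codimension-$\ge 3$ wiggle room you invoke is irrelevant. Relatedly, the collapse map you describe does not actually land in an $n$-disk: to be continuous across $\partial N(Z)=Z\times\partial D^n$ one must identify all of $\partial D^n$ with the basepoint, so the image is $D^n/\partial D^n\cong S^n$, and an extension of $f$ with image contained in $S^n$ is by definition a null-homotopy. A correct argument must send the interior of $B^{m+1}$ into a genuinely $n$-dimensional web inside the \emph{interior} of $B^{n+1}$---not into $S^n$---which is precisely what a single Pontryagin--Thom collapse cannot do. Your alternative suggestion of combining the paper's $h$-principle with Kaufman-type folding points in a more plausible direction, but it is left entirely unworked.
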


\begin{Cor} If $n+1 \le m < 2n-1$, and $a \in \pi_m(S^n)$, then the suspension of $a$ can be realized by a Lipschitz map
$S^{m+1} \rightarrow S^{n+1}$ whose derivative has rank $\le n$ almost everywhere.
\end{Cor}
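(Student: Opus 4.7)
The plan is to reduce the corollary to the Wenger--Young extension theorem by realizing $\Sigma a$ as two copies of an extension glued along the equator. Fix a Lipschitz representative $f: S^m \to S^n$ of the class $a$. Since $n+1 \le m < 2n-1$, the Wenger--Young theorem produces a Lipschitz map $F: B^{m+1} \to B^{n+1}$ with $F|_{S^m} = f$ and whose derivative has rank $\le n$ almost everywhere.

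Write $S^{m+1} = B^{m+1}_+ \cup B^{m+1}_-$ as the union of the closed upper and lower hemispheres, meeting along the equator $S^m$, and $S^{n+1} = B^{n+1}_+ \cup B^{n+1}_-$ analogously. Using orthogonal projection onto the equatorial hyperplane, identify each hemisphere of $S^{m+1}$ with the standard closed ball $B^{m+1}$, and likewise each hemisphere of $S^{n+1}$ with $B^{n+1}$. Define $G: S^{m+1} \to S^{n+1}$ by applying $F$ on each hemisphere under these identifications. The two pieces agree on the equator because both restrictions equal $f$ there, so $G$ is well-defined and Lipschitz. Its derivative has rank $\le n$ almost everywhere: on the interior of each hemisphere this is inherited from $F$, and the equator has $(m+1)$-dimensional measure zero.

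It remains to verify $[G] = \Sigma a$ in $\pi_{m+1}(S^{n+1})$. A direct calculation in coordinates shows that under these identifications the standard suspension $\Sigma f$ is exactly the map obtained by gluing two copies of the cone extension $F_0(x) := |x| \cdot f(x/|x|)$ on $B^{m+1}$ (whose image lies in $B^{n+1}$ since $|f| = 1$). Both $F_0$ and the Wenger--Young extension $F$ restrict to $f$ on $S^m$, so the straight-line homotopy $F_t := (1-t) F_0 + t F$ satisfies $F_t|_{S^m} = f$ for every $t \in [0,1]$, and by convexity of $B^{n+1} \subset \mathbb{R}^{n+1}$ it takes values in $B^{n+1}$. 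Applying this homotopy simultaneously on each hemisphere yields a continuous homotopy from $\Sigma f$ at $t = 0$ to $G$ at $t = 1$, so $[G] = \Sigma a$ as required.

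The main obstacle --- constructing an extension of $f$ whose differential has small enough rank --- is resolved entirely by the Wenger--Young theorem; the plan above only performs the homotopy-theoretic bookkeeping needed to pass from their relative extension statement on a ball to an absolute statement on a sphere. The dimensional hypothesis $n+1 \le m < 2n-1$ enters only through the application of Wenger--Young to $f$; no dimensional restriction is needed for the hemisphere-gluing or the convex interpolation.
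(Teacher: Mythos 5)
Your plan --- realize $\Sigma a$ by gluing two copies of a Wenger--Young extension $F$ along the equator --- is the right idea, and the straight-line homotopy correctly identifies the homotopy class of $G$. The gap is in the unproved assertion that $G$ is Lipschitz. Orthogonal projection $p$ from a closed hemisphere of $S^{m+1}$ to the equatorial ball $B^{m+1}$ is $1$-Lipschitz but not bilipschitz: its inverse $y \mapsto (y,\sqrt{1-|y|^2})$ has a derivative of order $(1-|y|)^{-1/2}$ as $|y|\to 1$. The genuine suspension $\Sigma f = \phi^{-1}\circ F_0\circ p$ is Lipschitz only because the cone $F_0(y)=|y|\,f(y/|y|)$ preserves distance to the boundary exactly ($|F_0(y)|=|y|$), so the degeneration of $dp$ near the equator cancels the blow-up of $d\phi^{-1}$ precisely. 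A general Lipschitz extension $F:B^{m+1}\to B^{n+1}$ of $f$ gives an upper bound $1-|F(y)|\le \Lip(F)\,(1-|y|)$ but no lower bound, and without that the composition $\phi^{-1}\circ F\circ p$ is only H\"older $1/2$ in general. Concretely, take $F(y)=q\cdot\min(2|y|,1)$ for a fixed $q\in S^n$: this is a Lipschitz extension of the constant map with rank $\le 1$ almost everywhere, yet $\phi^{-1}\circ F\circ p$ fails to be Lipschitz because its last coordinate $\sqrt{1-\min(2|p(x)|,1)^2}$ has unbounded derivative along meridians where $|p(x)|$ crosses $1/2$. Nothing in the Wenger--Young theorem rules out this kind of behavior for the extension it produces, so the Lipschitz claim for your $G$ is not established.

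The fix is minor. Replace $p$ and $\phi$ by bilipschitz identifications of the closed hemispheres with the closed balls that restrict to the identity on the equator; stereographic projection from the antipodal pole does this, since it is a diffeomorphism between compact manifolds with boundary and hence bilipschitz, and it is the identity on $S^m$. Then $G$, defined as $\tau^{-1}\circ F\circ\sigma$ on each hemisphere, is automatically Lipschitz, and $dG$ has rank $\le n$ almost everywhere because $d\sigma$ and $d\tau$ are isomorphisms at every interior point. You do lose the statement that gluing two cones via $\sigma,\tau$ gives \emph{exactly} $\Sigma f$, but the weaker statement that it is homotopic to $\Sigma f$ rel the equator is all you need, and it holds because $B^{m+1}$ and $B^{n+1}$ are contractible, so any two maps of a hemisphere into the ball that agree on the equator are homotopic rel the equator. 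With this adjustment your convex interpolation $F_t=(1-t)F_0+tF$ finishes the argument exactly as you wrote it.
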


\subsection{On thick tubes}

In Section \ref{twistedtubes}, we constructed $k$-expanding embeddings $I: S^1(\delta) \times B^{m-1}(1) \rightarrow B^m(\epsilon)$
for every $\epsilon > 0$ and for all $k > m/2$.  In other words, we constructed tubes with $k$-thickness 1 in arbitrarily small
balls $B^m(\epsilon)$ for all $k > m/2$.  

We don't know whether this is sharp.  It's straightforward to check that a tube with 1-thickness 1 does not embed in a small ball.  
We don't know if there are tubes with 2-thickness 1 in arbitrarily small balls $B^m(\epsilon)$ for every
dimension $m$.

We can generalize the question to embeddings from $S^p \times B^{m-p}$ into $B^m$.  The generalization of the thick tube construction in 
Section \ref{twistedtubes} gives the following lemma.

\begin{lemma} \label{thicktubesgen} If $k > \frac{m}{p+1}$, then for any $\epsilon > 0$, we can choose $\delta > 0$, and construct
a $k$-expanding embedding from $S^p(\delta) \times B^{m-p}(1)$ into $B^m(\epsilon)$.
\end{lemma}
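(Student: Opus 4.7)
The plan is to mimic the thick tube construction in higher dimensions from Section \ref{twistedtubes} verbatim, with the circle factor $S^1$ replaced throughout by the higher-dimensional sphere $S^p$. The only places where the dimension matters are (i) the exponent in the final volume computation and (ii) the 1-expanding embedding of the spherical annulus $S^p(r) \times [0, 2r]$ into a $(p{+}1)$-ball, both of which work cleanly for any $p \ge 1$.

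First I would observe the isometric inclusion
$$ S^p(\delta) \times B^{m-p}(1) \subset S^p(\delta) \times [0,2]^{m-p}. $$
For a parameter $\lambda > 1$ (to be chosen), I would then apply the product linear map that rescales $S^p(\delta)$ by a factor $\mu = \lambda^{1/(k-1)}$, rescales one Euclidean coordinate by $\lambda^{-1}$, and rescales the remaining $m-p-1$ Euclidean coordinates by $\mu$. The singular values of this map are one copy of $\lambda^{-1}$ and $m-1$ copies of $\mu = \lambda^{1/(k-1)}$, so the minimal product of any $k$ singular values is $\lambda^{-1} \mu^{k-1} = 1$, hence the map is $k$-expanding. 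The image is $S^p(\delta \mu) \times [0, 2 \lambda^{-1}] \times [0, 2\mu]^{m-p-1}$.

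Second, I would choose $\lambda$ so that the sphere radius and the shrunken interval match, i.e.\ $\delta \mu = \lambda^{-1}$. This forces $\lambda = \delta^{-(k-1)/k}$, so both equal $r := \delta^{(k-1)/k}$ and $\mu = \delta^{-1/k}$. Now I would use the $1$-expanding embedding of the spherical annulus $S^p(r) \times [0, 2r]$ into the ball $B^{p+1}(3r) \subset \mathbb{R}^{p+1}$ given in polar coordinates by $(x,t) \mapsto (r+t)(x/r)$; the image metric $dt^2 + (r+t)^2 g_{S^p(1)}$ dominates the source metric $dt^2 + r^2 g_{S^p(1)}$ pointwise, so this map is indeed $1$-expanding. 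Composing, our space admits a $k$-expanding embedding into the cylinder
$$ C_\delta := B^{p+1}(3 r) \times [0, 2 \mu]^{m-p-1}. $$

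Third, I would compute $\Vol(C_\delta) \sim r^{p+1} \mu^{m-p-1} = \delta^{((p+1)(k-1) - (m-p-1))/k} = \delta^{(k(p+1) - m)/k}$. The hypothesis $k > m/(p+1)$ is exactly the statement that this exponent is positive, so $\Vol(C_\delta) \to 0$ as $\delta \to 0$. By the bilipschitz embedding construction of Section \ref{bilipembed} (used in the same way as in the proof of the higher-dimensional thick tube example), a convex $m$-dimensional cylinder of volume $\to 0$ admits a $1$-expanding embedding into $B^m(\epsilon)$. The composition of these three embeddings is the desired $k$-expanding embedding $S^p(\delta) \times B^{m-p}(1) \hookrightarrow B^m(\epsilon)$. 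No step is an obstacle here; the only thing that requires checking is that the balance $\delta \mu = \lambda^{-1}$ and the dimension count produce the sharp threshold $k > m/(p+1)$, and this is the content of the volume computation above.
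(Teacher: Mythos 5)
Your proof is correct and follows essentially the same route as the paper: include into $S^p(\delta)\times[\text{cube}]^{m-p}$, apply the diagonal $k$-expanding rescaling, balance so the sphere radius equals the shrunken interval, wrap the spherical annulus into $B^{p+1}$, and invoke the bilipschitz embedding of a small-volume convex body. The only differences are notational (you track everything through $\delta$ and $r,\mu$ rather than through $\lambda$, and you spell out the singular value check and the explicit formula for the annulus embedding).
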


We proved the lemma for the case $p=1$ in Section \ref{twistedtubes}.  Now we give a straightforward generalization to other values of $p$.

\begin{proof} The domain $S^p(\delta) \times B^{m-p}(1)$ is contained in $S^p(\delta) \times [-1,1]^{m-p}$.  Now
we apply a $k$-expanding diffeomorphism that shrinks one direction of the cube by a factor $\lambda > 1$, and grows
all the other directions by a factor $\lambda^{\frac{1}{k-1}}$.  This map is a $k$-expanding diffeomorphism to
$S^p(\delta \lambda^{\frac{1}{k-1}}) \times [- \lambda^{-1}, \lambda^{-1}] \times [- \lambda^{\frac{1}{k-1}}, \lambda^{\frac{1}{k-1}}]^{m-p-1}$.
Now we choose $\delta$ so that $\delta \lambda^{\frac{1}{k-1}} = \lambda^{-1}$.  In other words, $\delta = \lambda^{- \frac{k}{k-1}}$.
So $S^p(\delta \lambda^{\frac{1}{k-1}}) \times [- \lambda^{-1}, \lambda^{-1}] = S^p(\lambda^{-1}) \times [- \lambda^{-1}, \lambda^{-1}]$,
which admits a 1-expanding embedding into a ball $B^{p+1}(C \lambda^{-1})$.  In summary, we have constructed a $k$-expanding
embedding from our domain into $B^{p+1}(C \lambda^{-1}) \times B^{m-p-1}(C \lambda^{\frac{1}{k-1}})$.  The volume of this product of balls
is $\sim \lambda^{-(p+1) + \frac{ m-p-1}{k-1}}$.  The condition $k > \frac{m}{p+1}$ makes the exponent negative.  By taking $\lambda$ large,
we can make the volume as small as we want.  This product of balls then admits a 1-expanding embedding into an arbitrarily small
ball (see Section \ref{bilipembed} for details).  
 \end{proof}

Is it possible to find a $k$-expanding embedding $S^p(\delta) \times B^{m-p}(1)$ into a small ball for any $k \le \frac{m}{p+1}$?

\subsection{On $k$-dilation and Uryson width}

Let $F$ be a map from $S^m$ to $S^n$ with $k$-dilation $W$.  Let
$g_0$ denote the unit sphere metric on $S^m$, and let $h_0$ denote the unit sphere
metric on $S^n$.  Let $g$ denote the ``pullback metric" $F^* (h_0)$.   We use quotes because
the symmetric tensor $g$ may not be positive definite.  In fact it won't be positive definite in 
the interesting case that $m > n$, but $g$ is always a positive semi-definite symmetric 2-tensor.
We can let $\tilde g = g + \epsilon g_0$ for some tiny $\epsilon > 0$, so that $\tilde g$ is an honest metric
on $S^m$.  The $k$-dilation of $F$ is closely related to $\Lambda^k g$ - the $k^{th}$ exterior power of the metric
$g$.  In particular, $\Lambda^k g 
\le \Dil_k(F)^2 \Lambda^k g_0$.  If $\epsilon$ is small enough, then $\Lambda^k \tilde g
\le (1.01) \Dil_k(F)^2 \Lambda^k g_0$.  This setup motivates the following question.

What can we say about metrics $g$ on $S^m$ obeying $\Lambda^k g \le \Lambda^k g_0$?

The Uryson widths are fundamental metric invariants of Riemannian manifolds.  
Recall that the Ursyon q-width of a metric space $X$, denoted $UW_q(X)$, is
defined as follows.  We say that $UW_q(X) \le W$ if there is a continuous map
from $X$ to a q-dimensional polyhedron $P^q$ whose fibers each have diameter
less than $W$.  In other words, if $x_1, x_2 \in X$ are any two points of $X$ mapped
to the same point of $P$, then the distance $dist_X(x_1, x_2)$ should be $\le W$.
Intuitively, the Uryson q-width of $X$ measures ``how far $X$ is from looking like
a q-dimensional polyhedron".

The main facts about Uryson width are contained in the paper \cite{GWR}.
One fundamental fact is that the Uryson $(n-1)$-width of the unit $n$-cube $[0,1]^n$ is positive.  In fact, the Lebesgue
covering lemma implies that $UW_{n-1}( [0,1]^n ) = 1$.  More generally, the Uryson q-width of a Riemannian
manifold of dimension $> q$ is always positive.  For this section, we recall one other fact about Uryson width.

\begin{prop} \label{uwcontr} (\cite{GWR}) For each dimension $n$, there is a constant $\beta(n) > 0$ so that the following holds.  
If $X$ is a metric space and $UW_{n-1}(X) < \beta(n)$, and $F: X \rightarrow S^n$ has Lipschitz constant 1, then
$F$ is contractible.
\end{prop}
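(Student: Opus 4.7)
The plan is to exploit the standard characterization of Uryson width in terms of covers: if $UW_{n-1}(X)$ is small, then $X$ admits a finite open cover $\{U_i\}$ of multiplicity at most $n$ with $\mathrm{diam}(U_i) \le C(n)\cdot UW_{n-1}(X)$ (this is the technical heart of \cite{GWR}). Let $P$ be the nerve of this cover, a simplicial complex of dimension $\le n-1$, and let $\pi: X \to P$ be the canonical partition-of-unity map. I will construct a continuous map $G: P \to S^n$ such that $G\circ\pi$ is homotopic to $F$. Once this is done $F$ is automatically null-homotopic, since any continuous map from an $(n-1)$-dimensional polyhedron into $S^n$ can be cellularly approximated by a map into the $(n-1)$-skeleton of $S^n$, which is a point.

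The map $G$ is built by a skeleton-by-skeleton extension. For each vertex $v_i$ of $P$ (corresponding to $U_i$), pick a point $G(v_i) \in F(U_i)$; because $F$ is $1$-Lipschitz, $F(U_i)$ has diameter less than $\beta(n)$. A simplex $\sigma = [v_{i_0},\dots,v_{i_k}]$ of $P$ appears exactly when $U_{i_0}\cap\cdots\cap U_{i_k}\neq\emptyset$, so $F(U_{i_0}\cup\cdots\cup U_{i_k})$ is contained in some geodesic ball $B_\sigma\subset S^n$ of radius less than $2\beta(n)$. Choosing $\beta(n)$ small enough that every $B_\sigma$ has radius $<\pi$, each $B_\sigma$ is contractible, so one can inductively extend $G$ over the $k$-skeleton by mapping each simplex $\sigma$ into $B_\sigma$ (on $\partial\sigma$ the map already lands in $B_\sigma$ by the inductive hypothesis, and $B_\sigma$ is contractible so the extension exists).

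For the homotopy from $F$ to $G\circ\pi$, use the geodesic straight-line homotopy. Given $x\in X$, let $U_{i_0},\dots,U_{i_k}$ be the sets containing $x$, so that $\pi(x)$ lies in the simplex $\sigma=[v_{i_0},\dots,v_{i_k}]$. Then $G(\pi(x))\in B_\sigma$ and $F(x)\in F(U_{i_0})\subset B_\sigma$, so both points lie in a common geodesic ball of diameter less than $4\beta(n)$. If $\beta(n)$ is further chosen so that $4\beta(n)<\pi$, then $F(x)$ and $G(\pi(x))$ are joined by a unique minimizing geodesic in $S^n$, and sliding along these geodesics gives a continuous homotopy. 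Combining this with the contractibility of $G\circ\pi$ proves that $F$ is null-homotopic.

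The main obstacle is the first step, converting the width hypothesis into a cover of controlled multiplicity and diameter; this is where the dimensional constant $\beta(n)$ enters, since the translation loses a factor depending on $n$. Everything afterward is elementary obstruction theory combined with the contractibility of small geodesic balls in $S^n$, and the quantitative choice of $\beta(n)$ is dictated solely by the two inequalities $2\beta(n)<\pi$ (extension step) and $4\beta(n)<\pi$ (homotopy step).
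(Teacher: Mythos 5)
The paper does not prove this proposition; it is quoted from Gromov's \cite{GWR}, so there is no in-paper argument to compare against. Your strategy (nerve of a low-multiplicity cover, skeleton-by-skeleton extension, contractibility of small geodesic balls in $S^n$, geodesic homotopy) is the standard and correct approach, but the extension step as written has a genuine gap. Your inductive hypothesis gives $G(\tau)\subset B_\tau$ for each proper face $\tau$ of $\sigma$, and you conclude ``on $\partial\sigma$ the map already lands in $B_\sigma$''. This does not follow: with all $B_\sigma$ of the same radius $<2\beta(n)$, the balls $B_\tau$ and $B_\sigma$ have comparable radius but in general different centers, so $B_\tau\not\subset B_\sigma$. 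The repair is routine and is worth making explicit because it determines $\beta(n)$: center $B_\sigma$ at $F(x_\sigma)$ for some $x_\sigma\in\bigcap_{i\in\sigma}U_i$, let its radius be $(k+1)\,C(n)\,UW_{n-1}(X)$ for a $k$-simplex $\sigma$, and note that for $\tau\subset\sigma$ the centers satisfy $d\bigl(F(x_\tau),F(x_\sigma)\bigr)<C(n)\,UW_{n-1}(X)$ since $x_\tau$ and $x_\sigma$ lie in a common $U_i$. Because $\dim P\le n-1$, the radii stay $\le n\,C(n)\,UW_{n-1}(X)$, and one more additive step of $C(n)\,UW_{n-1}(X)$ bounds $d(F(x),G(\pi(x)))$ for the homotopy. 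The constraint on $\beta(n)$ is therefore of the form $(n+1)\,C(n)\,\beta(n)<\pi$, linear in $n$, not the two fixed inequalities $2\beta<\pi$ and $4\beta<\pi$ you state.

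Two smaller remarks. First, the passage from the width hypothesis to a multiplicity-$\le n$ cover of controlled diameter is a definitional translation rather than the technical heart of \cite{GWR}, but it is not automatic from the paper's fiber-diameter definition of $UW_{n-1}$: one needs that $\phi$-preimages of small sets in $P$ (open stars of a fine subdivision) have small diameter, which follows from compactness of $X$ but not from the fiber condition alone; for the paper's application $X$ is the compact manifold $(S^m,\tilde g)$, so this is harmless, but it should be flagged if the proposition is taken for arbitrary metric spaces. Second, the diameter of $F(U_i)$ is bounded by $C(n)\,UW_{n-1}(X)$, not by $\beta(n)$; you dropped the $C(n)$ factor you introduced in the first step, and it should propagate through the radius bookkeeping.
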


\newtheorem*{uwq}{Ursyon width question}
\begin{uwq} If $g$ is a metric on $S^m$ obeying $\Lambda^k g \le \Lambda^k g_0$, then
what can we say about the Uryson widths of $(S^m, g)$?
\end{uwq}

In a recent preprint \cite{GUUW}, I proved that the codimension 1 Uryson width is controlled by
the volume:

\newtheorem*{uwi}{Uryson width inequality}
\begin{uwi} If $(M^m, g)$ is a closed $m$-dimensional Riemannian manifold, then
$UW_{m-1}(M,g)$ is at most $C(m) \Vol(M,g)^{1/m}$.
\end{uwi}

(This inequality is a technical improvement on the filling radius inequality from \cite{GFRM}.)

If $\Lambda^m g \le \Lambda^m g_0$, then one knows that the total volume of $g$ is
at most $\Vol(S^m, g_0)$.  By the Uryson width inequality, we see that $UW_{m-1}(S^m, g)$
is bounded by a dimensional constant $C(m)$.

As $k$ decreases, the condition $\Lambda^k g \le \Lambda^k g_0$ becomes stronger,
and for sufficiently small $k$, it may control Uryson q-widths for some $q < m-1$.

There are some examples built using the construction of ``thick tubes" discussed in Lemma \ref{thicktubesgen}.

\newtheorem*{gzm}{Thick tube metrics}
\begin{gzm} Let $c$ be an integer $2 \le c \le m-1$.  If $k > m/c$, then there are metrics $g$ on $S^m$ with
$\Lambda^k g \le \Lambda^k g_0$ and $UW_{m-c}(S^m, g)$ arbitrarily large.
\end{gzm}

\begin{proof} Let $p = c-1$.  According to Lemma \ref{thicktubesgen}, we can construct a $k$-expanding embedding $I_1$ from
$S^p(\delta) \times B^{m-p}(R)$ into the upper hemisphere of $(S^m, g_0)$, with $R$ arbitrarily large.  (As $R$ increases,
$\delta$ decreases.)  Let $U \subset S^m$ be the image of the embedding.  Let $g$ be the pushforward of the metric from $S^p(\delta) \times B^{m-p}(R)$
onto $U$.  So $(U,g)$ is isometric to $S^p(\delta) \times B^{m-p}(R)$.  Since we used a $k$-expanding embedding, $\Lambda^k g \le \Lambda^k g_0$ on
$U$.  Now we extend $g$ to a metric on all of $S^m$ with $\Lambda^k g \le \Lambda^k g_0$ by making $g$ very small outside of $U$.

We claim that the Uryson width $UW_{m-p-1}(S^m, g)$ is $\gtrsim R$.  To see this we will prove that $(S^m, g_0)$ contains
an undistorted copy of $B^{m-p}(R/2)$.  In other words, we will find an embedding $I: B^{m-p}(R/2) \rightarrow (S^m, g)$ so that for any
two points $x,y \in B^{m-p}(R/2)$, $|x-y| = dist_g (I(x), I(y))$.  Then it follows that $UW_{m-p-1}(S^m, g) \ge UW_{m-p-1}(B^{m-p}(R/2)) \gtrsim R$.

The embedding $I$ is very simple.  We just pick a point $\theta \in S^p(\delta)$, and we define $I(x) = I_1(\theta, x)$.  It only remains to check
that this embedding is undistorted.  Let $x,y \in B^{m-p}(R/2)$.  First we note that the distance in $S^p(\delta) \times B^{m-p}(R)$ from $(\theta, x)$
to $(\theta,y)$ is just $|x-y|$.  Now, let $\gamma$ be a path from $I(x)$ to $I(y)$ in $(S^m, g)$.  If the path $\gamma$ stays in $U$, then the length
of $\gamma$ is at least $|x-y|$, because $(U,g)$ is isometric to $S^p(\delta) \times B^{m-p}(R)$.  But if the path $\gamma$ leaves $U$, it must contain
an arc from $I(x)$ to $\partial U$ and another arc from $I(y)$ to $\partial U$.  Each of these arcs has length at least $R/2$.  So the total length of 
$\gamma$ is at least $R \ge |x-y|$.  

So we see that $UW_{m-p-1}(S^m, g) = UW_{m-c}(S^m, g)$ can be arbitrarily large.  \end{proof}

Based on these examples, the following conjecture looks plausible.

\newtheorem*{uwc}{Uryson width conjecture}
\begin{uwc} Let $1 \le c \le m$.  Let $g$ be a metric on $S^m$ with $\Lambda^k g \le \Lambda^k g_0$, where
$g_0$ is the unit sphere metric on $S^m$.  If $k \le m/c$, then $UW_{m-c}(S^m, g) \le C(m)$.
\end{uwc}

The conjecture is true when $c=1$ by the Uryson width inequality above.  It is trivially true
when $c=m$, since $\Lambda^1 g \le \Lambda^1 g_0$ implies $\Diam(g) \le \Diam(g_0)$,
and $UW_0(S^m,g)$ is just the diameter of $(S^m,g)$.  In the range
$2 \le c \le m-1$, the conjecture is open.  

The Uryson width conjecture has implications for the questions we considered
above, including our main question.  The first implication is that our
construction of thick tubes is optimal.

\newtheorem*{ttc}{Thick tube conjecture} 

\begin{ttc} If $I$ is a $k$-expanding embedding
from $S^p(\delta) \times B^{m-p}(R)$ into the unit $m$-ball, and if $k \le \frac{m}{p+1}$,
then $R \lesssim 1$.
\end{ttc}

The second implication of the Uryson width conjecture is a general conjecture about $k$-dilation
and contractibility of mappings.

\newtheorem*{nhc}{Null-homotopy conjecture}
\begin{nhc} If $k \le m/c$ and $n > m - c$,
then every non-contractible map $F$ from the unit $m$-sphere to the unit $n$-sphere has
$\Dil_k(F) \ge c(m,n) > 0$.
\end{nhc}

The Uryson width conjecture implies the null-homotopy conjecture by the following argument.
Let $h_0$ be the metric on the unit $n$-sphere.  Let $g$ be the pullback metric $F^*(h_0)$
and let $\tilde g = g + \epsilon g_0$.  
We know that $\Lambda^k g \le \Dil_k(F)^2 \Lambda^k(g_0)$, and if $\epsilon$ is small enough, we can assume that $\Lambda^k \tilde g \le 2 \Dil_k(F)^2 \Lambda^k g_0$.   Since $k \le (m/c)$, the Uryson width conjecture implies that $UW_{m-c}(S^m, \tilde g) \lesssim \Dil_k(F)^{1/k}$.  Now the map $F: (S^m, \tilde g) \rightarrow (S^n, h_0)$ has Lipschitz constant 1.  If $UW_{m-c}(S^m, \tilde g)$ is small
enough, then Proposition \ref{uwcontr} implies that $F$ is contractible.

For example, the null-homotopy conjecture says that if $F: S^m \rightarrow S^{m-1}$ has
tiny $m/2$-dilation, then $F$ is contractible.  This statement is actually true by our
Steenrod square inequality.  (The Steenrod square inequality is a little stronger than
this statement, because it also applies to $(m+1)/2$-dilation.)

The null-homotopy conjecture also says that if $F: S^m \rightarrow S^{m-2}$ has tiny
$m/3$ dilation, then $F$ should be contractible.  This is an open problem.

\section{Appendices}

\subsection{A probability lemma} \label{secproblemma}

In this section, we recall and prove a simple probability lemma that we used a couple times in the paper.

Suppose that $X = \prod_{i \in I} X_i$ is a (countable or finite) product of probability spaces.  Suppose that $B \subset X$ is a ``bad" set, consisting of a union $B = \cup B_\alpha$.  We would
like to find a not-bad element of $X$ i.e. an element $x \in X$ which is not in $B$.  We know that the measure (probability) of each $B_{\alpha}$ is less than $\epsilon$ a small number.  But,
we have no control over the number of sets $B_{\alpha}$.  Therefore, on average, an element of
$X$ may lie in over a thousand different $B_\alpha$.  We can still find an element outside of $B$ provided that the sets $B_\alpha$ are ``localized" in the following sense.

\begin{lemma} \label{problemma} Suppose that $B$ is the union of sets $B_\alpha$ each with probability less than $\epsilon$.  
Suppose that each set $B_{\alpha}$ depends on $< C_1$ different coordinates
$x_i$ of the point $x$.  Suppose that each variable is relevant for $< C_2$ different bad
sets $B_{\alpha}$.  If $\epsilon < (1/2) C_2^{- C_1}$, then $B$ is not all of $X$.
\end{lemma}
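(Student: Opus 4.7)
The plan is to construct a point $x \in X$ lying outside $B$ directly, by choosing its coordinates one at a time while maintaining a controlled bound on the conditional probability of each bad event. Reducing to the countable case first (by working one connected component at a time in the hypergraph where two bad sets are joined if they share a variable, noting that distinct components involve disjoint sets of variables), I would enumerate the relevant variables as $x_1, x_2, \ldots$ and choose their values in order.

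The guiding invariant after step $n$ (when values $x_1, \ldots, x_n$ have been committed) will be: for every bad set $B_\alpha$, if $k_\alpha(n)$ denotes the number of variables of $B_\alpha$ already fixed, then
\[
P\bigl(B_\alpha \,\big|\, x_1, \ldots, x_n\bigr) \;<\; \epsilon \cdot C_2^{k_\alpha(n)}.
\]
This holds for $n = 0$ by the hypothesis $P(B_\alpha) < \epsilon$. For the inductive step I would use the observation that only fewer than $C_2$ bad sets are relevant to $x_n$. For each such $B_\alpha$, averaging over $x_n$ (the remaining coordinates being integrated out) recovers $P(B_\alpha \mid x_1, \ldots, x_{n-1})$, which by inductive hypothesis is $< \epsilon \cdot C_2^{k_\alpha(n-1)}$. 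Markov's inequality then implies that the measure of values of $x_n$ violating the desired new bound $\epsilon \cdot C_2^{k_\alpha(n-1) + 1} = \epsilon \cdot C_2^{k_\alpha(n)}$ is less than $1/C_2$. A union bound over the fewer than $C_2$ relevant bad sets gives a set of measure strictly less than $1$, so there is a choice of $x_n$ that preserves the invariant.

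Once all (fewer than $C_1$) variables of $B_\alpha$ have been committed, the conditional probability $P(B_\alpha \mid \cdots)$ is $0$ or $1$, and the invariant gives
\[
P(B_\alpha \mid \cdots) \;<\; \epsilon \cdot C_2^{C_1 - 1} \;<\; \tfrac{1}{2}\,C_2^{-C_1} \cdot C_2^{C_1 - 1} \;=\; \tfrac{1}{2 C_2} \;<\; 1,
\]
where the hypothesis $\epsilon < (1/2) C_2^{-C_1}$ is used in exactly this step. Hence the conditional probability is $0$, and the constructed $x$ avoids $B_\alpha$. Since this holds for every $\alpha$, the point $x$ lies outside $B$.

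The main subtlety is not really the combinatorics but the organization: one must verify that the variable-by-variable choice is well-defined (so that a positive-measure set of admissible $x_n$ actually exists at every step), and that the reduction to connected components in the "shares a variable" dependency graph actually produces countable sub-problems that can be solved independently and recombined. The factor $1/2$ in the hypothesis is precisely the slack needed to absorb the final $C_2^{C_1 - 1}$ blow-up of the invariant at the moment a bad set becomes fully determined.
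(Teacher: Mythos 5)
Your proof is correct and follows essentially the same strategy as the paper: fix coordinates one at a time, maintain the invariant that the conditional probability of $B_\alpha$ never exceeds $\epsilon\,C_2^{(\text{number of fixed variables of }B_\alpha)}$, and use Markov's inequality plus a union bound over the fewer than $C_2$ bad sets involving each new variable to find an admissible choice at every step. The connected-components reduction you mention is harmless but unnecessary here since the paper already assumes a countable (or finite) product.
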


This lemma is an easy corollary of the Lovasz local lemma.  The hypotheses imply that each set $B_{\alpha_0}$ is independent of the other sets
except for $C_1 C_2$ of them.  Then the local lemma implies our lemma with a better estimate for $\epsilon$.  The local lemma is proven
in \cite{EL} and \cite{AS}.  

Our lemma is quite easy, and we give a short self-contained proof as well.

\proof The idea is that we just choose the coordinates $x_1, x_2, ... $ one at a time in a reasonable way.

Let $I(\alpha) \subset I$ be the set of coordinates that are relevant for the bad set $B(\alpha)$.  We
know that the number of elements $| I (\alpha) | < C_1$.  Similarly, we let $A(i)$ be the set of bad events $\alpha$
which depend on the coordinate $x_i$.  We know that the number of elements $| A(i) | < C_2$.  
We let $P(\alpha)$ be the measure of $B(\alpha)$.  After choosing $x_1, x_2, ..., x_i$, we let $P_i(\alpha)$ be the conditional probability of 
landing in $B(\alpha)$ after randomly making all other choices.  We let $I_i (\alpha)$ be the set of
coordinates $j \in I(\alpha)$ with $1 \le j \le i$.

When we choose $x_{i+1}$, we affect some of the probabilities.  
If $\alpha$ is not in $A(i+1)$, then $P_{i+1}(\alpha) = P_i(\alpha)$.   But if $\alpha \in A(i+1)$, then $P_{i+1}(\alpha)$ may be different
from $P_i(\alpha)$.  When we randomly pick $x_{i+1}$, the probability that $P_{i+1}(\alpha) > C_2 P_i(\alpha)$ is $\le C_2^{-1}$.  Since
$A(i+1)$ contains $< C_2$ values of $\alpha$, we can choose $x_{i+1}$ so that $P_{i+1}(\alpha) \le C_2 P_i(\alpha)$ for every $\alpha \in A(i+1)$.  

Hence by induction, we have $ P_i(\alpha) \le C_2^{| I_i(\alpha)| } \epsilon$ for each $\alpha$.

After we have chosen all the $x_i$, the conditional probability $P_\infty(\alpha)$ is either 0 or 1.  $P_\infty(\alpha) = 1$ if the point
$x = (x_1, x_2, ...)$ lies in $B(\alpha)$, and $P_\infty(\alpha) = 0$ if it doesn't.  Our inequality on $P_i(\alpha)$ becomes in the limit
$P_\infty(\alpha) \le C_2^{C_1} \epsilon \le 1/2 $, and so the point $x$ does not lie in any bad set $B(\alpha)$. \endproof

\subsection{Bilipschitz embeddings of rectangles} \label{bilipembed}

At several points in the paper we use a bilipschitz embedding from some rectangular solid into a unit ball.  These
embeddings can all be derived from the following basic lemma, which describes when there is a bilipschitz embedding
from one rectangular solid into another.

\begin{lemma} \label{bilipembedlemma} Suppose that $R$ and $S$ are $n$-dimensional rectangles.  Let $R = \prod_{j=1}^n [0, R_j]$ with $R_1 \le ... \le R_n$, and
let $S = \prod_{j=1}^n [0, S_j]$ with $S_1 \le ... \le S_n$.  If $\prod_{j=1}^p R_j \ge \prod_{j=1}^p S_j$ for all $p$ in the range $1 \le p \le n$, then there is a locally $C(n)$-bilipschitz embedding from $S$ into $R$.
\end{lemma}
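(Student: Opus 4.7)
The plan is to proceed by induction on the dimension $n$. The base case $n = 1$ is immediate: since $R_1 \ge S_1$, the inclusion $[0, S_1] \hookrightarrow [0, R_1]$ is an isometric embedding. For the inductive step from $n-1$ to $n$, the key tool is a folding construction that reduces the longest side $S_n$ at the expense of lengthening another side, so that afterwards the problem can be peeled off one dimension.

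For the folding, set $k = \lceil S_n / R_n \rceil$. The plan is to cut $S$ along its last coordinate into $k$ slabs, each of thickness $S_n/k \le R_n$, and stack these slabs along the $(n-1)$-st coordinate direction, connecting consecutive slabs by rounded half-cylindrical bends of radius $\sim S_{n-1}$. With the bends dimensioned appropriately (so the bending radius is comparable to both the slab thickness and the stacking spacing), this is a locally $C$-bilipschitz map, for some universal constant $C$, from $S$ into a rectangle of dimensions $S_1, \ldots, S_{n-2},\, C_0 k S_{n-1},\, R_n$ for a universal $C_0$. In particular, the last coordinate of the target is already $\le R_n$, so the image sits inside the product $S^{\ast} \times [0, R_n]$ where $S^{\ast}$ is the $(n-1)$-dimensional rectangle whose sorted coordinates are a permutation of $\{S_1, \ldots, S_{n-2}, C_0 k S_{n-1}\}$.

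With this reduction in hand, I would next verify that the pair $S^\ast$ and $R' := (R_1, \ldots, R_{n-1})$ satisfies the partial product hypothesis (up to a multiplicative constant that can be absorbed by a preliminary rescaling of $S$). For sorted indices $p \le n-2$, the required inequality is inherited from the original hypothesis applied to $R$ and $S$. For the top index $p = n-1$, one uses $\prod_{j=1}^n R_j \ge \prod_{j=1}^n S_j$ together with $k \le S_n/R_n + 1 \le 2 S_n/R_n$ (valid when $k \ge 2$) to show $\prod_{j=1}^{n-1} R_j \ge (\text{const}) \cdot \prod_{j=1}^{n-2} S_j \cdot k S_{n-1}$. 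The inductive hypothesis (applied to a uniformly rescaled copy of $S^\ast$) then yields a locally $C(n-1)$-bilipschitz embedding $S^\ast \hookrightarrow R'$, which combined with the identity on the last coordinate embeds $S^\ast \times [0, R_n]$ into $R' \times [0, R_n] = R$. Composing with the folding map gives the desired embedding $S \hookrightarrow R$ with bilipschitz constant at most $C \cdot C(n-1)$, which we take as $C(n)$.

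The main obstacle is the bookkeeping of constants through the induction. The folding step produces partial products that exceed those of $S$ by a bounded multiplicative factor (arising from the rounding overhead and the $k+1$ versus $S_n/R_n$ discrepancy), so the naive induction does not close. The cleanest way to handle this is to prove the slightly stronger statement that, for every $A \ge 1$, the hypothesis $\prod_{j=1}^p R_j \ge A^p \prod_{j=1}^p S_j$ implies the existence of a $C(n) A$-bilipschitz embedding; alternatively, one can rescale $S$ by a universal factor before applying the inductive hypothesis and absorb the loss into $C(n)$. A secondary subtlety is that $C_0 k S_{n-1}$ is not always the largest entry of $S^\ast$, so before applying the inductive hypothesis one must re-sort the coordinates and check the partial product condition in every case; this is a routine but case-by-case verification.
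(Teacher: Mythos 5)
Your inductive framework (reduce the dimension by a two-dimensional fold, then peel off a coordinate) is the same as the paper's, but the specific choice of which pair of coordinates to fold is wrong, and this is a genuine gap rather than a bookkeeping issue. You always cut along coordinate $n$ and stack along coordinate $n-1$. For the half-cylindrical U-turns to have bounded bilipschitz distortion, their radius must be comparable to the width of the strip being turned, which is $S_{n-1}$; the turning region then extends a length $\sim S_{n-1}$ in the $n$-th coordinate direction and must fit inside $[0, R_n]$. So your fold requires $R_n \gtrsim S_{n-1}$, and this can fail by an arbitrarily large factor. Take $n=3$, $R = (1,1,1)$ and $S = (K^{-2}, K, K)$ with $K$ large: all the partial-product hypotheses hold (the last with equality), yet $S_{n-1}/R_n = K$. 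Worse, your intermediate target $\bigl(S_1,\, C_0 k S_{n-1},\, R_n\bigr) = (K^{-2},\, C_0 K^2,\, 1)$ provably does not receive any $O(1)$-bilipschitz copy of $S$ at all: sorting the target gives $(K^{-2}, 1, C_0 K^2)$, whose $p=2$ partial product is $K^{-2}$, whereas $S_1 S_2 = K^{-1}$; by the necessary condition quoted in the paper right after this lemma (citing \cite{GUWV}), any such embedding would need bilipschitz constant $\gtrsim \sqrt{K}$.

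The paper's proof sidesteps this by choosing the fold coordinates adaptively. Let $a$ be the \emph{smallest} index with $R_a < S_a$; then fold only in the $(a-1,a)$-plane. Minimality of $a$ gives $R_{a-1} \ge S_{a-1}$, and hence $R_a \ge R_{a-1} \ge S_{a-1}$, so the strip width $S_{a-1}$ is automatically bounded by the available length $R_a$ and the U-turns fit with $O(1)$ distortion. Moreover, the paper chooses the new sides $(S'_{a-1}, S'_a)$ so that $S'_{a-1} S'_a = S_{a-1} S_a$ exactly and one of the two equals $R_{a-1}$ or $R_a$; this lets a coordinate be peeled off with \emph{no} loss in the partial-product hypotheses, eliminating the rescaling/$A$-parameter workaround you introduced to make the constants close. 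Reworking your inductive step around this choice of $a$ (rather than always folding the last two coordinates) would repair the argument.
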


Recall that an embedding $I: S \rightarrow R$ is called locally $L$-bilipschitz if it distorts the lengths of tangent vectors by at most a factor of $L$.  More precisely, if $v$ is any tangent vector in $S$, then $|v| / L \le | dI (v) | \le L |v|$.

The proof is by induction on the dimension.  Unfortunately, the algebra is a bit tedious.  It has the following corollary.

\newtheorem*{cor}{Corollary}
\begin{cor} Suppose that $A$ is an $n$-dimensional convex set in $\mathbb{R}^n$ with volume 1.  Then there is a locally $C(n)$-bilipschitz embedding into the unit $n$-ball or into the upper hemisphere of the unit $n$-sphere.
\end{cor}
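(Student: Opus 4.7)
My plan is to apply John's ellipsoid theorem to reduce the problem to embedding a rectangle bilipschitzly into the unit ball, and then to invoke Lemma \ref{bilipembedlemma} after a controlled rescaling.

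First I would apply John's theorem: every convex body $A \subset \mathbb{R}^n$ contains an ellipsoid $E$ with $A \subset nE$ (with the same center). After translating and applying an orthogonal change of coordinates --- both isometries that do not affect bilipschitz constants --- I may assume $E$ is axis-aligned with semi-axes $r_1 \le r_2 \le \cdots \le r_n$. Then $nE$ is contained in the axis-aligned rectangle $S = \prod_{j=1}^n [-nr_j, nr_j]$, so $A \subset S$ and the inclusion is an isometry. Thus it suffices to produce a locally $C(n)$-bilipschitz embedding $S \hookrightarrow B^n(1)$. From $1 = \Vol(A) \ge \Vol(E) = \omega_n \prod_j r_j$ (where $\omega_n$ is the volume of the unit ball) I also get $\Vol(S) = (2n)^n \prod_j r_j \le (2n)^n/\omega_n =: c_n$, a dimensional constant.

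Next, I would embed $S$ into the inscribed cube $R = [0, 2/\sqrt{n}]^n \subset B^n(1)$ using Lemma \ref{bilipembedlemma}, setting $c = 2/\sqrt{n}$. Writing $s_j = 2nr_j$ (already sorted in increasing order), I rescale by $\lambda := c \cdot c_n^{-1/n}$ to get the rectangle $\lambda S$ with sides $\lambda s_j$. To verify the hypothesis of the lemma, I need $\prod_{j=1}^p c \ge \prod_{j=1}^p \lambda s_j$ for every $p$. Since the $s_j$ are sorted in increasing order, the geometric mean of the first $p$ terms is at most the geometric mean of all $n$:
$$\Bigl(\prod_{j=1}^p s_j\Bigr)^{1/p} \le \Bigl(\prod_{j=1}^n s_j\Bigr)^{1/n} \le c_n^{1/n}.$$
Consequently $\prod_{j=1}^p \lambda s_j \le (\lambda c_n^{1/n})^p = c^p$, and the hypothesis holds for every $p$. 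Lemma \ref{bilipembedlemma} then supplies a locally $C_0(n)$-bilipschitz embedding $\lambda S \hookrightarrow R$; composing with the $1/\lambda$-bilipschitz scaling $S \to \lambda S$ (with $1/\lambda = (\sqrt{n}/2)\, c_n^{1/n}$ a dimensional constant) and with the isometric inclusion $A \hookrightarrow S$ gives the desired embedding of $A$ into $B^n(1)$.

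The upper hemisphere case follows immediately, because the upper hemisphere of the unit $n$-sphere is bilipschitz to $B^n(1)$ with a universal constant (exponential coordinates about the pole identify the open hemisphere with $B^n(\pi/2)$, and the metric comparison $2/\pi \le \sin(r)/r \le 1$ for $r < \pi/2$ gives a $(\pi/2)$-bilipschitz map). The only subtle step is the verification of the partial-product hypothesis of Lemma \ref{bilipembedlemma}, which reduces to the elementary monotonicity of geometric means over sorted initial segments; modulo this observation the proof is a straightforward composition of John's theorem, a global rescaling, and the lemma.
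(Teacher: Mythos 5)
Your proof is correct and follows essentially the same route as the paper's: enclose $A$ in a rectangle of bounded volume after a rotation, observe that the increasing side lengths make the partial products bounded by a dimensional constant, rescale, apply Lemma \ref{bilipembedlemma} to embed into a cube, and compare cube, ball, and hemisphere. The paper states these steps more tersely --- it does not explicitly invoke John's ellipsoid theorem for the bounding rectangle, does not write out the geometric-mean (or simpler case-split) argument behind ``the partial products are also $\le C(n)$,'' and leaves the rescaling implicit --- but the details you supply are precisely what the paper's proof leaves to the reader, and there is no gap.
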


\begin{proof} After a rotation, the set $A$ is a subset of a rectangle $R$ with volume $\le C(n)$, for some $C(n) > 1$.  
The rectangle has side lengths $R_1 \le ...
\le R_n$ and $\prod_{j=1}^n R_j \le C(n)$.  Since the $R_j$ are increasing, we have $\prod_{j=1}^p R_j \le C(n)$ also.
By Lemma \ref{bilipembedlemma}, this rectangle admits a
locally $C(n)$-bilipschitz embedding into the unit cube.  The unit cube has a $C(n)$-bilipschitz embedding into the unit ball or the upper hemisphere.
\end{proof}

Lemma \ref{bilipembedlemma} is sharp up to constant factors.  If there is an $L$-bilipschitz embedding from $S$ into $R$, then $R_1... R_p \ge c(n) L^{-p} S_1 ... S_p$ for each $p$ from 1 to $n$.  A proof is given in \cite{GUWV}.  The known proofs are surprisingly difficult.  All the proofs use homology theory.  It would be interesting to find a really elementary proof.  Now we give the proof of Lemma \ref{bilipembedlemma}.

\begin{proof} The proof is by induction on $n$.  The base case is $n=2$.

Suppose that $R_1 \ge S_1$ and $R_1 R_2 \ge S_1 S_2$.  If $R_2 \ge S_2$, then the identity map is an embedding from $S$ into $R$, and there is nothing to prove.  If $R_2 < S_2$, then $S$ is longer and thinner than $R$, and the area of $S$ is smaller than the area of $R$.  In this case, we can make a locally 10-bilipschitz embedding by folding $S$ back and forth inside of $R$.

For general $n$, we construct the bilipschitz embedding by using this construction repeatedly with different coordinates.  We know that $R_1 \ge 
S_1$.  If $R_j \ge S_j$ for all $j$, then the identity map is an embedding from $S$ into $R$, and there is nothing to prove.  Otherwise, let $a$ be the smallest value so that $R_a < S_a$.  We know that $a \ge 2$, and so $R_{a-1} \ge S_{a-1}$.  

We will define a rectangle $S'$ with $S_j' = S_j$ except for $j = a-1$ and $j=a$, and we will use the 2-dimensional case to find a 10-bilipschitz embedding from $S$ into $S'$.  Now $S'$ will have the property that either $S_a' = R_a$ or else $S_{a-1}' = R_{a-1}'$.  Then by induction, we will construct a $C(n-1)$-bilipschitz embedding from $S'$ into $R$.  Now we turn to the details.

We consider the ratios $R_{a-1} / S_{a-1} \ge 1$ and $S_a / R_a \ge 1$.  We proceed in two cases, depending on which ratio is larger.

Suppose first that $R_{a-1} / S_{a-1} \ge S_a/R_a$.  Define $S_a' = R_a$ and $S_{a-1}' = (S_a/R_a) S_{a-1} \le R_{a-1} \le R_a = S_a'$.  
We note that $S_{a-1}' \ge S_{a-1} \ge S_{a-2}$ and $S_a' \le S_a \le S_{a+1}$.  Now we define $S_j' = S_j$ for all $j$ except $a-1$ and $a$.  The inequalities we have proven show that $S_j'$ are in order: $S_1' \le S_2' \le ... \le S_n'$.  We let $S'$ be the corresponding rectangle $\prod_{j=1}^n [0, S_j']$.  By the 2-dimensional case, there is a 10-bilipschitz embedding from $[0, S_{a-1}] \times [0, S_a]$ into $[0, S_{a-1}'] \times [0, S_a']$.  Using the identity in the other coordinates, we get a 10-bilipschitz embedding from $S$ into $S'$.

We claim that $\prod_{j=1}^p S_j' \le \prod_{j=1}^p R_j$ for all $p$.  For $p \le a-2$, this follows because $\prod_{j=1}^p S_j' = \prod_{j=1}^p S_j$.  
We also note that $S_{a-1}' S_a' = S_{a-1} S_a$, and so for $p \ge a$, $\prod_{j=1}^p S_j' = \prod_{j=1}^p S_j \le \prod_{j=1}^p R_j$.  Finally, we have to consider $p = a-1$.  Since $S_{a-1}' \le R_{a-1}$, $\prod_{j=1}^{a-1} S_j' \le (\prod_{j=1}^{a-2} S_j') R_{a-1} \le (\prod_{j=1}^{a-2} R_j) R_{a-1} = \prod_{j=1}^{a-1} R_j$.  This proves the claim.  Now we let $R = \bar R \times [0, R_a]$ and $S' = \bar S' \times [0, R_a]$, where
$\bar R$ is the product of $[0, R_j]$ for $j \not= a$, and $\bar S'$ is the product of $[0, S_j']$ for $j \not=a$.  By induction, we see that there is a
$C(n-1)$-bilipschitz embedding from $\bar S'$ into $\bar R$.  Using the identity in the $a$-coordinate, we get a $C(n-1)$-bilipschitz embedding from $S'$ into $R$.  Composing our two embeddings, we get a $10 C(n-1)$-bilipschitz embedding from $S$ into $R$.

The other case is similar.  Suppose that $R_{a-1} / S_{a-1} \le S_a / R_a$.  Define $S_{a-1}' = R_{a-1}$ and $S_a' = (S_{a-1}/R_{a-1}) S_a$.  We note that
$R_a \le S_a' \le S_a$.  Therefore $S_{a-2} \le S_{a-1} \le S_{a-1}' = R_{a-1} \le R_a \le S_a' \le S_a \le S_{a+1}$.  We define $S_j ' = S_j$ for all $j$ 
except $a-1$ and $a$.  The inequalities we have proven show that the $S_j'$ are in order, and we define $S' = \prod_{j=1}^n [0, S_j']$  By the 
2-dimensional case there is a 10-bilipschitz embedding from $S$ into $S'$.  By the same arguments as above, we can check that $\prod_{j=1}^p S_j' \le \prod_{j=1}^p R_j$ for all $p$.  This time, $S_{a-1}' = R_{a-1}$.  We let $R = \bar R \times [0, R_{a-1}]$ and $S' = \bar S' \times [0, S'_{a-1}]$.  By induction, we see that there is a
$C(n-1)$-bilipschitz embedding from $\bar S'$ into $\bar R$.  Using the identity in the $a$-coordinate, we get a $C(n-1)$-bilipschitz embedding from $S'$ into $R$.  Composing our two embeddings, we get a $10 C(n-1)$-bilipschitz embedding from $S$ into $R$.  \end{proof}

\subsection{Basic facts about flat chains and flat equivalence} \label{appenflat}

We use some basic facts about flat chains and flat equivalence in Section \ref{newpfhopf} and Section \ref{cyclez(f)}.  In this appendix,
we review the basic facts.

The flat norm is usually defined for chains in a Riemannian manifold.  Here we have to work with chains in a finite CW complex with Lipschitz attaching maps.  This is only slightly harder.  If $X$ is a finite CW complex with Lipschitz attaching maps, then $X$ is a metric space in a natural way.  The complex $X$ is given by finitely many closed balls with some identifications.  We put the standard unit ball metric on each closed ball, and we define the metric on $X$ to be the quotient metric coming from the identifications.  So we can define Lipschitz maps into $X$ and Lipschitz chains.  The volume of a Lipschitz chain is defined by breaking the chain into pieces in each open cell, and the volume of each piece is defined in the usual way.

One fundamental result about the volumes of chains is that a cycle of small volume must be null-homologous.  We formulate this as a lemma and prove it by the standard Federer-Fleming deformation argument.

\begin{lemma} \label{defCW} If $X$ is a finite CW complex with Lipschitz attaching maps, then there is a constant $\epsilon > 0$ so that the following holds.  If
$z$ is a mod 2 Lipschitz cycle in $X$ with volume $< \epsilon$, then $z$ is homologically trivial. \end{lemma}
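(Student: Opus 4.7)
The plan is to mimic the standard Federer--Fleming deformation argument, adapted to a finite CW complex with Lipschitz attaching maps. Suppose $z$ is an $n$-dimensional mod~2 Lipschitz cycle. The idea is to inductively deform $z$ down through the skeleta of $X$, one cell at a time, until $z$ lies in the $n$-skeleton with small controlled volume; then a final pushout argument forces $z$ to be trivial.

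First I would work cell by cell in the top dimension. Let $e$ be a $d$-cell of $X$ with $d > n$, with characteristic map $\Phi\colon B^d \to X$ that is bilipschitz on the interior (this is where the Lipschitz hypothesis on the attaching maps is used; $\Phi$ is a Lipschitz map of a compact ball, so its restriction to a slightly shrunken sub-ball is bilipschitz with some constant $L_e$). Pull $z\cap e$ back to $B^d$ by $\Phi^{-1}$, apply the standard radial pushout lemma (as in the proof of Proposition~\ref{fedflemhcont}): averaging over center points $p \in (1/2)B^d$ shows that for some choice of $p$ the outward radial projection $\pi_p$ sends $z \cap B^d$ into $\partial B^d$ while multiplying $n$-volume by at most $C(d)$. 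Here we use that $z\cap e$ has volume bounded by $\Vol(z) < \epsilon$, so in particular we can find $p$ outside the support. Pushing forward by $\Phi$ and adding the straight-line homology between the pushed and unpushed chains, we obtain a new cycle $z'$ whose portion inside $e$ has been moved to $\partial e$, with total volume at most $C_e \Vol(z)$.

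Iterating over all top-dimensional cells, then all $(d-1)$-cells, and so on down to dimension $n+1$, we arrive at a cycle $\tilde z$ supported in the $n$-skeleton $X^{(n)}$, homologous to $z$, with $\Vol(\tilde z) \le C(X) \epsilon$, where $C(X)$ is a constant depending only on the finitely many cells of $X$ and their Lipschitz constants. For the final step, consider each $n$-cell $e$ of $X$ separately. If $C(X)\epsilon$ is strictly less than the smallest $n$-cell volume $v_{\min}$ of $X$, then for every $n$-cell $e$ the portion $\tilde z \cap e$ occupies less than the full volume of $e$. Applying the radial pushout once more in each $n$-cell, using a center point $p \in e$ outside the support of $\tilde z$, we push $\tilde z$ into the $(n-1)$-skeleton. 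But an $n$-cycle supported in an $(n-1)$-dimensional set must be the zero chain (mod~2), so $\tilde z = 0$ and hence $z$ was null-homologous. Choosing $\epsilon < v_{\min}/C(X)$ completes the argument.

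The main obstacle is simply bookkeeping: making sure the radial pushouts behave correctly under the Lipschitz (not smooth) characteristic maps, and that the volume-expansion constants from the pushout lemma combine into a single constant $C(X)$ that depends only on $X$ and not on $z$. Once the characteristic maps are controlled as bilipschitz on the relevant subsets, the Federer--Fleming pushout lemma (already used in Proposition~\ref{fedflemhcont}) applies verbatim, and the inductive dimension reduction is routine.
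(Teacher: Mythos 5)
Your proposal is correct and takes essentially the same approach as the paper: deform the $n$-cycle down through the skeleta using Federer--Fleming radial pushouts, control the volume growth by a constant depending only on $X$ (its cells and their Lipschitz constants), and then observe that once the cycle lies in the $n$-skeleton with volume smaller than the smallest $n$-cell, a final pushout sends it into the $(n-1)$-skeleton where it has zero volume and is therefore null-homologous. One small organizational difference: the paper performs the pushout for all $N$-cells of a given dimension simultaneously (the pushout map is globally defined on $X^{(N)}$ minus center points, homotopic to the inclusion, and the attaching maps absorb the composition with at most an $L^d$ volume distortion), whereas you describe pulling back $z\cap e$ into each ball $B^d$ and pushing out cell by cell; the former is cleaner since $z\cap e$ is not itself a cycle, but both work. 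Also note a minor wording slip near the end: after the final pushout the resulting cycle is the zero chain, and it is this that shows $\tilde z$ (and hence $z$) is \emph{null-homologous}; $\tilde z$ itself need not literally equal zero.
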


\begin{proof} Suppose that $z$ is a d-cycle.  We homotope $z$ into the d-skeleton of $X$ while keeping control of the volume.  We may assume that all the attaching maps have Lipschitz constant $< L$.  If $z$ initially lies in the N-skeleton of $X$ for some $N > d$, then we homotope it to the (N-1)-skeleton by picking a random point near the middle of each N-cell, and pushing out radially into the boundary of the cell.  By the Federer-Fleming averaging trick, we can choose a point so that this push out map increases volumes by at most a factor $C(N)$.  From the boundary of the cell, we map into the (N-1)-skeleton of $X$ using the attaching maps, which stretch volumes by at most a factor $L^d$.  Repeating this for each dimension, we homotope $z$ to a cycle $z'$ in the d-skeleton of $X$ with volume at most $C(X) \epsilon$.  If $\epsilon$ is small enough, then $z'$ doesn't cover any d-cell of the d-skeleton, and so $z'$ is null-homologous. \end{proof}

If $T$ is a mod 2 Lipschitz d-chain in a CW complex, then the flat norm of $T$ is defined to be the infimum over all
(d+1)-chains $U$ of $\Vol_{d+1}(U) + \Vol_d(T - \partial U)$.  In other words, a chain $T$ may have a small norm if it has small volume,
or if it is the boundary of a (d+1)-chain with small volume, or if it is the sum of pieces of these two types.  It's staightforward to check that the flat norm obeys the triangle inequality: $\FlatNorm( T_1 + T_2) \le \FlatNorm (T_1) + \FlatNorm (T_2)$.  

The flat distance between $T_1$ and $T_2$ is defined to be the flat norm of $T_1 - T_2$.  If the flat distance between two Lipschitz chains is zero, we say they are flat equivalent.  Because the flat norm obeys the triangle inequality, it follows that flat equivalence is an equivalence relation.  The flat norm defines a metric on the set of equivalence classes of Lipschitz chains.  

The resulting metric space is not complete, and the space of flat chains is the completion of this metric space.   However, in this paper, we only need the notion of flat equivalence.

Here are some examples of flat equivalence.  If $T_1$ and $T_2$ differ by a chain with volume zero, then they are flat equivalent.  Also, if $T_1$ and $T_2$ are two homologous d-dimensional cycles in a d-dimensional complex, then they are flat equivalent, because $T_1 - T_2$ is the boundary of a (d+1)-chain which must have zero (d+1)-volume.  The different flat equivalences that appear in Sections \ref{newpfhopf} and \ref{cyclez(f)} just come from these two observations.

The last small result that we need is that two flat equivalent Lipschitz cycles are homologous.

\begin{lemma} Suppose that $z_1$ and $z_2$ are flat equivalent Lipschitz cycles in a finite CW complex $X$ with Lipschitz attaching maps.
Then $z_1$ and $z_2$ are homologous.
\end{lemma}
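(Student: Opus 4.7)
The plan is to unpack the definition of flat equivalence, extract a small error cycle, and then apply Lemma \ref{defCW} to conclude that error is null-homologous.

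By hypothesis the flat norm of $z_1 - z_2$ equals zero, so for every $\varepsilon > 0$ I can produce a Lipschitz $(d+1)$-chain $U_\varepsilon$ and a Lipschitz $d$-chain $V_\varepsilon$ with
\[
z_1 - z_2 = \partial U_\varepsilon + V_\varepsilon, \qquad \Vol_{d+1}(U_\varepsilon) + \Vol_d(V_\varepsilon) < \varepsilon.
\]
I would first observe that $V_\varepsilon$ is automatically a mod 2 Lipschitz cycle. Indeed, $z_1$ and $z_2$ are cycles, so $\partial(z_1 - z_2) = 0$, and $\partial \partial U_\varepsilon = 0$, which forces $\partial V_\varepsilon = 0$. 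This is the small but crucial point that turns the decomposition from flat equivalence into usable homological data.

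Next I would invoke Lemma \ref{defCW} applied to the finite CW complex $X$: there is a constant $\varepsilon_0 > 0$ (depending only on $X$) so that any mod 2 Lipschitz $d$-cycle of volume less than $\varepsilon_0$ is null-homologous. I simply choose $\varepsilon < \varepsilon_0$. Then $V_\varepsilon$ is a cycle of volume $< \varepsilon_0$, hence null-homologous, say $V_\varepsilon = \partial W_\varepsilon$ for some Lipschitz $(d+1)$-chain $W_\varepsilon$. Substituting back,
\[
z_1 - z_2 = \partial U_\varepsilon + \partial W_\varepsilon = \partial(U_\varepsilon + W_\varepsilon),
\]
which exhibits $z_1$ and $z_2$ as homologous Lipschitz cycles in $X$.

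There is essentially no obstacle here beyond making sure the deformation lemma for finite CW complexes with Lipschitz attaching maps is exactly in the form needed; the only mild subtlety is the fact that the ``error'' $V_\varepsilon$ really is a cycle, which relies on the two pieces of $z_1 - z_2$ being Lipschitz chains with well-defined boundaries obeying $\partial^2 = 0$. All of this is built into the ambient setup of Lipschitz chains that we are using, so the argument goes through cleanly.
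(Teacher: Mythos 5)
Your proof is correct and follows essentially the same line as the paper's: decompose $z_1 - z_2$ from the flat-norm condition, note that the small leftover piece is a cycle, and apply Lemma \ref{defCW} to kill it. Your explicit remark that the leftover is automatically a cycle (from $\partial(z_1-z_2)=0$ and $\partial^2=0$) is a small point the paper leaves implicit, but the argument and the key lemma used are the same.
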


\begin{proof} We know that the flat norm of $z_1 - z_2$ is zero.  So for any $\epsilon > 0$, we can find a Lipschitz chain $U$ so that $U$ has volume $< \epsilon$ and $\partial U - z_1 + z_2$ has volume $< \epsilon$.  Obviously, $\partial U$ is homologically trivial.  If $\epsilon$ is small enough, then $\partial U - z_1 + z_2$ is homologically trivial by Lemma \ref{defCW}.  Therefore, $z_1 - z_2$ is homologically trivial. \end{proof}

\subsection{Standard facts about the deformation operator} \label{fedflemreview}

In this section, we review some standard facts about the deformation operator, which we stated in Section \ref{defop}.  We work
with mod 2 chains and cycles.  (The statements here can be extended to other coefficients, but we don't need them and it takes extra work to keep track of the orientations.)

If $T$ is a d-chain, $s > 0$ is a scale, and $v \in \mathbb{R}^N$ is a vector, then we define the
deformation operator $D_v(T)$ by the following formula,

$$D_v T:= \sum_{F \subset \Sigma^d(s)} [\bar F_v \cap T] F.$$

\noindent In this formula, $[\bar F_v \cap T] \in \mathbb{Z}_2$ is the number of points in $\bar F_v \cap T$ taken mod 2. 
If $\bar \Sigma(s)$ is transverse to $T$, then $D_v T$ is well-defined.  The deformation $D_v T$ is a cubical d-chain in $\Sigma^d(s)$.   

The deformation operator has the following properties.

1. If $|v| < s/2$, and if $T$ is a cubical d-chain in $\Sigma(s)$, then $D_v(T) = T$.

\begin{proof} We just have to check that if $F, G$ are d-faces of $\Sigma(s)$, then $[\bar F_v \cap G]$ is equal to 1 if $F = G$ and 0 if $F \not= G$.  This holds for $v=0$.  The boundary of $\bar F$ lies at a distance $\ge s/2$ from the face $G$.  So as we continuously translate $\bar F$ to $\bar F_v$, the intersection number doesn't change.
\end{proof}

2. The deformation operator commutes with taking boundaries.  In other words, as long as $\bar \Sigma_v$ is transverse to
both $\partial T$ and $T$,  $ \partial D_v(T) = D_v( \partial T)$.

\begin{proof} From the formula for $D_v(T)$, we see that 

$$\partial D_v(T) = \sum_{F^d \subset \Sigma^d(s)} [\bar F_v \cap T] \partial F.$$

Consider a (d-1)-face $G$ in $\Sigma^{d-1}$.  Let $F_1(G), ..., F_{2(N-d+1)}(G)$ be the set of all the d-faces of $\Sigma^d(s)$ that contain $G$ in their boundary.  We can rewrite the formula for $\partial D_v(T)$ as follows:

$$ \partial D_v(T) = \sum_{G^{d-1} \subset \Sigma^{d-1}(s)} \left( \sum_{j=1}^{2(N-d+1)} [\overline{ F_j(G)}_v \cap T] \right) G. $$

Now the first key point is that $\sum_{j=1}^{2(N-d+1)} \overline{F_j(G)} = \partial \bar G$.  Plugging in, we get

$$ \partial D_v(T) = \sum_{G^{d-1} \subset \Sigma^{d-1}(s)} [\partial \bar G_v \cap T] G. $$

 Since $\bar \Sigma_v$ is transverse to $T$, $\bar G_v \cap T$ is a
1-chain, and the boundary of $\bar G_v \cap T$ consists of an even number of points.  Since $\bar \Sigma_v$ is transverse to
$T$ and $\partial T$, the boundary of $\bar G_v \cap T$ is the union of $\partial \bar G_v \cap T$ and $\bar G_v \cap \partial T$.
Therefore, $[\partial \bar G_v \cap T] = [\bar G_v \cap \partial T]$.  Substituting this identity into the last equation, we get

$$\partial D_v(T) =  \sum_{G^{d-1} \subset \Sigma^{d-1}(s)} [\bar G_v \cap \partial T] G = D( \partial T). $$

\end{proof}

3.  If we average over all $|v| < s/2$, then

$$ \Average_{v \in B(s/2)}  \Vol_d [ D_v(T) ] \le C(N) \Vol_d(T). $$

\begin{proof} This follows by integral geometry.  If $F$ is a face of $\Sigma^d(s)$, let $B[F]$ denote the ball around the center of $F$ with radius $Ns$.  If we take a random vector $v \in B(s/2)$, the probability that $[\bar F_v \cap T] = 1$
is at most $C(N) s^{-N} \Vol_d( T \cap B[F] )$.  Therefore the average volume on the left-hand side is 

$$ \le C(N) \sum_F \Vol_d(T \cap B[F]) \le C(N) \Vol_d(T). $$

\end{proof}

4. If $z$ is a d-cycle, then we can build a (d+1)-chain $A_v(z)$ in the $C(N) s$ neighborhood of $z$ with $\partial A_v(z) = z - D_v(z)$.  Moreover, if we average over all $|v| < s/2$, then

$$ \Average_v \Vol_{d+1} [ A_v(z)] \le C(N) s \Vol_{d}(z). $$

This estimate takes a little more work.  There are several variations of the deformation operator.  We begin by recalling
a different point of view about the deformation operator, where the chain $A$ appears more naturally.  Then we see how
the different points of view are connected.

Suppose that $z$ is a d-cycle in $\mathbb{R}^N$.  Federer and Fleming gave a procedure to homotope $z$ into $\Sigma^d_v(s)$ (which is well-defined for almost every $v$).  For each N-cube $Q^N$ of $\Sigma_v(s)$, we project $z$ outward from the center to the boundary of $Q$.  As long as $z$ doesn't intersect the center point, we get a homotopy into the (N-1)-skeleton of $\Sigma_v(s)$.  If $d < N-1$, we repeat this operation with each (N-1)-cube $Q^{N-1}$ of $\Sigma_v(s)$.  We continue in this way until we have homotoped $z$ into the d-skeleton of $\Sigma_v(s)$.  We can do this as long as, at each step of the homotopy, the image of $z$ does not include any of the center points of the cubes.

This procedure defines a homotopy $H_v: z \times [0,1] \rightarrow \mathbb{R}^N$, for $t \in [0,1]$, where $H_v$ at time 0 is the identity and $H_v$ at time 1 maps $z$ into the d-skeleton of $\Sigma_v(s)$.  (We will see below that the homotopy $H_v$ is defined for almost every $v$.)

We notice that $H_v(z, 1)$ is a d-cycle in $\Sigma^d_v(s)$.  Since $\Sigma^d_v(s)$ is a d-dimensional polyhedron, $H_v(z,1)$ is homologous to a sum of faces.  In other words, we have

$$H_v(z,1) = \sum_{F \in \Sigma^d(s)} c(F) F_v + \partial \nu, $$

\noindent where $c(F)$ are coefficients and $\nu$ is a (d+1)-chain in $\Sigma^d_v(s)$.  Note that $\nu$ is a (d+1)-chain with $\Vol_{d+1} (\nu) = 0$, so $H_v(z,1)$ is essentially equal to $\sum_{F \in \Sigma^d(s)} c(F) F_v$.  We now define the Federer-Fleming deformation of $z$ by

$$ \tilde D_v(z) := \sum_{F \in \Sigma^d(s)} c(F) F_v. $$

(The chain $\tilde D_v(z)$ is closely related to $D_v(z)$, as we explain below, but they are not identical.)

We now compute the constant $c(F)$.  The constant $c(F)$ measures the number of times that $H_v(z,1)$ covers the face $F_v$, taken mod 2.

If $Q$ is an e-face of $\Sigma_v(s)$ with center $x_Q$, let $\pi_Q: Q \setminus \{ x_Q \} \rightarrow \partial Q$ be the radial projection.  Notice that the center $x_Q$ is $Q \cap \bar \Sigma^{N-e}_v(s)$.   By applying the radial projection $\pi_Q$ in each e-face $Q$, we get a map $\pi_d: \Sigma^e_v(s) \setminus \bar \Sigma^{N-e}_v(s) \rightarrow \Sigma^{e-1}_v(s)$.   To get a map from $\mathbb{R}^N$ to $\Sigma^d_v(s)$, we use the composition $\pi:= \pi_{d+1} \circ ... \circ \pi_N$.  

The map $\pi$ is not defined on all of $\mathbb{R}^N$, but it is a well-defined map from $\mathbb{R}^N \setminus \bar \Sigma^{N-d-1}_v(s)$ to $\Sigma^d_v(s)$.  (To see this, we just have to check that for each $e \ge d+1$, $\pi_e$ maps $\Sigma^e_v(s) \setminus \bar \Sigma^{N-d-1}_v(s)$ into $\Sigma^{e-1}_v(s) \setminus \bar \Sigma^{N-d-1}_v(s)$.)  Therefore, the homotopy $H$ is well-defined as long as $z$ is disjoint from $\bar \Sigma^{N-d-1}_v(s)$, which happens for almost every $v$.

If $F_v$ is a d-face of $\Sigma_v(s)$ with center $x(F_v)$, then $\pi^{-1}(x(F_v))$ is just $\bar F_v$ - the perpendicular (N-d)-face of $\bar \Sigma_v(s)$.  If $z$ is transverse to $\bar \Sigma_v(s)$, then we see that the coefficient $c(F)$ is just the intersection number $c(F) = [z \cap \bar F_v]$.   Therefore, we get the following formula for $\tilde D_v(z)$:

$$ \tilde D_v(z) = \sum_{F \in \Sigma^n(s)} [z \cap \bar F_v] F_v. $$

So we see that $\tilde D_v(z)$ is just the translation of $D_v(z)$ by the vector $v$.

Now we can define the homology $A_v(z)$.  Since $\tilde D_v(z)$ is just a translation of $D_v(z)$ by a vector $v$, there is an obvious homotopy between them, given by translations.  This homotopy defines a chain $H'$ with $\partial H' = D_v(z) - \tilde D_v(z)$. The chain $A_v(z)$ is the sum of $H_v(z \times [0,1])$ and the chain $\nu$ and the chain $H'$.

\begin{lemma} If $|v| \le s$ and if we choose the zero-volume chain $\nu$ correctly, then $A_v(z)$ is contained in the $C(N) s$ neighborhood of $z$.

\end{lemma}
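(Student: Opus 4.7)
The plan is to bound each of the three pieces that make up $A_v(z) = H_v(z \times [0,1]) + \nu + H'$ separately, showing each lies in a $C(N)s$-neighborhood of $z$.

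First I would handle $H_v(z\times [0,1])$, which is the Federer--Fleming radial projection homotopy. This homotopy is assembled one skeleton at a time: for $e = N, N-1, \ldots, d+1$, the map $\pi_e$ radially projects within each $e$-face of $\Sigma_v(s)$ from its center to its boundary. A single application of $\pi_e$ moves each point by at most the diameter $\sqrt{e}\, s$ of an $e$-face. Composing the $N-d$ radial projections, each point of $z$ traces out a path in $\mathbb{R}^N$ of length at most $\sum_{e=d+1}^{N} \sqrt{e}\, s \le N^{3/2} s$. Therefore $H_v(z \times [0,1])$ lies in the $C(N)s$-neighborhood of $z$.

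Next I would handle $H'$, the straight-line homotopy from $\tilde D_v(z)$ to $D_v(z)$ coming from translation by $-v$. Since $\tilde D_v(z) = H_v(z,1) - \partial \nu$ (essentially) lies in the $C(N)s$-neighborhood of $z$ by the previous paragraph, and $|v| \le s$, each point moves by at most $s$ during this translation. Hence $H'$ also lies in the $C(N)s$-neighborhood of $z$ (with a slightly larger constant).

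The main obstacle, and the reason we have freedom in choosing $\nu$, is the third piece. Recall that $\nu$ is a $(d+1)$-chain in the $d$-dimensional polyhedron $\Sigma^d_v(s)$ with $\partial \nu = H_v(z,1) - \sum_F c(F) F_v$; automatically $\nu$ has zero $(d+1)$-volume, but we still need to place it near $z$. I would do this face by face. For each $d$-face $F_v$ of $\Sigma^d_v(s)$, the restriction $H_v(z,1) \cap F_v - c(F) F_v$ is a $d$-chain supported in $F_v$ whose boundary lies in $\partial F_v$; since $F_v$ is a $d$-disk, we can fill this chain with a $(d+1)$-chain $\nu_F$ supported entirely in $F_v$, and take $\nu := \sum_F \nu_F$. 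The only faces $F_v$ that contribute are those meeting $H_v(z,1)$ or having $c(F) \ne 0$. For any such face, $F_v$ contains at least one point within $C(N)s$ of $z$ (by the first paragraph); since $\operatorname{diam}(F_v) \le \sqrt{N}\, s$, the entire face $F_v$ lies in the $C'(N)s$-neighborhood of $z$, and hence so does $\nu_F$. Combining the three bounds completes the argument.
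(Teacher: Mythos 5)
Your bounds for the first two pieces match the paper and are correct: the displacement of $H_v$ is at most the sum of the diameters of the faces across the $N-d$ radial projections, which is $\le C(N)s$, so $H_v(z\times[0,1])$ and hence $\tilde D_v(z)$ lie in the $C(N)s$-neighborhood of $z$; and the translation homotopy $H'$ moves points by at most $|v|\le s$, so it too stays near $z$.

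The gap is in your construction of $\nu$, which is also the step the paper treats tersely. You propose, for each $d$-face $F_v$, to fill $w_F := H_v(z,1)|_{F_v} - c(F)F_v$ by a $(d+1)$-chain $\nu_F$ supported in $F_v$. But $w_F$ is in general not a cycle. You are right that $\partial w_F$ is supported in $\partial F_v$, but that makes $w_F$ a relative cycle, not an absolute one, and a relative cycle is not the boundary of anything. Concretely, $H_v(z,1)|_{F_v}$ may enter and exit $\bar F_v$ twice through the same $(d-1)$-face, so $\partial\bigl(H_v(z,1)|_{F_v}\bigr)$ is zero there, while $c(F)\,\partial F_v$ weights every $(d-1)$-face of $F_v$ by $c(F)$; then $\partial w_F \ne 0$. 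No $\nu_F$ with $\partial\nu_F = w_F$ exists. The boundary cancellation you need happens only after summing over all faces, so the face-by-face absolute filling does not go through.

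The repair requires one more layer of work: fill each $w_F$ only relative to $\partial F_v$ (choose $\nu_F$ in $F_v$ so that $w_F - \partial\nu_F$ is supported in $\partial F_v$, using that $w_F$ has degree zero over interior points of $F_v$), observe that after summing the leftover is a $d$-cycle supported in the $(d-1)$-skeleton, and then fill that remainder by descending through the skeleta, all the while staying within the union of closed faces already touched by $H_v(z,1)$ and $\tilde D_v(z)$, each of which lies within $C(N)s$ of $z$ by your first paragraph. Without this descent, the per-face fillings leave an uncontrolled remainder on $\Sigma^{d-1}_v(s)$.
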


\begin{proof} By construction, the homotopy $H_v$ displaces points by $\le C(N) s$: in other words, $|H_v(x,t) - x| \le C(N) s$.  Therefore, $H_v(z \times [0,1])$ lies in the $C(N) s$ neighborhood of $z$.  Therefore, $\tilde D_v(z)$ lies in the $C(N) s$ neighborhood of $z$.  Now $H_v(z, 1) - \tilde D_v(z)$ is a null-homologous cycle in $\Sigma_v^d(s)$ lying in the $C(N) s$ neighborhood of $z$.  Therefore, we can fill it by a chain $\nu$ in $\Sigma_v^d(s)$ lying in the same neighborhood.  Finally, since $|v| \le s$, the homotopy $H'$ lies in the $s$-neighborhood of $\tilde D_v(z)$ and in the $C(N) s$-neighborhood of $z$.  \end{proof}

Next we turn to bounding the volume of $A_v(z)$.  Federer and Fleming observed that if we take a random vector $v$ in $B(s/2)$, then there are several useful volume estimates that hold on average.

\begin{prop} The following estimates hold for the average behavior of $H_v$ and $\tilde D_v(z)$:

$$ \Average_{v \in B(s/2)}  \Vol_d \tilde D_v(z)  \le C(N) \Vol_d z. $$

$$ \Average_{v \in B(s/2)}  \Vol_{d+1} H( z \times [0,1]) \le C(N) s \Vol_d z. $$

\end{prop}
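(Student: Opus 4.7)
The plan is to prove both estimates by the standard Federer--Fleming stage-by-stage argument: push $z$ through the iterative homotopy one codimension at a time, and at each stage apply a one-cube averaging lemma.

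First I would establish the one-cube lemma. Let $Q$ be an $e$-dimensional cube of side $s$ and let $p$ be uniformly distributed in the ball of radius $s/2$ concentric with $Q$. For a $d$-chain $\Sigma\subset Q$ with $d<e$, the radial projection $\pi_p\colon Q\setminus\{p\}\to\partial Q$ has Jacobian bounded by $(Cs/|x-p|)^d$ on any $d$-plane through $x$. Since $d<e$, the integral $\int_{|v|\le s/2}|v|^{-d}\,dv$ converges and is $\le C(N) s^{e-d}$, so $\Average_p(s/|x-p|)^d\le C(N)$ and hence
\begin{equation*}
\Average_p \Vol_d(\pi_p(\Sigma)) \le C(N)\Vol_d(\Sigma).
\end{equation*}
The straight-line homotopy $H_p(x,t)=(1-t)x+t\pi_p(x)$ has $t$-derivative of norm $|\pi_p(x)-x|\le Cs$, so the same averaging yields
\begin{equation*}
\Average_p \Vol_{d+1}(H_p(\Sigma\times[0,1])) \le C(N)\, s\, \Vol_d(\Sigma).
\end{equation*}

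Next I would globalize. Since $v\in B(s/2)$ translates every cube of $\Sigma_v(s)$ and every vertex of $\bar\Sigma_v(s)$ by the same vector, the center of each cube varies uniformly in its concentric ball of radius $s/2$ as $v$ varies. By Fubini, summing the one-cube estimate over all $e$-faces of $\Sigma_v(s)$ gives, for any $d$-chain $w\subset\Sigma^e_v(s)$,
\begin{equation*}
\Average_v \Vol_d(\pi(w)) \le C(N)\Vol_d(w),\qquad \Average_v \Vol_{d+1}(H(w\times[0,1])) \le C(N)\,s\,\Vol_d(w),
\end{equation*}
where $\pi$ and $H$ denote a single stage of the Federer--Fleming projection/homotopy inside the $e$-faces.

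Finally I iterate for $e=N,N-1,\dots,d+1$. Set $z_N:=z$ and let $z_{e-1}$ be the output of stage $e$. The $d$-volume bound gives $\Average_v \Vol_d(z_{e-1})\le C(N)\Average_v \Vol_d(z_e)$, so induction yields $\Average_v \Vol_d(z_d)\le C(N)^{N-d}\Vol_d(z)$; since $\tilde D_v(z)$ differs from $z_d$ by a chain of zero $d$-volume, the first inequality follows. The full homotopy $H(z\times[0,1])$ is the concatenation of the $N-d$ stage homotopies, so its $(d+1)$-volume is a sum of stage contributions, each bounded in expectation by $C(N)\,s\,\Vol_d(z_e)\le C(N)^{N-e+1}\,s\,\Vol_d(z)$; summing the geometric series produces $C(N)\,s\,\Vol_d(z)$. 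The main obstacle is the interdependence of the stages: a single parameter $v$ determines all projection centers at every dimension at once, so strictly speaking the inductive expectation estimate requires a tower argument. In practice this is handled by observing that the stage-$e$ operation acts locally inside each $e$-face and that, conditional on the data at higher stages, the center of each face is still uniformly distributed in the relevant ball of radius $s/2$; this is a standard bookkeeping of pushforward measures in the Federer--Fleming deformation theorem.
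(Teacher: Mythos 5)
Your proposal differs from the paper's argument in a way that introduces a genuine gap. The paper proves a Jacobian bound for the \emph{full} composition $\pi = \pi_{d+1} \circ \cdots \circ \pi_N$ at once, namely $\Vol_d(\pi(z)) \le C(N) \int_z \Dist(x, \bar\Sigma^{N-d-1}_v(s))^{-d}\,dvol_z(x)$, and then applies Fubini a single time. This works cleanly because $z$ does not depend on $v$, so the inner average $s^{-N}\int_{B(s/2)} \Dist(x, \bar\Sigma^{N-d-1}_v(s))^{-d}\,dv$ is a fixed quantity bounded uniformly in $x$.

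You instead iterate codimension by codimension, applying Fubini at each stage to the chain $z_e$. The problem is that for $e < N$ the chain $z_e$ itself depends on $v$, and so does the projection center at stage $e$ (both are deterministic functions of the single translation $v$). Your proposed repair — ``conditional on the data at higher stages, the center of each face is still uniformly distributed in the relevant ball of radius $s/2$'' — is false: conditioning on the higher-stage data fixes $v$, which in turn fixes the projection centers at every stage, so there is no residual randomness to exploit. The failure mode is concrete: after stage $e+1$, there is nothing a priori preventing $z_e$ from concentrating near the cube centers of the stage-$e$ projection for many values of $v$, in which case the stage-$e$ Jacobian integral would blow up even though $\Average_v \Vol_d(z_e)$ is controlled. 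The way to close the gap is exactly the paper's ``direct computation'': show that the product of the single-stage Jacobians along the Federer--Fleming trajectory of $x$ compounds to something comparable to $\Dist(x, \bar\Sigma^{N-d-1}_v(s))^{-d}$ (a bound expressed in terms of the \emph{initial} point $x$, which does not move with $v$), and only then average over $v$. Your one-cube Jacobian estimate is the right building block, but the compounding step is the content you are missing.
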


We sketch the proof of the proposition.  For more details, see \cite{GFRM} pages 16-20.  By a direct computation, one shows that for any d-cycle $z$, 

$$\Vol_d \pi(z) \le C(N) \int_z \Dist(x, \bar \Sigma^{N-d-1}_v(s) )^{-d}  dvol_z (x) . $$

If we use a random translation $v \in B(s/2)$, then the average value of the last line is

$$ C(N) s^{-N} \int_{B(s/2)} \left( \int_z \Dist(x, \bar \Sigma^{N-d-1}_v(s) )^{-d}  dvol_z (x)  \right) dv. $$

The key insight of Federer-Fleming is to estimate this double integral using Fubini.  It is equal to

$$ C(N) \int_z \left( s^{-N} \int_{B(s/2)} \Dist(x, \bar \Sigma^{N-d-1}_v(s) )^{-d} dv \right) dvol_z(x). $$

Now the expression in the large parentheses does not depend on $z$, and it is bounded $\le C(N)$ uniformly 
in $x$.  Therefore, the whole last line is $\le C(N) \Vol_d z$.  

By another direct computation, the (d+1)-volume of the homotopy $H$ from $z$ to $\pi(z)$ is bounded by 

$$\Vol_{d+1} H_v(z \times [0,1] ) \le s C(N) \int_z \Dist(x, \bar \Sigma^{N-d-1}_v(s) )^{-d}  dvol_z (x) . $$

And the same argument shows that $s^{-N} \int_{B(s/2)} \Vol_{d+1} H_v(z \times [0,1]) \le C(N) s \Vol_d z$. 

The last estimate is the main term in bounding the volume of $A_v(z) = H_v(z \times [0,1]) + \nu + H'$.  The chain
$\nu$ has zero volume.  The chain $H'$ is given by translating $\tilde D_v(z)$ to $D_v(z)$, and so it has volume
at most $|v| \Vol_{d} \tilde D_v(z) \le C(N) s \Vol_d z$.  Therefore, for an average $v \in B^N(s/2)$, 
the chain $A_v(z)$ has (d+1)-volume at most $C(N) s \Vol_d z$.

\end{document}